\documentclass[hyperref,12pt]{article}
\usepackage{graphicx} 
\usepackage[toc]{appendix}
\usepackage[autostyle]{csquotes} 
\usepackage{float} 
\usepackage{amssymb}
\usepackage{amsmath}
\usepackage{latexsym}
\usepackage{mathrsfs} 
\usepackage{bm} 
\usepackage{extarrows}
\usepackage[hang,flushmargin]{footmisc} 
\usepackage[noend]{algpseudocode}
\usepackage{fancyhdr}
\usepackage{pdfpages}
\usepackage{tikz}
\usepackage{tikz-cd}
\usepackage[all]{xy}
\usepackage{dsfont}
\usepackage{tipa}
\usepackage{changepage}
\usepackage{pifont}
\usepackage{imakeidx}
\usepackage{multicol}
\usepackage[colorlinks,linkcolor=red,anchorcolor=blue,citecolor=blue]{hyperref}
\usepackage{titletoc}
\usepackage{titlesec}
\usepackage{docmute}
\usepackage[font=footnotesize,skip=0pt,textfont=rm,labelfont=rm]{caption,subcaption}
\usepackage{booktabs} 
\usepackage{tocloft}

\usepackage{amsthm}
\newtheorem{thm}{Theorem}
\newtheorem{cor}[thm]{Corollary}
\newtheorem{prop}[thm]{Proposition}

\newtheorem{lem}[thm]{Lemma}
{
	\theoremstyle{definition}
	\newtheorem{defn}[thm]{Definition}

	\newtheorem{question}[thm]{Question}
	
	\theoremstyle{remark}
	\newtheorem{remark}[thm]{Remark}
}
\numberwithin{thm}{section}
\numberwithin{equation}{section} 

\usepackage{enumitem}

\graphicspath{{figure/}}                                 
\usepackage[a4paper]{geometry}                           
\usepackage{lmodern, mathtools}
\usepackage{old-arrows} 

\makeatletter
\newcommand{\xdbheadrightarrow}[2][]{%
	\ext@arrow 0099\xdbheadfill@{#1}{#2}}%
\newcommand{\xdbheadfill@}{%
	\arrowfill@\relbar\relbar{\mathrel{\vphantom{\rightarrow}\smash{\twoheadrightarrow}}}}
\makeatother

\usepackage{pst-plot}

\newcommand{\qp}{\mathbb{Q}_p}
\newcommand{\gal}{\mathrm{Gal}}

\newcommand{\modet}{\mathrm{Mod}^{\mathrm{\acute{e}t}}}

\newcommand\closeopen[2]{\ensuremath{[#1,#2)}}
\newcommand\openclose[2]{\ensuremath{(#1,#2]}}

\usepackage[backend=biber, style=alphabetic, maxalphanames=5, maxnames=99, useprefix=true]{biblatex}
\title{The overconvergence of multivariable $(\varphi_q,\mathcal{O}_K^{\times})$-modules at the perfectoid level}

\author{Changjiang DU}

\begin{document}
	\maketitle
	
	\begin{abstract}
		Let $K$ be a finite unramified extension of $\qp$, and $E$ a finite extension of $K$ with ring of integers $\mathcal{O}_E$. We define the overconvergence of  multivariable $(\varphi_q,\mathcal{O}_K^{\times})$-modules over $A_{\mathrm{mv},E}$ and explore some basic properties. We prove the overconvergence at the perfectoid level using the geometry of relative Fargues-Fontaine curve.
	\end{abstract}
	
	\tableofcontents
	
	\section{Introduction}
	
	Let $K$ be a finite unramified extension of $\qp$ with ring of integers $\mathcal{O}_K$ and residue field $\mathbb{F}_q$, and let $G_K\coloneq \gal(\overline{K}/K)$ be the absolute Galois group. Let $E$ be a finite extension of $K$. To study continuous $E$-representations of $G_K$, Fontaine introduced $(\varphi,\Gamma)$-modules for certain extension $K_{\infty}/K$, and showed that the category of $(\varphi,\Gamma)$-modules is equivalent to the category of $E$-representations of $G_K$ (\cite{Fontaine1990}). A common choice for $K_{\infty}$ is the cyclotomic extension; the resulting $(\varphi,\Gamma)$-modules are then called \textit{cyclotomic} $(\varphi,\Gamma)$-modules.  Cherbonnier and Colmez proved that cyclotomic $(\varphi,\Gamma)$-modules are always overconvergent; that is, every cyclotomic $(\varphi,\Gamma)$-module descends to a finite free module over the Robba ring $\mathcal{R}_E$, where $\mathcal{R}_E$ is the ring of Laurent series in one variable with coefficients in $E$ which converge in an open annulus of outer radius $1$ and inner radius $r$ for some $r<1$ (\cite{cherbonnier1998representations}). This fundamental result allows us to relate cyclotomic $(\varphi,\Gamma)$-modules and $p$-adic differential equations, and to recover many invariants in $p$-adic Hodge theory using cyclotomic $(\varphi,\Gamma)$-modules, see for example \cite{colmez1999representations}, \cite{berger2002representations}, and \cite{berger2004limites}. 
	
	One can also choose $K_{\infty}$ as the Lubin-Tate extension associated to some uniformizer of $K$ and obtain \textit{Lubin-Tate} $(\varphi,\Gamma)$-modules (see for example \cite{kisin2009galois} and \cite{schneider2017galois}). However, {Lubin-Tate} $(\varphi,\Gamma)$-modules are usually not overconvergent (\cite[Remark 5.21]{fourquaux2014triangulable}). More precisely, with a fixed Lubin-Tate formal group over $\mathcal{O}_K$ which induces a Lubin-Tate character $G_K\twoheadrightarrow \mathcal{O}_K^{\times}$, Berger defined $K$-analytic Galois representations and proved that every overconvergent {Lubin-Tate} $(\varphi,\Gamma)$-module corresponds precisely to a quotient of the tensor product of a $K$-analytic Galois representation and a Galois representation which factors through the Lubin-Tate character (\cite{berger2016multivariable}, \cite{berger2017iwasawa}). 
	
	Let $\mathcal{O}_E$ be the ring of integers of $E$ with uniformizer $\pi$ and residue field $\mathbb{F}$, and fix an embedding $\sigma_{0}: K\hookrightarrow E$. In \cite{du2025multivariablevarphiqmathcaloktimesmodulesassociatedpadic}, we introduced a new kind of $(\varphi,\Gamma)$-modules, called \textit{multivariable} $(\varphi_q,\mathcal{O}_K^{\times})$-modules since the coefficient ring $A_{\mathrm{mv},E}$ has $f\coloneq [K:\qp]$ variables, and constructed a fully faithful exact functor $D_{A_{\mathrm{mv},E}}^{(i)}$ sending continuous $\mathcal{O}_E$-representations of $G_K$ to multivariable $(\varphi_q,\mathcal{O}_K^{\times})$-modules over $A_{\mathrm{mv},E}$ for $0\leq i\leq f-1$, which is a $p$-adic analogue of the functor $D_A^{(i)}$ in \cite{breuil2023multivariable}. This kind of multivariable $(\varphi_q,\mathcal{O}_K^{\times})$-modules may be related to $p$-adic representations of $\mathrm{GL}_2(K)$, see \cite{breuil2023multivariable} and \cite{wang2024lubintatemultivariablevarphimathcaloktimesmodulesdimension}.
	
	In this paper, we consider the subring $A_{\mathrm{mv},E}^{\dagger}$ of $A_{\mathrm{mv},E}$ consisting of overconvergent elements (for a precise definition, see (\ref{defn of dagger})). Let $\modet_{(\varphi_q,\mathcal{O}_K^{\times})}\left(A_{\mathrm{mv},E}\right)$ (resp.\ $\modet_{(\varphi_q,\mathcal{O}_K^{\times})}\left(A_{\mathrm{mv},E}^{\dagger}\right)$) be the category of étale $(\varphi_q,\mathcal{O}_K^{\times})$-modules over $A_{\mathrm{mv},E}$ (resp.\ over $A_{\mathrm{mv},E}^{\dagger}$), and define
	\[j^{\dagger,*}\coloneq A_{\mathrm{mv},E}\otimes_{A_{\mathrm{mv},E}^{\dagger}}-: \modet_{(\varphi_q,\mathcal{O}_K^{\times})}\left(A_{\mathrm{mv},E}^{\dagger}\right)\to \modet_{(\varphi_q,\mathcal{O}_K^{\times})}\left(A_{\mathrm{mv},E}\right).\]
	\begin{defn}
		We say that a multivariable $(\varphi_q,\mathcal{O}_K^{\times})$-module $M$ over $A_{\mathrm{mv},E}$ is \textit{overconvergent}, if $M$ lies in the essential image of $j^{\dagger,*}$.
	\end{defn}
	We establish the following result:
	\begin{thm}[Theorem \ref{general facts}]\label{non-perfectoid case}
		The base change functor $j^{\dagger,*}$ is exact and admits a right adjoint functor $j_{*}^{\dagger}$ such that the natural transformation $\mathrm{id}\to j_{*}^{\dagger}\circ j^{\dagger,*}$ is an isomorphism. In particular, $j^{\dagger,*}$ is fully faithful.
	\end{thm}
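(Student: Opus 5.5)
Theorem \ref{non-perfectoid case} is a formal consequence of the fact that $A_{\mathrm{mv},E}$ is faithfully flat over $A_{\mathrm{mv},E}^{\dagger}$, together with an explicit description of the right adjoint. The plan is to define $j^{\dagger}_{*}$ by taking \enquote{overconvergent vectors}, exactly as in the cyclotomic theory of Cherbonnier--Colmez. Step one: check that $A_{\mathrm{mv},E}^{\dagger}$ is stable under $\varphi_q$ and under the action of $\mathcal{O}_K^{\times}$. This should follow from the explicit formulas in (\ref{defn of dagger}): $\varphi_q$ and $a\in\mathcal{O}_K^{\times}$ send each variable to a power series with integral coefficients, and such a series preserves convergence on some annulus after shrinking the radius. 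Hence the base change $j^{\dagger,*}$ makes sense and lands in étale modules, because the linearized Frobenius stays an isomorphism after base change.

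Step two: exactness. Étale modules over $A_{\mathrm{mv},E}^{\dagger}$ are finite projective (or free, according to the conventions of the paper). So exactness of $A_{\mathrm{mv},E}\otimes_{A_{\mathrm{mv},E}^{\dagger}}-$ reduces to flatness of $A_{\mathrm{mv},E}$ over $A_{\mathrm{mv},E}^{\dagger}$. I would prove flatness by identifying $A_{\mathrm{mv},E}$ with the $\pi$-adic completion of $A_{\mathrm{mv},E}^{\dagger}$. The ring $A_{\mathrm{mv},E}^{\dagger}$ is a colimit over $r$ of rings $A^{\dagger,r}$. Each $A^{\dagger,r}$ should be Noetherian, or at least coherent, being a ring of bounded functions on a polyannulus. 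Completion of a Noetherian ring is flat, and flatness passes to filtered colimits. Alternatively, if all modules in sight are finite projective, exactness of the functor on short exact sequences is automatic, since such sequences split as modules. That would sidestep flatness entirely, and I would try this first.

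Step three: construct $j^{\dagger}_{*}$. For $M$ étale over $A_{\mathrm{mv},E}$, let $j^{\dagger}_{*}M$ be the union of all $\varphi_q$- and $\mathcal{O}_K^{\times}$-stable finitely generated étale $A_{\mathrm{mv},E}^{\dagger}$-submodules $N\subset M$ such that $A_{\mathrm{mv},E}\otimes N\to M$ is injective. The main obstacle is showing that this union is itself finite and étale, so that it is an object of the category. My approach is to show that the maximal such $N$ exists and has rank at most $\mathrm{rank}\,M$. For this I would use the fully faithful functor $D^{(i)}_{A_{\mathrm{mv},E}}$ from \cite{du2025multivariablevarphiqmathcaloktimesmodulesassociatedpadic} and the comparison with $\varphi_q$-invariants over a big period ring $\widetilde{A}^{\dagger}$. Étale $\varphi_q$-modules over $A^{\dagger}$ should embed into Galois representations via $(\widetilde{A}^{\dagger}\otimes N)^{\varphi_q=1}$. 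Under this embedding, the sum of two overconvergent submodules corresponds to the sum of subrepresentations, which is finite.

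Step four: the adjunction and the unit. The adjunction $\mathrm{Hom}(j^{\dagger,*}N,M)=\mathrm{Hom}(N,j^{\dagger}_*M)$ follows by restriction: the image of $N$ lands in the overconvergent part. For the unit to be an isomorphism, we need $j^{\dagger}_{*}(A_{\mathrm{mv},E}\otimes N)=N$, that is, no overconvergent element of $A_{\mathrm{mv},E}\otimes N$ lies outside $N$. I would prove this using étaleness. Choose a basis of $N$ over $A^{\dagger}$ with Frobenius matrix $P\in \mathrm{GL}_d(A^{\dagger})$. Suppose $x\in A^d$ satisfies that $x$ lies in a stable overconvergent module. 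Then, after comparing Frobenius equations and using $\varphi_q$-contraction estimates on radii, its coordinates satisfy $x=P\varphi_q(x)$-type relations. These force the coefficients to converge on a smaller annulus, in the style of Cherbonnier--Colmez's lemma that $\varphi$-solutions in $A$ with overconvergent matrix are overconvergent. Full faithfulness then follows formally from the unit being an isomorphism.
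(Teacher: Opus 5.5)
There is a genuine gap in your Step 3, and that is where the whole difficulty lies. The theorem concerns arbitrary étale $(\varphi_q,\mathcal{O}_K^{\times})$-modules $M$ over $A_{\mathrm{mv},E}$, but $D^{(i)}_{A_{\mathrm{mv},E}}$ is only known to be fully faithful, so a general $M$ need not come from a representation of $G_K$. Your proposed dictionary between étale $\varphi_q$-modules over $A^{\dagger}_{\mathrm{mv},E}$ and Galois representations, turning overconvergent submodules into subrepresentations, presupposes the full faithfulness and subobject statements you are trying to prove. Even granting it, a bound on the rank of $A_{\mathrm{mv},E}\otimes N$ gives no finite generation of the union of the lattices $N$. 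What is needed is that this union is $\pi$-saturated in $M$ and has bounded reduction mod $\pi$. There is also a smaller problem: with your extra conditions (étale, $A_{\mathrm{mv},E}\otimes N\hookrightarrow M$) the family is not visibly directed, so its union need not be a submodule. The paper instead takes all finitely generated $\varphi_q$-stable $A^{\dagger}_{\mathrm{mv},E}$-submodules. That family is directed and permuted by $\mathcal{O}_K^{\times}$, so the union is automatically $\mathcal{O}_K^{\times}$-stable. Finiteness then comes from two ingredients you never identify:
\begin{itemize}
\item The contraction estimate $\varphi_q(Y_{\sigma_0}^s)A^{\dagger,qs}_{\mathrm{mv},E}\subseteq \pi A^{\dagger,qs}_{\mathrm{mv},E}+Y_{\sigma_0}^sA^{\dagger,s}_{\mathrm{mv},E}$. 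It shows $(\frac{Y_{\sigma_0}^s}{\pi}N)\cap M$ is again $\varphi_q$-stable (Lemma \ref{1/p j^s cap M is contained in T^-s j^s}), which yields the saturation $j^{\dagger}_*(M)\cap\pi M=\pi j^{\dagger}_*(M)$ (Corollary \ref{j^dagger cap pM =p j^dagger}).
\item The image of $j^{\dagger,\infty}_*(M)$ in $M/\pi M$ lies in $(M/\pi M)^{\circ}$, which is finite over $A^{\circ}$ by a valuation argument on the determinant of the Frobenius matrix (Lemma \ref{A^{circ}-submodule is finite}).
\end{itemize}
Then $\frac{\pi}{Y_{\sigma_0}^s}$-adic approximation (Lemma \ref{mod p generators are enough}) gives finite generation. Étaleness is checked mod $\pi$, using that $\varphi_q$- and $\mathcal{O}_K^{\times}$-stable submodules of étale modules over $A$ are étale (Corollary \ref{submodule of etale module over A is etale}, which rests on $A$ having no nontrivial $\mathcal{O}_K^{\times}$-stable ideals), followed by Nakayama.

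Your Step 4 relies on a multivariable Cherbonnier--Colmez estimate that is not available. The relation you get is also not the one you wrote. If $X$ is the coordinate matrix of generators of a finitely generated $\varphi_q$-stable module in a basis of $N$ with Frobenius matrix $P$, you only get $\varphi_q(X)=QXP^{-1}$ with $Q$ possibly singular. So you cannot solve for $X$ in terms of $\varphi_q(X)$. No analysis is needed once saturation and finiteness are known. For $N$ free, set $N':=j^{\dagger}_*(A_{\mathrm{mv},E}\otimes N)\supseteq N$. Saturation together with $A^{\dagger}_{\mathrm{mv},E}\cap\pi A_{\mathrm{mv},E}=\pi A^{\dagger}_{\mathrm{mv},E}$ gives $N/\pi N\subseteq N'/\pi N'\subseteq N/\pi N$, hence $N'=N+\pi N'$. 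Since $N'$ is finitely generated and $\pi$ lies in the Jacobson radical of $A^{\dagger}_{\mathrm{mv},E}$, Nakayama gives $N'=N$. Torsion is handled by $A_{\mathrm{mv},E}/\pi^n=A^{\dagger}_{\mathrm{mv},E}/\pi^n$ and dévissage.

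In Step 2, your splitting shortcut fails because étale modules over $A^{\dagger}_{\mathrm{mv},E}$ can have $\pi$-power torsion. For example, $0\to A^{\dagger}_{\mathrm{mv},E}\xrightarrow{\pi}A^{\dagger}_{\mathrm{mv},E}\to A^{\dagger}_{\mathrm{mv},E}/\pi\to 0$ does not split. Your flatness route does work, as an alternative to the paper's local-criterion argument (Lemma \ref{inclusion for s is flat}, Proposition \ref{inclusion for dagger is flat}). You must complete the Noetherian rings $A^{\dagger,s}_{\mathrm{mv},E}[1/Y_{\sigma_0}]$, whose $\pi$-adic completion is $A_{\mathrm{mv},E}$ since both reduce to $A$ mod $\pi$, and then pass to the colimit. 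Do not complete $A^{\dagger}_{\mathrm{mv},E}$ itself, which is not known to be Noetherian.
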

	
	The construction of $j^{\dagger}_*$ follows Cherbonnier's thesis (\cite{cherb1996}): for an étale $(\varphi_q,\mathcal{O}_K^{\times})$-module $M$ over $A_{\mathrm{mv},E}$, $j^{\dagger}_*M$ is the union of $\varphi_q$-stable finitely generated $A_{\mathrm{mv},E}^{\dagger}$-submodules of $M$. 
	
	Before we proceed to the perfectoid level, we briefly recall the construction of $D_{A_{\mathrm{mv},E}}^{(i)}$ and the definition of perfectoid Robba rings. Recall from \cite[$\S$2.2]{breuil2023multivariable} that
	\[A_{\mathrm{mv},E}/\pi\cong A=\mathbb{F}\left(\negthinspace\left(Y_{\sigma_0}\right)\negthinspace\right)\left\langle\left(\dfrac{Y_{\sigma_{i}}}{Y_{\sigma_0}}\right)^{\pm 1}: 1\leq i\leq f-1\right\rangle,\]
	and there is a continuous $\mathbb{F}$-linear $(\varphi_q,\mathcal{O}_K^{\times})$-action on $A$.
	Let 
	\[A_{\infty}\coloneq \mathbb{F}\left(\negthinspace\left(Y_{\sigma_0}^{1/p^{\infty}}\right)\negthinspace\right)\left\langle\left(\dfrac{Y_{\sigma_{i}}}{Y_{\sigma_0}}\right)^{\pm 1/p^{\infty}}: 1\leq i\leq f-1\right\rangle\]
	be the completed perfection of the ring $A$, and let
	\[A_{\infty}'\coloneq \mathbb{F}\left(\negthinspace\left(T_{\mathrm{LT},0}^{1/p^{\infty}}\right)\negthinspace\right)\left\langle\left(\dfrac{T_{\mathrm{LT},{i}}}{T_{\mathrm{LT},0}^{p^{i}}}\right)^{\pm 1/p^{\infty}}: 1\leq i\leq f-1\right\rangle.\]
	There is a continuous $\mathbb{F}$-linear $(\varphi_q,(\mathcal{O}_K^{\times})^{f})$-action on $A_{\infty}'$, and there is a morphism $m:\mathrm{Spa}(A_{\infty}',(A_{\infty}')^{\circ})\to \mathrm{Spa}(A_{\infty},A_{\infty}^{\circ})$ which
	is a pro-étale $\Delta_1$-torsor(\cite[Proposition 2.4.4]{breuil2023multivariable}), where
	\[\Delta_1=\{(a_0,\dots,a_{f-1})\in(\mathcal{O}_K^{\times})^{f}: a_0a_1\cdots a_{f-1}=1\}.\]
	For $0\leq i\leq f-1$, there is a $(\varphi_q,(\mathcal{O}_K^{\times})^{f})$-equivariant map
	\[\mathrm{pr}_i: \mathbb{F}\left(\negthinspace\left(T_{\mathrm{LT}}^{1/p^{\infty}}\right)\negthinspace\right)\hookrightarrow A_{\infty}': T_{\mathrm{LT}}\mapsto T_{\mathrm{LT},i}\]
	where $\mathbb{F}\left(\negthinspace\left(T_{\mathrm{LT}}^{1/p^{\infty}}\right)\negthinspace\right)$ is the completed perfection of the residue field of the coefficient ring of Lubin-Tate $(\varphi_q,\mathcal{O}_K^{\times})$-modules.
	These maps lift to a diagram of $p$-adically complete rings:
	\begin{align*}
		A_{\mathrm{mv},E}\hookrightarrow W_{\mathcal{O}_E}(A_{\infty})\xhookrightarrow{m}W_{\mathcal{O}_E}(A_{\infty}')\xhookleftarrow{\mathrm{pr}_i}W_{\mathcal{O}_E}\left(\mathbb{F}\left(\negthinspace\left(T_{\mathrm{LT}}^{1/p^{\infty}}\right)\negthinspace\right)\right).
	\end{align*}
	where $W_{\mathcal{O}_E}(-)\coloneq \mathcal{O}_E\otimes_{W(\mathbb{F})}W(-)$ sends perfect $\mathbb{F}$-algebras to rings of ramified Witt vectors.
	If $\rho$ is a continuous $\mathcal{O}_E$-representation of $G_K$, the theory of Lubin-Tate $(\varphi_q,\mathcal{O}_K^{\times})$-modules gives us a $(\varphi_q,\mathcal{O}_{K}^{\times})$-module $D_{\mathrm{LT}}(\rho)$ over $W_{\mathcal{O}_E}\left(\mathbb{F}\left(\negthinspace\left(T_{\mathrm{LT}}^{1/p^{\infty}}\right)\negthinspace\right)\right)$, then the $\Delta_1$-invariant  $\left(W_{\mathcal{O}_E}(A_{\infty}')\otimes_{\mathrm{pr}_i}D_{\mathrm{LT}}(\rho)\right)^{\Delta_1}$
	is a $(\varphi_q,\mathcal{O}_{K}^{\times})$-module over $W_{\mathcal{O}_E}(A_{\infty})$, which canonically descends to an étale $(\varphi_q,\mathcal{O}_{K}^{\times})$-module $D_{A_{\mathrm{mv},E}}^{(i)}(\rho)$ over $A_{\mathrm{mv},E}$ (for details, see \cite{du2025multivariablevarphiqmathcaloktimesmodulesassociatedpadic}).
	
	For a perfectoid Huber pair $(R,R^{+})$ over $\mathbb{F}$ with a pseudo-uniformizer $\varpi\in R^{\circ\circ}$ (\cite[Definition 6.1.1]{WS2020berkeley}), we consider the affinoid analytic adic space
	\[\mathcal{Y}_{(R,R^{+}),[0,r]}\coloneq \left\{x\in \mathrm{Spa}\left(W_{\mathcal{O}_E}(R^{+}),W_{\mathcal{O}_E}(R^{+})\right): |\pi|_x^r\leq |[\varpi]|_x\neq 0 \right\},\]
	where $r\in\mathbb{Q}_{>0}$. If $1/r\in\mathbb{Z}[1/p]$, we have
	\[\mathcal{O}\left(\mathcal{Y}_{(R,R^{+}),[0,r]}\right)=W_{\mathcal{O}_E}(R^{+})\left\langle\frac{\pi}{[\varpi]^{1/r}}\right\rangle\left[\frac{1}{[\varpi]}\right]\subset W_{\mathcal{O}_E}(R).\]
	In general, for a perfectoid $\mathbb{F}$-algebra $R$ with a  pseudo-uniformizer $\varpi$ and $r\in\mathbb{Q}_{>0}$, we define the integral perfectoid Robba ring $\widetilde{\mathcal{R}}_{R}^{\mathrm{int}}$ as follows:
	\[B_{R,[0,r]}\coloneq \mathcal{O}\left(\mathcal{Y}_{(R,R^{\circ}),[0,r]}\right),\quad \widetilde{\mathcal{R}}_{R}^{\mathrm{int}}\coloneq \varinjlim_{r\to 0^{+}}B_{R,[0,r]}\subset W_{\mathcal{O}_E}(R).\]
	Note that the $\pi$-adic completions of $B_{R,[0,r]}$ and $\widetilde{\mathcal{R}}_{R}^{\mathrm{int}}$ are $W_{\mathcal{O}_E}(R)$. Using  
	the fact that $\pi$ is contained in the Jacobson radical of $\widetilde{\mathcal{R}}_{R}^{\mathrm{int}}$, we can show that:
	\begin{prop}[Corollary \ref{fin proj over int Robba is free}]\label{fin proj over int is free}
		If any finite projective module over $R$ is free, then every finite projective module over $\widetilde{\mathcal{R}}_{R}^{\mathrm{int}}$ is free.
	\end{prop}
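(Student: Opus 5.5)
We reduce modulo $\pi$ and lift, using that $\pi$ lies in the Jacobson radical of $\widetilde{\mathcal{R}}_{R}^{\mathrm{int}}$ and that $\widetilde{\mathcal{R}}_{R}^{\mathrm{int}}/\pi \cong R$.

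First we identify the quotient. Each $B_{R,[0,r]}$ sits inside $W_{\mathcal{O}_E}(R)$ and its $\pi$-adic completion is $W_{\mathcal{O}_E}(R)$. Hence the natural map $B_{R,[0,r]}/\pi\to W_{\mathcal{O}_E}(R)/\pi = R$ is an isomorphism, because a ring and its $\pi$-adic completion have the same quotient by $\pi$. Passing to the colimit over $r\to 0^+$ gives $\widetilde{\mathcal{R}}_{R}^{\mathrm{int}}/\pi\cong R$. Concretely, every element of $R$ lifts via the Teichmüller map. Conversely, if $x\in\widetilde{\mathcal{R}}_{R}^{\mathrm{int}}$ is divisible by $\pi$ in $W_{\mathcal{O}_E}(R)$, then $x/\pi$ again lies in some $B_{R,[0,r]}$. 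This holds because $B_{R,[0,r]}$ is $\pi$-torsion free and saturated in $W_{\mathcal{O}_E}(R)$ for $\pi$: the condition $|\pi|^r\le|[\varpi]|$ only gets better after dividing by $\pi$, up to shrinking $r$.

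Next, let $P$ be a finite projective $\widetilde{\mathcal{R}}_{R}^{\mathrm{int}}$-module. Then $\bar P\coloneq P/\pi P$ is finite projective over $R$, hence free of some rank $n$ by hypothesis. Choose $e_1,\dots,e_n\in P$ lifting a basis of $\bar P$. This gives a map $u:F\coloneq(\widetilde{\mathcal{R}}_{R}^{\mathrm{int}})^n\to P$ which is surjective modulo $\pi$. Since $P$ is finitely generated and $\pi\in\mathrm{Jac}(\widetilde{\mathcal{R}}_{R}^{\mathrm{int}})$, Nakayama's lemma shows $u$ is surjective.

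Because $P$ is projective, $u$ splits, so $F\cong P\oplus Q$ with $Q=\ker u$. Then $Q$ is a direct summand of $F$, hence finitely generated. Reducing the split sequence mod $\pi$ gives $\bar F\cong\bar P\oplus Q/\pi Q$. The induced map $\bar F\to\bar P$ is an isomorphism by the choice of the $e_i$, so $Q/\pi Q=0$. Nakayama again gives $Q=0$, so $P\cong F$ is free.

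The only point needing real care is the Jacobson radical claim, which the statement cites as a known fact: $1+\pi x$ is a unit in $\widetilde{\mathcal{R}}_{R}^{\mathrm{int}}$ for every $x$. If one had to prove it, the plan would be as follows. Fix $x\in B_{R,[0,r]}$. On the smaller annulus $[0,r']$ with $r'<r$, the element $\pi x$ should be topologically nilpotent. Then $\sum(-\pi x)^k$ converges in $B_{R,[0,r']}$, since this ring is complete for its natural norm, which is a supremum over the rational localization. This inversion is the main obstacle, although the statement lets us assume it.
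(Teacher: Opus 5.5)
Your overall route is the paper's: use $\widetilde{\mathcal{R}}^{\mathrm{int}}_R/\pi\cong R$ to see that $P/\pi P$ is free, lift a basis, and get surjectivity from Nakayama with $\pi\in\mathrm{Jac}(\widetilde{\mathcal{R}}^{\mathrm{int}}_R)$. Your injectivity step (split $u$, then kill $\ker u$ by a second Nakayama) is a valid variant. The paper instead shows that a relation $\sum a_ie_i=0$ forces $\pi^n\mid a_i$ for all $n$, using that $P$ is $\pi$-torsion free and $\bigcap_n\pi^nW_{\mathcal{O}_E}(R)=0$. Your version avoids needing the separatedness input.

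The gap is the Jacobson radical claim. It is not among the hypotheses; the paper proves it as the first step of the proof, so you cannot simply assume it. Your sketch is also wrong as written: shrinking to an arbitrary $r'<r$ does not make $\pi x$ topologically nilpotent, because elements of $B_{R,[0,r]}$ can have poles in $[\varpi]$ of any order. For example, if the norm on $R$ is multiplicative and $x=[\varpi]^{-k}$, then $|\pi x|_{r'}=p^{k-1/r'}>1$ whenever $1/r'<k$. In that case $\sum(-\pi x)^n$ diverges in $B_{R,[0,r']}$.

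The radius must be chosen in terms of $x$. Pick an integer $k\ge 1/r$ with $[\varpi]^kx\in(B_{R,[0,r]})^{\circ}$, which is possible by the explicit description of $(B_{R,[0,r]})^{\circ}$, and set $r'=1/(k+1)<r$. Then $(B_{R,[0,r]})^{\circ}\subseteq(B_{R,[0,r']})^{\circ}$, and
\[\pi x=[\varpi]\cdot\frac{\pi}{[\varpi]^{k+1}}\cdot[\varpi]^kx\in[\varpi]\,(B_{R,[0,r']})^{\circ}\subseteq(B_{R,[0,r']})^{\circ\circ}.\]
Hence $1+\pi x$ is a unit in the complete ring $(B_{R,[0,r']})^{\circ}$, and therefore in $\widetilde{\mathcal{R}}^{\mathrm{int}}_R$. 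With this step supplied, your argument is complete.
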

	
	For a general perfectoid space $X$ over $\mathbb{F}$, we can construct $\mathcal{Y}_{X,\closeopen{0}{\infty}}$ by \cite[Proposition 11.2.1]{WS2020berkeley} ($\mathcal{Y}_{X,\closeopen{0}{\infty}}$ is denoted by ``$X\times\mathrm{Spa}\mathbb{Z}_p$'' there). If we fix an affinoid perfectoid space $S=\mathrm{Spa}(R_0,R_0^{+})$ over $\mathbb{F}$ with a pseudo-uniformizer $\varpi\in R_0^{+}$, then we can define $\mathcal{Y}_{X,[0,r]}\coloneq \mathcal{Y}_{X,\closeopen{0}{\infty}}\times_{\mathcal{Y}_{S,\closeopen{0}{\infty}}}\mathcal{Y}_{S,[0,r]}$ for any perfectoid space $X$ over $S$ and $r\in\mathbb{Q}_{>0}$. For details, see for example \cite[Chapter II]{fargues2024geometrizationlocallanglandscorrespondence} and \cite[$\S$11]{WS2020berkeley}. 
	\begin{prop}[{\cite[Proposition II.2.1]{fargues2024geometrizationlocallanglandscorrespondence}, \cite[Proposition 19.5.3]{WS2020berkeley}}]\label{relative FF curve is a stack}
		Let $\mathbf{Perf}_{S}$ be the category of perfectoid spaces over $S$, where $S$ is an affinoid perfectoid space, and let $r$ be a positive rational number.
		\begin{enumerate}
			\item [(i)] 
			The functor $\mathcal{O}_{[0,r]}: X\mapsto \mathcal{O}\left(\mathcal{Y}_{X,[0,r]}\right)$ is a $v$-sheaf on $\mathbf{Perf}_{S}$, and $H_v^k(S,\mathcal{O}_{[0,r]})=0$ for $k\geq 1$.
			\item [(ii)] The functor sending $X$ to the groupoid of vector bundles over $\mathcal{Y}_{X,[0,r]}$ is a $v$-stack.
		\end{enumerate}
	\end{prop}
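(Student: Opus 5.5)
This is a cited result (Fargues–Scholze, Scholze–Weinstein), so the plan is to reduce it to the standard input: $v$-descent for the structure sheaf on perfectoid spaces.

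\emph{Step 1: reduce to a tilted, integral statement.} Locally on $\mathcal{Y}_{X,[0,r]}$ we may take $X=\mathrm{Spa}(R,R^{+})$ affinoid perfectoid with $1/r\in\mathbb{Z}[1/p]$; for general rational $r$ one approximates $r$ or covers by such pieces. Then
\[\mathcal{O}\left(\mathcal{Y}_{X,[0,r]}\right)=W_{\mathcal{O}_E}(R^{+})\left\langle\frac{\pi}{[\varpi]^{1/r}}\right\rangle\left[\frac{1}{[\varpi]}\right].\]
The ring $W_{\mathcal{O}_E}(R^{+})\langle \pi/[\varpi]^{1/r}\rangle$ is $[\varpi]$-adically complete. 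Modulo $[\varpi]^{1/r}$ it is generated over $W_{\mathcal{O}_E}(R^{+})/[\varpi]^{1/r}$ by $u=\pi/[\varpi]^{1/r}$, subject to $[\varpi]^{1/r}u=\pi$. Since $\pi\equiv 0$ modulo $[\varpi]^{1/r}$ only up to the relation, the quotient is a polynomial ring in $u$ over $R^{+}/\varpi^{1/r}$ (up to bounded torsion). This has a filtration whose graded pieces are copies of $R^{+}/\varpi^{1/r}$.

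\emph{Step 2: almost vanishing and the sheaf property, part (i).} By the almost purity / $v$-descent theorem for perfectoid spaces, $X\mapsto R^{+}$ satisfies almost vanishing of higher $v$-cohomology on affinoid perfectoids, and $X\mapsto R$ is a $v$-sheaf with $H^{k}_v=0$ for $k\ge1$. Hence $X\mapsto R^{+}/\varpi^{1/r}$ also has almost vanishing higher cohomology. Dévissage along the filtration of Step 1, followed by passage to the derived limit over powers of $[\varpi]$ (Mittag-Leffler holds since the transition maps are surjective), gives almost vanishing for the integral ring $\mathcal{O}^{+}$. Inverting $[\varpi]$ kills the almost ambiguity and yields $H^{k}_v(S,\mathcal{O}_{[0,r]})=0$ for $k\ge1$ together with the sheaf property in degree $0$. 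The main obstacle is this step: one must control the completed tensor product $\langle\cdot\rangle$ integrally, namely showing that the Čech complex of the integral model is almost exact uniformly, so that the limit commutes with cohomology. I would handle it by working with $[\varpi]$-torsion-free models and bounding the torsion in the quotients uniformly.

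\emph{Step 3: part (ii).} Descent of vector bundles reduces, via (i), to descent of finite projective modules over the affinoid rings $B=\mathcal{O}(\mathcal{Y}_{X,[0,r]})$. Since $\mathcal{Y}_{X,[0,r]}$ is sous-perfectoid, vector bundles on it are the same as finite projective $B$-modules. The proof has three parts.
\begin{itemize}
\item Full faithfulness, i.e.\ that $\mathrm{Hom}$ forms a sheaf, follows from (i) applied to $\mathcal{H}om$, which is locally a direct summand of $\mathcal{O}^{n}$.
\item For effectivity, given descent data along a $v$-cover $X'\to X$, I would first treat the analytic (open-cover) case by gluing. For the $v$-case, I would use that vector bundles on $\mathcal{Y}$ can be described by $v$-descent of étale/analytic-local data; following Scholze–Weinstein, I would reduce to a finite projective module over $\mathcal{O}(\mathcal{Y}_{X',[0,r]})$ with descent datum.
\item I would show the descended module $M=\ker(M'\rightrightarrows M'')$ is finite projective. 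This uses a lattice argument: choose a finite projective integral lattice, descend modulo $[\varpi]^{1/r}$ using the $v$-descent of vector bundles on perfectoid spaces (Kedlaya–Liu / Scholze), and then lift by completeness. Finally, check that $M\otimes B'\cong M'$ by the vanishing of $H^{1}$ from (i).
\end{itemize}
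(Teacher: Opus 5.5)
The paper gives no proof of its own here. It cites \cite[Proposition II.2.1]{fargues2024geometrizationlocallanglandscorrespondence} and \cite[Proposition 19.5.3]{WS2020berkeley}. Those arguments, like the paper's proof of Corollary~\ref{G-torsor}, pass to the split cover $\mathcal{Y}_{X,[0,r]}\times_{\mathrm{Spa}\,\mathcal{O}_E}\mathrm{Spa}\,\mathcal{O}_{E(\pi^{1/p^{\infty}})}$. That cover is perfectoid, and its tilt is $X\times_{S}\mathrm{Spa}(R_0\langle T^{1/p^{\infty}}\rangle,R_1^{+})$. One then invokes $v$-descent for $\mathcal{O}$ and for vector bundles on perfectoid spaces.

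\textbf{Part (i).} Your route is different and does work. It is an integral dévissage from the almost acyclicity of $\mathcal{O}^{+}$, with no tilting and no need for compatibility of $\mathcal{Y}_{-,[0,r]}$ with fibre products. You can drop the ``bounded torsion'' hedge, because the statement is exact:
\begin{itemize}
\item The elements $[\varpi]^{1/r},\pi$ form a regular sequence in $W_{\mathcal{O}_E}(R^{+})$, by uniqueness of the expansions $\sum_n\pi^n[x_n]$.
\item Hence $W_{\mathcal{O}_E}(R^{+})[u]/([\varpi]^{1/r}u-\pi)$ is $[\varpi]$-torsion free. Its reduction mod $[\varpi]^{1/r}$ is exactly $\bigoplus_{n\geq 0}(R^{+}/\varpi^{1/r})u^{n}$.
\item \v{C}ech complexes commute with this direct sum, and $\varprojlim$ and $\varprojlim^{1}$ of almost zero systems are almost zero, so the limit step goes through.
\end{itemize}
One correction: for $r=a/b$ with $1/r\notin\mathbb{Z}[1/p]$, you cannot cover $\mathcal{Y}_{X,[0,r]}$ by the $\mathcal{Y}_{X,[0,r']}$ with $r'<r$. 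Their union misses the locus $|\pi|_x^{r}=|[\varpi]|_x$. Instead, run the same argument with $u=\pi^{a}/[\varpi]^{b}$, filtering $W_{\mathcal{O}_E}(R^{+})/(\pi^{a},[\varpi]^{b})$ by powers of $\pi$. Your full faithfulness argument in (ii) is also fine.

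\textbf{The gap is effectivity in (ii).} The lattice argument fails at two points.
\begin{itemize}
\item Take a finite projective $B'=\mathcal{O}(\mathcal{Y}_{X',[0,r]})$-module $M'$ with a descent isomorphism over $B''$. Nothing produces a finite projective integral lattice that this isomorphism carries onto itself. A bounded isomorphism only preserves a lattice up to $[\varpi]^{\pm N}$. The natural candidate, integral sections, is itself a descent problem you can only solve up to almost zero error.
\item Modulo $[\varpi]^{1/r}$ you are working with modules over $(R'^{+}/\varpi^{1/r})[u]$. This is not the ring of functions of any perfectoid space, so $v$-descent of vector bundles on perfectoid spaces does not apply. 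The only tool available there is the almost exactness from your Step~2. That gives at most almost descent, which determines neither a module up to isomorphism nor its projectivity. So there is nothing coherent to ``lift by completeness''.
\end{itemize}
Nor does $H^{1}_v(\mathcal{O}_{[0,r]})=0$ supply effectivity. It controls $\mathrm{Hom}$'s, not whether $M=\ker(M'\rightrightarrows M'')$ is finite projective with $M\otimes_{B}B'\cong M'$.

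The missing idea is the reduction to perfectoid spaces used in the cited sources:
\begin{itemize}
\item $\mathcal{Y}^{\mathrm{perf}}_{X,[0,r]}$ is perfectoid, and its formation commutes with fibre products in $X$.
\item Via its tilt, a $v$-cover $X'\to X$ becomes a $v$-cover of perfectoid spaces. There, $v$-descent of vector bundles (Kedlaya--Liu, Scholze) is available.
\item One then descends back to the sous-perfectoid space $\mathcal{Y}_{X,[0,r]}$ using the $\mathcal{O}_E$-linear splitting of $\mathcal{O}_E\to\mathcal{O}_{E(\pi^{1/p^{\infty}})}$. This is the kind of argument behind \cite[Proposition 19.5.3]{WS2020berkeley}.
\end{itemize}
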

	
	We take $Y_{\sigma_{0}}$ as the fixed pseudo-uniformizer of $A_{\infty}\subset A_{\infty}'$, and take $T_{\mathrm{LT}}$ as the fixed pseudo-uniformizer of $\mathbb{F}(\negthinspace(T_{\mathrm{LT}}^{1/p^{\infty}})\negthinspace)$ so that the rings $B_{A_{\infty},[0,r]}$, $B_{A_{\infty}',[0,r]}$ and $B_{\mathbb{F}(\negthinspace(T_{\mathrm{LT}}^{1/p^{\infty}})\negthinspace),[0,r]}$ are clear. Then we obtain a commutative diagram for $s\in\mathbb{Z}_{\geq 1}$:
	\begin{equation*}
		\begin{tikzcd}
			A_{\mathrm{mv},E}^{\dagger,s^{-}}\left[\frac{1}{Y_{\sigma_{0}}}\right]\arrow[d,hook]\arrow[r,hook] & B_{A_{\infty},[0,1/s]}\arrow[d,hook]\arrow[r,hook,"m"]&B_{A_{\infty}',[0,1/s]}\arrow[d,hook]&B_{\mathbb{F}(\negthinspace(T_{\mathrm{LT}}^{1/p^{\infty}})\negthinspace),[0,\frac{p-1}{(q-1)p^{i}s}]}\arrow[d,hook]\arrow[l,hook',"\mathrm{pr}_i"']\\
			A_{\mathrm{mv},E}^{\dagger}\arrow[d,hook]\arrow[r,hook] &\widetilde{\mathcal{R}}_{A_{\infty}}^{\mathrm{int}} \arrow[d,hook]\arrow[r,hook,"m"]& \widetilde{\mathcal{R}}_{A_{\infty}'}^{\mathrm{int}}\arrow[d,hook]&\widetilde{\mathcal{R}}_{\mathbb{F}(\negthinspace(T_{\mathrm{LT}}^{1/p^{\infty}})\negthinspace)}^{\mathrm{int}} \arrow[d,hook]\arrow[l,hook',"\mathrm{pr}_i"']\\
			A_{\mathrm{mv},E}\arrow[r,hook]& W_{\mathcal{O}_E}(A_{\infty})\arrow[r,hook,"m"]&W_{\mathcal{O}_E}(A_{\infty}')&W_{\mathcal{O}_E}(\mathbb{F}(\negthinspace(T_{\mathrm{LT}}^{1/p^{\infty}})\negthinspace))\arrow[l,hook',"\mathrm{pr}_i"'].
		\end{tikzcd}
	\end{equation*}
	For the definition of $A_{\mathrm{mv},E}^{\dagger,s^{-}}$, see (\ref{defn of s^{-1}}). Now, let $\rho$ be a finite free continuous $\mathcal{O}_E$-representation of $G_K$ of rank $d$, Proposition \ref{relative FF curve is a stack} implies that for any $0<r\in\mathbb{Z}[1/p]$, we can associate to $\rho$ a $B_{\mathbb{F}(\negthinspace(T_{\mathrm{LT}}^{1/p^{\infty}})\negthinspace),[0,r]}$-module $D_{\mathrm{LT}}^{[0,r]}(\rho)$ locally free of rank $d$. Since $B_{\mathbb{F}(\negthinspace(T_{\mathrm{LT}}^{1/p^{\infty}})\negthinspace),[0,r]}$ is a PID (\cite[Corollary 2.10]{kedlaya2016noetherian}), $D_{\mathrm{LT}}^{[0,r]}(\rho)$ is actually a free $B_{\mathbb{F}(\negthinspace(T_{\mathrm{LT}}^{1/p^{\infty}})\negthinspace),[0,r]}$-module of rank $d$.
	Applying Proposition \ref{relative FF curve is a stack} and Proposition \ref{fin proj over int is free}, we have the following perfectoid overconvergence result:
	\begin{thm}[Theorem \ref{overconvergence of mv module at the perfectoid level}, Proposition \ref{for r large is free}]\label{D_{A_infty} is finite projective}
		Let $\rho$ be a finite free continuous $\mathcal{O}_E$-representation of $G_K$ of rank $d$. Then for any $r\in \mathbb{Q}_{>0}$, 
		\[D_{A_{\infty},i}^{[0,r]}(\rho)\coloneq\left(B_{A_{\infty}',[0,r]}\otimes_{B_{\mathbb{F}(\negthinspace(T_{\mathrm{LT}}^{1/p^{\infty}})\negthinspace),[0,\frac{p-1}{(q-1)p^{i}}r]},\mathrm{pr}_i}D_{\mathrm{LT}}^{[0,\frac{p-1}{(q-1)p^{i}}r]}(\rho)\right)^{\Delta_1}\]
		is a locally free $B_{A_{\infty},[0,r]}$-module of rank $d$. Moreover, $D_{A_{\infty},i}^{[0,r]}(\rho)$ is a free $B_{A_{\infty},[0,r]}$-module of rank $d$ for $r$ sufficiently small.
	\end{thm}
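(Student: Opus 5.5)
The plan is to use pro-étale (hence $v$-) descent along the $\Delta_1$-torsor $m:\mathrm{Spa}(A_{\infty}',(A_{\infty}')^{\circ})\to \mathrm{Spa}(A_{\infty},A_{\infty}^{\circ})$, via Proposition \ref{relative FF curve is a stack}. Write $X=\mathrm{Spa}(A_\infty,A_\infty^\circ)$ and $X'=\mathrm{Spa}(A_\infty',(A_\infty')^\circ)$.

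\textbf{Step 1: the vector bundle on $\mathcal{Y}_{X',[0,r]}$.} Set $r'=\frac{p-1}{(q-1)p^{i}}r$ and
\[\mathcal{E}'\coloneq B_{A_{\infty}',[0,r]}\otimes_{\mathrm{pr}_i}D_{\mathrm{LT}}^{[0,r']}(\rho).\]
First I will check that $\mathrm{pr}_i$ sends $B_{\mathbb{F}(\negthinspace(T_{\mathrm{LT}}^{1/p^{\infty}})\negthinspace),[0,r']}$ into $B_{A_{\infty}',[0,r]}$. This is the top row of the diagram, which rests on comparing $|T_{\mathrm{LT},i}|$ with $|Y_{\sigma_0}|$ on $X'$. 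It follows that $\mathcal{E}'$ is a finite free $B_{A_\infty',[0,r]}$-module of rank $d$. Next I will use the $(\mathcal{O}_K^\times)^f$-action on $A_\infty'$ together with the $\mathcal{O}_K^\times$-action on $D_{\mathrm{LT}}(\rho)$, acting through the $i$-th factor. This makes $\mathcal{E}'$ semilinear for $\Delta_1$. I will show this action is continuous, so that it gives a descent datum for $\mathcal{E}'$ along $X'\to X$.

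\textbf{Step 2: descent.} Since $X'\to X$ is a pro-étale $\Delta_1$-torsor, we have
\[X'\times_X X'\cong X'\times\underline{\Delta_1}, \qquad \mathcal{O}(\mathcal{Y}_{X'\times\underline{\Delta_1},[0,r]})=\mathcal{C}(\Delta_1,B_{A_\infty',[0,r]}).\]
Continuity of the action is therefore exactly a descent datum for $\mathcal{E}'$ with respect to the $v$-cover $X'\to X$. By Proposition \ref{relative FF curve is a stack}(ii), $\mathcal{E}'$ descends to a vector bundle $\mathcal{E}$ on $\mathcal{Y}_{X,[0,r]}$. Because $\mathcal{Y}_{X,[0,r]}$ is affinoid, I will use the vanishing of higher cohomology in Proposition \ref{relative FF curve is a stack}(i) to identify $\mathcal{E}$ with its global sections. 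Concretely, $\mathcal{O}(\mathcal{Y}_{X,[0,r]})=\mathcal{O}(\mathcal{Y}_{X',[0,r]})^{\Delta_1}$, which is the sheaf property, and a vector bundle on an affinoid sheafy space with vanishing cohomology corresponds to a finite projective module by Kedlaya--Liu. The global sections of $\mathcal{E}$ are precisely $(\mathcal{E}')^{\Delta_1}=D_{A_\infty,i}^{[0,r]}(\rho)$, and the base change of this module back to $B_{A_\infty',[0,r]}$ recovers $\mathcal{E}'$. Locally free of rank $d$ then follows, since rank is preserved under the faithfully flat descent. I expect the main obstacle to be this step: checking that the continuous group action really gives a $v$-descent datum, and passing from the $v$-stack statement to the finite-projective-module statement on the affinoid $\mathcal{Y}_{X,[0,r]}$.

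\textbf{Step 3: freeness for small $r$.} I will take the colimit as $r\to0^+$. Then $\varinjlim_r D^{[0,r]}_{A_\infty,i}(\rho)$ is a finite projective module over $\widetilde{\mathcal{R}}^{\mathrm{int}}_{A_\infty}$; finite projectivity descends along the colimit because each transition map is a base change. Finite projective modules over $A_\infty$ are free: I expect to show this as in the $\bmod\,\pi$ theory of \cite{breuil2023multivariable}, where $A$ is a completed localized polynomial-type ring. Granting this, Proposition \ref{fin proj over int is free} makes the colimit module free of rank $d$. A basis of the colimit is defined over some $B_{A_\infty,[0,r_0]}$, and the corresponding isomorphism with $B^d$ holds after enlarging the level. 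This gives freeness for all sufficiently small $r$, because for $r\le r_0$ the module $D^{[0,r]}$ is the base change of $D^{[0,r_0]}$ by the descent description in Step 2.
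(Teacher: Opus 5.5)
Your strategy is the paper's. You get local freeness by descending $B_{A_\infty',[0,r]}\otimes_{\mathrm{pr}_i}D_{\mathrm{LT}}^{[0,r']}(\rho)$ along the pro-\'etale $\Delta_1$-torsor via Proposition \ref{relative FF curve is a stack}(ii). The identification of functions on $\mathcal{Y}_{X'\times_X X',[0,r]}$ with $\mathcal{C}(\Delta_1,B_{A_\infty',[0,r]})$, which you flag as the main obstacle, is what the paper proves in Corollary \ref{G-torsor}, by base changing to $E(\pi^{1/p^\infty})$ and tilting. You then get freeness for small $r$ by passing to $\widetilde{\mathcal{R}}^{\mathrm{int}}_{A_\infty}$, applying Proposition \ref{fin proj over int is free}, and spreading a basis out to a finite level; the paper does the same.

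The one input you leave open is that every finite projective $A_\infty$-module is free, and the route you indicate for it does not work as stated. For $A$ this freeness comes from L\"utkebohmert's theorem on vector bundles over this Laurent-type affinoid over $\mathbb{F}(\negthinspace(Y_{\sigma_0})\negthinspace)$. That theorem does not apply to the non-noetherian perfectoid ring $A_\infty$, so imitating the argument for $A$ gives nothing directly. The paper instead invokes \cite[Theorem 2.19]{DH21Projective}.

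If you want a self-contained argument, reduce to $A$ as follows. A finite projective module over the uncompleted perfection $\bigcup_n A^{1/p^n}$ is defined over some $A^{1/p^n}$. That ring is isomorphic to $A$, so the module is free. Next, $\bigcup_n (A^{\circ})^{1/p^n}$ is $Y_{\sigma_0}$-adically henselian, being a filtered colimit of $Y_{\sigma_0}$-adically complete rings. Hence an idempotent matrix over the completion $A_\infty$ can be approximated by an idempotent matrix over $\bigcup_n A^{1/p^n}$. Two sufficiently close idempotents over $A_\infty$ are conjugate, so every finite projective $A_\infty$-module is extended from $\bigcup_n A^{1/p^n}$, and is therefore free.

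A smaller point: in Step 2 you appeal to faithful flatness of $B_{A_\infty,[0,r]}\to B_{A_\infty',[0,r]}$, which you have not established. You do not need it. The rank of the descended bundle can be read off pointwise, because $\mathcal{Y}_{X',[0,r]}\to\mathcal{Y}_{X,[0,r]}$ is surjective.
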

	
	Our first proof of Theorem~\ref{D_{A_infty} is finite projective} was based on a complicated computation using the generalized Colmez-Sen-Tate method developed in \cite{colmez1998theorie}, \cite{cherbonnier1998representations} and \cite{berger2008familles}. This previous method only ensures that Theorem \ref{D_{A_infty} is finite projective} holds for sufficiently small $r$, whereas the new method shows that the overconvergent radius at the perfectoid level can be arbitrary. 
	
	There is yet another proof of Theorem \ref{D_{A_infty} is finite projective}. Using the same method as in \cite[Theorem 2.4.5]{kedlaya2015new} and \cite[Proposition 4.8]{de2019induction}, one can show that the base change functor from the category of finite projective étale $\varphi_q$-modules over $\widetilde{\mathcal{R}}^{\mathrm{int}}_{A_{\infty}}$ to the category of finite projective étale $\varphi_q$-modules over $W_{\mathcal{O}_E}(A_{\infty})$ is an equivalence of categories. This is a special case of \cite[Theorem 4.5.7]{kedlaya2019relative}. From this equivalence, one also easily deduces the perfectoid overconvergence.
	
	A natural open question is whether every multivariable $(\varphi_q,\mathcal{O}_K^{\times})$-module $D_{A_{\mathrm{mv},E}}^{(i)}(\rho)$ associated to a Galois representation $\rho$ is overconvergent. However, we currently do not know how to descend $\widetilde{\mathcal{R}}_{A_{\infty}}^{\mathrm{int}}\otimes D_{A_{\infty},i}^{[0,r]}(\rho)$ to a module over $A_{\mathrm{mv},E}^{\dagger}$. For $K=\qp$, this was proved by Cherbonnier and Colmez in \cite{cherbonnier1998representations} using Tate's normalized trace. However, it seems very hard to extend this method to the general case, since $\mathcal{O}_K^{\times}$ is not a one-dimensional $p$-adic Lie group and the coefficient ring $A_{\mathrm{mv},E}^{\dagger}$ has $f$ variables. 
	
	Another possible approach is to consider locally analytic vectors of $\widetilde{\mathcal{R}}_{A_{\infty}}^{\mathrm{int}}\otimes D_{A_{\infty},i}^{[0,r]}(\rho)$ for the action of $\mathcal{O}_K^{\times}$. This approach was used by Berger in \cite{berger2016multivariable} to prove the overconvergence of $K$-analytic Galois representations. The main difficulty with this approach is that it is difficult to compute locally analytic vectors of $\widetilde{\mathcal{R}}_{A_{\infty}}^{\mathrm{int}}$ (see, however, Lemma \ref{subring of qp-locally analytic vectors} for a partial result). We also note that it is extremely difficult to do concrete computations, even for the case $\rho$ is a character, since the map $A_{\infty}\hookrightarrow A_{\infty}'$ is quite complicated. At present, the only overconvergent examples we know are unramified characters of $G_K$.
	
	Since $A_{\mathrm{mv},E}^{\dagger}$ and $A_{\mathrm{mv},E}$ admit an endomorphism $\varphi$ such that $\varphi_q=\varphi^f$, by replacing $\varphi_q$ everywhere with $\varphi$, we can also define \textit{overconvergent} $(\varphi,\mathcal{O}_K^{\times})$-modules. A natural question is then whether $D^{\otimes}_{\mathrm{mv},E}(\rho)\coloneq \bigotimes^{f-1}_{i=0}D^{(i)}_{A_{\mathrm{mv},E}}(\rho)$ is an overconvergent $(\varphi,\mathcal{O}_K^{\times})$-module, where $\rho$ is a general Galois representation. Note that this is weaker than the overconvergence of each $D_{A_{\mathrm{mv},E}}^{(i)}(\rho)$. 
	
	We now describe the content of each section. In $\S$\ref{section 2}, we introduce the definition and investigate basic properties of overconvergent multivariable $(\varphi_q,\mathcal{O}_K^{\times})$-modules over $A_{\mathrm{mv},E}$, and we prove that the category of overconvergent $(\varphi_q,\mathcal{O}_K^{\times})$-modules is a full subcategory of the category of étale $(\varphi_q,\mathcal{O}_K^{\times})$-modules. In $\S$\ref{section 3}, we show the overconvergence at the perfectoid level, that is every finite free $\mathcal{O}_E$-representation $\rho$ of $G_K$ descends to a finite projective module $D_{A_{\infty},i}^{[0,r]}(\rho)$ over $B_{A_{\infty},[0,r]}$. The proof uses the geometry of the relative Fargues-Fontaine curve. Finally, in $\S$\ref{appendix}, we prove some technical lemmas which are needed in the proof of Theorem \ref{general facts}.
	
	We now introduce the main general notation (many have already been introduced, and more specific notation will be introduced throughout). 
	Let $K$ be a finite unramified extension of $\qp$ of degree $f$ with ring of integers $\mathcal{O}_K$ and residue field $\mathbb{F}_q$. We denote by $G_K$ the absolute Galois group of $K$. Let $E$ be a finite extension of $K$ with ring of integers $\mathcal{O}_E$, residue field $\mathbb{F}$ and uniformizer $\pi$. We fix a normalized $p$-adic norm of $E$ such that $|\pi|=p^{-1}$. Fix an embedding $\sigma_{0}: K\hookrightarrow E$ and set $\sigma_{i}\coloneq \sigma_{0}\circ\mathrm{Frob}_p^{i}$ for $0\leq i\leq f-1$, each $\sigma_{i}$ also induces embeddings $\mathbb{F}_q\hookrightarrow \mathbb{F}$ and $\mathcal{O}_K\hookrightarrow \mathcal{O}_E$, which are still denoted by $\sigma_{i}$. For a perfect $\mathbb{F}_p$-algebra $R$, we denote by $W(R)$ the ring of Witt vectors. For $r\in R$, we denote by $[r]\in W(R)$ the Teichmüller lift of $r$. If in addition $R$ contains $\mathbb{F}$, we define the ring of ramified Witt vectors $W_{\mathcal{O}_E}(R)\coloneq \mathcal{O}_E\otimes_{W(\mathbb{F})}W(R)$. If $R$ is a Huber ring, we denote by $R^{\circ}$ the subring consisting of power-bounded elements, and by $R^{\circ\circ}$ the ideal of $R^{\circ}$ consisting of topological nilpotent elements and we write $\mathrm{Spa}R$ for $\mathrm{Spa}(R,R^{\circ})$. If $(R,R^{+})$ is a perfectoid Huber pair, we denote its tilt by $(R^{\flat},R^{\flat+})$. If $X$ is a perfectoid space, we denote its tilt by $X^{\flat}$.
	
	\textbf{Acknowledgements}: We thank Christophe Breuil and Laurent Berger for their invaluable guidance and many helpful discussions. We thank Christophe Breuil and Laurent Berger for their careful reading of earlier drafts of this paper. We also express our gratitude to Yuanyang Jiang, Hyungseop Kim and Yicheng Zhou for many helpful discussions and suggestions, and to Johannes Anschütz and Qixiang Wang for answering questions.
	
	This work was supported by \'Ecole Doctorale de Mathématiques Hadamard.
	\section{Overconvergent $(\varphi_q,\mathcal{O}_K^{\times})$-modules over $A_{\mathrm{mv},E}$}\label{section 2}
	In this section, we introduce the coefficient ring $A_{\mathrm{mv},E}^{\dagger}$ of overconvergent elements of $A_{\mathrm{mv},E}$ and define overconvergent multivariable $(\varphi_q,\mathcal{O}_K^{\times})$-modules over $A_{\mathrm{mv},E}$. We prove that the category of overconvergent $(\varphi_q,\mathcal{O}_K^{\times})$-modules is a full subcategory of the category of étale $(\varphi_q,\mathcal{O}_K^{\times})$-modules.
	\subsection{The coefficient ring $A_{\mathrm{mv},E}^{\dagger}$}
	
	Recall that $\mathcal{O}_E[\negthinspace[\mathcal{O}_K]\negthinspace]=\mathcal{O}_E[\negthinspace[Y_{\sigma_0},\dots,Y_{\sigma_{f-1}}]\negthinspace]$, where 
	\begin{align}\label{choice of variables}
		Y_{\sigma_{i}}\coloneq \sum_{\lambda\in\mathbb{F}_q^{\times}}\sigma_{i}([\lambda])\cdot [\lambda]\in\mathcal{O}_E[\negthinspace[\mathcal{O}_K]\negthinspace]
	\end{align}
	for $K\neq \mathbb{Q}_2$. 
	For $K=\mathbb{Q}_2$, since $[1]$ is not topologically nilpotent, we choose $Y_{\sigma_{0}}\coloneq[1]-1$. 
	We define a Noetherian domain as in \cite{du2025multivariablevarphiqmathcaloktimesmodulesassociatedpadic}:
	\[A_{\mathrm{mv},E}\coloneq\left\{\sum_{\underline{n}\in \mathbb{Z}^{f-1}} f_{\underline{n}}(Y_{\sigma_0})\prod_{i=1}^{f-1}\left(\dfrac{Y_{\sigma_i}}{Y_{\sigma_0}}\right)^{n_i}: f_{\underline{n}}(Y_{\sigma_0})\in \left(\mathcal{O}_E[\negthinspace[Y_{\sigma_0}]\negthinspace]\left[\frac{1}{Y_{\sigma_0}}\right]\right)^{\wedge}, \lim\limits_{|\underline{n}|\to\infty}f_{\underline{n}}(Y_{\sigma_0})=0\right\},\]
	where $|\underline{n}|\coloneq |n_1|+\dots+|n_{f-1}|$, $\left(\mathcal{O}_E[\negthinspace[Y_{\sigma_0}]\negthinspace]\left[\frac{1}{Y_{\sigma_0}}\right]\right)^{\wedge}$ is the $p$-adic completion of $\mathcal{O}_E[\negthinspace[Y_{\sigma_0}]\negthinspace]\left[\frac{1}{Y_{\sigma_0}}\right]$, and the topology of $\left(\mathcal{O}_E[\negthinspace[Y_{\sigma_0}]\negthinspace]\left[\frac{1}{Y_{\sigma_0}}\right]\right)^{\wedge}$ is the $(\pi,Y_{\sigma_0})$-adic topology. We fix a normalized $p$-adic norm of $E$ such that $|\pi|=p^{-1}$.
	
	\begin{defn}
		For a real number $0<\rho<1$, we define
		\begin{align}
			A_E^{(\rho,1)}\coloneq\left\{\sum_{n\in\mathbb{Z}}a_nY_{\sigma_0}^n: a_n\in\mathcal{O}_E, |a_n|\rho^n\leq 1\right\}.
		\end{align}
	\end{defn}
	
	The following lemma is straightforward to verify:
	\begin{lem}
		If $\rho=p^{-\frac{1}{s}}$ for some $s\in\mathbb{Z}_{\geq 1}$, then 
		\[A_E^{(\rho,1)}\cong \dfrac{\mathcal{O}_E[\negthinspace[Y_{\sigma_0},X]\negthinspace]}{(Y_{\sigma_0}^sX-\pi)}.\]
		In particular, $A_E^{(\rho,1)}$ is a Noetherian domain.
	\end{lem}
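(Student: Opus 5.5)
We will construct mutually inverse maps between $A_E^{(\rho,1)}$ and $\mathcal{O}_E[\negthinspace[Y_{\sigma_0},X]\negthinspace]/(Y_{\sigma_0}^sX-\pi)$, with $X$ sent to $\pi/Y_{\sigma_0}^s$. First we make the condition concrete. Since $\rho=p^{-1/s}$ and $|\pi|=p^{-1}$, write $a_n=\pi^{v_n}u_n$ with $u_n\in\mathcal{O}_E^{\times}$ (or $a_n=0$). Then $|a_n|\rho^n\le 1$ says $v_n+n/s\ge 0$, i.e.\ $sv_n\ge -n$. So for $n\ge 0$ there is no condition, and for $n<0$ we need $v_n\ge \lceil -n/s\rceil$. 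We must also fix the convergence meaning of $\sum a_nY_{\sigma_0}^n$; we read $A_E^{(\rho,1)}$ inside $\bigl(\mathcal{O}_E[\negthinspace[Y_{\sigma_0}]\negthinspace][1/Y_{\sigma_0}]\bigr)^{\wedge}$, so that $a_n\to 0$ as $n\to-\infty$. This is automatic from the bound, since $v_n\to\infty$.

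Define $\Phi:\mathcal{O}_E[\negthinspace[Y_{\sigma_0},X]\negthinspace]\to A_E^{(\rho,1)}$ by $Y_{\sigma_0}\mapsto Y_{\sigma_0}$ and $X\mapsto \pi Y_{\sigma_0}^{-s}$. A monomial $cY_{\sigma_0}^aX^b$ is sent to $c\pi^bY_{\sigma_0}^{a-sb}$. For a fixed exponent $n=a-sb$ the contributions have $b\to\infty$, hence are $\pi$-adically convergent, so the image is a well-defined series. Its coefficient at $n$ has valuation at least $\min\{b\ge 0: a=n+sb\ge 0\}\ge \lceil -n/s\rceil$, so the image lies in $A_E^{(\rho,1)}$. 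One checks that $\Phi$ is a continuous ring map, that $A_E^{(\rho,1)}$ is closed under multiplication (the norm $\sup_n|a_n|\rho^n$ is submultiplicative), and that $\Phi$ kills $Y_{\sigma_0}^sX-\pi$.

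For surjectivity, take $\sum a_nY_{\sigma_0}^n$ in $A_E^{(\rho,1)}$. The part with $n\ge 0$ is already a power series in $Y_{\sigma_0}$. For $n<0$, write $-n=sb-r$ with $0\le r<s$ and $b=\lceil -n/s\rceil$. Then $a_nY_{\sigma_0}^n=(a_n/\pi^b)\,\pi^bY_{\sigma_0}^{r-sb}=\Phi\bigl((a_n/\pi^b)Y_{\sigma_0}^rX^b\bigr)$, where $a_n/\pi^b\in\mathcal{O}_E$. As $n\to-\infty$ we have $b\to\infty$, so the preimages sum in $\mathcal{O}_E[\negthinspace[Y_{\sigma_0},X]\negthinspace]$.

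The main step is showing that $\ker\Phi=(Y_{\sigma_0}^sX-\pi)$. Modulo this ideal, every power series reduces to a normal form $\sum_{a\ge 0}c_aY_{\sigma_0}^a+\sum_{b\ge 1,\,0\le r<s}d_{r,b}Y_{\sigma_0}^rX^b$: replace $Y_{\sigma_0}^sX$ by $\pi$ repeatedly, and the process converges $(\pi,Y_{\sigma_0},X)$-adically. The ideal is closed because the ring is Noetherian, so the reduction is legitimate in the quotient. On a normal form, $\Phi$ is injective, because distinct monomials $Y_{\sigma_0}^a$ and $Y_{\sigma_0}^rX^b$ go to distinct exponents of $Y_{\sigma_0}$. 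Hence the kernel is exactly the ideal. An alternative is to note that both sides are $\pi$-torsion free and compare them modulo $\pi$. Modulo $\pi$ the quotient is $\mathbb{F}[\negthinspace[Y_{\sigma_0},X]\negthinspace]/(Y_{\sigma_0}^sX)$. The other side modulo $\pi$ needs more care, so the normal-form argument is the primary route.

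Finally, Noetherianity follows because the quotient of a Noetherian power series ring is Noetherian. $A_E^{(\rho,1)}$ is a domain because it embeds in $\bigl(\mathcal{O}_E[\negthinspace[Y_{\sigma_0}]\negthinspace][1/Y_{\sigma_0}]\bigr)^{\wedge}$. That ring is a domain: it is $\pi$-adically separated and $\pi$-torsion free, and modulo $\pi$ it is the field $\mathbb{F}(\negthinspace(Y_{\sigma_0})\negthinspace)$, so a leading-$\pi$-power argument applies.
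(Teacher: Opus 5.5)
Your proof is correct. The paper gives no argument at all (it only says the lemma is straightforward to verify), so there is no route to compare against. Your argument is a complete verification:
\begin{itemize}
\item the map $X\mapsto \pi/Y_{\sigma_0}^{s}$;
\item surjectivity, by writing each negative exponent as $n=r-sb$ with $0\le r<s$ and $b=\lceil -n/s\rceil$;
\item the kernel, by reducing to the normal form spanned by the monomials $Y_{\sigma_0}^{a}$ and $Y_{\sigma_0}^{r}X^{b}$ ($0\le r<s$, $b\geq 1$), on which $\Phi$ is visibly injective;
\item the domain property, via the embedding into $\bigl(\mathcal{O}_E[\negthinspace[Y_{\sigma_0}]\negthinspace][1/Y_{\sigma_0}]\bigr)^{\wedge}$.
\end{itemize}

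Two steps you leave as ``one checks'' are indeed routine.

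\emph{Multiplicativity of $\Phi$ on infinite series.} For each $k$, the coefficients of $Y_{\sigma_0}^{k}$ in $\Phi(f)\Phi(g)$ and in $\Phi(fg)$ are both sums of the same family $c_{a,b}d_{a',b'}\pi^{b+b'}$, indexed by $(a+a')-s(b+b')=k$. This family has only finitely many terms of $\pi$-adic valuation $<N$ for each $N$, so both sums agree.

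\emph{Convergence of the reduction process.} The reduced monomial $\pi^{m}Y_{\sigma_0}^{a-sm}X^{b-m}$, with $m=\min(b,\lfloor a/s\rfloor)$, has $(\pi,Y_{\sigma_0},X)$-degree $a+b-sm\geq (a+b)/(s+1)$. Hence the truncations converge, and closedness of ideals in the Noetherian complete local ring $\mathcal{O}_E[\negthinspace[Y_{\sigma_0},X]\negthinspace]$ puts $f-g$ in $(Y_{\sigma_0}^{s}X-\pi)$.
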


	\begin{defn}
		Let $s$ be a positive integer, and endow $A_E^{(p^{-1/s},1)}$ with the $(Y_{\sigma_0},\frac{\pi}{Y_{\sigma_0}^s})$-adic topology. We define
		\begin{align}
			A_{\mathrm{mv},E}^{\dagger,s}\coloneq\left\{\sum_{\underline{n}\in \mathbb{Z}^{f-1}} f_{\underline{n}}(Y_{\sigma_0})\prod_{i=1}^{f-1}\left(\dfrac{Y_{\sigma_i}}{Y_{\sigma_0}}\right)^{n_i}: f_{\underline{n}}(Y_{\sigma_0})\in A_E^{(p^{-1/s},1)}, \lim\limits_{|\underline{n}|\to\infty}f_{\underline{n}}(Y_{\sigma_0})=0\right\},
		\end{align}
		and
		\begin{align}\label{defn of dagger}
			A_{\mathrm{mv},E}^{\dagger,\infty}\coloneq \bigcup_{s\geq 1}A_{\mathrm{mv},E}^{\dagger,s},\quad A_{\mathrm{mv},E}^{\dagger}\coloneq A_{\mathrm{mv},E}^{\dagger,\infty}\left[\dfrac{1}{Y_{\sigma_0}}\right].
		\end{align}
		These rings are contained in $A_{\mathrm{mv},E}$, hence are integral domains.
	\end{defn}
	
	For $x\in A_{\mathrm{mv},E}^{\dagger}$, there exists $n\geq 0$ such that $Y_{\sigma_{0}}^nx=y\in A_{\mathrm{mv},E}^{\dagger,\infty}$, hence $\pi x=\frac{\pi}{Y_{\sigma_{0}}^n}y\in A_{\mathrm{mv},E}^{\dagger,\infty}$. Hence
	\begin{align}\label{pi dagger=infty}
		\pi A_{\mathrm{mv},E}^{\dagger}\subset A_{\mathrm{mv},E}^{\dagger,\infty}.
	\end{align}
	The definition of $A_{\mathrm{mv},E}^{\dagger,s}$ implies the following. 
	
	\begin{lem}\label{A dagger s is noetherian}
		Let $s$ be a positive integer. There is an isomorphism
		\[A_{\mathrm{mv},E}^{\dagger,s}\cong\varprojlim_{n}\dfrac{A_E^{(p^{-1/s},1)}\left[\left(\frac{Y_{\sigma_i}}{Y_{\sigma_0}}\right)^{\pm 1}: 1\leq i\leq f\right]}{\left(Y_{\sigma_0},\frac{\pi}{Y_{\sigma_0}^s}\right)^n},\]
		that is, $A_{\mathrm{mv},E}^{\dagger,s}$ is the $\left(Y_{\sigma_0},\frac{\pi}{Y_{\sigma_0}^s}\right)$-adic completion of $A_E^{(p^{-1/s},1)}\left[\left(\frac{Y_{\sigma_i}}{Y_{\sigma_0}}\right)^{\pm 1}: 1\leq i\leq f\right]$. As a consequence, $A_{\mathrm{mv},E}^{\dagger,s}$ is a Noetherian domain.
	\end{lem}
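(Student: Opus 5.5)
Set $R_s\coloneq A_E^{(p^{-1/s},1)}$, $X\coloneq \pi/Y_{\sigma_0}^s$, $Z_i\coloneq Y_{\sigma_i}/Y_{\sigma_0}$ and $B\coloneq R_s[Z_i^{\pm1}:1\leq i\leq f-1]$. (The index range $1\leq i\leq f$ in the statement should be read as $1\leq i\leq f-1$.) The plan is to compare both sides coefficientwise in the monomials $\underline{Z}^{\underline{n}}\coloneq\prod_i Z_i^{n_i}$. First, I would check that $B$ is a Laurent polynomial ring over $R_s$: the $Z_i$ are algebraically independent over $R_s$ inside $A_{\mathrm{mv},E}$. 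This follows from the uniqueness of expansions $\sum_{\underline{n}} f_{\underline{n}}(Y_{\sigma_0})\underline{Z}^{\underline{n}}$ in $A_{\mathrm{mv},E}$, which is implicit in its definition. Thus every element of $B$ is a finite sum $\sum_{\underline{n}} f_{\underline{n}}\underline{Z}^{\underline{n}}$ with $f_{\underline{n}}\in R_s$ uniquely determined.

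Let $I=(Y_{\sigma_0},X)\subset R_s$. Since $B$ is free over $R_s$ with basis $\{\underline{Z}^{\underline{n}}\}$, we get $I^nB=\bigoplus_{\underline{n}} I^n\underline{Z}^{\underline{n}}$, hence
\[B/I^nB\cong\bigoplus_{\underline{n}\in\mathbb{Z}^{f-1}}(R_s/I^n)\,\underline{Z}^{\underline{n}}.\]
Passing to the inverse limit, an element of $\varprojlim_n B/I^nB$ is a compatible family of finitely supported families in $R_s/I^n$. By the Lemma giving $R_s\cong\mathcal{O}_E[\negthinspace[Y_{\sigma_0},X]\negthinspace]/(Y_{\sigma_0}^sX-\pi)$, the ring $R_s$ is $I$-adically complete, so $\varprojlim_n R_s/I^n=R_s$. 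The limit is therefore exactly the set of families $(f_{\underline{n}})$ in $R_s$ such that, for each $n$, only finitely many $f_{\underline{n}}\notin I^n$. That is, $f_{\underline{n}}\to0$ $I$-adically as $|\underline{n}|\to\infty$, which is the definition of $A_{\mathrm{mv},E}^{\dagger,s}$ with the stated topology on $A_E^{(p^{-1/s},1)}$. To finish the isomorphism, I must check that the map $(f_{\underline{n}})\mapsto\sum f_{\underline{n}}\underline{Z}^{\underline{n}}\in A_{\mathrm{mv},E}$ makes sense and is injective and multiplicative. It converges because the $(Y_{\sigma_0},X)$-adic topology on $R_s$ is finer than the $(\pi,Y_{\sigma_0})$-adic topology on $(\mathcal{O}_E[\negthinspace[Y_{\sigma_0}]\negthinspace][1/Y_{\sigma_0}])^{\wedge}$; this uses $X\in$ that ring and $X^kY_{\sigma_0}^{sk}=\pi^k$. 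It is injective by uniqueness of coefficients, and it is multiplicative by continuity.

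Noetherianity is then formal. $R_s$ is Noetherian, so $B$ is Noetherian by Hilbert's basis theorem and localization, and the $I B$-adic completion of a Noetherian ring is Noetherian. It is a domain because it embeds into the domain $A_{\mathrm{mv},E}$.

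The main obstacle is the compatibility-of-topologies check: convergence of $I$-adically null sequences in $R_s$ within $A_{\mathrm{mv},E}$. I would handle it by showing $I^{2n}\subset (\pi,Y_{\sigma_0})^{n}$ inside the completed ring. A monomial $Y_{\sigma_0}^aX^b$ with $a+b\geq 2n$ either has $a\geq n$, or else $b\geq n$ and it can be written as $\pi^b Y_{\sigma_0}^{a-sb}$. In the second case the negative power of $Y_{\sigma_0}$ is harmless for the $(\pi,Y_{\sigma_0})$-adic topology on $(\mathcal{O}_E[\negthinspace[Y_{\sigma_0}]\negthinspace][1/Y_{\sigma_0}])^{\wedge}$ only after care, so one has to be precise about what that topology means on Laurent tails. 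This step is where I would spend the effort.
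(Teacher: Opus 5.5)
Your argument is correct and is essentially the paper's approach. The paper gives no separate proof: it states the lemma as an immediate consequence of the definition of $A_{\mathrm{mv},E}^{\dagger,s}$, and your coefficientwise comparison is exactly that unwinding made explicit.

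The one step you flagged needs a larger exponent than $I^{2n}$. A monomial $Y_{\sigma_0}^aX^b$ with $b<n$ equals $\pi^bY_{\sigma_0}^{a-sb}$, and this can have a very negative $Y_{\sigma_0}$-exponent. If instead $a+b\geq (s+1)n+m$, then either $b\geq n$, and the monomial lies in $\pi^n(\mathcal{O}_E[\negthinspace[Y_{\sigma_0}]\negthinspace][1/Y_{\sigma_0}])^{\wedge}$, or $b<n$ and $a-sb\geq (s+1)(n-b)+m>m$, so it lies in $Y_{\sigma_0}^m\mathcal{O}_E[\negthinspace[Y_{\sigma_0}]\negthinspace]$. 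That is exactly what convergence requires in the weak (lattice $\mathcal{O}_E[\negthinspace[Y_{\sigma_0}]\negthinspace]$) $(\pi,Y_{\sigma_0})$-adic topology.
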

	
	Similarly, let $s$ be a positive integer, we define a subring of $A_E^{(p^{-1/s},1)}$:
		\begin{align}
			A_E^{\closeopen{p^{-1/s}}{1}}\coloneq\left\{\sum_{n\in\mathbb{Z}} a_nY_{\sigma_{0}}^n\in A_E^{(p^{-1/s},1)}: \lim\limits_{|n|\to\infty}|a_n|p^{-n/s}=0\right\},
		\end{align}
		with a multiplicative norm $\lVert \sum_{n\in\mathbb{Z}} a_nY_{\sigma_{0}}^n\rVert_{s}\coloneq \sup_n |a_n|p^{-n/s}$. Note that $A_E^{\closeopen{p^{-1/s}}{1}}$ is the $\pi$-adic completion of $\mathcal{O}_E[\negthinspace[Y_{\sigma_{0}}]\negthinspace]\left[\frac{\pi}{Y_{\sigma_{0}}^s}\right]$. Then we can define the subring of $A_{\mathrm{mv},E}^{\dagger,s}$:
		\begin{align}\label{defn of s^{-1}}
			A_{\mathrm{mv},E}^{\dagger,s^{-}}\coloneq\left\{\sum_{\underline{n}\in \mathbb{Z}^{f-1}} f_{\underline{n}}(Y_{\sigma_0})\prod_{i=1}^{f-1}\left(\dfrac{Y_{\sigma_i}}{Y_{\sigma_0}}\right)^{n_i}: f_{\underline{n}}(Y_{\sigma_0})\in A_E^{\closeopen{p^{-1/s}}{1}}, \lim\limits_{|\underline{n}|\to\infty}\lVert f_{\underline{n}}(Y_{\sigma_0})\rVert_{s}=0\right\}
		\end{align}
		with a norm given by
		\begin{align}\label{defn of || ||_s}
			\left\lVert\sum_{\underline{n}} f_{\underline{n}}\prod_{i=1}^{f-1}\left(\dfrac{Y_{\sigma_i}}{Y_{\sigma_0}}\right)^{n_i}\right\rVert_s\coloneq \sup_{\underline{n}}\lVert f_{\underline{n}}\rVert_{s}.
		\end{align}
		It is straightforward to check that $\lVert\cdot\rVert_s$ is a multiplicative norm of $A_{\mathrm{mv},E}^{\dagger,s^{-}}$ satisfying the strong triangle inequality, that $A_{\mathrm{mv},E}^{\dagger,s^{-}}$ is complete for $\lVert\cdot\rVert_s$, and 
		\begin{align}
			\begin{cases}
				Y_{\sigma_0}A_{\mathrm{mv},E}^{\dagger,s^{-}}=\left\{f\in A_{\mathrm{mv},E}^{\dagger,s^{-}}: \lVert f\rVert_{s}<1\right\}=\left\{f\in A_{\mathrm{mv},E}^{\dagger,s^{-}}: \lVert f\rVert_{s}\leq p^{-1/s}\right\},\\
				\dfrac{A_{\mathrm{mv},E}^{\dagger,s^{-}}}{Y_{\sigma_0}A_{\mathrm{mv},E}^{\dagger,s^{-}}}\cong \mathbb{F}\left[\frac{\pi}{Y_{\sigma_0}^s},\left(\frac{Y_{\sigma_{i}}}{Y_{\sigma_0}}\right)^{\pm 1}: 1\leq i\leq f-1\right].
			\end{cases}
		\end{align}
		Hence $A_{\mathrm{mv},E}^{\dagger,s^{-}}$ is also a Noetherian domain. Moreover, $\lVert\cdot\rVert_s$ extends uniquely to a multiplicative norm on $A_{\mathrm{mv},E}^{\dagger,s^{-}}\left[\frac{1}{Y_{\sigma_{0}}}\right]$, which is still denoted by $\lVert\cdot\rVert_{s}$. Note that for $s_1< s_2$ we have
		\[A_{\mathrm{mv},E}^{\dagger,s_1^{-}}\subset A_{\mathrm{mv},E}^{\dagger,s_1}\subset A_{\mathrm{mv},E}^{\dagger,s_2^{-}}\subset A_{\mathrm{mv},E}^{\dagger,s_2},\]
		thus
		\[A_{\mathrm{mv},E}^{\dagger,\infty}= \bigcup_{s\geq 1}A_{\mathrm{mv},E}^{\dagger,s}= \bigcup_{s\geq 1}A_{\mathrm{mv},E}^{\dagger,s^{-}}.\]
	
	\subsection{The endomorphism $\varphi_q$ and the $\mathcal{O}_K^{\times}$-action}
	
	Recall that the multiplication by $p$ (resp.\ $\mathcal{O}_K^{\times}$) on $\mathcal{O}_K$ induces an endomorphism $\varphi$ of $ \mathcal{O}_K[\negthinspace[\mathcal{O}_K]\negthinspace]$ (resp.\ an $\mathcal{O}_K^{\times}$-action on $\mathcal{O}_K[\negthinspace[\mathcal{O}_K]\negthinspace]$). Note that there is a commutative diagram of rings
	\begin{equation}
		\begin{tikzcd}
			\mathcal{O}_K[\negthinspace[\mathcal{O}_K]\negthinspace]\arrow[rr,"\varphi"]\arrow[rd,two heads]&  &\mathcal{O}_K[\negthinspace[\mathcal{O}_K]\negthinspace]\arrow[ld,two heads]\\
			&\mathcal{O}_K & 
		\end{tikzcd}
	\end{equation}
	where the two maps $\mathcal{O}_K[\negthinspace[\mathcal{O}_K]\negthinspace]\twoheadrightarrow \mathcal{O}_K$ are given by $Y_{\sigma_i}\mapsto 0$, thus the constant term of $\varphi(Y_{\sigma_i})$ is always $0$ for any $0\leq i\leq f-1$. On the other hand, recall that 
	\[\varphi(Y_{\sigma_i})\equiv Y_{\sigma_{i-1}}^p \mod p,\]
	hence if we let $\mathfrak{m}\coloneq Y_{\sigma_0}\mathcal{O}_E[\negthinspace[\mathcal{O}_K]\negthinspace]+\cdots+Y_{\sigma_{f-1}}\mathcal{O}_E[\negthinspace[\mathcal{O}_K]\negthinspace]$, then
	\begin{align}\label{endomorphism phi_q}
		\varphi(Y_{\sigma_i})\in  Y_{\sigma_{i-1}}^p+p\cdot\mathfrak{m},\quad 0\leq i\leq f-1.
	\end{align}
	Similarly, the action of $\mathcal{O}_K^{\times}$ also satisfies
	\begin{align}\label{O_k^*-action}
		a(Y_{\sigma_i})\in  \sigma_i(a)Y_{\sigma_{i}}+p\cdot\mathfrak{m}+\mathfrak{m}^p,\quad a\in\mathcal{O}_K^{\times}, 0\leq i\leq f-1.
	\end{align}
	In fact, by the discussion in \cite[$\S$2.2]{breuil2023multivariable}, (\ref{O_k^*-action}) can be refined as follows:
	\begin{align}\label{refined O_k^*-action}
		a(Y_{\sigma_i})\in  Y_{\sigma_{i}}+p\cdot\mathfrak{m}+\mathfrak{m}^{p^n},\quad a\in1+p^n\mathcal{O}_K, 0\leq i\leq f-1, n\geq 1.
	\end{align}
	We will use (\ref{refined O_k^*-action}) to prove Lemma \ref{subring of qp-locally analytic vectors}.
	
	\begin{lem}
		Let $s\geq 1$ be an integer. Then the endomorphism $\varphi$ of $ A_{\mathrm{mv},E}$ satisfies
		\[\varphi\left(A_{\mathrm{mv},E}^{\dagger,s}\right)\subseteq A_{\mathrm{mv},E}^{\dagger,ps},\]
		and $A_{\mathrm{mv},E}^{\dagger,s}$ is stable under the $\mathcal{O}_K^{\times}$-action. As a consequence, $A_{\mathrm{mv},E}^{\dagger,\infty}$ and $A_{\mathrm{mv},E}^{\dagger}$ are stable under the endomorphism $\varphi$ and the $\mathcal{O}_K^{\times}$-action.
	\end{lem}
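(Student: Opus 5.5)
The lemma will follow once we control the images of the generators $Y_{\sigma_0}$, $\frac{\pi}{Y_{\sigma_0}^s}$ and $\left(\frac{Y_{\sigma_i}}{Y_{\sigma_0}}\right)^{\pm1}$ of $A_{\mathrm{mv},E}^{\dagger,s}$ and then extend by continuity using Lemma \ref{A dagger s is noetherian}. Since $A_{\mathrm{mv},E}^{\dagger,s}$ is the $\left(Y_{\sigma_0},\frac{\pi}{Y_{\sigma_0}^s}\right)$-adic completion of $A_E^{(p^{-1/s},1)}\left[\left(\frac{Y_{\sigma_i}}{Y_{\sigma_0}}\right)^{\pm 1}\right]$, the plan is as follows. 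First, for a continuous ring endomorphism $g$ of $A_{\mathrm{mv},E}$ (either $\varphi$ or some $a\in\mathcal{O}_K^{\times}$), show that $g$ maps these generators into the target ring $B$, where $B=A_{\mathrm{mv},E}^{\dagger,ps}$ or $A_{\mathrm{mv},E}^{\dagger,s}$. Second, show that $g(Y_{\sigma_0})$ and $g\left(\frac{\pi}{Y_{\sigma_0}^s}\right)$ are topologically nilpotent in $B$. Then $g$ sends the ideal $\left(Y_{\sigma_0},\frac{\pi}{Y_{\sigma_0}^s}\right)^N$ into a high power of the defining ideal of $B$, so $g$ extends continuously to the completion. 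Compatibility with $g$ on $A_{\mathrm{mv},E}$ is automatic because both topologies are finer than the one induced from $A_{\mathrm{mv},E}$.

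\textbf{Case $g=\varphi$.} By (\ref{endomorphism phi_q}), $\varphi(Y_{\sigma_i})=Y_{\sigma_{i-1}}^p(1+u_i)$, where
\[u_i=\frac{p\,m_i}{Y_{\sigma_{i-1}}^p},\qquad m_i\in\mathfrak{m}.\]
Write each $Y_{\sigma_j}=Y_{\sigma_0}\cdot\frac{Y_{\sigma_j}}{Y_{\sigma_0}}$. Then $m_i\in Y_{\sigma_0}\cdot(\text{unit-type ring})$, so
\[u_i\in\frac{p}{Y_{\sigma_0}^{p-1}}\,\mathcal{O}_E\left[\!\left[Y_{\sigma_0}\right]\!\right]\left[\left(\frac{Y_{\sigma_j}}{Y_{\sigma_0}}\right)^{\pm1}\right].\]
Now $\frac{p}{Y_{\sigma_0}^{p-1}}=\frac{p}{\pi^{e}}\cdot\left(\frac{\pi}{Y_{\sigma_0}^{ps}}\right)^{e}Y_{\sigma_0}^{eps-p+1}$, where $e$ is the ramification index. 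This element lies in the topologically nilpotent ideal of $A_{\mathrm{mv},E}^{\dagger,ps}$, since $eps\geq p-1$. Hence $1+u_i$ is a unit of $A_{\mathrm{mv},E}^{\dagger,ps}$, via a convergent geometric series. It follows that $\varphi\left(\frac{Y_{\sigma_i}}{Y_{\sigma_0}}\right)^{\pm1}=\left(\frac{Y_{\sigma_{i-1}}}{Y_{\sigma_{-1}}}\right)^{\pm p}\left(\frac{1+u_i}{1+u_0}\right)^{\pm1}$ lies in $A_{\mathrm{mv},E}^{\dagger,ps}$. Also
\[\varphi\left(\frac{\pi}{Y_{\sigma_0}^s}\right)=\frac{\pi}{Y_{\sigma_{f-1}}^{ps}}(1+u_0)^{-s}=\frac{\pi}{Y_{\sigma_0}^{ps}}\left(\frac{Y_{\sigma_0}}{Y_{\sigma_{f-1}}}\right)^{ps}(1+u_0)^{-s},\]
which is in the defining ideal. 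Likewise $\varphi(Y_{\sigma_0})\in Y_{\sigma_0}^pA_{\mathrm{mv},E}^{\dagger,ps}$. This gives both the containment of the generators and the topological nilpotence.

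\textbf{Case $g=a\in\mathcal{O}_K^{\times}$.} The same argument applies using (\ref{O_k^*-action}). Write
\[a(Y_{\sigma_i})=\sigma_i(a)Y_{\sigma_i}(1+v_i),\qquad v_i\in\frac{p\,\mathfrak{m}+\mathfrak{m}^p}{Y_{\sigma_i}}.\]
The $\mathfrak{m}^p/Y_{\sigma_i}$ part lies in $Y_{\sigma_0}^{p-1}(\cdots)$, which is topologically nilpotent. The $p\mathfrak{m}/Y_{\sigma_i}$ part lies in $p(\cdots)$, which is also topologically nilpotent in $A_{\mathrm{mv},E}^{\dagger,s}$, because $p\in\left(\frac{\pi}{Y_{\sigma_0}^s}\right)^eY_{\sigma_0}^{es}\mathcal{O}_E$. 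So $1+v_i$ is a unit, and the generators map into $A_{\mathrm{mv},E}^{\dagger,s}$ with the required nilpotence; in particular $a\left(\frac{\pi}{Y_{\sigma_0}^s}\right)=\sigma_0(a)^{-s}\frac{\pi}{Y_{\sigma_0}^s}(1+v_0)^{-s}$. The consequences for $A_{\mathrm{mv},E}^{\dagger,\infty}$ and $A_{\mathrm{mv},E}^{\dagger}$ follow by taking unions, together with the fact that $g(Y_{\sigma_0})$ equals $Y_{\sigma_0}^{p}$ (resp.\ $Y_{\sigma_0}$) times a unit, so inverting $Y_{\sigma_0}$ is preserved.

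\textbf{Main obstacle.} The delicate step is justifying the continuous extension. One must check that elements of $\mathcal{O}_E[\![Y_{\sigma_0}]\!]$, and more generally the $f_{\underline n}$, are sent compatibly, i.e.\ that the images of $Y_{\sigma_j}/Y_{\sigma_0}$ really lie in the subring, which requires the units $1+u_i$ to be genuinely convergent in the $\left(Y_{\sigma_0},\frac{\pi}{Y_{\sigma_0}^{ps}}\right)$-adic topology. For $K=\mathbb{Q}_2$ the variable choice differs, but since $f=1$ only $Y_{\sigma_0}$ is involved and the same estimates apply.
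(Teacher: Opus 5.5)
Your proposal is correct and follows essentially the same route as the paper: you write $\varphi(Y_{\sigma_i})=Y_{\sigma_{i-1}}^p\bigl(1+pQ_i/Y_{\sigma_{i-1}}^p\bigr)$, note that the second factor is a unit of the overconvergent ring, and deduce that the topological generators $Y_{\sigma_0}$, $\frac{\pi}{Y_{\sigma_0}^s}$ and $\bigl(\frac{Y_{\sigma_i}}{Y_{\sigma_0}}\bigr)^{\pm1}$ are sent into $A_{\mathrm{mv},E}^{\dagger,ps}$, and you treat $\mathcal{O}_K^{\times}$ in the same way. You also make explicit the continuity/extension step and the $\mathcal{O}_K^{\times}$ case, which the paper leaves to the reader; the only slip is cosmetic ($m_i/Y_{\sigma_0}$ lies in the $Y_{\sigma_0}$-adic completion of $\mathcal{O}_E[\negthinspace[Y_{\sigma_0}]\negthinspace]\bigl[\bigl(\frac{Y_{\sigma_j}}{Y_{\sigma_0}}\bigr)^{\pm1}\bigr]$ rather than in that ring itself) and does not affect the argument.
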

	\begin{proof}
		For $0\leq i\leq f-1$, we write $\varphi(Y_{\sigma_i})=Y_{\sigma_{i-1}}^p+pQ_i$ where $Q_i\in\mathfrak{m}$. In particular, $Q_i$ is topologically nilpotent in $A_{\mathrm{mv},E}^{\dagger,s}$ for any $s$. Then
		\begin{equation}
			\begin{aligned}
				\varphi\left(\dfrac{Y_{\sigma_i}}{Y_{\sigma_j}}\right)&=\dfrac{Y_{\sigma_{i-1}}^p+pQ_i}{Y_{\sigma_{j-1}}^p+pQ_j}\\
				&=\left(\dfrac{Y_{\sigma_{i-1}}}{Y_{\sigma_{j-1}}}\right)^p\cdot\dfrac{1+Q_i\frac{p}{Y_{\sigma_{i-1}}^p}}{1+Q_j\frac{p}{Y_{\sigma_{j-1}}^p}}\in A_{\mathrm{mv},E}^{\dagger,p}\subset A_{\mathrm{mv},E}^{\dagger,ps},
			\end{aligned}
		\end{equation}
		and
		\begin{equation}
			\begin{aligned}
				\varphi\left(\dfrac{\pi}{Y_{\sigma_0}^s}\right)&=\dfrac{\pi}{\left(Y_{\sigma_{f-1}}^p+pQ_0\right)^s}\\
				&=\dfrac{\pi}{Y_{\sigma_{f-1}}^{ps}}\cdot \left(1+Q_0\dfrac{p}{Y_{\sigma_{f-1}}^p}\right)^{-s}\in A_{\mathrm{mv},E}^{\dagger,ps}.
			\end{aligned}
		\end{equation}
		Hence
		\[\varphi\left(A_{\mathrm{mv},E}^{\dagger,s}\right)\subseteq A_{\mathrm{mv},E}^{\dagger,ps}.\]
		The proof for the $\mathcal{O}_K^{\times}$-action is similar, and we omit the details.
	\end{proof}
	
	Similarly, one can check that
	\begin{lem}\label{||_s and phi,Gamma}
		Let $s\geq 1$ be an integer. Then the endomorphism $\varphi$ of $ A_{\mathrm{mv},E}$ satisfies
		\[\varphi\left(A_{\mathrm{mv},E}^{\dagger,s^{-}}\right)\subseteq A_{\mathrm{mv},E}^{\dagger,ps^{-}},\]
		and $A_{\mathrm{mv},E}^{\dagger,s^{-}}$ is stable under the $\mathcal{O}_K^{\times}$-action. Moreover, for $x\in A_{\mathrm{mv},E}^{\dagger,s^{-}}$ and $\gamma\in \mathcal{O}_K^{\times}$, $\lVert\varphi(x)\rVert _{ps}=\lVert x\rVert_{s}$ and $\lVert\gamma(x)\rVert _{s}=\lVert x\rVert_{s}$.
	\end{lem}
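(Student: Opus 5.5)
Since $\varphi$ and each $\gamma\in\mathcal{O}_K^{\times}$ are continuous ring endomorphisms of $A_{\mathrm{mv},E}$, the plan is to check everything on topological generators. The ring $A_{\mathrm{mv},E}^{\dagger,s^{-}}$ is the completion, for $\lVert\cdot\rVert_s$, of the $\mathcal{O}_E$-algebra generated by $Y_{\sigma_0}$, $\frac{\pi}{Y_{\sigma_0}^s}$ and $\left(\frac{Y_{\sigma_i}}{Y_{\sigma_0}}\right)^{\pm1}$. Here $\lVert Y_{\sigma_0}\rVert_s=p^{-1/s}$ and $\lVert \pi/Y_{\sigma_0}^s\rVert_s=1$, while each ratio $Y_{\sigma_i}/Y_{\sigma_0}$ has norm $1$ and is a unit. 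So it suffices to show that the images of these generators lie in $A_{\mathrm{mv},E}^{\dagger,ps^{-}}$ and have the expected norms there. One also checks that $\varphi$ is continuous from $\lVert\cdot\rVert_s$ to $\lVert\cdot\rVert_{ps}$, and $\gamma$ from $\lVert\cdot\rVert_s$ to itself; this follows from an inequality $\lVert\varphi(x)\rVert_{ps}\le\lVert x\rVert_s$ on monomials together with the ultrametric inequality.

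\textbf{Step 1: generators, following the previous lemma.} Write $\varphi(Y_{\sigma_i})=Y_{\sigma_{i-1}}^p(1+u_i)$ with $u_i=pQ_i/Y_{\sigma_{i-1}}^p$ and $Q_i\in\mathfrak m$. Using $Y_{\sigma_{i-1}}=Y_{\sigma_0}\cdot(Y_{\sigma_{i-1}}/Y_{\sigma_0})$, we get $\lVert Y_{\sigma_{i-1}}^p\rVert_{ps}=p^{-1/s}$. Since $Q_i$ is a combination of the $Y_{\sigma_j}$, also $\lVert Q_i\rVert_{ps}\le p^{-1/(ps)}$. Hence
\[\lVert u_i\rVert_{ps}\le p^{-1}\,p^{-1/(ps)}\,p^{1/s}<1,\]
because $1/s\le 1$. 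Consequently $1+u_i$ is a unit of $A_{\mathrm{mv},E}^{\dagger,ps^{-}}$ of norm $1$, by the geometric series and multiplicativity. This gives:
\begin{itemize}
\item $\lVert\varphi(Y_{\sigma_0})\rVert_{ps}=p^{-1/s}=\lVert Y_{\sigma_0}\rVert_s$;
\item $\varphi\left(\frac{\pi}{Y_{\sigma_0}^s}\right)=\frac{\pi}{Y_{\sigma_{f-1}}^{ps}}(1+u_0)^{-s}$, which has norm $1$;
\item $\varphi(Y_{\sigma_i}/Y_{\sigma_0})$ is a norm-$1$ unit.
\end{itemize}
For $\gamma=a\in\mathcal{O}_K^{\times}$, (\ref{O_k^*-action}) gives $a(Y_{\sigma_i})=\sigma_i(a)Y_{\sigma_i}(1+v_i)$ with $v_i\in(p\mathfrak m+\mathfrak m^p)/Y_{\sigma_i}$. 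Here $\lVert v_i\rVert_s\le\max(p^{-1},p^{-(p-1)/s})<1$, so the same argument applies with $s$ in place of $ps$. I expect this estimate to be the main technical point. One must check that $\mathfrak m^p/Y_{\sigma_i}$ and $p\mathfrak m/Y_{\sigma_i}$ genuinely lie in $A_{\mathrm{mv},E}^{\dagger,s^{-}}$ with norm strictly less than $1$. This uses that every $Y_{\sigma_j}$ equals $Y_{\sigma_0}$ times a norm-$1$ unit, and that $p\in\pi^{e}\mathcal{O}_E^{\times}$ has norm $\le p^{-1}$, which is at most $p^{-1/s}$.

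\textbf{Step 2: isometry.} Inclusion of images follows from continuity and completeness. For the norm equality, I will show that $\lVert\cdot\rVert_s$ is the Gauss norm in $Y_{\sigma_0}$ with the ratios as norm-$1$ units. Then for any $x=\sum f_{\underline n}\prod(Y_{\sigma_i}/Y_{\sigma_0})^{n_i}$ one has $\varphi(x)\equiv\sum f_{\underline n}^{\varphi_0}(Y_{\sigma_{f-1}}^p)\prod(\cdots)^{p n_i}$ modulo terms of strictly smaller relative norm. This congruence is made precise via reduction modulo the ideal $\{\lVert\cdot\rVert<c\}$ at the top norm level $c=\lVert x\rVert_s$. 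On the associated graded ring, which by the displayed quotient description is essentially a Laurent-polynomial ring over $\mathbb F[\pi/Y^s]$ times powers of $Y_{\sigma_0}$, the map $\varphi$ acts as a Frobenius-like injective map. This is done by normalizing $x$ with powers of $Y_{\sigma_0}$ and $\pi$ to norm $1$ and reducing mod $Y_{\sigma_0}$: the reduction of $\varphi$ sends $\frac{\pi}{Y_{\sigma_0}^s}\mapsto\frac{\pi}{Y_{\sigma_{f-1}}^{ps}}$ and the ratios to $p$-th powers of ratios. This map is injective on $\mathbb F[\pi/Y^s,(Y_{\sigma_i}/Y_{\sigma_0})^{\pm1}]$, since after relabeling it is $q$-power–type and hence injective. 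So a nonzero reduction stays nonzero, giving $\lVert\varphi(x)\rVert_{ps}=\lVert x\rVert_s$.

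For $\gamma$, the reduction is the automorphism scaling the variables by $\sigma_i(\bar a)$, which gives equality again; alternatively, $\gamma^{-1}$ yields the reverse inequality directly.
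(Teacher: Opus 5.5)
Your proposal is correct and follows the route the paper indicates: the paper only writes ``similarly, one can check'' after the preceding lemma, i.e.\ check on the generators $Y_{\sigma_0}$, $\pi/Y_{\sigma_0}^s$, $(Y_{\sigma_i}/Y_{\sigma_0})^{\pm1}$. Your norm equality, obtained by normalizing to norm $1$ and reducing modulo $Y_{\sigma_0}$, is the same device the paper uses later to compare $\lVert\cdot\rVert_s$ with $|\iota(\cdot)|_{1/s}$.

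One small fix concerns your injectivity justification. The reduced map sends $\pi/Y_{\sigma_0}^s$ to $\overline{\pi/Y_{\sigma_0}^{ps}}\cdot\overline{(Y_{\sigma_{f-1}}/Y_{\sigma_0})}^{-ps}$, which is not a $p$-th power, so ``$q$-power type'' is the wrong reason. Justify injectivity instead by observing that this is a monomial map whose induced map on exponent vectors is injective.
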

	
	Let $A^{\circ}$ be the subring of $A=A_{\mathrm{mv},E}/\pi$ consisting of power bounded elements, then 
	\[A^{\circ}=\mathbb{F}\left[\negthinspace\left[Y_{\sigma_0}\right]\negthinspace\right]\left\langle\left(\dfrac{Y_{\sigma_{i}}}{Y_{\sigma_0}}\right)^{\pm 1}: 1\leq i\leq f-1\right\rangle.\]
	Recall that $\varphi: A\to A$ satisfies $\varphi(Y_{\sigma_{i}})=Y_{\sigma_{i-1}}^p$, thus
	\begin{align}\label{A is free over phi(A)}
		A^{\circ}=\bigoplus\limits_{0\leq n_0,\dots,n_{f-1}\leq p-1}Y_{\sigma_{0}}^{n_0}\prod_{i=1}^{f-1}\left(\dfrac{Y_{\sigma_{i}}}{Y_{\sigma_{0}}}\right)^{n_i}\varphi(A^{\circ}).
	\end{align}
	
	\begin{lem}\label{general form of basis}
		Let $a_1,\dots,a_{q}\in A_{\mathrm{mv},E}^{\dagger,1}$ such that $A^{\circ}=\bigoplus_{i=1}^q \overline{a}_i\varphi(A^{\circ})$, where $\overline{a}_i\in  A^{\circ}$ is the image of $a_i\in A_{\mathrm{mv},E}^{\dagger,1}\subset A_{\mathrm{mv},E}$ in $A=A_{\mathrm{mv},E}/\pi$ (the existence of such $a_1,\dots,a_{q}$ is ensured by (\ref{A is free over phi(A)})). Then we have
		\begin{align}\label{a basis}
			A_{\mathrm{mv},E}=\bigoplus_{i=1}^qa_i \varphi(A_{\mathrm{mv},E}),\quad A_{\mathrm{mv},E}^{\dagger,ps}=\bigoplus_{i=1}^qa_i \varphi(A_{\mathrm{mv},E}^{\dagger,s}),\quad s\geq 1.
		\end{align}
		In particular, $\varphi: A_{\mathrm{mv},E}\to A_{\mathrm{mv},E}$ and $\varphi: A_{\mathrm{mv},E}^{\dagger,s}\to A_{\mathrm{mv},E}^{\dagger,ps}$ are faithfully flat and of finite presentation.
	\end{lem}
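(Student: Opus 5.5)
The plan is to lift the mod-$\pi$ decomposition (\ref{A is free over phi(A)}) by a successive-approximation (Nakayama-type) argument. We treat $A_{\mathrm{mv},E}$ first, using its $\pi$-adic completeness, and then $A_{\mathrm{mv},E}^{\dagger,s}$, using completeness for the $\bigl(Y_{\sigma_0},\frac{\pi}{Y_{\sigma_0}^{s}}\bigr)$-adic topology (Lemma \ref{A dagger s is noetherian}).

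For $A_{\mathrm{mv},E}$ we use that it is $\pi$-adically complete and separated, $\pi$-torsion free, and that $\varphi(\pi)=\pi$. Consider the map $\Phi:\varphi(A_{\mathrm{mv},E})^{\oplus q}\to A_{\mathrm{mv},E}$ given by $(b_i)\mapsto \sum a_ib_i$.

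\textbf{Surjectivity.} Given $x$, the hypothesis $A^{\circ}=\bigoplus\overline a_i\varphi(A^{\circ})$, together with $A=A^{\circ}[1/Y_{\sigma_0}]$ and $Y_{\sigma_0}^{-pq'}=\varphi(Y_{\sigma_{1}})^{-q'}\cdot(\text{unit})$, gives $A=\bigoplus \overline a_i\varphi(A)$. Hence we can write $x=\sum a_i\varphi(y_i)+\pi x_1$, and iterating yields convergent series $y_i=\sum_k \pi^k y_{i,k}$.

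Note that $\varphi(Y_{\sigma_1})$ mod $\pi$ equals $Y_{\sigma_0}^p$. Inverting $Y_{\sigma_0}^p$ modulo $\pi$ amounts to inverting $\varphi(Y_{\sigma_1})$, and one still needs $\varphi(A)=A^p$-type identities after completion. I would simply verify that $\varphi:A\to A$ is injective with $A$ finite free over $\varphi(A)$ with basis $\overline a_i$; this follows from (\ref{A is free over phi(A)}) after inverting $Y_{\sigma_0}^p=\varphi(Y_{\sigma_1})$.

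\textbf{Injectivity.} If $\sum a_i\varphi(b_i)=0$ with not all $b_i$ divisible by $\pi$, reduce mod $\pi$ to obtain a contradiction with freeness mod $\pi$. Here we use that $\varphi(b)\in\pi A_{\mathrm{mv},E}$ forces $b\in\pi A_{\mathrm{mv},E}$, which holds since $\varphi$ is injective mod $\pi$. Then divide by $\pi$ and use separatedness.

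For the dagger rings the same strategy applies, but the mod-$\pi$ reduction must be replaced by reduction modulo the ideal $I=\bigl(Y_{\sigma_0},\frac{\pi}{Y_{\sigma_0}^{ps}}\bigr)$ of $A_{\mathrm{mv},E}^{\dagger,ps}$. One computes $\varphi\bigl(\frac{\pi}{Y_{\sigma_0}^s}\bigr)=\frac{\pi}{Y_{\sigma_{f-1}}^{ps}}\cdot(\text{unit})$, and $\frac{\pi}{Y_{\sigma_{f-1}}^{ps}}$ differs from $\frac{\pi}{Y_{\sigma_0}^{ps}}$ by a unit $(Y_{\sigma_0}/Y_{\sigma_{f-1}})^{ps}$. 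Thus the quotient $A_{\mathrm{mv},E}^{\dagger,ps}/(Y_{\sigma_0}^p,\frac{\pi}{Y_{\sigma_0}^{ps}})$ is $\mathbb F[Y_{\sigma_0},Z,(Y_{\sigma_i}/Y_{\sigma_0})^{\pm1}]/(Y_{\sigma_0}^p)$, with $Z=\frac{\pi}{Y_{\sigma_0}^{ps}}$. Modulo $\varphi$ of the corresponding ideal $J=(Y_{\sigma_0},\frac{\pi}{Y^s_{\sigma_0}})$ of $A_{\mathrm{mv},E}^{\dagger,s}$, the graded pieces are free over $\varphi(A_{\mathrm{mv},E}^{\dagger,s}/J)$ with the images of the $a_i$ as basis; this again reduces to the structure (\ref{A is free over phi(A)}) of $\mathbb F[Y_{\sigma_0},(Y_{\sigma_i}/Y_{\sigma_0})^{\pm 1}]$ over its Frobenius twist, with the $Z$ variable matched. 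We then run the same successive approximation using $J$-adic completeness of $A_{\mathrm{mv},E}^{\dagger,s}$ and the $\varphi(J)A^{\dagger,ps}$-adic completeness of $A^{\dagger,ps}$. The latter holds because $\varphi(J)A^{\dagger,ps}$ and $I$ define the same topology: $\varphi(Y_{\sigma_0})$ is $Y_{\sigma_{f-1}}^p$ times a unit, and $Y_{\sigma_{f-1}}$ is $Y_{\sigma_0}$ times a unit.

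Finally, finite freeness with basis $a_i$ gives that $\varphi$ is finite free, hence flat and of finite presentation. Faithful flatness follows because a nonzero free module is faithfully flat; since the ring is a domain and $q\ge1$, this is automatic.

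The main obstacle is the dagger step: checking carefully that the associated graded (or quotient) of $A_{\mathrm{mv},E}^{\dagger,ps}$ is free over the image of $\varphi$ with basis the $\overline a_i$. This requires tracking how $\varphi$ transforms $\frac{\pi}{Y_{\sigma_0}^s}$, and confirming that the perturbations $pQ_i$ are topologically nilpotent so that they do not disturb the reduction.
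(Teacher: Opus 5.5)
Your proposal is correct in outline. It uses the same mechanism as the paper: lift the mod-$\pi$ decomposition (\ref{A is free over phi(A)}) by successive approximation, using $\pi$-adic completeness for $A_{\mathrm{mv},E}$ and Lemma~\ref{A dagger s is noetherian} for the dagger rings. Your treatment of $A_{\mathrm{mv},E}$ is the paper's, with the injectivity spelled out. The differences, and two things to repair, are in the dagger step.

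\textbf{The choice of ideal.} The paper reduces only modulo the principal ideal generated by $\varphi\bigl(\frac{\pi}{Y_{\sigma_0}^{s}}\bigr)$, which equals $\frac{\pi}{Y_{\sigma_0}^{ps}}A_{\mathrm{mv},E}^{\dagger,ps}$. Both quotients $A_{\mathrm{mv},E}^{\dagger,s}/\frac{\pi}{Y_{\sigma_0}^{s}}A_{\mathrm{mv},E}^{\dagger,s}$ and $A_{\mathrm{mv},E}^{\dagger,ps}/\frac{\pi}{Y_{\sigma_0}^{ps}}A_{\mathrm{mv},E}^{\dagger,ps}$ equal $A^{\circ}$. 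Moreover $\varphi$ induces the usual $\varphi$ on $A^{\circ}$; the $pQ_i$ die because $p\in\frac{\pi}{Y_{\sigma_0}^{ps}}A_{\mathrm{mv},E}^{\dagger,ps}$. So the hypothesis applies verbatim and the approximation uses a single element.

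Adding $Y_{\sigma_0}$ to the ideal buys nothing and caused a slip. $Z=\frac{\pi}{Y_{\sigma_0}^{ps}}$ lies in your ideal, so $A_{\mathrm{mv},E}^{\dagger,ps}/\bigl(Y_{\sigma_0}^{p},Z\bigr)=A^{\circ}/Y_{\sigma_0}^{p}A^{\circ}$ contains no variable $Z$. Your surjectivity argument still goes through once you use $A^{\circ}/Y_{\sigma_0}^{p}A^{\circ}=\bigoplus_i\overline a_i\varphi\bigl(A^{\circ}/Y_{\sigma_0}A^{\circ}\bigr)$. This follows from the hypothesis, since $\bigoplus_i\overline a_i\varphi(Y_{\sigma_0}A^{\circ})=Y_{\sigma_{f-1}}^{p}A^{\circ}=Y_{\sigma_0}^{p}A^{\circ}$. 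You must then track the remainders in $\varphi(J)^nA_{\mathrm{mv},E}^{\dagger,ps}$.

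\textbf{Directness in the dagger case.} The injectivity half of your ``same strategy'' (reduce, deduce divisibility, divide, use separatedness) does not transfer to a two-generator ideal. Knowing $b_i\in J$ does not let you divide; you would need a genuine associated-graded argument. None is needed. As the paper observes, $A_{\mathrm{mv},E}^{\dagger,s}\subset A_{\mathrm{mv},E}$ and $\varphi$ is the restriction. So directness of $\sum_i a_i\varphi(A_{\mathrm{mv},E}^{\dagger,s})$ is inherited from the first equality together with the injectivity of $\varphi$ that you already proved. With the principal ideal, your divide-and-iterate argument would also work verbatim, since $\bigcap_n\bigl(\frac{\pi}{Y_{\sigma_0}^{s}}\bigr)^nA_{\mathrm{mv},E}^{\dagger,s}\subseteq\bigcap_n\pi^nA_{\mathrm{mv},E}=0$.

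\textbf{Faithful flatness.} Here your argument is the better one. Once $A_{\mathrm{mv},E}^{\dagger,ps}$ is free of rank $q\geq 1$ over $A_{\mathrm{mv},E}^{\dagger,s}$ via $\varphi$, it is faithfully flat. Freeness uses injectivity of $\varphi$ and $a_i\neq 0$ in a domain. The paper instead shows that a finite, flat, finitely presented map has open and closed image on $\mathrm{Spec}$, then uses connectedness of $\mathrm{Spec}$ of a domain. That is correct but a detour.
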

	\begin{proof}
		The assumption $A^{\circ}=\bigoplus_{i=1}^q \overline{a}_i\varphi(A^{\circ})$ implies that
		\[A=A^{\circ}\left[\frac{1}{Y_{\sigma_{0}}^p}\right]=\bigoplus_{i=1}^q \overline{a}_i\varphi\left(A^{\circ}\left[\frac{1}{Y_{\sigma_{1}}}\right]\right)=\bigoplus_{i=1}^q \overline{a}_i\varphi(A).\]
		Then the first equality of (\ref{a basis}) follows from the fact that $A_{\mathrm{mv},E}$ is $\pi$-adically complete and $A=A_{\mathrm{mv},E}/\pi$. For the second equality of (\ref{a basis}), using (\ref{endomorphism phi_q}), we have
		\[\varphi\left(\dfrac{\pi}{Y_{\sigma_0}^s}\right)=\dfrac{\pi}{Y_{\sigma_{f-1}}^{ps}}\left(\dfrac{Y_{\sigma_{f-1}}^p}{\varphi(Y_{\sigma_0})}\right)^s\in \dfrac{\pi}{Y_{\sigma_{f-1}}^{ps}}\cdot\left(1+\mathfrak{m}\cdot A_{\mathrm{mv},E}^{\dagger,p}\right)\subset \dfrac{\pi}{Y_{\sigma_{f-1}}^{ps}}\cdot \left(A_{\mathrm{mv},E}^{\dagger,ps}\right)^{\times},\]
		hence
		\begin{align}\label{phi(pi/T^s)}
			\varphi\left(\dfrac{\pi}{Y_{\sigma_0}^s}\right)A_{\mathrm{mv},E}^{\dagger,ps}=\dfrac{\pi}{Y_{\sigma_{f-1}}^{ps}}A_{\mathrm{mv},E}^{\dagger,ps}=\dfrac{\pi}{Y_{\sigma_{0}}^{ps}}A_{\mathrm{mv},E}^{\dagger,ps}.
		\end{align}
		Note that
		\begin{align}\label{s/pi/T^s=ps/pi/T^{ps}}
			\dfrac{A_{\mathrm{mv},E}^{\dagger,s}}{\frac{\pi}{Y_{\sigma_0}^s}A_{\mathrm{mv},E}^{\dagger,s}}= A^{\circ}= \dfrac{A_{\mathrm{mv},E}^{\dagger,ps}}{\frac{\pi}{Y_{\sigma_{0}}^{ps}}A_{\mathrm{mv},E}^{\dagger,ps}}.
		\end{align}
		Combining (\ref{phi(pi/T^s)}) and (\ref{s/pi/T^s=ps/pi/T^{ps}}), we obtain
		\[\dfrac{A_{\mathrm{mv},E}^{\dagger,ps}}{\varphi\left(\frac{\pi}{Y_{\sigma_0}^s}\right)A_{\mathrm{mv},E}^{\dagger,ps}}=A^{\circ}=\bigoplus_{i=1}^q\overline{a}_i\varphi(A^{\circ})=\bigoplus_{i=1}^q \overline{a}_i\varphi\left(\dfrac{A_{\mathrm{mv},E}^{\dagger,s}}{\frac{\pi}{Y_{\sigma_0}^{s}}A_{\mathrm{mv},E}^{\dagger,s}}\right)=\dfrac{\bigoplus_{i=1}^qa_i \varphi(A_{\mathrm{mv},E}^{\dagger,s})}{\varphi\left(\frac{\pi}{Y_{\sigma_0}^s}\right)\left(\bigoplus_{i=1}^qa_i \varphi(A_{\mathrm{mv},E}^{\dagger,s})\right)}.\]
		Therefore, for any $x\in A_{\mathrm{mv},E}^{\dagger,ps}$, there exists $y_1^{(0)},\dots,y_q^{(0)}\in A_{\mathrm{mv},E}^{\dagger,s}$ and $x_1\in A_{\mathrm{mv},E}^{\dagger,ps}$ such that
		\[x-\sum_{i=1}^qa_i\varphi(y_i^{(0)})=\varphi\left(\frac{\pi}{Y_{\sigma_{0}}^s}\right)x_1.\]
		Starting again with $x_1$ instead of $x$, by induction on $n$, we can prove that for every $n\geq 0$, there exists $y_1^{(n)},\dots,y_q^{(n)}\in A_{\mathrm{mv},E}^{\dagger,s}$ and $x_{n+1}\in A_{\mathrm{mv},E}^{\dagger,ps}$ such that
		\[x-\sum_{i=1}^qa_i\varphi\left(\sum_{j=0}^{n}\dfrac{\pi^j}{Y_{\sigma_{0}}^{sj}}y_i^{(j)}\right)=\varphi\left(\frac{\pi^{n+1}}{Y_{\sigma_{0}}^{s(n+1)}}\right)x_{n+1}.\]
		Let $y_i\coloneq \sum_{n=0}^{\infty}\frac{\pi^n}{Y_{\sigma_{0}}^{sn}}y_i^{(n)}$ which is an element of $ A_{\mathrm{mv},E}^{\dagger,s}$ by Lemma \ref{A dagger s is noetherian}, then $x=\sum_{i=1}^qa_i\varphi(y_i)$, i.e.
		\[A_{\mathrm{mv},E}^{\dagger,ps}=\sum_{i=1}^qa_i \varphi(A_{\mathrm{mv},E}^{\dagger,s}).\]
		Since $A_{\mathrm{mv},E}=\bigoplus_{i=1}^qa_i \varphi(A_{\mathrm{mv},E})$, we deduce that $\sum_{i=1}^qa_i \varphi(A_{\mathrm{mv},E}^{\dagger,s})=\bigoplus_{i=1}^qa_i \varphi(A_{\mathrm{mv},E}^{\dagger,s})$, thus
		\[A_{\mathrm{mv},E}^{\dagger,ps}=\bigoplus_{i=1}^qa_i \varphi(A_{\mathrm{mv},E}^{\dagger,s}),\quad s\geq 1.\]
		Thus $\varphi: A_{\mathrm{mv},E}^{\dagger,s}\to A_{\mathrm{mv},E}^{\dagger,ps}$ is finite, flat and of finite presentation. This implies that $\varphi: \mathrm{Spec}A_{\mathrm{mv},E}^{\dagger,ps}\to \mathrm{Spec}A_{\mathrm{mv},E}^{\dagger,s}$ is open and closed (see for example \cite[\href{https://stacks.math.columbia.edu/tag/00I1}{Tag 00I1}]{stacks-project}). Note that $A_{\mathrm{mv},E}^{\dagger,s}$ is an integral domain, thus $\mathrm{Spec}A_{\mathrm{mv},E}^{\dagger,s}$ is connected, which implies that $\varphi\left(\mathrm{Spec}A_{\mathrm{mv},E}^{\dagger,ps}\right)= \mathrm{Spec}A_{\mathrm{mv},E}^{\dagger,s}$.
		Hence $\varphi: A_{\mathrm{mv},E}^{\dagger,s}\to A_{\mathrm{mv},E}^{\dagger,ps}$ is faithfully flat by \cite[\href{https://stacks.math.columbia.edu/tag/00HQ}{Tag 00HQ}]{stacks-project}. The proof of the faithfully flatness of $\varphi: A_{\mathrm{mv},E}\to A_{\mathrm{mv},E}$ is similar, and we omit the details.
	\end{proof}
	
	\begin{cor}\label{phi_q is flat}
		We have 
		\[A_{\mathrm{mv},E}^{\dagger}=\bigoplus_{n_i\in \{0,\dots,p-1\}}Y_{\sigma_{0}}^{n_0}\prod_{i=1}^{f-1}\left(\dfrac{Y_{\sigma_{i}}}{Y_{\sigma_{0}}}\right)^{n_i}\varphi(A_{\mathrm{mv},E}^{\dagger}).\]
		In particular, $\varphi: A_{\mathrm{mv},E}^{\dagger}\to A_{\mathrm{mv},E}^{\dagger}$ and $\varphi_q:A_{\mathrm{mv},E}^{\dagger}\to A_{\mathrm{mv},E}^{\dagger}$ are of finite presentation and faithfully flat, where $\varphi_q\coloneq \varphi^f$.
	\end{cor}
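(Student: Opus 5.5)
The plan is to deduce the corollary from Lemma \ref{general form of basis} by passing to unions over $s$ and then inverting $Y_{\sigma_0}$. The basis is the explicit one from (\ref{A is free over phi(A)}):
\[a_{\underline{n}}\coloneq Y_{\sigma_{0}}^{n_0}\prod_{i=1}^{f-1}\left(\dfrac{Y_{\sigma_{i}}}{Y_{\sigma_{0}}}\right)^{n_i},\qquad 0\leq n_i\leq p-1 .\]
These $p^f=q$ elements lie in $A_{\mathrm{mv},E}^{\dagger,1}$, and their reductions form a $\varphi(A^{\circ})$-basis of $A^{\circ}$. So Lemma \ref{general form of basis} gives $A_{\mathrm{mv},E}^{\dagger,ps}=\bigoplus a_{\underline{n}}\varphi(A_{\mathrm{mv},E}^{\dagger,s})$ for every $s\geq 1$.

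\textbf{Step 1: the union $A_{\mathrm{mv},E}^{\dagger,\infty}$.} Take the union over $s$. Since the $A_{\mathrm{mv},E}^{\dagger,s}$ increase with $s$ and every $A_{\mathrm{mv},E}^{\dagger,t}$ is contained in $A_{\mathrm{mv},E}^{\dagger,ps}$ for $s$ large, we get
\[A_{\mathrm{mv},E}^{\dagger,\infty}=\sum a_{\underline{n}}\varphi(A_{\mathrm{mv},E}^{\dagger,\infty}).\]
The sum is direct because it is already direct inside $A_{\mathrm{mv},E}=\bigoplus a_{\underline{n}}\varphi(A_{\mathrm{mv},E})$.

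\textbf{Step 2: inverting $Y_{\sigma_0}$.} This is the only point needing care. I would use $\varphi(Y_{\sigma_0})=Y_{\sigma_{f-1}}^p(1+pQ_0/Y_{\sigma_{f-1}}^p)$. Here $Y_{\sigma_{f-1}}/Y_{\sigma_0}$ is a unit, and $1+pQ_0/Y_{\sigma_{f-1}}^p$ is a unit in $A_{\mathrm{mv},E}^{\dagger,p}$, as in the proof of the preceding lemma. Hence $\varphi(Y_{\sigma_0})=Y_{\sigma_0}^p u$ with $u\in(A_{\mathrm{mv},E}^{\dagger,\infty})^{\times}$, and inverting $Y_{\sigma_0}$ is the same as inverting $\varphi(Y_{\sigma_0})$. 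Now take $x\in A_{\mathrm{mv},E}^{\dagger}$ and write $x=y/\varphi(Y_{\sigma_0})^m$ with $y\in A_{\mathrm{mv},E}^{\dagger,\infty}$. Decomposing $y$ by Step 1 and dividing by $\varphi(Y_{\sigma_0}^m)$ gives $x\in\sum a_{\underline{n}}\varphi(A_{\mathrm{mv},E}^{\dagger})$. Directness again follows by comparison inside $A_{\mathrm{mv},E}$.

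\textbf{Step 3: consequences.} $A_{\mathrm{mv},E}^{\dagger}$ is free of rank $q$ over $\varphi(A_{\mathrm{mv},E}^{\dagger})$, and $\varphi$ is injective since it is injective on $A_{\mathrm{mv},E}$. Hence $\varphi$ is finite free, so flat and of finite presentation. A finite free extension of nonzero rank is faithfully flat, because a nonzero free module is faithful; alternatively one can repeat the open-closed argument of Lemma \ref{general form of basis} using that $A_{\mathrm{mv},E}^{\dagger}$ is a domain. Finally, $\varphi_q=\varphi^f$ is a composite of finitely presented faithfully flat maps, so it is finitely presented and faithfully flat.

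The only genuine obstacle is the unit computation in Step 2; it is routine given the estimates used for $\varphi(\pi/Y_{\sigma_0}^s)$.
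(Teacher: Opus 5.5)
Your proposal is correct and follows the paper's proof essentially step for step. You apply Lemma \ref{general form of basis} to the monomial basis, pass to the union over $s$, and clear denominators using that $\varphi$ of a power of a variable equals a power of $Y_{\sigma_0}$ times a unit of $A_{\mathrm{mv},E}^{\dagger,\infty}$; the paper uses $\varphi(Y_{\sigma_1})$ where you use $\varphi(Y_{\sigma_0})$, which is equivalent. The only other difference is that you get faithful flatness directly from freeness of rank $q\geq 1$ instead of rerunning the open-and-closed argument on $\mathrm{Spec}$, and you spell out the $\varphi_q=\varphi^f$ case by composition; both are fine.
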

	\begin{proof}
		We take $\left\{a_1,\dots,a_q\right\}=\left\{Y_{\sigma_{0}}^{n_0}\prod_{i=1}^{f-1}\left(\dfrac{Y_{\sigma_{i}}}{Y_{\sigma_{0}}}\right)^{n_i}: 0\leq n_0,\dots,n_{f-1}\leq f-1\right\}$ and apply (\ref{A is free over phi(A)}) and Lemma \ref{general form of basis}, thus
		\begin{align}
			A_{\mathrm{mv},E}^{\dagger,qs}=\bigoplus_{n_i\in \{0,\dots,p-1\}}Y_{\sigma_{0}}^{n_0}\prod_{i=1}^{f-1}\left(\dfrac{Y_{\sigma_{i}}}{Y_{\sigma_{0}}}\right)^{n_i}\varphi(A_{\mathrm{mv},E}^{\dagger,s}),\quad s\geq 1.
		\end{align}
		Hence by the definition of $A_{\mathrm{mv},E}^{\dagger,\infty}$, we have
		\begin{align}
			A_{\mathrm{mv},E}^{\dagger,\infty}=\bigoplus_{n_i\in \{0,\dots,p-1\}}Y_{\sigma_{0}}^{n_0}\prod_{i=1}^{f-1}\left(\dfrac{Y_{\sigma_{i}}}{Y_{\sigma_{0}}}\right)^{n_i}\varphi(A_{\mathrm{mv},E}^{\dagger,\infty}).
		\end{align}
		For $x\in A_{\mathrm{mv},E}^{\dagger}=A_{\mathrm{mv},E}^{\dagger,\infty}\left[\frac{1}{Y_{\sigma_{0}}}\right]$, there exists $n\geq 0$ such that $Y_{\sigma_{0}}^{pn}x\in A_{\mathrm{mv},E}^{\dagger,\infty}$
		. Thus
		\[\varphi(Y_{\sigma_{1}}^n)x=\dfrac{\varphi(Y_{\sigma_{1}}^n)}{Y_{\sigma_{0}}^{pn}}Y_{\sigma_{0}}^{pn}x\in Y_{\sigma_{0}}^{pn}x\cdot\left(A_{\mathrm{mv},E}^{\dagger,p}\right)^{\times}\subset A_{\mathrm{mv},E}^{\dagger,\infty},\]
		thus 
		\[\varphi(Y_{\sigma_{1}}^n)x=\sum_{\underline{n}\in \{0,\dots,p-1\}^{f-1}}Y_{\sigma_{0}}^{n_0}\prod_{i=1}^{f-1}\left(\dfrac{Y_{\sigma_{i}}}{Y_{\sigma_{0}}}\right)^{n_i}\varphi(f_{\underline{n}}),\quad f_{\underline{n}}\in A_{\mathrm{mv},E}^{\dagger,\infty},\]
		thus
		\[x=\sum_{\underline{n}\in \{0,\dots,p-1\}^{f-1}}Y_{\sigma_{0}}^{n_0}\prod_{i=1}^{f-1}\left(\dfrac{Y_{\sigma_{i}}}{Y_{\sigma_{0}}}\right)^{n_i}\varphi(Y_{\sigma_{1}}^{-n}f_{\underline{n}}),\quad Y_{\sigma_{1}}^{-n}f_{\underline{n}}\in A_{\mathrm{mv},E}^{\dagger}.\]
		Hence we deduce the equality in the statement.
		This implies that $\varphi: A_{\mathrm{mv},E}^{\dagger}\to A_{\mathrm{mv},E}^{\dagger}$ is of finite presentation and flat and $A_{\mathrm{mv},E}^{\dagger}$ is a finite $\varphi(A_{\mathrm{mv},E}^{\dagger})$-module. Then we use the same argument as in the proof of Lemma \ref{general form of basis} to conclude.
	\end{proof}

	\subsection{The structure of étale $\varphi_q$-modules over $A_{\mathrm{mv},E}^{\dagger}$}
	Let $\varphi_q\coloneq \varphi^f$.
	\begin{defn}
		Let $R$ be one of the rings $\{A_{\mathrm{mv},E}, A_{\mathrm{mv},E}/{\pi}^n,A_{\mathrm{mv},E}^{\dagger}\}$ where $n\geq 1$. An \textit{étale} $\varphi_q$-\textit{module} over $R$ is a finite $R$-module $M$ equipped with a semi-linear endomorphism $\varphi_q$ such that the map $R\otimes_{\varphi_q,R}M\to M, x\otimes m\mapsto \varphi_q(x)m$
		is an isomorphism. We say that $M$ is an \textit{étale} $(\varphi_q,\mathcal{O}_K^{\times})$-\textit{module}, if $M$ is an étale $\varphi_q$-module equipped with a semi-linear action of $\mathcal{O}_K^{\times}$ commuting with $\varphi_q$. We denote by $\modet_{(\varphi_q,\mathcal{O}_K^{\times})}(R)$ the category of étale $(\varphi_q,\mathcal{O}_K^{\times})$-modules over $R$.
	\end{defn}

	\begin{lem}\label{pA dagger is topologically nilpotent}
		The ideal $\pi A_{\mathrm{mv},E}^{\dagger}$ is contained in the Jacobson radical of $A_{\mathrm{mv},E}^{\dagger}$.
	\end{lem}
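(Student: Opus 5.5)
The plan is to use the standard criterion: an ideal $I$ of a commutative ring $R$ lies in the Jacobson radical if and only if $1+x$ is a unit in $R$ for every $x\in I$. So fix $x\in A_{\mathrm{mv},E}^{\dagger}$; it suffices to show that $1+\pi x$ is invertible in $A_{\mathrm{mv},E}^{\dagger}$.

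By (\ref{pi dagger=infty}), $\pi x\in A_{\mathrm{mv},E}^{\dagger,\infty}$. More precisely, write $x=Y_{\sigma_0}^{-n}y$ with $y\in A_{\mathrm{mv},E}^{\dagger,s}$ for some $s\geq 1$ and $n\geq 0$. We may enlarge $s$ so that $s\geq n$, since $A_{\mathrm{mv},E}^{\dagger,s}\subset A_{\mathrm{mv},E}^{\dagger,s'}$ for $s\leq s'$. Then
\[\pi x=\frac{\pi}{Y_{\sigma_0}^{s}}\cdot Y_{\sigma_0}^{s-n}y\in \frac{\pi}{Y_{\sigma_0}^{s}}A_{\mathrm{mv},E}^{\dagger,s}.\]
The key observation is that $\frac{\pi}{Y_{\sigma_0}^{s}}$ is topologically nilpotent for the $\left(Y_{\sigma_0},\frac{\pi}{Y_{\sigma_0}^s}\right)$-adic topology on $A_{\mathrm{mv},E}^{\dagger,s}$. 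By Lemma \ref{A dagger s is noetherian}, $A_{\mathrm{mv},E}^{\dagger,s}$ is complete for this topology. Hence the geometric series
\[\sum_{k\geq 0}(-\pi x)^k=\sum_{k\geq 0}(-1)^k\left(\frac{\pi}{Y_{\sigma_0}^{s}}\right)^k\left(Y_{\sigma_0}^{s-n}y\right)^k\]
converges in $A_{\mathrm{mv},E}^{\dagger,s}$: its $k$-th term lies in $\left(Y_{\sigma_0},\frac{\pi}{Y_{\sigma_0}^s}\right)^k$. Its sum is an inverse of $1+\pi x$ in $A_{\mathrm{mv},E}^{\dagger,s}\subset A_{\mathrm{mv},E}^{\dagger}$. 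Equivalently, the ideal $\left(Y_{\sigma_0},\frac{\pi}{Y_{\sigma_0}^s}\right)$ lies in the Jacobson radical of the adically complete ring $A_{\mathrm{mv},E}^{\dagger,s}$.

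The only point needing care is the claim that $A_{\mathrm{mv},E}^{\dagger,s}$ is complete with respect to this ideal, so that the series genuinely converges inside the ring rather than merely inside $A_{\mathrm{mv},E}$. This is exactly what Lemma \ref{A dagger s is noetherian} provides, since it identifies $A_{\mathrm{mv},E}^{\dagger,s}$ with an inverse limit along the powers of this ideal. The reduction to $s\geq n$ is harmless. No further obstacle is expected.
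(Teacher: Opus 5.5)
Your proof is correct and follows essentially the same route as the paper. It applies the unit criterion for the Jacobson radical, writes $\pi x$ as a multiple of $\frac{\pi}{Y_{\sigma_0}^{s}}$ in $A_{\mathrm{mv},E}^{\dagger,s}$ for $s$ large, and sums the geometric series using the adic completeness from Lemma \ref{A dagger s is noetherian}. Your bound $s\geq n$ works just as well as the paper's $s\geq \max\{s_0,n\}+1$.
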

	\begin{proof}
		By \cite[\href{https://stacks.math.columbia.edu/tag/0AME}{Tag 0AME}]{stacks-project}, we need to show that $1+\pi x$ is invertible in $A_{\mathrm{mv},E}^{\dagger}$ for any $x\in A_{\mathrm{mv},E}^{\dagger}$. Recall that
		\[A_{\mathrm{mv},E}^{\dagger}=\bigcup_{s\geq 1}A_{\mathrm{mv},E}^{\dagger,s}\left[\dfrac{1}{Y_{\sigma_0}}\right],\]
		hence there exists $s_0,n\geq 1$, $x'\in A_{\mathrm{mv},E}^{\dagger,s_0}$ such that $x=\frac{x'}{Y_{\sigma_0}^{n}}$.
		Then $\pi x=\frac{\pi}{Y_{\sigma_0}^{n}}x'\in A_{\mathrm{mv},E}^{\dagger,s}$ is topologically nilpotent for $s\geq \max\{s_0,n\}+1$ by Lemma \ref{A dagger s is noetherian}. Thus $(1+\pi x)^{-1}=1+\sum_{i=1}^{\infty}(-\pi x)^{i}$ converges in $ A_{\mathrm{mv},E}^{\dagger,s}$ for $s\geq \max\{s_0,n\}+1$, hence $1+\pi x$ is invertible in $A_{\mathrm{mv},E}^{\dagger}$.
	\end{proof}
	
	\begin{remark}
		Using Newton's method, one can check that $\left( A_{\mathrm{mv},E}^{\dagger},\pi A_{\mathrm{mv},E}^{\dagger}\right)$ is a henselian pair. We do not need this fact.
	\end{remark}
	
	\begin{cor}\label{finite projective implies free}
		If $M$ is a finite projective $A_{\mathrm{mv},E}^{\dagger}$-module, then $M$ is a free $A_{\mathrm{mv},E}^{\dagger}$-module.
	\end{cor}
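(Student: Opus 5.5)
The plan is the standard ``Nakayama lifting'' argument. Fix a finite projective $A_{\mathrm{mv},E}^{\dagger}$-module $M$ and reduce it modulo $\pi$. Set $\overline{R}\coloneq A_{\mathrm{mv},E}^{\dagger}/\pi A_{\mathrm{mv},E}^{\dagger}$; then $\overline{M}\coloneq M/\pi M$ is a finite projective $\overline{R}$-module. We will show that $\overline{M}$ is free, choose a basis, and lift it to a basis of $M$.

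\textbf{Step 1: identify $\overline{R}$.} By (\ref{pi dagger=infty}) we have $\pi A_{\mathrm{mv},E}^{\dagger}\subset A_{\mathrm{mv},E}^{\dagger,\infty}$, and elements $\pi/Y_{\sigma_0}^{n}$ lie in $\pi A_{\mathrm{mv},E}^{\dagger}$. From this we expect $\overline{R}$ to be the subring of $A=A_{\mathrm{mv},E}/\pi$ generated by the images of the $A_{\mathrm{mv},E}^{\dagger,s}$ together with $1/Y_{\sigma_0}$. Since $A_{\mathrm{mv},E}^{\dagger,s}/(\pi/Y_{\sigma_0}^s)\cong A^{\circ}$ by (\ref{s/pi/T^s=ps/pi/T^{ps}}), this subring is $A^{\circ}[1/Y_{\sigma_0}]=A$. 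So we aim to prove $\overline{R}\cong A$, i.e.\ that $A_{\mathrm{mv},E}^{\dagger}\to A_{\mathrm{mv},E}\to A$ is surjective with kernel exactly $\pi A_{\mathrm{mv},E}^{\dagger}$. Surjectivity follows by lifting $\mathbb{F}$-coefficients to $\mathcal{O}_E$. For the kernel, take $x\in A_{\mathrm{mv},E}^{\dagger}$ with $x\in \pi A_{\mathrm{mv},E}$. Write $x=y/Y_{\sigma_0}^n$ with $y\in A_{\mathrm{mv},E}^{\dagger,s}$. Then $y/\pi$ has coefficients $a_m/\pi$, and with a slightly larger $s'$ these still satisfy the growth condition defining $A_{\mathrm{mv},E}^{\dagger,s'}[1/Y_{\sigma_0}]$, since dividing by $\pi$ only costs a bounded shift in the power of $Y_{\sigma_0}$. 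This coefficientwise check is the main technical point; I expect it to be routine from the explicit series description of $A_{\mathrm{mv},E}^{\dagger,s}$.

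\textbf{Step 2: $\overline{M}$ is free.} Finite projective modules over $A$ are free. One route is \cite[Corollary 2.10]{kedlaya2016noetherian}-style results or the known structure of $A$ (as in \cite{breuil2023multivariable}). If a citation is unavailable, I would argue directly: $A$ is a localization of the regular ring $A^{\circ}$, which is $Y_{\sigma_0}$-adically complete with $A^{\circ}/Y_{\sigma_0}\cong\mathbb{F}[(Y_{\sigma_i}/Y_{\sigma_0})^{\pm1}]$, a Laurent polynomial ring. By Quillen--Suslin--Swan, finite projective modules over this Laurent ring are free, and freeness lifts along the complete $Y_{\sigma_0}$-adic thickening. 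The remaining difficulty is passing from $A^{\circ}$ to $A=A^{\circ}[1/Y_{\sigma_0}]$. Here I would use that $A$ is a regular (indeed a Dedekind-like, or at least UFD-type) ring with trivial Picard group, and for higher rank that the projective extends over $A^{\circ}$ (a Beauville--Laszlo-type gluing). This is the step I expect to be the main obstacle. If the paper only needs Corollary \ref{finite projective implies free} in the form ``$\overline{M}$ free implies $M$ free'', I would isolate freeness over $A$ as the key input.

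\textbf{Step 3: lift the basis.} Choose $e_1,\dots,e_d\in M$ lifting a basis of $\overline{M}$ and let $f:(A_{\mathrm{mv},E}^{\dagger})^d\to M$ be the induced map. Then $\mathrm{coker}(f)/\pi=0$, and $\pi$ lies in the Jacobson radical by Lemma \ref{pA dagger is topologically nilpotent}, so Nakayama gives that $f$ is surjective. Since $M$ is projective, $f$ splits, so $\ker f$ is finitely generated and $\ker f/\pi\ker f\hookrightarrow \overline{R}^d\to\overline{M}$. The last map is an isomorphism, so $\ker f/\pi\ker f=0$, and Nakayama again gives $\ker f=0$. Hence $M$ is free.
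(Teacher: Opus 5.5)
Your Steps 1 and 3 are sound and follow the paper's route. The paper also reduces modulo $\pi$, uses $A_{\mathrm{mv},E}^{\dagger}/\pi=A$ (which it takes for granted, so your coefficientwise check is extra detail), and lifts a basis by Nakayama via Lemma \ref{pA dagger is topologically nilpotent}. For injectivity the paper shows $\pi^n\mid a_i$ for all $n$, using $\pi$-torsion-freeness of $M$ and $\bigcap_n\pi^nA_{\mathrm{mv},E}^{\dagger}=0$. Your splitting-plus-second-Nakayama argument is an equally valid alternative.

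The gap is Step 2, which is the only substantive input. The paper gets it by citing L\"utkebohmert \cite[Satz 3, p.~131]{lutkebohmert1977vektorraumbundel}. It views $A\cong\mathbb{F}(\negthinspace(Y_{\sigma_0})\negthinspace)\langle T_1^{\pm1},\dots,T_{f-1}^{\pm1}\rangle$, with $T_i=Y_{\sigma_i}/Y_{\sigma_0}$, as a Laurent-type affinoid algebra over the complete field $\mathbb{F}(\negthinspace(Y_{\sigma_0})\negthinspace)$, over which finite projective modules are free. Neither of your substitutes delivers this.

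The reference \cite[Corollary 2.10]{kedlaya2016noetherian} concerns $B_{L,I}$ for a perfectoid field $L$ and says nothing about $A$.

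In your direct argument, freeness over $A^\circ$ is fine: Swan's theorem for the Laurent polynomial ring $A^\circ/Y_{\sigma_0}$, then lifting along the $Y_{\sigma_0}$-adically complete ring $A^\circ$. But the passage to $A=A^\circ[1/Y_{\sigma_0}]$ is exactly the hard part, and your sketch does not handle it:
\begin{itemize}
\item $A$ has Krull dimension $f-1$, so it is not Dedekind once $f\geq 3$, and a trivial Picard group only controls rank one.
\item Beauville--Laszlo gluing is vacuous here, because $A^\circ$ is already $Y_{\sigma_0}$-adically complete.
\item An arbitrary finitely generated $A^\circ$-lattice in a projective $A$-module need not be projective. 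Taking the reflexive hull of a lattice does give a projective, hence free, $A^\circ$-lattice when $\dim A^\circ=f\leq 2$, but this fails in general.
\end{itemize}
So for $f\geq 3$ you genuinely need L\"utkebohmert's theorem, or an equivalent result on vector bundles over such affinoids, to close Step 2.
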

	\begin{proof}
		Since $M$ is finite projective over $A_{\mathrm{mv},E}^{\dagger}$, $M/\pi M$ is finite projective over $A_{\mathrm{mv},E}^{\dagger}/\pi=A$. By \cite[Satz 3, p. 131]{lutkebohmert1977vektorraumbundel}, $M/\pi M$ is a finite free $A$-module. Let $e_1,\dots,e_r\in M$ such that the image of $e_1,\dots,e_r$ in $M/\pi M$ forms an $A$-basis of $M/\pi M$. By Lemma \ref{pA dagger is topologically nilpotent} and Nakayama's lemma, $M=\sum_{i=1}^rA_{\mathrm{mv},E}^{\dagger}e_i$. Suppose that $\sum_{i=1}^ra_ie_i=0$ for some $a_i\in A_{\mathrm{mv},E}^{\dagger}$, then by reducing modulo $\pi$, we see that $\pi\mid a_i$ for every $i$. Thus $\pi\cdot\sum_{i=1}^r\frac{a_i}{\pi}e_i=0$, and since $M$ is projective over $A_{\mathrm{mv},E}^{\dagger}$, we have $\sum_{i=1}^r\frac{a_i}{\pi}e_i=0$, hence $\pi\mid \frac{a_i}{\pi}$, i.e. $\pi^2\mid a_i$ for $i=1,\dots,r$. By induction on $n$, we have $\pi^n\mid a_i$ for every $n\geq 1$, hence $a_i=0$ for $i=1,\dots,r$.
	\end{proof}
	
	\begin{cor}\label{finite etale over A^{dagger} is free}
		Let $M$ be a $\pi$-torsion free étale $\varphi_q$-module over $A_{\mathrm{mv},E}^{\dagger}$. Then $M$ is free.
	\end{cor}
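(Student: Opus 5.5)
By Corollary \ref{finite projective implies free}, it suffices to show that $M$ is a finite projective $A_{\mathrm{mv},E}^{\dagger}$-module. The plan is to get projectivity from the étale condition together with faithful flatness of $\varphi_q$ (Corollary \ref{phi_q is flat}). Since $A_{\mathrm{mv},E}^{\dagger}$ is not obviously Noetherian, I would first reduce to finite presentation, and then show that $M$ is flat.

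\emph{Step 1 (finite presentation and flatness via $\varphi_q$).} Choose a presentation $0\to N\to F\to M\to 0$ with $F$ finite free. Consider the Fitting ideals, or more robustly the locus where $M$ is locally free of a given rank. Because $\varphi_q$ is faithfully flat, we have $\mathrm{Tor}_k^{A^\dagger}(\varphi_q^*M,-)$ computed by base change. The étale isomorphism $\varphi_q^*M\cong M$ then gives $\mathrm{Fitt}_j(M)A^\dagger=\varphi_q(\mathrm{Fitt}_j(M))A^\dagger$ for all $j$. I would use these $\varphi_q$-stable Fitting ideals, together with the reduction mod $\pi$, to show that each $\mathrm{Fitt}_j(M)$ is either $0$ or the unit ideal.

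For the reduction mod $\pi$: $M/\pi M$ is an étale $\varphi_q$-module over $A=A_{\mathrm{mv},E}/\pi$. Since $A$ is Noetherian and $\varphi_q$ on $A$ is faithfully flat and finite, a standard argument shows that $M/\pi M$ is finite projective over $A$. Indeed, a nonzero proper $\varphi$-stable Fitting ideal $I$ with $I=\varphi_q(I)A$ is impossible in the regular domain $A$: its vanishing locus is stable under a finite map of degree $q^{\dim}$, so the multiplicities force it to be empty. Hence the Fitting ideals of $M/\pi M$ are $0$ or $A$, and $M/\pi M$ is locally free of some constant rank $d$, hence free by \cite{lutkebohmert1977vektorraumbundel}.

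\emph{Step 2 (lifting).} Lift a basis of $M/\pi M$ to $e_1,\dots,e_d\in M$. By Lemma \ref{pA dagger is topologically nilpotent} and Nakayama, the $e_i$ generate $M$. Now suppose $\sum a_ie_i=0$. Reducing mod $\pi$ gives $\pi\mid a_i$. Since $M$ is $\pi$-torsion free, we may divide by $\pi$ and iterate, exactly as in the proof of Corollary \ref{finite projective implies free}. This yields $a_i\in\bigcap_n\pi^nA_{\mathrm{mv},E}^{\dagger}$, which is $0$ because $A_{\mathrm{mv},E}^{\dagger}\subset A_{\mathrm{mv},E}$ and the latter is $\pi$-adically separated. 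So $M$ is free of rank $d$.

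Step 2 only uses $\pi$-torsion freeness and the freeness of $M/\pi M$. The real content is therefore the mod $\pi$ statement: an étale $\varphi_q$-module over $A$ is projective. This is the main obstacle. I would try to prove it via the Fitting ideal argument above: a $\varphi_q$-stable radical ideal of $A$ defines a closed subset $Z$ with $\varphi_q^{-1}(Z)=Z$. Since $\varphi_q$ is finite flat of degree $q^{f}$ and Frobenius-like (it is a power of the absolute Frobenius up to the automorphism of $\mathbb F$ on the coefficients), $Z$ is stable under a homeomorphism, so this alone does not force $Z=\emptyset$. Instead I would use the ideal-level identity $I=\varphi_q(I)A$. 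If $I\neq 0,A$, pick a height-one prime minimal over $I$ in the UFD-like localisation. Comparing valuations, $v(I)=v(\varphi_q(I)A)=q\,v(I)$ then forces $v(I)=0$, a contradiction. Thus $I\in\{0,A\}$, as required.
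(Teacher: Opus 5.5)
Your Step 2 is exactly the paper's argument, and like the paper you reduce everything to the freeness of $M/\pi M$ over $A$. The Fitting-ideal discussion over $A_{\mathrm{mv},E}^{\dagger}$ in Step 1 is never actually used. The paper does not reprove the mod $\pi$ statement; it quotes \cite[Corollary 4.9]{du2025multivariablevarphiqmathcaloktimesmodulesassociatedpadic}. You try to prove it, and that is where the proposal has a gap.

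Your final valuation argument only handles Fitting ideals of height one. If $\mathrm{ht}(I)\geq 2$, there is no height-one prime minimal over $I$, and $v_{\mathfrak p}(I)=0$ for every height-one prime $\mathfrak p$. So comparing $v(I)$ with $v(\varphi_q(I)A)$ yields no contradiction. This is not a marginal case. Since $\dim A=f-1$, for $f\geq 3$ an ideal such as $\bigl(\tfrac{Y_{\sigma_1}}{Y_{\sigma_0}}-1,\tfrac{Y_{\sigma_2}}{Y_{\sigma_0}}-1\bigr)$ is invisible to every height-one valuation. Codimension $\geq 2$ is also precisely where a finite torsion-free module over a normal domain can fail to be free. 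In addition, the identity $v_{\mathfrak p}(\varphi_q(I)A)=q\,v_{\mathfrak p}(I)$ is not literally true. The map $\varphi_q$ is $\mathbb F$-linear rather than the absolute Frobenius, so $v_{\mathfrak p}\circ\varphi_q=q\,v_{\mathfrak p'}$ for a possibly different prime $\mathfrak p'$; this part can be repaired by iterating.

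The missing idea is that $\varphi_q$ is a Frobenius up to a finite-order twist, and this handles all heights at once. On $A$, $\varphi_q$ is $\mathbb F$-linear with $\varphi_q(Y_{\sigma_i})=Y_{\sigma_i}^q$, and $\mathbb F$ is finite. Choose $m\geq 1$ with $c^{q^m}=c$ for all $c\in\mathbb F$; by continuity, $\varphi_q^m(x)=x^{q^m}$ for all $x\in A$. Iterating $I=\varphi_q(I)A$ then gives
\[
I=\varphi_q^m(I)A=I^{[q^m]}\subseteq I^{q^m}\subseteq I^2,
\]
so $I=I^2$. In the Noetherian domain $A$ this forces $I\in\{0,A\}$ by the determinant trick. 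Hence all Fitting ideals of $M/\pi M$ are $0$ or $A$, so $M/\pi M$ is finite locally free of constant rank and therefore free by \cite[Satz 3, p. 131]{lutkebohmert1977vektorraumbundel}. Your Step 2 then finishes the proof. Alternatively, simply cite \cite[Corollary 4.9]{du2025multivariablevarphiqmathcaloktimesmodulesassociatedpadic} for this step, as the paper does.
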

	\begin{proof}
		Using the isomorphism $A_{\mathrm{mv},E}^{\dagger}\otimes_{\varphi_q,A_{\mathrm{mv},E}^{\dagger}}M\xrightarrow{\sim }M$ and the natural isomorphism 
		\[\dfrac{A_{\mathrm{mv},E}^{\dagger}\otimes_{\varphi_q,A_{\mathrm{mv},E}^{\dagger}}M}{\pi\cdot \left(A_{\mathrm{mv},E}^{\dagger}\otimes_{\varphi_q,A_{\mathrm{mv},E}^{\dagger}}M\right)}\cong \dfrac{A_{\mathrm{mv},E}^{\dagger}}{\pi A_{\mathrm{mv},E}^{\dagger}}\otimes_{\varphi_q,A_{\mathrm{mv},E}^{\dagger}}\dfrac{M}{\pi M}\cong A\otimes_{\varphi_q,A}M/\pi M,\]
		we deduce that $A\otimes_{\varphi_q,A}M/\pi M\xrightarrow{\sim}M/\pi M$,
		hence $M/\pi M$ is a finitely generated étale $\varphi_q$-module over $A$. By \cite[Corollary 4.9]{du2025multivariablevarphiqmathcaloktimesmodulesassociatedpadic}, $M/\pi M$ is a finite free $A$-module. Then we can use the same argument as in the proof of Corollary \ref{finite projective implies free} to deduce that $M$ is a finite free $A_{\mathrm{mv},E}^{\dagger}$-module.
	\end{proof}

	\begin{prop}\label{structure of étale varphi_q-modules of finite type}
		Let $M$ be an étale $\varphi_q$-module over $A_{\mathrm{mv},E}^{\dagger}$. Then there exist $1\leq n_1<\cdots< n_r\leq \infty$, $d_1,\dots,d_r\geq 1$ such that
		\[M\cong \bigoplus_{i=1}^r\left(A_{\mathrm{mv},E}^{\dagger}/\pi^{n_i}\right)^{\oplus d_i},\]
		here $A_{\mathrm{mv},E}^{\dagger}/\pi^{\infty}\coloneq A_{\mathrm{mv},E}^{\dagger}$.
	\end{prop}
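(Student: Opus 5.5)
We will reduce the statement to the two extreme cases already treated: $\pi$-torsion-free étale modules (Corollary \ref{finite etale over A^{dagger} is free}) and modules over $A=A_{\mathrm{mv},E}^{\dagger}/\pi$, where étale $\varphi_q$-modules are free by \cite[Corollary 4.9]{du2025multivariablevarphiqmathcaloktimesmodulesassociatedpadic}. The key input is that $\varphi_q: A_{\mathrm{mv},E}^{\dagger}\to A_{\mathrm{mv},E}^{\dagger}$ is faithfully flat (Corollary \ref{phi_q is flat}). Flatness means the functor $A_{\mathrm{mv},E}^{\dagger}\otimes_{\varphi_q}-$ is exact, so kernels, cokernels and images of $\varphi_q$-equivariant maps between étale modules are again étale. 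Since $\varphi_q(\pi)=\pi$, the submodules $M[\pi^n]$, $\pi^nM$ and $M_{\mathrm{tors}}=\bigcup_n M[\pi^n]$ are étale. Moreover $M_{\mathrm{tors}}$ is finitely generated, because the ring is Noetherian, being a filtered union of Noetherian rings, localized. Hmm—actually the Noetherianity of $A_{\mathrm{mv},E}^{\dagger}$ is not clear. Instead I will use that $M$ is finitely generated and that the ascending chain $M[\pi^n]$ stabilizes. This follows because the étale module $M/M[\pi^n]\cong \pi^nM$ and the ranks over $A$ of the graded pieces behave additively, as explained below. So $M_{\mathrm{tors}}=M[\pi^N]$ for some $N$, and $M/M_{\mathrm{tors}}$ is $\pi$-torsion free and étale, hence free by Corollary \ref{finite etale over A^{dagger} is free}.

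Next I split off the free part. Choose lifts in $M$ of a basis of $M/M_{\mathrm{tors}}$. The sequence $0\to M_{\mathrm{tors}}\to M\to M/M_{\mathrm{tors}}\to 0$ then splits as modules (not as $\varphi_q$-modules, which is not needed since the statement only asserts a module isomorphism). It thus suffices to treat $M$ killed by $\pi^N$.

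For a torsion module $M$ killed by $\pi^N$, consider the étale $A$-modules $M_k:=\pi^{k}M[\pi^{k+1}]/\pi^{k+1}M[\pi^{k+2}]$, or more simply the filtration by $M[\pi]\cap\pi^kM$. Each piece is a finitely generated étale $\varphi_q$-module over $A$, hence free. I will use the structure theory over a ring where $\pi$ lies in the Jacobson radical (Lemma \ref{pA dagger is topologically nilpotent}). Let $V_k=(\pi^{k}M\cap M[\pi])$, a decreasing chain of free $A$-modules with free quotients $V_k/V_{k+1}$, all étale. Set $d_k=\operatorname{rank}(V_{k-1}/V_k)$. Choose elements $m$ with $\pi^{k-1}m$ lifting a basis of $V_{k-1}/V_k$; such $m$ generate copies of $A_{\mathrm{mv},E}^{\dagger}/\pi^{k}$. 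The claim is that their sum is direct and equals $M$. Generation follows from Nakayama, using $\pi$ in the Jacobson radical and $M$ finite: it suffices to check generation modulo $\pi$, by induction on $N$. Independence follows by the argument of Corollary \ref{finite projective implies free}: reduce a relation modulo $\pi$, use freeness of the quotients $V_{k-1}/V_k$ over $A$ to divide the coefficients by $\pi$, and iterate.

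The main obstacle is the bookkeeping that the quotients $V_{k-1}/V_k$ are étale, hence free. Freeness of these quotients, not just projectivity, is exactly what makes the lifting argument produce a direct sum. Étaleness follows from exactness of $\varphi_q^*$ applied to the maps $\pi^k:M\to M$ and intersections. Stabilization of $M[\pi^n]$ uses that $\operatorname{rank}_A(M[\pi]\cap \pi^k M)$ is a decreasing sequence of nonnegative integers bounded by the number of generators of $M$.
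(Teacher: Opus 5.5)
\textbf{There is a genuine gap: your argument that the chain $M[\pi^n]$ stabilizes does not work.}

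Multiplication by $\pi^n$ identifies $M[\pi^{n+1}]/M[\pi^n]$ with $V_n=\pi^nM\cap M[\pi]$. So what you need is $V_n=0$ for $n\gg 0$.

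A decreasing sequence of $A$-ranks only shows that the $V_n$ are eventually constant, even using that $V_n/V_{n+1}$ is free. That is, $V_n=\bigcap_k V_k$ for large $n$. Ruling out $\bigcap_k(\pi^kM\cap M[\pi])\neq 0$ is a Krull-intersection statement for a finitely generated module over $A_{\mathrm{mv},E}^{\dagger}$, and you cannot invoke it because Noetherianity of $A_{\mathrm{mv},E}^{\dagger}$ is unknown.

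There is a second problem at this stage. You do not yet know that $M$ is killed by a power of $\pi$, so you are not working over the Noetherian ring $A_{\mathrm{mv},E}^{\dagger}/\pi^N=A_{\mathrm{mv},E}/\pi^N$. Hence nothing guarantees that $M[\pi]$ or the $V_k$ are finitely generated over $A$. Without that, they are not étale in the paper's sense (the definition requires finiteness), their $A$-ranks are undefined, and \cite[Corollary 4.9]{du2025multivariablevarphiqmathcaloktimesmodulesassociatedpadic} does not apply. Your blanket claim that kernels and images of maps between étale modules are étale has the same defect: flatness of $\varphi_q$ gives the linearization isomorphism, not finite generation.

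\textbf{The missing idea: you do not need stabilization before splitting off the free part, because the splitting itself gives it.} This is the paper's route.

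\begin{enumerate}
\item Since $\varphi_q$ is flat and tensor products commute with filtered colimits, $A_{\mathrm{mv},E}^{\dagger}\otimes_{\varphi_q}M[\pi^\infty]\cong\bigl(A_{\mathrm{mv},E}^{\dagger}\otimes_{\varphi_q}M\bigr)[\pi^\infty]$. The linearization isomorphism of $M$ carries this onto $M[\pi^\infty]$.
\item The five lemma then shows that $M/M[\pi^\infty]$ is étale.
\item $M/M[\pi^\infty]$ is finitely generated (as a quotient of $M$) and $\pi$-torsion free, hence free by Corollary \ref{finite etale over A^{dagger} is free}. So $M\cong M[\pi^\infty]\oplus M/M[\pi^\infty]$ as modules.
\item Now $M[\pi^\infty]$ is a quotient of $M$, hence finitely generated, hence equal to $M[\pi^n]$ for some $n$.
\end{enumerate}

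For this torsion part, the paper simply cites \cite[Proposition 4.10]{du2025multivariablevarphiqmathcaloktimesmodulesassociatedpadic}, using $A_{\mathrm{mv},E}^{\dagger}/\pi^n=A_{\mathrm{mv},E}/\pi^n$. Your filtration argument can legitimately replace that citation, but only once you are over the Noetherian ring $A_{\mathrm{mv},E}/\pi^n$. There the $V_k$ are finite, étale, and hence free over $A$.

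Your generation and independence steps are only sketched. The cleanest way to finish:
\begin{itemize}
\item \emph{Injectivity.} The chosen elements $\pi^{k-1}m$ form an $A$-basis of $M[\pi]$ adapted to the filtration $V_k$. So the map $\bigoplus_k(A_{\mathrm{mv},E}^{\dagger}/\pi^k)^{d_k}\to M$ is an isomorphism on $\pi$-torsion, which forces it to be injective.
\item \emph{Surjectivity.} Apply induction on the exponent to $\pi M$, whose filtration pieces are the $V_k$ for $k\geq 1$.
\end{itemize}
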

	\begin{proof}
		Let $M[\pi^n]\coloneq\left\{x\in M: \pi^nx=0\right\}$ for $n\geq 1$, and $M[\pi^{\infty}]\coloneq \bigcup_{n=1}M[\pi^n]$. There is a short exact sequence
		\[0\to M[\pi^n]\to M\xrightarrow{\times \pi^n}M.\]
		Since $\varphi_q$ is flat (Corollary \ref{phi_q is flat}), it induces a short exact sequence
		\[0\to A_{\mathrm{mv},E}^{\dagger}\otimes_{\varphi_q,A_{\mathrm{mv},E}^{\dagger}}M[\pi^n]\to A_{\mathrm{mv},E}^{\dagger}\otimes_{\varphi_q,A_{\mathrm{mv},E}^{\dagger}}M\xrightarrow{\times \pi^n}A_{\mathrm{mv},E}^{\dagger}\otimes_{\varphi_q,A_{\mathrm{mv},E}^{\dagger}}M,\]
		hence
		\begin{align}\label{p^n-torsion elements}
			A_{\mathrm{mv},E}^{\dagger}\otimes_{\varphi_q,A_{\mathrm{mv},E}^{\dagger}}M[\pi^n]=\left(A_{\mathrm{mv},E}^{\dagger}\otimes_{\varphi_q,A_{\mathrm{mv},E}^{\dagger}}M\right)[\pi^n],\quad n\geq 1.
		\end{align}
		Consider the following commutative diagram with exact rows
		\begin{equation*}
			\begin{tikzcd}
				A_{\mathrm{mv},E}^{\dagger}\otimes_{\varphi_q,A_{\mathrm{mv},E}^{\dagger}}M[\pi^{\infty}]\arrow[d]\arrow[r,hook] &A_{\mathrm{mv},E}^{\dagger}\otimes_{\varphi_q,A_{\mathrm{mv},E}^{\dagger}}M \arrow[d,"\wr"]\arrow[r,two heads]&A_{\mathrm{mv},E}^{\dagger}\otimes_{\varphi_q,A_{\mathrm{mv},E}^{\dagger}}M/M[\pi^{\infty}]\arrow[d]\\
				M[\pi^{\infty}]\arrow[r,hook] &M\arrow[r,two heads]&M/M[\pi^{\infty}].
			\end{tikzcd}
		\end{equation*}
		The left vertical map is an isomorphism by (\ref{p^n-torsion elements}), hence the right vertical map is also an isomorphism, thus $M/M[\pi^{\infty}]$ is an étale $\varphi_q$-module over $A_{\mathrm{mv},E}^{\dagger}$ without $\pi$-torsion. Note that $M/M[\pi^{\infty}]$ is of finite type since it is a quotient of $M$, thus $M/M[\pi^{\infty}]$ is a free $A_{\mathrm{mv},E}^{\dagger}$-module of finite rank by Corollary \ref{finite etale over A^{dagger} is free}. In particular, the short exact sequence of $A_{\mathrm{mv},E}^{\dagger}$-modules
		\[0\to M[\pi^{\infty}]\to M\to M/M[\pi^{\infty}]\to 0\]
		splits, hence $M\cong M/M[\pi^{\infty}]\oplus M[\pi^{\infty}] $ as $A_{\mathrm{mv},E}^{\dagger}$-modules, and $M[\pi^{\infty}]$ is also a quotient of $M$, hence is of finite type, hence $M[\pi^{\infty}]=M[\pi^n]$ for some $n\geq 1$. Thus $M[\pi^{\infty}]$ is an étale $\varphi_q$-module over $A_{\mathrm{mv},E}^{\dagger}/\pi^n=A_{\mathrm{mv},E}/\pi^n$. Then we apply \cite[Proposition 4.10 ]{du2025multivariablevarphiqmathcaloktimesmodulesassociatedpadic} to conclude.
	\end{proof}
	
	By \cite[ Lemma 4.11]{du2025multivariablevarphiqmathcaloktimesmodulesassociatedpadic}, $\modet_{(\varphi_q,\mathcal{O}_K^{\times})}(A_{\mathrm{mv},E})$ and $\modet_{(\varphi_q,\mathcal{O}_K^{\times})}(A_{\mathrm{mv},E}/\pi^n)$ are abelian categories. This is also true for $A_{\mathrm{mv},E}^{\dagger}$.
	
	\begin{lem}\label{A^{dagger} is abelian}
		The category $\modet_{(\varphi_q,\mathcal{O}_K^{\times})}(A_{\mathrm{mv},E}^{\dagger})$ is an abelian category.
	\end{lem}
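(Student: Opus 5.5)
The plan is to show that $\modet_{(\varphi_q,\mathcal{O}_K^{\times})}(A_{\mathrm{mv},E}^{\dagger})$ is closed under kernels and cokernels taken in the category of $A_{\mathrm{mv},E}^{\dagger}$-modules. It is clearly additive, and it sits as a full subcategory of the category of $A_{\mathrm{mv},E}^{\dagger}$-modules with semilinear $(\varphi_q,\mathcal{O}_K^{\times})$-action. Once closure is established, the abelian axioms (image equals coimage) are inherited from modules. So let $f: M\to N$ be a morphism of étale $(\varphi_q,\mathcal{O}_K^{\times})$-modules over $A_{\mathrm{mv},E}^{\dagger}$. The kernel and cokernel are stable under $\varphi_q$ and $\mathcal{O}_K^{\times}$, since $f$ commutes with both. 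What needs proof is that they are finite and étale.

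\textbf{Cokernel.} The cokernel is a quotient of the finite module $N$, hence finite. By Corollary \ref{phi_q is flat}, $\varphi_q$ is flat, so $A_{\mathrm{mv},E}^{\dagger}\otimes_{\varphi_q,A_{\mathrm{mv},E}^{\dagger}}-$ is exact (right exactness alone already suffices here). Consider the commutative diagram with exact rows
\[A_{\mathrm{mv},E}^{\dagger}\otimes_{\varphi_q}M\to A_{\mathrm{mv},E}^{\dagger}\otimes_{\varphi_q}N\to A_{\mathrm{mv},E}^{\dagger}\otimes_{\varphi_q}\mathrm{coker}f\to 0\]
mapping to $M\to N\to\mathrm{coker} f\to 0$ via the linearized Frobenius maps. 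The first two vertical maps are isomorphisms, so the five lemma shows the third is an isomorphism too.

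\textbf{Kernel.} By flatness, $A_{\mathrm{mv},E}^{\dagger}\otimes_{\varphi_q}\ker f=\ker(1\otimes f)$. The linearization maps $\ker(1\otimes f)$ isomorphically onto $\ker f$, because the vertical maps for $M$ and $N$ are isomorphisms compatible with $f$. The only remaining point is finiteness of $\ker f$, and I expect this to be the main obstacle, since $A_{\mathrm{mv},E}^{\dagger}$ is a union of Noetherian rings and is not obviously Noetherian itself.

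To handle finiteness I would use Proposition \ref{structure of étale varphi_q-modules of finite type}. It gives $N\cong N_{\mathrm{free}}\oplus N[\pi^\infty]$, and likewise $M\cong M_{\mathrm{free}}\oplus M[\pi^{\infty}]$ with $M[\pi^\infty]$ killed by some $\pi^n$. Set $M_{\rm tors}=M[\pi^\infty]$ and $K_1=\ker(f|_{M_{\rm tors}})$. This is a submodule of a finite $A_{\mathrm{mv},E}/\pi^n$-module, and $A_{\mathrm{mv},E}/\pi^n$ is Noetherian because $A_{\mathrm{mv},E}$ is, so $K_1$ is finite. Then $\ker f/K_1$ injects into the image of $\ker f$ in $M/M_{\rm tors}$, and that image is the kernel of the induced map $M/M_{\rm tors}\to N/f(M_{\rm tors})$. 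To prove this kernel is finite, I would first note that $\ker f$ is already an étale $\varphi_q$-module in the sense of the isomorphism condition. Then I would redo the argument of Proposition \ref{structure of étale varphi_q-modules of finite type} and Corollary \ref{finite etale over A^{dagger} is free}, which only use finite generation mod $\pi$ plus the Jacobson radical property (Lemma \ref{pA dagger is topologically nilpotent}). Alternatively, I would reduce to a Noetherian level: $f$ is given by a matrix with entries in some $A_{\mathrm{mv},E}^{\dagger,s}[1/Y_{\sigma_0}]$, which is Noetherian by Lemma \ref{A dagger s is noetherian}. Working with free lattices over that ring, the kernel is finite there and generates $\ker f$ after flat base change to $A_{\mathrm{mv},E}^{\dagger}$. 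Here flatness of $A_{\mathrm{mv},E}^{\dagger}$ over $A_{\mathrm{mv},E}^{\dagger,s}[1/Y_{\sigma_0}]$ is the needed input. I would try to get it from the fact that $A_{\mathrm{mv},E}^{\dagger}$ is a filtered union of these rings, with each transition map flat, the transition maps being completions and localizations of Noetherian rings. This step is the one most likely to require care.
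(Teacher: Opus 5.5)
Your handling of étaleness is the paper's argument: flatness of $\varphi_q$ (Corollary \ref{phi_q is flat}) gives the comparison diagram for $\ker f$, and right exactness handles $\mathrm{coker} f$. The difference is finiteness of $\ker f$. The paper settles this in one line: $M,N$ are finitely presented by Proposition \ref{structure of étale varphi_q-modules of finite type}, hence $\ker f$ is of finite type. Strictly speaking, that inference needs $A_{\mathrm{mv},E}^{\dagger}$ to be coherent, because a finitely generated submodule of a finitely presented module need not be finitely presented. So the obstacle you flag is real, and you are more careful here than the paper.

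Your first route does not work as described. The proofs of Corollary \ref{finite etale over A^{dagger} is free} and Proposition \ref{structure of étale varphi_q-modules of finite type} apply Nakayama's lemma to the module itself, which must already be finitely generated. Knowing the reduction mod $\pi$ is finite, together with $\pi$ lying in the Jacobson radical, is not enough; moreover $(\ker f)/\pi$ need not embed in $M/\pi M$.

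Your second route is sound, but its key input is only asserted, and the stated reason needs care. The integral transition map $A_{\mathrm{mv},E}^{\dagger,s}\to A_{\mathrm{mv},E}^{\dagger,s'}$ ($s<s'$) is \emph{not} flat. In $A_{\mathrm{mv},E}^{\dagger,s'}/(\pi/Y_{\sigma_0}^{s})$ the nonzero element $\pi/Y_{\sigma_0}^{s'}$ is killed by $Y_{\sigma_0}^{s'-s}$, whereas $A_{\mathrm{mv},E}^{\dagger,s}/(\pi/Y_{\sigma_0}^{s})=A^{\circ}$ is a domain. So you must invert $Y_{\sigma_0}$ first. Then it works as follows:
\begin{itemize}
\item Set $D=A_{\mathrm{mv},E}^{\dagger,s}[T]/(Y_{\sigma_0}^{s'-s}T-\pi/Y_{\sigma_0}^{s})$, which is Noetherian and satisfies $D[1/Y_{\sigma_0}]=A_{\mathrm{mv},E}^{\dagger,s}[1/Y_{\sigma_0}]$.
\item Using Lemma \ref{A dagger s is noetherian} and $A_E^{(p^{-1/s},1)}\cong\mathcal{O}_E[\negthinspace[Y_{\sigma_0},X]\negthinspace]/(Y_{\sigma_0}^sX-\pi)$, the $(Y_{\sigma_0},T)$-adic completion of $D$ is $A_{\mathrm{mv},E}^{\dagger,s'}$, with $T\mapsto\pi/Y_{\sigma_0}^{s'}$.
\item Hence the transition map is the localization of a Noetherian completion, so it is flat, and $A_{\mathrm{mv},E}^{\dagger}$ is flat over each level as a filtered colimit.
\end{itemize}
You must also descend $M$ and $N$ themselves, not just a matrix. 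This is possible via $M\cong\bigoplus_i(A_{\mathrm{mv},E}^{\dagger}/\pi^{n_i})^{\oplus d_i}$, after enlarging $s$ so the descended map is well defined.

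A shorter fix uses a result already in the paper. $A_{\mathrm{mv},E}^{\dagger}\to A_{\mathrm{mv},E}$ is faithfully flat (Proposition \ref{inclusion for dagger is flat}), and its proof does not depend on the present lemma. So $\ker f\otimes A_{\mathrm{mv},E}=\ker(f\otimes A_{\mathrm{mv},E})$ is finite, indeed finitely presented, over the Noetherian ring $A_{\mathrm{mv},E}$. Finite type and finite presentation descend along faithfully flat ring maps, so $\ker f$ is finitely generated. The same argument shows $A_{\mathrm{mv},E}^{\dagger}$ is coherent, which is exactly what makes the paper's one-line finiteness claim valid.
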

	\begin{proof}
		Let $f: M\to N$ be a morphism of étale $(\varphi_q,\mathcal{O}_K^{\times})$-modules over $A_{\mathrm{mv},E}^{\dagger}$, we need to show that $\ker f$ and $\mathrm{coker}f$ are also étale $(\varphi_q,\mathcal{O}_K^{\times})$-modules. First, Proposition \ref{structure of étale varphi_q-modules of finite type} implies that $M,N$ are of finite presentation over $A_{\mathrm{mv},E}^{\dagger}$, hence $\ker f$ and $\mathrm{coker}f$ are of finite type over $A_{\mathrm{mv},E}^{\dagger}$. The endomorphism $\varphi_q$ and the $\mathcal{O}_K^{\times}$-action on $\ker f$ and $\mathrm{coker}f$ are clear, hence we only need to show the étaleness of $\ker f$ and $\mathrm{coker}f$. Since $\varphi_q$ is flat (Corollary \ref{phi_q is flat}), we have a commutative diagram with exact rows
		\begin{equation}
			\begin{tikzcd}
				0\arrow[r]&A_{\mathrm{mv},E}^{\dagger}\otimes_{\varphi_q}\ker f\arrow[r]\arrow[d]&A_{\mathrm{mv},E}^{\dagger}\otimes_{\varphi_q}M\arrow[r] \arrow[d,"\wr"]&A_{\mathrm{mv},E}^{\dagger}\otimes_{\varphi_q}N \arrow[d,"\wr"]\\
				0\arrow[r]&\ker f\arrow[r]&M\arrow[r,"f"]&N,
			\end{tikzcd}
		\end{equation}
		then the étaleness of $\ker f$ follows. The proof of the étaleness of $\mathrm{coker}f$ is similar, and we omit the details.
	\end{proof}
	
	\subsection{Overconvergent multivariable $(\varphi_q,\mathcal{O}_K^{\times})$-modules}

	
	
	The $(\varphi_q,\mathcal{O}_K^{\times})$-equivariant inclusion $A_{\mathrm{mv},E}^{\dagger}\hookrightarrow A_{\mathrm{mv},E}$ induces a functor
	\begin{equation}\label{defn of j^*}
		\begin{aligned}
			j^{\dagger,*}: \modet_{(\varphi_q,\mathcal{O}_K^{\times})}(A_{\mathrm{mv},E}^{\dagger}) &\to 
			\modet_{(\varphi_q,\mathcal{O}_K^{\times})} (A_{\mathrm{mv},E})\\
			M&\mapsto A_{\mathrm{mv},E}\otimes_{A_{\mathrm{mv},E}^{\dagger}}M.
		\end{aligned}
	\end{equation}
	By Proposition \ref{inclusion for dagger is flat}, $j^{\dagger,*}$ is an exact functor. In this section, we will construct a right adjoint functor $j_*^{\dagger}$ of $j^{\dagger,*}$ and investigate some basic properties of $j^{\dagger,*}$. In particular, $j_*^{\dagger}$ is left exact. The construction of $j_*^{\dagger}$ follows Cherbonnier's thesis (\cite{cherb1996}).
	
	\begin{defn}\label{defn of F_dagger}
		Let $M$ be an étale $(\varphi_q,\mathcal{O}_K^{\times})$-module over $A_{\mathrm{mv},E}$, $s\geq 1$. We define
		\begin{align*}
			F_{\dagger}(M)&\coloneq\left\{N: N\ \text{is a finitely generated }A_{\mathrm{mv},E}^{\dagger}\text{-submodule of}\ M, \varphi_q(N)\subseteq N\right\},\\
			F_{\dagger,s}(M)&\coloneq\left\{N: N\ \text{is a finitely generated }A_{\mathrm{mv},E}^{\dagger}\text{-submodule of}\ M, \varphi_q(N)\subseteq A_{\mathrm{mv},E}^{\dagger,qs}N\right\},\\
			F_{\dagger,\infty}(M)&\coloneq\left\{N: N\ \text{is a finitely generated }A_{\mathrm{mv},E}^{\dagger}\text{-submodule of}\ M, \varphi_q(N)\subseteq N\right\}.
		\end{align*}
		Then we define
		\begin{align}
			j^{\dagger}_{*}(M)\coloneq \bigcup_{N\in F_{\dagger}(M)}N,\quad j^{\dagger,s}_{*}(M)\coloneq \bigcup_{N\in F_{\dagger,s}(M)}N,\quad j^{\dagger,\infty}_{*}(M)\coloneq\bigcup_{N\in F_{\dagger,\infty}(M)}N.
		\end{align}
	\end{defn}
	
	\begin{lem}\label{relation between j,j_s,j_infty}
		Let $M$ be an étale $(\varphi_q,\mathcal{O}_K^{\times})$-module over $A_{\mathrm{mv},E}$. Then
		\begin{align}\label{relation between s,infty,dagger}
			j_*^{\dagger}(M)=j_*^{\dagger,\infty}(M)\left[\dfrac{1}{Y_{\sigma_0}}\right],\quad j_*^{\dagger,\infty}(M)=\bigcup_{s\geq 1}j_*^{\dagger,s}(M).
		\end{align}
	\end{lem}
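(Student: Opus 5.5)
Throughout, I read the definitions of $F_{\dagger,s}(M)$ and $F_{\dagger,\infty}(M)$ as concerning finitely generated $A_{\mathrm{mv},E}^{\dagger,s}$-submodules, resp. $A_{\mathrm{mv},E}^{\dagger,\infty}$-submodules, of $M$. This seems to be the intended meaning. Since $Y_{\sigma_0}$ is invertible in $A_{\mathrm{mv},E}$, localizing submodules of $M$ at $Y_{\sigma_0}$ makes sense inside $M$. I will also use the following fact, obtained by iterating the lemma on $\varphi$: we have $\varphi_q(A_{\mathrm{mv},E}^{\dagger,s})\subseteq A_{\mathrm{mv},E}^{\dagger,qs}$, and $A_{\mathrm{mv},E}^{\dagger,\infty}$ and $A_{\mathrm{mv},E}^{\dagger}$ are $\varphi_q$-stable.

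\emph{Second equality.} Let $N\in F_{\dagger,s}(M)$. Then $N':=A_{\mathrm{mv},E}^{\dagger,\infty}N$ is finitely generated over $A_{\mathrm{mv},E}^{\dagger,\infty}$. It is also $\varphi_q$-stable, because
\[\varphi_q(N')\subseteq \varphi_q(A_{\mathrm{mv},E}^{\dagger,\infty})\,A_{\mathrm{mv},E}^{\dagger,qs}N\subseteq N'.\]
Hence $N'\in F_{\dagger,\infty}(M)$, and this gives $\bigcup_s j_*^{\dagger,s}(M)\subseteq j_*^{\dagger,\infty}(M)$.

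Conversely, let $N\in F_{\dagger,\infty}(M)$ with generators $e_1,\dots,e_k$, and write $\varphi_q(e_j)=\sum_l a_{jl}e_l$ with $a_{jl}\in A_{\mathrm{mv},E}^{\dagger,\infty}$. Only finitely many coefficients occur, so all $a_{jl}$ lie in some $A_{\mathrm{mv},E}^{\dagger,s_0}$. For $s\geq s_0$ put $N_s:=\sum_j A_{\mathrm{mv},E}^{\dagger,s}e_j$. Then
\[\varphi_q(N_s)\subseteq \varphi_q(A_{\mathrm{mv},E}^{\dagger,s})\sum_{j,l} a_{jl}e_l\subseteq A_{\mathrm{mv},E}^{\dagger,qs}N_s,\]
using $A_{\mathrm{mv},E}^{\dagger,s_0}\subseteq A_{\mathrm{mv},E}^{\dagger,qs}$. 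So $N_s\in F_{\dagger,s}(M)$, and $N=\bigcup_{s\geq s_0}N_s$. This proves the second equality.

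\emph{First equality, inclusion $\supseteq$.} If $N\in F_{\dagger,\infty}(M)$, then $N[1/Y_{\sigma_0}]=A_{\mathrm{mv},E}^{\dagger}N$ is finitely generated over $A_{\mathrm{mv},E}^{\dagger}$. It is $\varphi_q$-stable because $\varphi_q(A_{\mathrm{mv},E}^{\dagger})\subseteq A_{\mathrm{mv},E}^{\dagger}$. Hence it lies in $F_{\dagger}(M)$, which gives $j_*^{\dagger,\infty}(M)[1/Y_{\sigma_0}]\subseteq j_*^{\dagger}(M)$.

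\emph{First equality, inclusion $\subseteq$.} This is the main obstacle, because the matrix of $\varphi_q$ on a $\varphi_q$-stable $A_{\mathrm{mv},E}^{\dagger}$-lattice may have $Y_{\sigma_0}$ in the denominators. Let $N\in F_\dagger(M)$ with generators $e_j$, and write $\varphi_q(e_j)=\sum_l a_{jl}e_l$ with $a_{jl}\in A_{\mathrm{mv},E}^{\dagger}$. Choose $m$ with $Y_{\sigma_0}^m a_{jl}\in A_{\mathrm{mv},E}^{\dagger,\infty}$ for all $j,l$.

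The key input is that $\varphi_q(Y_{\sigma_0})=Y_{\sigma_0}^q u$ for some $u\in (A_{\mathrm{mv},E}^{\dagger,\infty})^\times$. To see this, note that the computations in the proof of Lemma \ref{general form of basis} give $\varphi(Y_{\sigma_i})=Y_{\sigma_{i-1}}^p\cdot(\text{unit of }A_{\mathrm{mv},E}^{\dagger,p})$. Since $Y_{\sigma_i}/Y_{\sigma_0}$ is a unit of $A_{\mathrm{mv},E}^{\dagger,1}$, iterating $f$ times and using the $\varphi$-stability of units yields the claim.

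Now set $e'_j:=Y_{\sigma_0}^n e_j$ with $n(q-1)\geq m$. Then
\[\varphi_q(e'_j)=Y_{\sigma_0}^{qn}u^n\sum_l a_{jl}e_l=u^n Y_{\sigma_0}^{n(q-1)}\sum_l a_{jl}e'_l,\]
and the coefficients $u^nY_{\sigma_0}^{n(q-1)}a_{jl}$ lie in $A_{\mathrm{mv},E}^{\dagger,\infty}$. Therefore $N':=\sum_j A_{\mathrm{mv},E}^{\dagger,\infty}e'_j$ belongs to $F_{\dagger,\infty}(M)$. Since $N'[1/Y_{\sigma_0}]=N$, we get $N\subseteq j_*^{\dagger,\infty}(M)[1/Y_{\sigma_0}]$, which completes the first equality.
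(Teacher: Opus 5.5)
Your proof is correct and follows the paper's argument. For the first equality you rescale the generators by a power of $Y_{\sigma_0}$ until the matrix of $\varphi_q$ has entries in $A_{\mathrm{mv},E}^{\dagger,\infty}$; for the second you observe that the finitely many coefficients already lie in some $A_{\mathrm{mv},E}^{\dagger,s_0}$. The differences are cosmetic: you write $\varphi_q(Y_{\sigma_0})=Y_{\sigma_0}^q u$ with $u$ a unit, where the paper uses $\varphi_q(Y_{\sigma_0}^n)/Y_{\sigma_0}^n\in Y_{\sigma_0}^{(q-1)n}+pA_{\mathrm{mv},E}^{\dagger,1}$ together with $\pi A_{\mathrm{mv},E}^{\dagger}\subseteq A_{\mathrm{mv},E}^{\dagger,\infty}$, and you also check the easy reverse inclusions that the paper leaves implicit.
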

	\begin{proof}
		For every $N\in F_{\dagger}(M)$, assume that $N=\sum_{i=1}^r A_{\mathrm{mv},E}^{\dagger}e_i$, and $\varphi_q(e_i)=\sum_{j}a_{ij}e_j$ for some $a_{ij}\in A_{\mathrm{mv},E}^{\dagger}$ (these $a_{ij}$ may not be unique). Then for $n\geq 1$,
		\[\varphi_q\left(Y_{\sigma_0}^ne_i\right)=\sum_{j=1}^r a_{ij}\dfrac{\varphi_q(Y_{\sigma_0}^n)}{Y_{\sigma_0}^n}Y_{\sigma_0}^ne_j.\]
		By (\ref{endomorphism phi_q}), $\dfrac{\varphi_q(Y_{\sigma_0}^n)}{Y_{\sigma_0}^n}\in Y_{\sigma_0}^{(q-1)n}+pA_{\mathrm{mv},E}^{\dagger,1}$. Since $pA_{\mathrm{mv},E}^{\dagger}\subseteq A_{\mathrm{mv},E}^{\dagger,\infty}$ by (\ref{pi dagger=infty}), we have $a_{ij}\dfrac{\varphi_q(Y_{\sigma_0}^{n_{ij}})}{Y_{\sigma_0}^{n_{ij}}}\in A_{\mathrm{mv},E}^{\dagger,\infty}$
		for $n_{ij}\gg 0$. Thus replacing $e_0,\dots,e_r$ by $Y_{\sigma_0}^{n_0}e_0,\dots,Y_{\sigma_0}^{n_0}e_r$ with $n_0\coloneq \max_{i,j}n_{ij}$, we may assume that $\varphi_q(e_i)=\sum_{j=1}^r a_{ij}e_j$ for some $a_{ij}\in A_{\mathrm{mv},E}^{\dagger,\infty}$. Then $N_{\infty}\coloneq\sum_{i=1}^r A_{\mathrm{mv},E}^{\dagger,\infty}e_i$ is an element of $F_{\dagger,\infty}(M)$ and $N_{\infty}\left[\frac{1}{Y_{\sigma_0}}\right]=N$.
		Thus
		\[j_*^{\dagger}(M)=j_*^{\dagger,\infty}(M)\left[\dfrac{1}{Y_{\sigma_0}}\right].\]
		For any $N\in F_{\dagger,\infty}(M)$, assume that $N=\sum_{i=1}^r A_{\mathrm{mv},E}^{\dagger,\infty}e_i$, and $\varphi_q(e_i)=\sum_{j}a_{ij}e_j$ for some $a_{ij}\in A_{\mathrm{mv},E}^{\dagger,\infty}$. Since $A_{\mathrm{mv},E}^{\dagger,\infty}=\bigcup_{s\geq 1}A_{\mathrm{mv},E}^{\dagger,s}$, there exists $s(N)\geq 1$ such that $a_{i,j}\in A_{\mathrm{mv},E}^{\dagger,s(N)}\subseteq A_{\mathrm{mv},E}^{\dagger,s}$ for any $1\leq i,j\leq r$, $s\geq s(N)$. Then $\sum_{i=1}^rA_{\mathrm{mv},E}^{\dagger,s}e_i\in F_{\dagger,s}(M)$ and
		\[N=\bigcup_{s\geq s(N)}\sum_{i=1}^rA_{\mathrm{mv},E}^{\dagger,s}e_i.\]
		This finishes the proof.
	\end{proof}
	
	
	Note that for $N\in F_{\dagger}(M)$ and $a\in\mathcal{O}_K^{\times}$, since $\varphi_q$ commutes with the action of $\mathcal{O}_K^{\times}$, $a(N)$ is also stable under $\varphi_q$. Hence $a(N)\in F_{\dagger}(M)$. This implies that $j^{\dagger}_{*}(M)$ is stable under the $\mathcal{O}_K^{\times}$-action. For the same reason, $j^{\dagger,s}_{*}(M)$ and $j^{\dagger,\infty}_{*}(M)$ are also stable under the $\mathcal{O}_K^{\times}$-action. The following lemma is important in the proof of the finiteness of $j^{\dagger}_{*}(M)$:
	
	\begin{lem}\label{1/p j^s cap M is contained in T^-s j^s}
		Let $M$ be a finite free étale $(\varphi_q,\mathcal{O}_K^{\times})$-module over $A_{\mathrm{mv},E}$, $s\geq 1$. Then
		\[\left(\dfrac{1}{\pi}j_{*}^{\dagger,s}(M)\right)\cap M = \dfrac{1}{Y_{\sigma_0}^s}j_{*}^{\dagger,s}(M).\]
	\end{lem}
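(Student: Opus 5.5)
The inclusion $\supseteq$ should be formal. The inclusion $\subseteq$ I would prove by enlarging a given $N\in F_{\dagger,s}(M)$ to a larger member of $F_{\dagger,s}(M)$ that contains $Y_{\sigma_0}^s x$. Throughout I read the members of $F_{\dagger,s}(M)$ as finitely generated $A_{\mathrm{mv},E}^{\dagger,s}$-submodules $N\subseteq M$ with $\varphi_q(N)\subseteq A_{\mathrm{mv},E}^{\dagger,qs}N$, as in the proof of Lemma \ref{relation between j,j_s,j_infty}. Since the sum of two members of $F_{\dagger,s}(M)$ is again a member, $j_*^{\dagger,s}(M)$ is a directed union, hence an $A_{\mathrm{mv},E}^{\dagger,s}$-submodule of $M$.

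\emph{Inclusion $\supseteq$.} Let $y\in j_*^{\dagger,s}(M)$. Then $Y_{\sigma_0}^{-s}y=\frac{1}{\pi}\cdot\frac{\pi}{Y_{\sigma_0}^s}y$. Here $\frac{\pi}{Y_{\sigma_0}^s}\in A_{\mathrm{mv},E}^{\dagger,s}$ and $j_*^{\dagger,s}(M)$ is an $A_{\mathrm{mv},E}^{\dagger,s}$-module, so $Y_{\sigma_0}^{-s}y\in\frac{1}{\pi}j_*^{\dagger,s}(M)$. Also $Y_{\sigma_0}$ is invertible in $A_{\mathrm{mv},E}$, so $Y_{\sigma_0}^{-s}y\in M$.

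\emph{Inclusion $\subseteq$.} Let $x\in M$ with $\pi x\in N$ for some $N\in F_{\dagger,s}(M)$. It suffices to show that
\[N^{+}\coloneq N+Y_{\sigma_0}^s\Big(\tfrac{1}{\pi}N\cap M\Big)\]
lies in $F_{\dagger,s}(M)$, since $N^+$ contains $Y_{\sigma_0}^s x$.

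\emph{Finite generation of $N^+$.} The module $\frac1\pi N\cap M$ is an $A_{\mathrm{mv},E}^{\dagger,s}$-submodule of the finitely generated module $\frac1\pi N$. Since $M$ is free, it is $\pi$-torsion free, and $A_{\mathrm{mv},E}^{\dagger,s}$ is Noetherian (Lemma \ref{A dagger s is noetherian}). Hence $\frac1\pi N\cap M$ is finitely generated, and so is $N^+$.

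\emph{Stability of $N^+$ under $\varphi_q$.} Take $y\in\frac1\pi N\cap M$. By (\ref{endomorphism phi_q}) and the computation in the proof of Lemma \ref{general form of basis}, we can write $\varphi_q(Y_{\sigma_0})=Y_{\sigma_0}^q u$ with $u\in(A_{\mathrm{mv},E}^{\dagger,q})^\times\subseteq(A_{\mathrm{mv},E}^{\dagger,qs})^\times$. Therefore
\[\varphi_q(Y_{\sigma_0}^sy)=u^sY_{\sigma_0}^{(q-1)s}\cdot Y_{\sigma_0}^s\varphi_q(y),\]
and $\varphi_q(y)\in\frac1\pi A_{\mathrm{mv},E}^{\dagger,qs}N\cap M$. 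So everything reduces to the following key claim:
\[\tfrac{1}{\pi}A_{\mathrm{mv},E}^{\dagger,qs}N\cap M\;\subseteq\;A_{\mathrm{mv},E}^{\dagger,qs}\Big(\tfrac1\pi N\cap M\Big).\]
Granting it, $\varphi_q(Y_{\sigma_0}^sy)\in A_{\mathrm{mv},E}^{\dagger,qs}N^+$, and together with $\varphi_q(N)\subseteq A_{\mathrm{mv},E}^{\dagger,qs}N$ this gives $N^+\in F_{\dagger,s}(M)$.

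\emph{The key claim (main obstacle).} Intersection with $M$ is not a priori compatible with the base change $A_{\mathrm{mv},E}^{\dagger,s}\to A_{\mathrm{mv},E}^{\dagger,qs}$. I would rewrite $\frac1\pi N\cap M$ as the kernel of the map of finitely generated $A_{\mathrm{mv},E}^{\dagger,s}$-modules
\[\tfrac1\pi N\to \tfrac1\pi N/N\xrightarrow{\ \times\pi\ }N/\pi N\to M/\pi M.\]
To make this tractable, I would use the factorization through $N/\pi N\to M/\pi M$ and replace the target by the image of $N/\pi N$, which is finitely generated. Then I would invoke flatness of $A_{\mathrm{mv},E}^{\dagger,s}\to A_{\mathrm{mv},E}^{\dagger,qs}$, which should follow because it is built from a localization followed by a completion of Noetherian rings. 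Flat base change commutes with kernels, so kernels would be preserved. What remains is to compare the base change of the image with the image of $A_{\mathrm{mv},E}^{\dagger,qs}N/\pi$ in $M/\pi M$. For this I would use the ring-level identity
\[A_{\mathrm{mv},E}^{\dagger,qs}\cap\pi A_{\mathrm{mv},E}=\tfrac{\pi}{Y_{\sigma_0}^{qs}}A_{\mathrm{mv},E}^{\dagger,qs},\]
which comes from (\ref{s/pi/T^s=ps/pi/T^{ps}}) and the injectivity of $A^\circ\hookrightarrow A$, together with the freeness of $M$. If the flatness route fails, the fallback is to pass to a basis: first enlarge $N$ to $A_{\mathrm{mv},E}^{\dagger,s}e_1+\cdots+A_{\mathrm{mv},E}^{\dagger,s}e_d$ for an $A_{\mathrm{mv},E}$-basis $(e_i)$ of $M$ (after multiplying by a suitable power of $Y_{\sigma_0}$), and then verify the claim coordinatewise using the ring identity above.
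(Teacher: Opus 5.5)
Your $\supseteq$ inclusion, the reduction to showing $N^{+}\in F_{\dagger,s}(M)$, and the finite generation of $N^{+}$ are all fine. In fact $N^{+}=\frac{Y_{\sigma_0}^s}{\pi}N\cap M$, which is exactly the module the paper works with.

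\textbf{The gap: your key claim is false.} So neither the flatness route nor the coordinatewise fallback can prove it. Take $M=A_{\mathrm{mv},E}$ with $\varphi_q$ the ring endomorphism, and $N=A_{\mathrm{mv},E}^{\dagger,s}\in F_{\dagger,s}(M)$. The ring identity you quote, $A_{\mathrm{mv},E}^{\dagger,t}\cap\pi A_{\mathrm{mv},E}=\frac{\pi}{Y_{\sigma_0}^{t}}A_{\mathrm{mv},E}^{\dagger,t}$ for $t=s,qs$, gives two different modules:
\[
\tfrac1\pi A_{\mathrm{mv},E}^{\dagger,qs}N\cap M=Y_{\sigma_0}^{-qs}A_{\mathrm{mv},E}^{\dagger,qs},\qquad A_{\mathrm{mv},E}^{\dagger,qs}\Big(\tfrac1\pi N\cap M\Big)=Y_{\sigma_0}^{-s}A_{\mathrm{mv},E}^{\dagger,qs}.
\]
The first is not contained in the second, because $Y_{\sigma_0}^{-(q-1)s}\notin A_{\mathrm{mv},E}^{\dagger,qs}$ (its image in $A$ is not in $A^{\circ}$).

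Concretely, take $y=Y_{\sigma_0}^{-s}$. Here $N^{+}=A_{\mathrm{mv},E}^{\dagger,s}$, and your reduction demands
\[
Y_{\sigma_0}^{s}\varphi_q(y)=u^{-s}Y_{\sigma_0}^{-(q-1)s}\in A_{\mathrm{mv},E}^{\dagger,qs}N^{+}=A_{\mathrm{mv},E}^{\dagger,qs},
\]
which fails. Yet $\varphi_q(Y_{\sigma_0}^{s}y)=1$ is harmless.

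Two side remarks on your proposed routes:
\begin{itemize}
\item $A_{\mathrm{mv},E}^{\dagger,s}\to A_{\mathrm{mv},E}^{\dagger,qs}$ is not flat. It is an affine blow-up followed by completion, not a localization. Modulo $Y_{\sigma_0}$ it kills the non-zero-divisor $\frac{\pi}{Y_{\sigma_0}^{s}}=Y_{\sigma_0}^{(q-1)s}\cdot\frac{\pi}{Y_{\sigma_0}^{qs}}$.
\item Enlarging $N$ inside $F_{\dagger,s}(M)$ so that it contains a basis of $M$ would already prove $M$ overconvergent, so the fallback is not available.
\end{itemize}

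\textbf{The fix.} The mistake is splitting off the factor $Y_{\sigma_0}^{(q-1)s}$; that factor is precisely what makes the argument work, so keep it together with $\varphi_q(y)$.
\begin{enumerate}
\item For $y\in\frac1\pi N\cap M$, write $\varphi_q(Y_{\sigma_0}^{s}y)=\frac{\varphi_q(Y_{\sigma_0}^{s})}{\pi}\varphi_q(\pi y)$. Here $\varphi_q(\pi y)\in A_{\mathrm{mv},E}^{\dagger,qs}N$ and $\varphi_q(Y_{\sigma_0}^{s})\in Y_{\sigma_0}^{qs}A_{\mathrm{mv},E}^{\dagger,qs}$.
\item The quotients $A_{\mathrm{mv},E}^{\dagger,s}/\frac{\pi}{Y_{\sigma_0}^{s}}A_{\mathrm{mv},E}^{\dagger,s}$ and $A_{\mathrm{mv},E}^{\dagger,qs}/\frac{\pi}{Y_{\sigma_0}^{qs}}A_{\mathrm{mv},E}^{\dagger,qs}$ are both $A^{\circ}$. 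Hence $A_{\mathrm{mv},E}^{\dagger,qs}=A_{\mathrm{mv},E}^{\dagger,s}+\frac{\pi}{Y_{\sigma_0}^{qs}}A_{\mathrm{mv},E}^{\dagger,qs}$, and so $Y_{\sigma_0}^{qs}A_{\mathrm{mv},E}^{\dagger,qs}\subseteq\pi A_{\mathrm{mv},E}^{\dagger,qs}+Y_{\sigma_0}^{s}A_{\mathrm{mv},E}^{\dagger,s}$.
\item Therefore
\[
\varphi_q(Y_{\sigma_0}^{s}y)\in\Big(A_{\mathrm{mv},E}^{\dagger,qs}N+\tfrac{Y_{\sigma_0}^{s}}{\pi}N\Big)\cap M=A_{\mathrm{mv},E}^{\dagger,qs}N+\Big(\tfrac{Y_{\sigma_0}^{s}}{\pi}N\cap M\Big)\subseteq A_{\mathrm{mv},E}^{\dagger,qs}N^{+}.
\]
The middle equality is the modular law, valid because $A_{\mathrm{mv},E}^{\dagger,qs}N\subseteq M$.
\end{enumerate}
This is the paper's proof. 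It needs neither flatness nor any compatibility of the intersection with $M$ under base change.
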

	\begin{proof}
		Let $N\in F_{\dagger,s}(M)$ (Definition \ref{defn of F_dagger}), and let $N_1\coloneq \left(\frac{Y_{\sigma_0}^{s}}{\pi}N\right) \cap M$. Note that every $x\in N$ can be written as $\frac{\pi}{Y_{\sigma_0}^{s}}\cdot \frac{Y_{\sigma_0}^{s}}{\pi}x$ with $\frac{Y_{\sigma_0}^{s}}{\pi}x\in \frac{Y_{\sigma_0}^{s}}{\pi}N$, hence $N\subseteq N_1$.
		Since $N_1\subseteq  \frac{Y_{\sigma_0}^{s}}{\pi}N$ and $A_{\mathrm{mv},E}^{\dagger,s}$ is Noetherian (Lemma \ref{A dagger s is noetherian}), $N_1$ is a finite $A_{\mathrm{mv},E}^{\dagger,s}$-module.
		By (\ref{endomorphism phi_q}),
		\[\dfrac{\varphi_q(Y_{\sigma_0}^s)}{Y_{\sigma_0}^{qs}}\in \left(1+\dfrac{p\mathfrak{m}}{Y_{\sigma_0}^q}\right)^s\subset A_{\mathrm{mv},E}^{\dagger,q}\subseteq A_{\mathrm{mv},E}^{\dagger,qs}.\]
		And note that
		$A_{\mathrm{mv},E}^{\dagger,qs}\subseteq  \frac{\pi}{Y_{\sigma_0}^{qs}}A_{\mathrm{mv},E}^{\dagger,qs}+A_{\mathrm{mv},E}^{\dagger,s}$ by Lemma \ref{A dagger s is noetherian}, thus we have
		\begin{align}\label{varphi_q(T^s)qs=pi qs+ss}
			\varphi_q(Y_{\sigma_0}^s)A_{\mathrm{mv},E}^{\dagger,qs}\subseteq Y_{\sigma_0}^{qs}A_{\mathrm{mv},E}^{\dagger,qs}\subseteq \pi A_{\mathrm{mv},E}^{\dagger,qs}+Y_{\sigma_0}^{s}A_{\mathrm{mv},E}^{\dagger,s}.
		\end{align}
		Therefore,
		\begin{align*}
			\varphi_q(N_1)&\subseteq \left(\varphi_q\left(\dfrac{Y_{\sigma_0}^s}{\pi}\right)A_{\mathrm{mv},E}^{\dagger,qs}N\right)\cap M\\
			&\subseteq \left(A_{\mathrm{mv},E}^{\dagger,qs}N+\dfrac{Y_{\sigma_0}^s}{\pi}N\right)\cap M\\ &=A_{\mathrm{mv},E}^{\dagger,qs}N+\left(\dfrac{Y_{\sigma_0}^s}{\pi}N\cap M\right)\subseteq A_{\mathrm{mv},E}^{\dagger,qs}N_1,
		\end{align*}
		where the first inclusion follows from $N\in F_{\dagger,s}(M)$ and $\varphi_q(M)\subseteq M$, the second inclusion follows from (\ref{varphi_q(T^s)qs=pi qs+ss}), and the last equality follows from $A_{\mathrm{mv},E}^{\dagger,qs}N\subseteq M$, and the last inclusion is due to $N\subseteq N_1$. Therefore, $N_1\in F_{\dagger,s}(M)$. This implies that
		\begin{align*}
			\left(\dfrac{Y_{\sigma_0}^{s}}{\pi}j_{*}^{\dagger,s}(M)\right) \cap M \subseteq j_{*}^{\dagger,s}(M).
		\end{align*}
		Since every $x\in j_{*}^{\dagger,s}(M)$ can be written as $\frac{\pi}{Y_{\sigma_0}^{s}}\cdot \frac{Y_{\sigma_0}^{s}}{\pi}x$ where $\frac{Y_{\sigma_0}^{s}}{\pi}x\in \frac{Y_{\sigma_0}^{s}}{\pi}j_{*}^{\dagger,s}(M)$, hence $j_{*}^{\dagger,s}(M)\subseteq \left(\frac{Y_{\sigma_0}^{s}}{\pi}j_{*}^{\dagger,s}(M)\right) \cap M$, then the required equality follows.
	\end{proof}

	\begin{cor}\label{j^dagger cap pM =p j^dagger}
		Let $M$ be a finite free étale $(\varphi_q,\mathcal{O}_K^{\times})$-module over $A_{\mathrm{mv},E}$. Then
		\[ j_{*}^{\dagger}(M)\cap \pi M=\pi j_{*}^{\dagger}(M).\]
	\end{cor}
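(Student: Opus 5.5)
The inclusion $\pi j_{*}^{\dagger}(M)\subseteq j_{*}^{\dagger}(M)\cap \pi M$ is clear: $j_{*}^{\dagger}(M)$ is an $A_{\mathrm{mv},E}^{\dagger}$-submodule of $M$, so $\pi j_{*}^{\dagger}(M)\subseteq j_{*}^{\dagger}(M)$, and $\pi j_{*}^{\dagger}(M)\subseteq\pi M$. For the reverse inclusion, take $x\in j_{*}^{\dagger}(M)\cap \pi M$ and write $x=\pi y$ with $y\in M$. We must show $y\in j_{*}^{\dagger}(M)$. The plan is to move $x$ into some $j_{*}^{\dagger,s}(M)$ after multiplying by a power of $Y_{\sigma_0}$, apply Lemma \ref{1/p j^s cap M is contained in T^-s j^s}, and then invert $Y_{\sigma_0}$ again using Lemma \ref{relation between j,j_s,j_infty}.

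By Lemma \ref{relation between j,j_s,j_infty}, there is $n\geq 0$ with $Y_{\sigma_0}^n x\in j_{*}^{\dagger,\infty}(M)$. There is then some $s\geq 1$ with $Y_{\sigma_0}^n x\in j_{*}^{\dagger,s}(M)$. Put $x'\coloneq Y_{\sigma_0}^n x$ and $y'\coloneq Y_{\sigma_0}^n y\in M$, so that $x'=\pi y'$. Here we use that $M$ is an $A_{\mathrm{mv},E}$-module and $Y_{\sigma_0}\in A_{\mathrm{mv},E}$, so $y'$ is still in $M$. Then
\[y'=\tfrac{1}{\pi}x'\in\left(\tfrac{1}{\pi}j_{*}^{\dagger,s}(M)\right)\cap M,\]
where the division by $\pi$ is meaningful because $M$ is finite free over the domain $A_{\mathrm{mv},E}$, hence $\pi$-torsion free. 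Lemma \ref{1/p j^s cap M is contained in T^-s j^s} now gives $y'\in Y_{\sigma_0}^{-s}j_{*}^{\dagger,s}(M)$, that is, $Y_{\sigma_0}^{s}y'\in j_{*}^{\dagger,s}(M)\subseteq j_{*}^{\dagger,\infty}(M)$. Hence
\[y=Y_{\sigma_0}^{-(n+s)}\cdot Y_{\sigma_0}^{s+n}y\in j_{*}^{\dagger,\infty}(M)\left[\tfrac{1}{Y_{\sigma_0}}\right]=j_{*}^{\dagger}(M),\]
again by Lemma \ref{relation between j,j_s,j_infty}. Therefore $x=\pi y\in\pi j_{*}^{\dagger}(M)$.

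One point needs checking: the localization $j_{*}^{\dagger,\infty}(M)[1/Y_{\sigma_0}]$ must be taken inside $M$. This requires $Y_{\sigma_0}$ to be invertible in $A_{\mathrm{mv},E}$, which holds because $A_{\mathrm{mv},E}$ contains $\left(\mathcal{O}_E[\negthinspace[Y_{\sigma_0}]\negthinspace][1/Y_{\sigma_0}]\right)^{\wedge}$, and $j_{*}^{\dagger}(M)$ is defined as the corresponding subset of $M$. There is no real obstacle here; all the substantive work is already in Lemma \ref{1/p j^s cap M is contained in T^-s j^s}.
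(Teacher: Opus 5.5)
Your proposal is correct and follows essentially the same route as the paper's proof: multiply by a power of $Y_{\sigma_0}$ to land in some $j_{*}^{\dagger,s}(M)$ via Lemma \ref{relation between j,j_s,j_infty}, apply Lemma \ref{1/p j^s cap M is contained in T^-s j^s} to divide by $\pi$ at the cost of $Y_{\sigma_0}^{-s}$, then invert $Y_{\sigma_0}$ to return to $j_{*}^{\dagger}(M)$. Your remarks on $\pi$-torsion freeness of $M$ and on $Y_{\sigma_0}$ being invertible in $A_{\mathrm{mv},E}$ spell out what the paper leaves implicit.
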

	\begin{proof}
		The inclusion $\pi j_{*}^{\dagger}(M)\subseteq j_{*}^{\dagger}(M)\cap \pi M$ is obvious. Conversely, let $x\in M$ with $\pi x\in j_{*}^{\dagger}(M)$, we need to show that $x\in j_{*}^{\dagger}(M)$. By (\ref{relation between s,infty,dagger}), there exists $n\geq 0$ such that $Y_{\sigma_0}^n\pi x\in j_{*}^{\dagger,\infty}(M)=\bigcup_{s\geq 1} j_{*}^{\dagger,s}(M)$. Thus there exists $s\geq 1$ such that $\pi Y_{\sigma_0}^nx\in j_{*}^{\dagger,s}(M)$, hence $Y_{\sigma_0}^nx\in \left(\frac{1}{\pi}j_{*}^{\dagger,s}(M)\right)\cap M\subseteq \frac{1}{Y_{\sigma_0}^s}j_{*}^{\dagger,s}(M)$ (Lemma \ref{1/p j^s cap M is contained in T^-s j^s}),
		thus $Y_{\sigma_0}^{s+n}x\in j_{*}^{\dagger,s}(M)$, thus $x\in j_{*}^{\dagger,s}(M)\left[\frac{1}{Y_{\sigma_0}}\right]\subset j_{*}^{\dagger}(M)$.
	\end{proof}
	
	\begin{lem}\label{mod p generators are enough}
		Let $M$ be a finite free étale $(\varphi_q,\mathcal{O}_K^{\times})$-module over $A_{\mathrm{mv},E}$, $s\geq 1$. If the image of $e_1,\dots,e_r\in j_{*}^{\dagger,s}(M)$ generates $\frac{j_{*}^{\dagger,s}(M)}{j_{*}^{\dagger,s}(M)\cap \pi M}$ as an $A^{\circ}$-module, then $j_{*}^{\dagger,s}(M)=\sum_{i=1}^rA_{\mathrm{mv},E}^{\dagger,s}e_i$.
	\end{lem}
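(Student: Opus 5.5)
The plan is to run a successive approximation with respect to the element $\frac{\pi}{Y_{\sigma_0}^s}$, in the same way as in the proof of Lemma \ref{general form of basis}. First I will check that $j_{*}^{\dagger,s}(M)$ is an $A_{\mathrm{mv},E}^{\dagger,s}$-submodule of $M$. If $N,N'\in F_{\dagger,s}(M)$, then $N+N'\in F_{\dagger,s}(M)$, so the family is directed and its union is a submodule, stable under multiplication by $A_{\mathrm{mv},E}^{\dagger,s}$.

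Next I will identify the denominator in the statement. By Lemma \ref{1/p j^s cap M is contained in T^-s j^s}, an element $\pi y$ with $y\in M$ lies in $j_{*}^{\dagger,s}(M)$ if and only if $y\in \frac{1}{Y_{\sigma_0}^s}j_{*}^{\dagger,s}(M)$. Hence
\[j_{*}^{\dagger,s}(M)\cap \pi M=\frac{\pi}{Y_{\sigma_0}^s}\,j_{*}^{\dagger,s}(M).\]
Since $A_{\mathrm{mv},E}^{\dagger,s}/\frac{\pi}{Y_{\sigma_0}^s}A_{\mathrm{mv},E}^{\dagger,s}=A^{\circ}$ by (\ref{s/pi/T^s=ps/pi/T^{ps}}), the hypothesis becomes
\[j_{*}^{\dagger,s}(M)=\sum_{i=1}^rA_{\mathrm{mv},E}^{\dagger,s}e_i+\frac{\pi}{Y_{\sigma_0}^s}\,j_{*}^{\dagger,s}(M).\]

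Now let $x\in j_{*}^{\dagger,s}(M)$. Iterating the displayed equality gives $a_i^{(n)}\in A_{\mathrm{mv},E}^{\dagger,s}$ and $x_{n+1}\in j_{*}^{\dagger,s}(M)$ with
\[x-\sum_{i=1}^r\Bigl(\sum_{j=0}^{n}\bigl(\tfrac{\pi}{Y_{\sigma_0}^s}\bigr)^ja_i^{(j)}\Bigr)e_i=\bigl(\tfrac{\pi}{Y_{\sigma_0}^s}\bigr)^{n+1}x_{n+1}.\]
By Lemma \ref{A dagger s is noetherian}, $A_{\mathrm{mv},E}^{\dagger,s}$ is $\bigl(Y_{\sigma_0},\frac{\pi}{Y_{\sigma_0}^s}\bigr)$-adically complete. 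So $a_i\coloneq\sum_{j\geq0}\bigl(\frac{\pi}{Y_{\sigma_0}^s}\bigr)^ja_i^{(j)}$ converges in $A_{\mathrm{mv},E}^{\dagger,s}$. Moreover, $a_i$ minus its $n$-th partial sum lies in $\bigl(\frac{\pi}{Y_{\sigma_0}^s}\bigr)^{n+1}A_{\mathrm{mv},E}^{\dagger,s}$.

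The main obstacle is to show that $x=\sum a_ie_i$. The series need not converge in the topology of $M$, because $\frac{\pi}{Y_{\sigma_0}^s}$ is not topologically nilpotent in $A_{\mathrm{mv},E}$. I will avoid limits in $M$ and argue algebraically. Set $d\coloneq x-\sum_i a_ie_i$. By the two facts above, for every $n$,
\[d\in \bigl(\tfrac{\pi}{Y_{\sigma_0}^s}\bigr)^{n+1}M\subseteq \pi^{n+1}M,\]
since $Y_{\sigma_0}$ is invertible in $A_{\mathrm{mv},E}$ and $j_{*}^{\dagger,s}(M)\subseteq M$. As $M$ is finite free over the $\pi$-adically separated ring $A_{\mathrm{mv},E}$, we have $\bigcap_n\pi^nM=0$, so $d=0$.

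This shows $x\in\sum_iA_{\mathrm{mv},E}^{\dagger,s}e_i$. The reverse inclusion holds because $j_{*}^{\dagger,s}(M)$ is an $A_{\mathrm{mv},E}^{\dagger,s}$-module, which proves the lemma.
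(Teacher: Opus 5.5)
Your proof is correct and follows the paper's argument essentially step for step. You rewrite $j_{*}^{\dagger,s}(M)\cap\pi M$ as $\frac{\pi}{Y_{\sigma_0}^s}j_{*}^{\dagger,s}(M)$ via the identity $\left(\frac{1}{\pi}j_{*}^{\dagger,s}(M)\right)\cap M=\frac{1}{Y_{\sigma_0}^s}j_{*}^{\dagger,s}(M)$, approximate successively in powers of $\frac{\pi}{Y_{\sigma_0}^s}$, sum the coefficients using completeness of $A_{\mathrm{mv},E}^{\dagger,s}$, and kill the difference using $\bigcap_n\pi^nM=0$. You also make explicit two points the paper leaves implicit: that the family defining $j_{*}^{\dagger,s}(M)$ is directed, and why the tail of each coefficient series lies in $\bigl(\frac{\pi}{Y_{\sigma_0}^s}\bigr)^{n+1}A_{\mathrm{mv},E}^{\dagger,s}$.
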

	\begin{proof}
		By Lemma \ref{1/p j^s cap M is contained in T^-s j^s}, 
		\[j_{*}^{\dagger,s}(M)\cap \pi M=\dfrac{\pi}{Y_{\sigma_0}^s}j_{*}^{\dagger,s}(M).\]
		Thus for $x\in j_{*}^{\dagger,s}(M)$, there exists $y_0\in \sum_{i=1}^rA_{\mathrm{mv},E}^{\dagger,s}e_i$ such that $x-y_0\in \dfrac{\pi}{Y_{\sigma_0}^s}j_{*}^{\dagger,s}(M)$. By induction on $n\geq 0$, we can prove that there exists $y_n\in \sum_{i=1}^rA_{\mathrm{mv},E}^{\dagger,s}e_i$ such that
		\[x-\sum_{j=1}^{n}\left(\dfrac{\pi}{Y_{\sigma_0}^s}\right)^jy_j \in \left(\dfrac{\pi}{Y_{\sigma_0}^s}\right)^{n+1}j_{*}^{\dagger,s}(M), \quad n\geq 0.\]
		For each $y_n$, we write $y_n=\sum_{i=1}a^{(n)}_ie_i$, $a^{(n)}_i\in A_{\mathrm{mv},E}^{\dagger,s}$ (these $a^{(n)}_i$ may not be unique), then we define
		\[a_i\coloneq \sum_{n=0}^{\infty}a^{(n)}_i\left(\dfrac{\pi}{Y_{\sigma_0}^s}\right)^n,\quad i=1,\dots,r.\]
		Note that $A_{\mathrm{mv},E}^{\dagger,s}$ is complete for the $\dfrac{\pi}{Y_{\sigma_0}^s}$-adic topology (Lemma \ref{A dagger s is noetherian}), hence $a_i\in A_{\mathrm{mv},E}^{\dagger,s}$ for $i=1,\dots,r$. Moreover, we can prove that
		\[x-\sum_{i=1}^ra_ie_i \in \bigcap_{n=0}^{\infty}\left(\dfrac{\pi^n}{Y_{\sigma_0}^{ns}} j_{*}^{\dagger,s}(M)\right)\subseteq \bigcap_{n=0}^{\infty}\left(\dfrac{\pi^n}{Y_{\sigma_0}^{ns}}\right) M=\bigcap_{n=0}^{\infty}\pi^nM=0,\]
		thus $x\in \sum_{i=1}^rA_{\mathrm{mv},E}^{\dagger,s}e_i$.
	\end{proof}
	
	Now we show the finiteness of $j^{\dagger}_{*}(M)$.
	\begin{prop}\label{finiteness of dagger, s, infty}
		If $M$ is a finite free étale $(\varphi_q,\mathcal{O}_K^{\times})$-module over $A_{\mathrm{mv},E}$, then there exists $s_0$ such that, for $s\geq s_0$, $j^{\dagger,s}_{*}(M)$ is of finite type over $A_{\mathrm{mv},E}^{\dagger,s}$ and 
		\[A_{\mathrm{mv},E}^{\dagger,\infty }\cdot j^{\dagger,s}_{*}(M)= j^{\dagger,\infty}_{*}(M),\quad A_{\mathrm{mv},E}^{\dagger}\cdot j^{\dagger,s}_{*}(M)= j^{\dagger}_{*}(M).\]
		In particular, $j^{\dagger,\infty}_{*}(M)$ and $j^{\dagger}_{*}(M)$ are of finite type.
	\end{prop}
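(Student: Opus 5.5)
The plan is to reduce finiteness of $j^{\dagger,s}_{*}(M)$ to a finiteness statement modulo $\pi$, via Lemma \ref{mod p generators are enough}. Fix $s\geq 1$ and consider
\[\overline{j^{\dagger,s}_*}(M)\coloneq \frac{j_{*}^{\dagger,s}(M)}{j_{*}^{\dagger,s}(M)\cap \pi M}\hookrightarrow M/\pi M .\]
Since $M$ is finite free over $A_{\mathrm{mv},E}$, the target $M/\pi M$ is finite free over $A$. By Lemma \ref{1/p j^s cap M is contained in T^-s j^s}, $j_{*}^{\dagger,s}(M)\cap\pi M=\frac{\pi}{Y_{\sigma_0}^s}j_{*}^{\dagger,s}(M)$. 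Hence $\overline{j^{\dagger,s}_*}(M)$ is a module over $A_{\mathrm{mv},E}^{\dagger,s}/\frac{\pi}{Y_{\sigma_0}^s}=A^{\circ}$, as in (\ref{s/pi/T^s=ps/pi/T^{ps}}), and it is $Y_{\sigma_0}$-torsion free, being inside $M/\pi M$.

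The key step is to bound this $A^{\circ}$-submodule by a lattice. Choose an $A$-basis of $M/\pi M$, and let $L\subset M/\pi M$ be the $A^{\circ}$-span of that basis. I claim there is $c$ (depending on $M$, not on $s$) such that $\overline{j^{\dagger,s}_*}(M)\subseteq Y_{\sigma_0}^{-c}L$. To get this I would use the étaleness of $M/\pi M$: its Frobenius matrix $P$ in the chosen basis is invertible over $A$. Fix $c_0$ with $Y_{\sigma_0}^{c_0}P^{-1}$ having entries in $A^{\circ}$.

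Now take $N\in F_{\dagger,s}(M)$. Then $\bar N$ is a finitely generated $A^{\circ}$-submodule of $M/\pi M$, so it lies in $Y_{\sigma_0}^{-k}L$ for some $k$, and it satisfies $\varphi_q(\bar N)\subseteq \bar N$, because $A_{\mathrm{mv},E}^{\dagger,qs}$ also reduces to $A^{\circ}$ modulo $\frac{\pi}{Y_{\sigma_0}^{qs}}$. Take $x\in\bar N$ whose $Y_{\sigma_0}$-adic valuation with respect to $L$ is the minimum $-k$. Write $x=\sum_j \varphi_q(y_j)\,\bar a_j$, using Lemma \ref{general form of basis} iterated $f$ times, with basis elements $\bar a_j\in A^{\circ}$; the $y_j$ then lie in $\bar N$ up to $\varphi_q$-étaleness. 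Since $\varphi_q$ multiplies valuations by $q$, comparing valuations gives $-k\geq -q k' - c_0$ with $k'\leq k$, and this forces $k\leq c_0/(q-1)$.

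I expect this valuation argument to be the main obstacle. The difficulty is making rigorous that $\varphi_q$-stability of an $A^{\circ}$-lattice bounds its denominators. A cleaner way to run it is to show directly that $\bar N\subseteq Y_{\sigma_0}^{-c}L$ with $c=\lceil c_0/(q-1)\rceil$: if $Y_{\sigma_0}^kL\supseteq \bar N$, then $\bar N\subseteq A^{\circ}\varphi_q(\bar N)\subseteq Y_{\sigma_0}^{qk-c_0}L$ (contrapositively for negative $k$), and iterating improves $k$ until $k\geq -c$.

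Granting the bound, $\overline{j^{\dagger,s}_*}(M)$ is an $A^{\circ}$-submodule of the finite module $Y_{\sigma_0}^{-c}L$. Since $A^{\circ}$ is Noetherian, it is finitely generated. Lemma \ref{mod p generators are enough} then shows that $j^{\dagger,s}_{*}(M)$ is of finite type over $A_{\mathrm{mv},E}^{\dagger,s}$ for every $s$.

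For the stabilization, note that the images $\overline{j^{\dagger,s}_*}(M)$ increase with $s$ and all lie in $Y_{\sigma_0}^{-c}L$. Noetherianity of $A^{\circ}$ gives an $s_0$ after which they are constant. For $s\geq s_0$, the generators of $j^{\dagger,s_0}_*(M)$ therefore generate $\overline{j^{\dagger,s}_*}(M)$, and Lemma \ref{mod p generators are enough} gives $j^{\dagger,s}_*(M)=A_{\mathrm{mv},E}^{\dagger,s}j^{\dagger,s_0}_*(M)$. Taking the union over $s$ and applying Lemma \ref{relation between j,j_s,j_infty} yields $j^{\dagger,\infty}_*(M)=A_{\mathrm{mv},E}^{\dagger,\infty}j^{\dagger,s}_*(M)$. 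Inverting $Y_{\sigma_0}$ then gives the statement for $j^{\dagger}_*(M)$.
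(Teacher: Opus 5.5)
\textbf{Gap in the key step.} Your outer structure matches the paper's. You pass to the image of $j^{\dagger,s}_*(M)$ in $M/\pi M$, bound it inside a finitely generated $A^{\circ}$-module, and then apply Lemma \ref{mod p generators are enough}, Noetherian stabilization, Lemma \ref{relation between j,j_s,j_infty} and inversion of $Y_{\sigma_0}$; that part is fine, including finite type for every $s$. The gap is the uniform bound $\bar N\subseteq Y_{\sigma_0}^{-c}L$ for all $N\in F_{\dagger,s}(M)$. This is exactly what the paper gets from Lemma \ref{A^{circ}-submodule is finite} (finiteness of $(M/\pi M)^{\circ}$), and neither of your arguments proves it.

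In the first argument, the $y_j$ in $x=\sum_j\bar a_j\varphi_q(y_j)$ need not lie in $\bar N$: you only know $\varphi_q(\bar N)\subseteq\bar N$, not that $\bar N$ is étale. Even granting it, $-k\geq -qk'-c_0$ with $k'\leq k$ only gives $(q-1)k\geq -c_0$, which is a lower bound on $k$, not $k\leq c_0/(q-1)$.

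In the second argument, the inclusion $\bar N\subseteq A^{\circ}\varphi_q(\bar N)$ is false. For $M=A_{\mathrm{mv},E}e$ with $\varphi_q(e)=e$ and $N=A_{\mathrm{mv},E}^{\dagger,s}Y_{\sigma_0}e\in F_{\dagger,s}(M)$, one gets $\bar N=Y_{\sigma_0}A^{\circ}\bar e$ but $A^{\circ}\varphi_q(\bar N)=Y_{\sigma_0}^{q}A^{\circ}\bar e$. Moreover, bounding $\varphi_q(\bar N)$ from $\bar N\subseteq Y_{\sigma_0}^{k}L$ involves $P$, not $P^{-1}$. For $k<0$ the resulting exponent is smaller than $k$, so iterating weakens the bound rather than improving it.

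\textbf{Repair.} Apply $\varphi_q$ to an element of $\bar N$, and use $P^{-1}$ to show that $\varphi_q$ worsens denominators.
\begin{itemize}
\item Let $v$ be the minimum of the $Y_{\sigma_0}$-adic valuations of coordinates in your basis. On $A$ one has $v(\varphi_q(a))=q\,v(a)$.
\item Write $\underline{y}$ for the coordinate row vector of $y$, with $\varphi_q(e_i)=\sum_j p_{ij}e_j$. Then $\underline{\varphi_q(x)}=\varphi_q(\underline{x})P$, so $\varphi_q(\underline{x})=\underline{\varphi_q(x)}\,P^{-1}$. Since $Y_{\sigma_0}^{c_0}P^{-1}$ has entries in $A^{\circ}$, this gives $q\,v(x)\geq v(\varphi_q(x))-c_0$.
\item Since $\bar N$ is finitely generated, $v$ attains a minimum $-k$ on $\bar N$ at some $x$. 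As $\varphi_q(x)\in\bar N$, we get $-k\leq v(\varphi_q(x))\leq -qk+c_0$, hence $k\leq c_0/(q-1)$. This holds uniformly in $N$ and $s$.
\end{itemize}
With this fix your proof is complete. It is a slightly more direct route than the paper's: the paper proves the same bound by passing to $\wedge^d$ and comparing valuations of the iterates $\varphi_q^i(x)$ in rank one.
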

	\begin{proof}
		By Lemma \ref{A dagger s is noetherian}, one can easily check that
		\begin{align}
			\dfrac{A_{\mathrm{mv},E}^{\dagger,s}}{A_{\mathrm{mv},E}^{\dagger,s}\cap \pi A_{\mathrm{mv},E}}=\dfrac{A_{\mathrm{mv},E}^{\dagger,\infty}}{A_{\mathrm{mv},E}^{\dagger,\infty}\cap \pi A_{\mathrm{mv},E}}=A^{\circ}.
		\end{align}
		For $N\in F_{\dagger,\infty}(M)$ (Definition \ref{defn of F_dagger}), $\frac{N}{N\cap \pi M}$ is a finite $A^{\circ}$-module stable under $\varphi_q$, hence $\frac{N}{N\cap \pi M}\subseteq \left(M/\pi M\right)^{\circ}$ (for the definition of $\left(M/\pi M\right)^{\circ}$, see (\ref{defn of M^circ})). Therefore, 
		\[\dfrac{j_{*}^{\dagger,\infty}(M)}{j_{*}^{\dagger,\infty}(M)\cap \pi M}=\bigcup_{N\in F_{\dagger,\infty}(M)}\dfrac{N}{N\cap \pi M}\subseteq \left(M/\pi M\right)^{\circ}.\]
		By Lemma \ref{A^{circ}-submodule is finite}, $\left(M/\pi M\right)^{\circ}$ is a finite $A^{\circ}$-module. As $A^{\circ}$ is Noetherian, it follows that $\frac{j_{*}^{\dagger,\infty}(M)}{j_{*}^{\dagger,\infty}(M)\cap \pi M}$ is also a finite $A^{\circ}$-module. Take $e_1,\dots,e_r\in j_{*}^{\dagger,\infty}(M)$ such that the image of $e_1,\dots,e_r$ generates $\frac{j_{*}^{\dagger,\infty}(M)}{j_{*}^{\dagger,\infty}(M)\cap \pi M}$, there exists $s_0$ such that $e_i \in j_*^{\dagger,s}(M)$ for every $s\geq s_0$, $i=1,\dots,r$, hence we have
		\[\sum_{i=1}^r A^{\circ}\overline{e}_i\subseteq \dfrac{j_{*}^{\dagger,s}(M)}{j_{*}^{\dagger,s}(M)\cap \pi M}\subseteq \dfrac{j_{*}^{\dagger,\infty}(M)}{j_{*}^{\dagger,\infty}(M)\cap \pi M} = \sum_{i=1}^r A^{\circ}\overline{e}_i,\quad s\geq s_0,\]
		which implies
		\[\dfrac{j_{*}^{\dagger,s}(M)}{j_{*}^{\dagger,s}(M)\cap \pi M}= \dfrac{j_{*}^{\dagger,\infty}(M)}{j_{*}^{\dagger,\infty}(M)\cap \pi M} = \sum_{i=1}^r A^{\circ}\overline{e}_i,\quad s\geq s_0.\]
		Hence $j_*^{\dagger,s}(M)=\sum_{i=1}^r A_{\mathrm{mv},E}^{\dagger,s}e_i$ for every $s\geq s_0$ by Lemma \ref{mod p generators are enough}. Therefore, $j_*^{\dagger,s}(M)$ is of finite type for $s\geq s_0$ and $A_{\mathrm{mv},E}^{\dagger,s}\cdot j_*^{\dagger,s_0}(M)=j_*^{\dagger,s}(M)$. Then we apply (\ref{relation between s,infty,dagger}) to conclude.
	\end{proof}
	
	Now we can state and prove the main result of this section:
	
	\begin{thm}\label{general facts}
		\begin{enumerate}[label=(\roman*),ref={\theprop.(\roman*)}]
			\item \label{part 1}
			If $M$ is an étale $(\varphi_q,\mathcal{O}_K^{\times})$-module over $A_{\mathrm{mv},E}$, killed by ${\pi}^n$ for some $n\geq 1$, then
			\[j^{\dagger}_{*}(M)=M.\]
			\item \label{part 2}
			If $M$ is an étale $(\varphi_q,\mathcal{O}_K^{\times})$-module over $A_{\mathrm{mv},E}$ free of rank $d$, then there exists $s_0$ such that, for $s\geq s_0$, $j^{\dagger,s}_{*}(M)$ is of finite type over $A_{\mathrm{mv},E}^{\dagger,s}$ and there is an equality
			\[A_{\mathrm{mv},E}^{\dagger}\cdot j^{\dagger,s}_{*}(M)= j^{\dagger}_{*}(M).\]
			Moreover, $j^{\dagger}_{*}(M)$ is an étale $(\varphi_q,\mathcal{O}_K^{\times})$-module over $A_{\mathrm{mv},E}^{\dagger}$, free of rank $r\leq d$.
			\item \label{part 3}
			If $M$ is an étale $(\varphi_q,\mathcal{O}_K^{\times})$-module over $A_{\mathrm{mv},E}$, let $M_{\mathrm{tor}}$ be the submodule of $M$ consisting of $\pi$-power torsion elements, then there is a short exact sequence
			\[0\to j^{\dagger}_{*}(M_{\mathrm{tor}})\to j^{\dagger}_{*}(M)\to j^{\dagger}_{*}(M/M_{\mathrm{tor}})\to 0.\]
			\item \label{part 4}
			If $M$ is an étale $(\varphi_q,\mathcal{O}_K^{\times})$-module over $A_{\mathrm{mv},E}$, then $j^{\dagger}_{*}(M)$ is an étale $(\varphi_q,\mathcal{O}_K^{\times})$-module over $A_{\mathrm{mv},E}^{\dagger}$ of finite presentation.
			\item \label{part 5}The functor $j^{\dagger}_{*}$ is right adjoint to $j^{\dagger,*}$ and the natural transformation $\mathrm{id}\to j_{*}^{\dagger}\circ j^{\dagger,*}$ is an isomorphism. 
			In particular, $j^{\dagger,*}$ is fully faithful and $j^{\dagger}_{*}$ is left exact.
			\item \label{part 6}If $M$ is an étale $(\varphi_q,\mathcal{O}_K^{\times})$-module over $A_{\mathrm{mv},E}$, then the natural map $j^{\dagger,*}j_{*}^{\dagger}M\to M$
			is injective.
		\end{enumerate}
	\end{thm}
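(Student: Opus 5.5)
I will prove the six parts essentially in the order stated. Parts (i) and (ii) handle the two extreme cases, (iii) glues them, and (iv)--(vi) are formal consequences together with the flatness of $A_{\mathrm{mv},E}^{\dagger}\to A_{\mathrm{mv},E}$ (Proposition \ref{inclusion for dagger is flat}).

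\textbf{Part (i).} Since $\pi^n M=0$ and $A_{\mathrm{mv},E}^{\dagger}/\pi^n=A_{\mathrm{mv},E}/\pi^n$, every finitely generated $A_{\mathrm{mv},E}/\pi^n$-submodule of $M$ is a finitely generated $A_{\mathrm{mv},E}^{\dagger}$-submodule. Now $M$ is finitely generated, so $M$ itself lies in $F_{\dagger}(M)$, and therefore $j_*^{\dagger}(M)=M$.

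\textbf{Part (ii).} Proposition \ref{finiteness of dagger, s, infty} already gives finite generation and the equality $A_{\mathrm{mv},E}^{\dagger}\cdot j^{\dagger,s}_{*}(M)=j^{\dagger}_{*}(M)$. It remains to prove that $N:=j_*^{\dagger}(M)$ is étale, free, and of rank at most $d$.
\begin{enumerate}
\item $N$ is $\pi$-torsion free because it sits inside $M$, and $\varphi_q(N)\subseteq N$ by construction.
\item For étaleness I must show that the linearization $A_{\mathrm{mv},E}^{\dagger}\otimes_{\varphi_q}N\to N$ is an isomorphism.
\begin{itemize}
\item Injectivity: by flatness of $A_{\mathrm{mv},E}^{\dagger}\to A_{\mathrm{mv},E}$, the source injects into $A_{\mathrm{mv},E}\otimes_{\varphi_q}N$. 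That module injects into $A_{\mathrm{mv},E}\otimes_{\varphi_q}M\cong M$, by flatness of $\varphi_q$ on $A_{\mathrm{mv},E}$ (Lemma \ref{general form of basis}) and the injectivity of $A_{\mathrm{mv},E}\otimes N\to M$. That last injectivity itself needs the fact that $N$ is torsion free; I will check it using Corollary \ref{j^dagger cap pM =p j^dagger}, which says $N/\pi N\hookrightarrow M/\pi M$.
\item Surjectivity: the image $N'=A_{\mathrm{mv},E}^{\dagger}\varphi_q(N)$ is a finitely generated $\varphi_q$-stable submodule, so $N'\subseteq N$. For the reverse inclusion I use the basis of Corollary \ref{phi_q is flat}. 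Given $x\in N$, write $x=\sum a_i\varphi_q(x_i)$ in $M$ with $x_i\in M$; the submodule generated by the $x_i$, together with $N$, is again $\varphi_q$-stable, so each $x_i\in N$.
\end{itemize}
\item This surjectivity argument is the main obstacle: I must check that the span of $N$ and the $x_i$ stays finitely generated and $\varphi_q$-stable. Cherbonnier's argument handles exactly this, via the decomposition $M=\bigoplus a_i\varphi_q(M)$.
\item Once $N$ is étale and torsion free, Corollary \ref{finite etale over A^{dagger} is free} gives that $N$ is free.
\item Injectivity of $A_{\mathrm{mv},E}\otimes N\to M$ then gives rank $r\le d$.
\end{enumerate}

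\textbf{Part (iii).}
\begin{itemize}
\item Left exactness of $j_*^{\dagger}$ on submodules is clear: $j^\dagger_*(M_{\rm tor})=M_{\rm tor}\cap j_*^\dagger(M)$. By (i), this equals $M_{\rm tor}$.
\item For surjectivity onto $j_*^{\dagger}(M/M_{\rm tor})$, lift generators of that free module (using (ii)) to elements $e_i\in M$. The submodule generated by the $e_i$ and $M_{\rm tor}$ is finitely generated. It is also $\varphi_q$-stable, since $\varphi_q(e_i)$ equals an $A^\dagger$-combination of the $e_j$ modulo $M_{\rm tor}$. Hence the $e_i$ lie in $j_*^{\dagger}(M)$.
\end{itemize}

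\textbf{Part (iv).} By (iii), $j_*^{\dagger}(M)$ is an extension of a free étale module (from (ii)) by the étale module $M_{\rm tor}$, which is étale over $A_{\mathrm{mv},E}/\pi^n$. Proposition \ref{structure of étale varphi_q-modules of finite type} and the split sequence then give finite presentation. Étaleness follows from the five lemma together with flatness of $\varphi_q$.

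\textbf{Part (v).}
\begin{itemize}
\item \emph{Adjunction.} For $D$ over $A_{\mathrm{mv},E}^{\dagger}$, the natural map $D\to A_{\mathrm{mv},E}\otimes D$ has $\varphi_q$-stable finitely generated image, so it factors through $j_*^{\dagger}$. This gives the unit. Conversely, a map $j^{\dagger,*}D\to M$ restricts on $D$ to a map into $j_*^{\dagger}M$. These two constructions are mutually inverse, by the usual argument, with $\mathcal{O}_K^\times$-equivariance automatic because $j_*^\dagger M$ is $\mathcal{O}_K^\times$-stable.
\item \emph{The unit is an isomorphism.} Injectivity of $D\to A_{\mathrm{mv},E}\otimes D$ follows from faithful flatness, which comes from Proposition \ref{inclusion for dagger is flat} and the fact that $\pi$ lies in the Jacobson radical by Lemma \ref{pA dagger is topologically nilpotent}. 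For surjectivity onto $j_*^\dagger$, decompose $D$ via Proposition \ref{structure of étale varphi_q-modules of finite type}, reducing by (iii) to the free case $D=(A^\dagger)^r$. There I must show that any $\varphi_q$-stable finitely generated submodule of $A_{\mathrm{mv},E}^r$ containing $D$ lies in $D$. I would compare determinants, or reduce mod $\pi$ and use Corollary \ref{j^dagger cap pM =p j^dagger} plus a successive approximation, as in Lemma \ref{mod p generators are enough}.
\end{itemize}

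\textbf{Part (vi).} The map $j^{\dagger,*}j_*^\dagger M\to M$ is injective: on the torsion part this is trivial by (i), and on the free part it was established in (ii). The five lemma on the sequence in (iii), which stays exact after base change by flatness, then finishes.
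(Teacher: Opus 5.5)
\textbf{There is a genuine gap in part (ii).} It sits at the surjectivity of $A_{\mathrm{mv},E}^{\dagger}\otimes_{\varphi_q}N\to N$ for $N=j_*^{\dagger}(M)$, i.e.\ at the étaleness of $N$.

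Your justification is that $N+\sum_i A_{\mathrm{mv},E}^{\dagger}x_i$ is $\varphi_q$-stable, and nothing supports it. The only information you have is $\sum_i a_i\varphi_q(x_i)=x\in N$, which says nothing about the individual $\varphi_q(x_i)$. Demanding $x_i\in N$ for all $x\in N$ amounts to demanding that $N$ be stable under the $\psi$-type operators attached to $M=\bigoplus_i a_i\varphi_q(M)$, and that is equivalent to what you want to prove. Deferring to Cherbonnier does not close this. In one variable the residue ring is a field, and a $\varphi$-stable subspace of an étale module is automatically étale by a dimension count. Here $A$ is not a field, and for $f\geq 2$ this fails. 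With $u=Y_{\sigma_1}/Y_{\sigma_0}$ one has $\varphi_q(u)=u^q$ in $A$. Hence $I=(u-1)A$ is a nonzero proper $\varphi_q$-stable ideal with $A\varphi_q(I)=I^q\neq I$. So étaleness cannot come from $\varphi_q$-stability alone, and your argument never uses the $\mathcal{O}_K^{\times}$-action, which is what the paper uses.

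\textbf{The missing step, as in the paper, is to reduce mod $\pi$.}
\begin{enumerate}
\item By the corollary $j_*^{\dagger}(M)\cap\pi M=\pi j_*^{\dagger}(M)$, the quotient $N/\pi N=N/(N\cap\pi M)$ embeds in $M/\pi M$ as an $A$-submodule. It is stable under $\varphi_q$, and under $\mathcal{O}_K^{\times}$ because $j_*^{\dagger}(M)$ is $\mathcal{O}_K^{\times}$-stable.
\item Corollary~\ref{submodule of etale module over A is etale} then makes $N/\pi N$ étale. Its proof rests on $A$ having no nonzero proper $\mathcal{O}_K^{\times}$-stable ideals, applied to the determinant of the Frobenius matrix.
\item Hence $N=A_{\mathrm{mv},E}^{\dagger}\varphi_q(N)+\pi N$. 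Since $N$ is finitely generated and $\pi$ lies in the Jacobson radical (Lemma~\ref{pA dagger is topologically nilpotent}), Nakayama gives $N=A_{\mathrm{mv},E}^{\dagger}\varphi_q(N)$.
\end{enumerate}

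Your injectivity argument is correct, and more explicit than the paper, which leaves injectivity implicit. It uses faithful flatness of $A_{\mathrm{mv},E}^{\dagger}\to A_{\mathrm{mv},E}$ together with the $\pi$-adic iteration showing $A_{\mathrm{mv},E}\otimes N\hookrightarrow M$. Your parts (iii)--(vi) all rely on the freeness coming from (ii), so they are incomplete until this step is repaired. Once it is, they follow the paper's proof.
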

	\begin{proof}
		\begin{enumerate}
			\item [(i)]
			It is a direct consequence of the fact $A_{\mathrm{mv},E}/\pi^{n}=A_{\mathrm{mv},E}^{\dagger}/\pi^{n}$, $n\geq 1$.
			\item [(ii)]
			The first part is Proposition \ref{finiteness of dagger, s, infty}. By Corollary \ref{j^dagger cap pM =p j^dagger}, we have $\frac{j_*^{\dagger}(M)}{\pi j_*^{\dagger}(M)}=\frac{j_*^{\dagger}(M)}{j_*^{\dagger}(M)\cap \pi M}$, which is therefore an $A$-submodule of $M/\pi M$, stable under $\varphi_q$ and the $\mathcal{O}_K^{\times}$-action. Hence $\frac{j_*^{\dagger}(M)}{\pi j_*^{\dagger}(M)}$ is an étale $(\varphi_q,\mathcal{O}_K^{\times})$-module over $A$ by Corollary \ref{submodule of etale module over A is etale}. The étaleness of $\frac{j_*^{\dagger}(M)}{\pi j_*^{\dagger}(M)}$ implies that
			\[j_*^{\dagger}(M)=A_{\mathrm{mv},E}^{\dagger}\cdot\varphi_q\left(j_*^{\dagger}(M)\right)+\pi j_*^{\dagger}(M).\]
			Hence by Lemma \ref{pA dagger is topologically nilpotent} and Nakayama's lemma, $j_*^{\dagger}(M)=A_{\mathrm{mv},E}^{\dagger}\cdot\varphi_q\left(j_*^{\dagger}(M)\right)$, i.e. $j_*^{\dagger}(M)$ is an  étale $(\varphi_q,\mathcal{O}_K^{\times})$-module over $A_{\mathrm{mv},E}^{\dagger}$, thus is free of rank $r=\mathrm{rank}_{A}\frac{j_*^{\dagger}(M)}{\pi j_*^{\dagger}(M)}\leq \mathrm{rank}_{A}M/\pi M=d$ by Corollary \ref{finite etale over A^{dagger} is free}.
			\item [(iii)]
			Note that we have a commutative diagram of complexes
			\begin{equation*}
				\begin{tikzcd}
					0\arrow[r] &j_*^{\dagger}(M_{\mathrm{tor}})\arrow[r]\arrow[d,equal]& j_*^{\dagger}(M)\arrow[d,hook]\arrow[r]&j_*^{\dagger}(M/M_{\mathrm{tor}})\arrow[d,hook]\arrow[r]&0\\
					0\arrow[r]& M_{\mathrm{tor}}\arrow[r]&M\arrow[r]&M/M_{\mathrm{tor}}\arrow[r]&0.
				\end{tikzcd}
			\end{equation*}
			The bottom complex of the diagram is exact, hence the exactness of the top complex at $j_*^{\dagger}(M_{\mathrm{tor}})$ and $j_*^{\dagger}(M)$ is clear. Since $j_*^{\dagger}(M/M_{\mathrm{tor}})$ is free of finite rank by (ii), we can take $e_1,\dots,e_r\in M/M_{\mathrm{tor}}$ such that
			\[j_*^{\dagger}(M/M_{\mathrm{tor}})=\oplus_{i=1} A_{\mathrm{mv},E}^{\dagger}e_i.\]
			Let $x_i\in M$ be a lift of $e_i\in M/M_{\mathrm{tor}}$ for $i=1,\dots,r$, then consider $N\coloneq \sum_{i=1}^{r}A_{\mathrm{mv},E}^{\dagger}x_i+M_{\mathrm{tor}}\subset M$ which is a finitely generated $A_{\mathrm{mv},E}^{\dagger}$-module and stable under $\varphi_q$. Thus $N\subseteq j_*^{\dagger}(M)$, which implies that the natural map $j_*^{\dagger}(M)\to j_*^{\dagger}(M/M_{\mathrm{tor}})$ is surjective.
			\item [(iv)]
			The claim that $j_*^{\dagger}(M)$ is of finite presentation follows from the short exact sequence of (iii), the fact that $M_{\mathrm{tor}}\cong\bigoplus_{i=1}^r\left(A_{\mathrm{mv},E}/\pi^{n_i}\right)^{\oplus d_i}$ for some $n_i,d_i$ (\cite[Proposition 4.10]{du2025multivariablevarphiqmathcaloktimesmodulesassociatedpadic}) and \cite[\href{https://stacks.math.columbia.edu/tag/0519}{Tag 07RB}]{stacks-project}. As for the proof of Lemma \ref{A^{dagger} is abelian}, the étaleness of $j_*^{\dagger}(M)$ follows from the étaleness of $j_*^{\dagger}(M_{\mathrm{tor}})=M_{\mathrm{tor}}$ and $j_*^{\dagger}(M/M_{\mathrm{tor}})$, the flatness of $\varphi_q: A_{\mathrm{mv},E}^{\dagger}\to A_{\mathrm{mv},E}^{\dagger}$ (Corollary \ref{phi_q is flat}) and five lemma.
			\item [(v)]
			The claim that $j^{\dagger}_{*}$ is right adjoint to $j^{\dagger,*}$ is easy to verify, hence we only prove that the natural map
			\begin{align}\label{natural map id to j_*j^*}
				M\to j_{*}^{\dagger}(j^{\dagger,*} (M))
			\end{align}
			is an isomorphism for $M\in \modet_{(\varphi_q,\mathcal{O}_K^{\times})}(A_{\mathrm{mv},E}^{\dagger})$. First we assume that $M$ is $\pi$-torsion free, hence $M$ is a free $A_{\mathrm{mv},E}^{\dagger}$-module (Corollary \ref{finite etale over A^{dagger} is free}). Then the map (\ref{natural map id to j_*j^*}) is injective.
			By Corollary \ref{j^dagger cap pM =p j^dagger}, $j_{*}^{\dagger}(j^{\dagger,*} (M)\cap \pi j^{\dagger,*} (M))=\pi j_{*}^{\dagger}(j^{\dagger,*} (M))$.
			Moreover, since $A_{\mathrm{mv},E}^{\dagger}\cap \pi A_{\mathrm{mv},E}=\pi A_{\mathrm{mv},E}^{\dagger}$, we have
			$M\cap \pi j^{\dagger,*} (M)={\pi} M$.
			Thus
			\[\dfrac{M}{{\pi}M}=\dfrac{M}{M\cap {\pi}j^{\dagger,*} (M)}\subseteq \dfrac{j_{*}^{\dagger}(j^{\dagger,*} (M))}{j_{*}^{\dagger}(j^{\dagger,*} (M)\cap {\pi}j^{\dagger,*} (M))}=\dfrac{j_{*}^{\dagger}(j^{\dagger,*} (M))}{{\pi}j_{*}^{\dagger}(j^{\dagger,*} (M))}\subseteq \dfrac{j^{\dagger,*} (M)}{{\pi}j^{\dagger,*} (M)}=\dfrac{M}{{\pi}M},\]
			where the last equality follows from the definition of $j^{\dagger,*}$. Therefore,
			\[\dfrac{M}{{\pi}M}=\dfrac{j_{*}^{\dagger}(j^{\dagger,*} (M))}{{\pi}j_{*}^{\dagger}(j^{\dagger,*} (M))},\]
			which implies
			\[j_{*}^{\dagger}(j^{\dagger,*} (M))=M+{\pi}j_{*}^{\dagger}(j^{\dagger,*} (M)).\]
			By Nakayama's lemma and Lemma \ref{pA dagger is topologically nilpotent}, we deduce $M=j_{*}^{\dagger}(j^{\dagger,*} (M))$. For general $M$, by Proposition \ref{structure of étale varphi_q-modules of finite type}, for $n\gg 1$, $M[{\pi}^n]\coloneq \left\{x\in M:{\pi}^nx=0\right\}$ is a finitely generated étale $(\varphi_q,\mathcal{O}_K^{\times})$-module over $A_{\mathrm{mv},E}^{\dagger}/{\pi}^n$ and $M/M[{\pi}^n]$ is an étale $(\varphi_q,\mathcal{O}_K^{\times})$-module over $A_{\mathrm{mv},E}^{\dagger}$ free of finite rank. Therefore, by Proposition \ref{inclusion for dagger is flat}, we have a short exact sequence of étale $(\varphi_q,\mathcal{O}_K^{\times})$-modules over $A_{\mathrm{mv},E}$:
			\[0\to j^{\dagger,*}(M[{\pi}^n])\to j^{\dagger,*}(M) \to j^{\dagger,*}(M/M[{\pi}^n])\to 0.\]
			Then we can apply (iii) to deduce that we have a short exact sequence of $A_{\mathrm{mv},E}^{\dagger}$-modules
			\[0\to j_{*}^{\dagger}(j^{\dagger,*} (M[{\pi}^n]))\to j_{*}^{\dagger}(j^{\dagger,*} (M))\to j_{*}^{\dagger}(j^{\dagger,*} (M/M[\pi^n]))\to 0.\]
			Moreover, $j^{\dagger,*}(M[{\pi}^n])=M[{\pi}^n]$, hence $j_{*}^{\dagger}(j^{\dagger,*} (M[{\pi}^n]))=M[{\pi}^n]$. Thus we obtain a commutative diagram with exact rows where the right vertical isomorphism follows from the free case:
			\begin{equation*}
				\begin{tikzcd}
					0\arrow[r]&M[{\pi}^n]\arrow[d,equal]\arrow[r]& M\arrow[d]\arrow[r]&M/M[{\pi}^n]\arrow[d,"\wr"]\arrow[r]&0\\
					0\arrow[r]& M[{\pi}^n]\arrow[r]& j_{*}^{\dagger}(j^{\dagger,*} (M))\arrow[r]& j_{*}^{\dagger}(j^{\dagger,*} (M/M[\pi^n]))\arrow[r]&0.
				\end{tikzcd}
			\end{equation*}
			Thus the natural map (\ref{natural map id to j_*j^*}) is an isomorphism. Then we deduce the fully faithfulness of $j^{\dagger,*}$ by \cite[\href{https://stacks.math.columbia.edu/tag/07RB}{Tag 07RB}]{stacks-project}.
			\item [(vi)]
			If $M$ is torsion, then $j^{\dagger,*}j_{*}^{\dagger}M=M$. If $M$ is free, we have
			\[\dfrac{j^{\dagger,*}j_{*}^{\dagger}M}{{\pi}j^{\dagger,*}j_{*}^{\dagger}M}=\dfrac{j_{*}^{\dagger}M}{{\pi}j_{*}^{\dagger}M}\subseteq \dfrac{M}{{\pi}M},\]
			thus if $x\in \ker \left(j^{\dagger,*}j_{*}^{\dagger}M\to M\right)$, then $x\in {\pi}j^{\dagger,*}j_{*}^{\dagger}M$. Say $x={\pi}x_1$ for some $x_1\in j^{\dagger,*}j_{*}^{\dagger}M$, since $M$ is free, we have $x_1\in \ker \left(j^{\dagger,*}j_{*}^{\dagger}M\to M\right)$, hence $x_1\in {\pi}j^{\dagger,*}j_{*}^{\dagger}M$, hence $x={\pi}x_1\in {\pi}^2j^{\dagger,*}j_{*}^{\dagger}M$. Repeating this argument, we deduce that $x\in \bigcap_{n=1}^{\infty}{\pi}^nj^{\dagger,*}j_{*}^{\dagger}M=0$, hence $\ker \left(j^{\dagger,*}j_{*}^{\dagger}M\to M\right)=0$.
			
			For general $M$, we have a commutative diagram with exact rows:
			\begin{equation*}
				\begin{tikzcd}
					0\arrow[r]& j^{\dagger,*}j_{*}^{\dagger}(M_{\mathrm{tor}})\arrow[r]\arrow[d,equal]& j^{\dagger,*}j_{*}^{\dagger}(M)\arrow[r]\arrow[d]& j^{\dagger,*}j_{*}^{\dagger}(M/M_{\mathrm{tor}})\arrow[r]\arrow[d,hook]& 0\\
					0\arrow[r]& M_{\mathrm{tor}}\arrow[r]& M\arrow[r]& M/M_{\mathrm{tor}}\arrow[r]& 0.
				\end{tikzcd}
			\end{equation*}
			Then the claim follows.
		\end{enumerate}
	\end{proof}
	
	\begin{defn}
		Let $M$ be an étale $(\varphi_q,\mathcal{O}_K^{\times})$-module over $A_{\mathrm{mv},E}$. We say that $M$ is \textit{overconvergent}, if $M$ lies in the essential image of $j^{\dagger,*}$.
	\end{defn}
	
	\begin{lem}\label{equivalent condition on overconvergence}
		Let $M$ be an étale $(\varphi_q,\mathcal{O}_K^{\times})$-module over $A_{\mathrm{mv},E}$, let $M_{\mathrm{tor}}$ be the submodule consisting of $\pi$-power torsion elements. The following are equivalent:
		\begin{enumerate}[label=(\roman*)]
			\item  $M$ is overconvergent.
			\item  $M/M_{\mathrm{tor}}$ is overconvergent.
			\item  The natural map $j^{\dagger,*}j_{*}^{\dagger}M\to M$ is surjective (hence is an isomorphism by Theorem \ref{part 6}).
			\item $\mathrm{rank}_{A_{\mathrm{mv}^{\dagger}}}j_*^{\dagger}\left(M/M_{\mathrm{tor}}\right)=\mathrm{rank}_{A_{\mathrm{mv},E}}M/M_{\mathrm{tor}}$.
			\item  $\mathrm{rank}_{A}M/\pi M=\mathrm{rank}_{A}j_*^{\dagger}M/{\pi}j_*^{\dagger}M$.
		\end{enumerate}
	\end{lem}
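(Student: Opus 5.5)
We prove the cycle of implications (i)$\Rightarrow$(iii)$\Rightarrow$(i), then (ii)$\Leftrightarrow$(iii), then (ii)$\Leftrightarrow$(iv)$\Leftrightarrow$(v). Throughout, the tools are Theorem~\ref{general facts} (especially (iii), (v), (vi)), the exactness of $j^{\dagger,*}$, and the structure results for torsion-free modules: Corollary~\ref{finite etale over A^{dagger} is free} and Theorem~\ref{part 2}.

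\textbf{(i)$\Leftrightarrow$(iii).} Suppose $M\cong j^{\dagger,*}N$. By Theorem~\ref{part 5}, $j_*^{\dagger}M\cong N$, so $j^{\dagger,*}j_*^{\dagger}M\to M$ is identified with the identity of $j^{\dagger,*}N$. Here we use the triangle identity for the adjunction, together with the fact that the unit is an isomorphism. Conversely, if the counit is surjective, it is an isomorphism by Theorem~\ref{part 6}. Then $M$ lies in the essential image, because $j_*^{\dagger}M$ is an étale module over $A_{\mathrm{mv},E}^{\dagger}$ by Theorem~\ref{part 4}.

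\textbf{(ii)$\Leftrightarrow$(iii).} Apply $j^{\dagger,*}$ to the exact sequence in Theorem~\ref{part 3}. Since $j^{\dagger,*}$ is exact and $j_*^{\dagger}M_{\mathrm{tor}}=M_{\mathrm{tor}}$ by Theorem~\ref{part 1}, we get a morphism of short exact sequences. It maps $0\to M_{\mathrm{tor}}\to j^{\dagger,*}j_*^{\dagger}M\to j^{\dagger,*}j_*^{\dagger}(M/M_{\mathrm{tor}})\to0$ to $0\to M_{\mathrm{tor}}\to M\to M/M_{\mathrm{tor}}\to 0$, with the identity on the left. By the five lemma, the middle counit is surjective if and only if the right one is. Applying (i)$\Leftrightarrow$(iii) to $M/M_{\mathrm{tor}}$, which is étale by the abelian-category statement, then gives (ii)$\Leftrightarrow$(iii).

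\textbf{(ii)$\Leftrightarrow$(iv)$\Leftrightarrow$(v).} Reduce to the case of free $M$. This is legitimate because $M/M_{\mathrm{tor}}$ is finite free by \cite[Proposition 4.10]{du2025multivariablevarphiqmathcaloktimesmodulesassociatedpadic}, and $M/\pi M$ and $j_*^{\dagger}M/\pi j_*^{\dagger}M$ differ from the free parts only by torsion summands that coincide. For (v), tensor the sequence of (iii) with $A_{\mathrm{mv},E}^{\dagger}/\pi$. The torsion parts agree, and the free parts are split, so the ranks compare correctly; one should interpret rank over $A$ as rank of the free quotient, or compare after splitting.

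For free $M$ of rank $d$, Theorem~\ref{part 2} says $j_*^{\dagger}M$ is free of rank $r\le d$. Moreover $j_*^{\dagger}M/\pi j_*^{\dagger}M\hookrightarrow M/\pi M$ by Corollary~\ref{j^dagger cap pM =p j^dagger}, so (iv) and (v) agree. If $r=d$, then $j^{\dagger,*}j_*^{\dagger}M\to M$ is an injective map between free modules of the same rank whose reduction mod $\pi$ is an injection $A^{d}\hookrightarrow A^{d}$ of étale $\varphi_q$-modules.

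\textbf{Main obstacle.} The difficulty is showing that this reduction is surjective. I would use the fact that the image is an étale $\varphi_q$-submodule of full rank. Its cokernel is then an étale $\varphi_q$-module over $A$ of rank $0$, hence free of rank $0$ by \cite[Corollary 4.9]{du2025multivariablevarphiqmathcaloktimesmodulesassociatedpadic}, which forces the cokernel to vanish. Here the cokernel is étale because the étale category over $A$ is abelian. With surjectivity mod $\pi$ in hand, Nakayama over the $\pi$-adically complete ring $A_{\mathrm{mv},E}$ gives surjectivity, which is (iii). The converse (iii)$\Rightarrow$(iv) is immediate by comparing ranks.
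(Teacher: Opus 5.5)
Your proposal is correct and follows essentially the paper's route. The key step (iv)$\Rightarrow$(ii) is the same as in the paper: for $N=M/M_{\mathrm{tor}}$, the modules $j_*^{\dagger}N/\pi j_*^{\dagger}N\subseteq N/\pi N$ are étale over $A$ of equal rank, hence equal, and Nakayama's lemma then gives surjectivity of $j^{\dagger,*}j_*^{\dagger}N\to N$. You justify the equality via the cokernel being étale and free of rank $0$; the paper cites Lemma~\ref{equal rank implies equality}. Likewise, (v)$\Leftrightarrow$(iv) comes, as in the paper, from splitting off the common torsion summand $M_{\mathrm{tor}}$. Your arguments for (i)$\Leftrightarrow$(ii)$\Leftrightarrow$(iii), via the triangle identity and the five lemma, just fill in what the paper calls easy.
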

	\begin{proof}
		The equivalence among (i), (ii) and (iii) is easy to prove. Also, it is obvious that (i) implies (iv) and (v), hence it remains to show that (iv) implies (ii) and that (v) implies (iv). Assuming (iv), let $N\coloneq M/M_{\mathrm{tor}}$, then
		\[\mathrm{rank}_{A}N/{\pi}N=\mathrm{rank}_{A_{\mathrm{mv},E}}N =\mathrm{rank}_{A_{\mathrm{mv},E}^{\dagger}}j_*^{\dagger}N=\mathrm{rank}_{A}j_*^{\dagger}N/\pi j_*^{\dagger}N,\]
		where the first equality follows from \cite[Proposition 4.10]{du2025multivariablevarphiqmathcaloktimesmodulesassociatedpadic} and the last equality from Proposition \ref{structure of étale varphi_q-modules of finite type}. By Corollary \ref{j^dagger cap pM =p j^dagger} and Corollary \ref{submodule of etale module over A is etale}, we see that $j_*^{\dagger}N/{\pi}j_*^{\dagger}N=j_*^{\dagger}N/(j_*^{\dagger}N\cap\pi N)\subseteq N/{\pi}N$ are étale $(\varphi_q,\mathcal{O}_K^{\times})$-modules over $A$. By Lemma \ref{equal rank implies equality}, it follows that $j_*^{\dagger}N/{\pi}j_*^{\dagger}N= N/{\pi}N$. By the definition of $j^{\dagger,*}$ (\ref{defn of j^*}) and  Proposition \ref{structure of étale varphi_q-modules of finite type}, we have
		\[j^{\dagger,*}j_*^{\dagger}N/{\pi}j^{\dagger,*}j_*^{\dagger}N=j_*^{\dagger}N/{\pi}j_*^{\dagger}N=N/\pi N.\] 
		Thus by Theorem \ref{part 6}, we obtain $N=j^{\dagger,*}j_*^{\dagger}N+{\pi}N$, then we apply Nakayama's lemma to conclude.
		
		Suppose (v) holds. Since $M\cong M_{\mathrm{tor}}\oplus M/M_{\mathrm{tor}}$ as $A_{\mathrm{mv},E}$-modules (\cite[Proposition 4.10]{du2025multivariablevarphiqmathcaloktimesmodulesassociatedpadic}) and $j_*^{\dagger}M\cong M_{\mathrm{tor}}\oplus j_*^{\dagger}\left(M/M_{\mathrm{tor}}\right)$ as $A_{\mathrm{mv},E}^{\dagger}$-modules (Theorem \ref{general facts} (i), (ii), (iii) and Proposition \ref{structure of étale varphi_q-modules of finite type}), we deduce that $\mathrm{rank}_A\frac{M/M_{\mathrm{tor}}}{\pi(M/M_{\mathrm{tor}}) }=\mathrm{rank}_A \frac{j_*^{\dagger}\left(M/M_{\mathrm{tor}}\right)}{\pi j_*^{\dagger}\left(M/M_{\mathrm{tor}}\right)}$. Therefore, 
		\[\mathrm{rank}_{A_{\mathrm{mv},E}}M/M_{\mathrm{tor}}=\mathrm{rank}_A\frac{M/M_{\mathrm{tor}}}{\pi(M/M_{\mathrm{tor}}) }=\mathrm{rank}_A \frac{j_*^{\dagger}\left(M/M_{\mathrm{tor}}\right)}{\pi j_*^{\dagger}\left(M/M_{\mathrm{tor}}\right)}=\mathrm{rank}_{A_{\mathrm{mv},E}^{\dagger}}j_*^{\dagger}\left(M/M_{\mathrm{tor}}\right),\]
		where the first equality uses \cite[Proposition 4.10]{du2025multivariablevarphiqmathcaloktimesmodulesassociatedpadic} and the last equality uses Proposition \ref{structure of étale varphi_q-modules of finite type}. This finishes the proof.
	\end{proof}
	
	\begin{lem}\label{subquo of oc is oc}
		Let 
		\[0\to M_1\to M\to M_2\to 0\]
		be a short exact sequence of étale $(\varphi_q,\mathcal{O}_K^{\times})$-modules over $A_{\mathrm{mv},E}$. Assume that $M_2$ is $\pi$-torsion free.
		If $M$ is overconvergent, then so are $M_1$ and $M_2$.
	\end{lem}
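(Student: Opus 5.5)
Write $M=j^{\dagger,*}D$ with $D\in\modet_{(\varphi_q,\mathcal{O}_K^{\times})}(A_{\mathrm{mv},E}^{\dagger})$. By Theorem \ref{general facts}(v) we may take $D=j_*^{\dagger}M$, so $j^{\dagger,*}j_*^{\dagger}M\to M$ is an isomorphism. The strategy is to produce, inside $D$, sub- and quotient objects mapping onto $M_1$ and $M_2$, using that $j^{\dagger,*}$ is exact (flatness of $A_{\mathrm{mv},E}^{\dagger}\to A_{\mathrm{mv},E}$, Proposition \ref{inclusion for dagger is flat}) and that $\modet_{(\varphi_q,\mathcal{O}_K^{\times})}(A_{\mathrm{mv},E}^{\dagger})$ is abelian (Lemma \ref{A^{dagger} is abelian}).

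\textbf{Step 1: $M_2$ is overconvergent.} Let $D_2$ be the image of $D=j_*^{\dagger}M$ under $M\to M_2$. It is an étale $(\varphi_q,\mathcal{O}_K^{\times})$-module over $A_{\mathrm{mv},E}^{\dagger}$, being the image of a morphism $D\to j_*^{\dagger}M_2$ in the abelian category. It is also finitely generated and $\varphi_q$-stable, so $D_2\subseteq j_*^{\dagger}M_2$. Since $A_{\mathrm{mv},E}D=M$ surjects onto $M_2$, we get $A_{\mathrm{mv},E}\cdot j_*^{\dagger}M_2=M_2$, i.e. $j^{\dagger,*}j_*^{\dagger}M_2\to M_2$ is surjective. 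Lemma \ref{equivalent condition on overconvergence}(iii) then gives overconvergence of $M_2$.

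\textbf{Step 2: $M_1$ is overconvergent.} Set $D_1:=\ker(D\to D_2)=D\cap M_1$, an étale module over $A_{\mathrm{mv},E}^{\dagger}$ by Lemma \ref{A^{dagger} is abelian}. Applying the exact functor $j^{\dagger,*}$ to $0\to D_1\to D\to D_2\to0$ gives an exact sequence
\[0\to j^{\dagger,*}D_1\to M\to j^{\dagger,*}D_2\to 0.\]
The key point is to identify $j^{\dagger,*}D_2$ with $M_2$, i.e. to show that $j^{\dagger,*}D_2\to M_2$ (whose image is $A_{\mathrm{mv},E}D_2=M_2$) is injective. Here the hypothesis that $M_2$ is $\pi$-torsion free enters. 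Then $D_2$ is $\pi$-torsion free, hence free by Corollary \ref{finite etale over A^{dagger} is free}. Moreover $D_2\subseteq j_*^{\dagger}M_2$, and $j^{\dagger,*}j_*^{\dagger}M_2\to M_2$ is injective by Theorem \ref{general facts}(vi) and in fact an isomorphism by Step 1.

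\textbf{Main obstacle.} We must check that $j^{\dagger,*}D_2\to j^{\dagger,*}j_*^{\dagger}M_2$ is injective, which follows from the flatness in Proposition \ref{inclusion for dagger is flat} applied to the inclusion $D_2\subseteq j_*^{\dagger}M_2$. This step is where care is needed. Alternatively, one can compare ranks: both $D_2$ and $j_*^{\dagger}M_2$ are free, and $A_{\mathrm{mv},E}D_2=M_2$ forces $\mathrm{rank}\,D_2\ge\mathrm{rank}\,M_2$. A surjection $A_{\mathrm{mv},E}^{\mathrm{rank} D_2}\to M_2$ between free modules of equal rank over a domain is an isomorphism, so it suffices to see $\mathrm{rank}\,D_2\le \mathrm{rank}\,M_2$. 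This holds because $D_2\subseteq M_2$ and $A_{\mathrm{mv},E}^{\dagger}\subseteq A_{\mathrm{mv},E}$ are domains, so passing to fraction fields gives an injective map on generic fibers.

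Once $j^{\dagger,*}D_2\cong M_2$ compatibly with $M\to M_2$, the five lemma identifies $j^{\dagger,*}D_1$ with $\ker(M\to M_2)=M_1$, $(\varphi_q,\mathcal{O}_K^{\times})$-equivariantly. Hence $M_1$ lies in the essential image of $j^{\dagger,*}$.
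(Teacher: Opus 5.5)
Your main argument is correct, but it takes a different route from the paper's.

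\textbf{The paper's route.} The paper never constructs the descended modules. It uses the torsion-freeness of $M_2$ to get $M_{\mathrm{tor}}\subseteq M_1$ and an exact sequence of free modules $0\to M_1/M_{\mathrm{tor}}\to M/M_{\mathrm{tor}}\to M_2\to 0$, and then argues purely with ranks. Overconvergence of $M/M_{\mathrm{tor}}$ gives $\mathrm{rank}\, j_*^{\dagger}(M/M_{\mathrm{tor}})=\mathrm{rank}\,M/M_{\mathrm{tor}}$. Left exactness of $j_*^{\dagger}$ bounds this by the sum of the ranks of $j_*^{\dagger}(M_1/M_{\mathrm{tor}})$ and $j_*^{\dagger}M_2$. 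Each of these is at most the corresponding $A_{\mathrm{mv},E}$-rank by Theorem~\ref{general facts}(ii), so every inequality is an equality, and criterion (iv) of Lemma~\ref{equivalent condition on overconvergence} concludes.

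\textbf{Your route.} You exhibit the descents explicitly: $D_2$, the image of $j_*^{\dagger}M$, and $D_1=j_*^{\dagger}M\cap M_1$. Surjectivity of $A_{\mathrm{mv},E}\otimes D_2\to M_2$ is immediate. Injectivity follows from flatness (Proposition~\ref{inclusion for dagger is flat}) applied to $D_2\subseteq j_*^{\dagger}M_2$, followed by Theorem~\ref{general facts}(vi). The five lemma then handles $M_1$.

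This chain never uses that $M_2$ is $\pi$-torsion free, because Theorem~\ref{general facts}(vi) holds for every étale $M_2$. So, contrary to your remark, the hypothesis does not enter. Your argument actually shows that the essential image of $j^{\dagger,*}$ is stable under arbitrary subobjects and quotients, and it gives explicit $A_{\mathrm{mv},E}^{\dagger}$-models. The paper's rank count is shorter once Lemma~\ref{equivalent condition on overconvergence} is available, but it only makes sense when all terms are free, which is why it needs the torsion-freeness assumption.

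\textbf{The alternative rank argument fails as written.} Injectivity on generic fibres only embeds a $\mathrm{Frac}(A_{\mathrm{mv},E}^{\dagger})$-vector space of dimension $\mathrm{rank}\,D_2$ into $\mathrm{Frac}(A_{\mathrm{mv},E})\otimes_{A_{\mathrm{mv},E}}M_2$. The $\mathrm{Frac}(A_{\mathrm{mv},E}^{\dagger})$-dimension of the target is not bounded by $\mathrm{rank}\,M_2$: an $A_{\mathrm{mv},E}^{\dagger}$-linearly independent family in $M_2$ need not be $A_{\mathrm{mv},E}$-linearly independent. The bound $\mathrm{rank}\,D_2\le\mathrm{rank}\,M_2$ has to come from the $\varphi_q$-structure. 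Namely, $D_2\subseteq j_*^{\dagger}M_2$, which is free of rank at most $\mathrm{rank}\,M_2$ by Theorem~\ref{general facts}(ii); that is where torsion-freeness would genuinely be used. Since the flatness argument already suffices, drop this alternative or repair it in that way.
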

	\begin{proof}
		Since $M_2$ is $\pi$-torsion free, by \cite[Proposition 4.10]{du2025multivariablevarphiqmathcaloktimesmodulesassociatedpadic}, we see that $M_{\mathrm{tor}}\subseteq M_1$ and $M_1/M_{\mathrm{tor}}$ is free, hence we obtain an exact sequence of free $A_{\mathrm{mv},E}$-modules:
		\begin{align}
			0\to M_1/M_{\mathrm{tor}}\to M/M_{\mathrm{tor}}\to M_2\to 0.
		\end{align}
		Hence
		\[\mathrm{rank}_{A_{\mathrm{mv},E}}M/M_{\mathrm{tor}}= \mathrm{rank}_{A_{\mathrm{mv},E}}M_1/M_{\mathrm{tor}}+\mathrm{rank}_{A_{\mathrm{mv},E}}M_2.\]
		By lemma \ref{equivalent condition on overconvergence}, $M/M_{\mathrm{tor}}$ is overconvergent and $\mathrm{rank}_{A_{\mathrm{mv}^{\dagger}}}j_*^{\dagger}\left(M/M_{\mathrm{tor}}\right)=\mathrm{rank}_{A_{\mathrm{mv},E}}M/M_{\mathrm{tor}}$. Since $j_*^{\dagger}$ is left exact by Theorem \ref{part 5}, it induces a short exact sequence
		\[0\to  j_*^{\dagger}\left(M_1/M_{\mathrm{tor}}\right)\to j_*^{\dagger}\left(M/M_{\mathrm{tor}}\right)\to j_*^{\dagger}M_2,\]
		thus
		\[\mathrm{rank}_{A_{\mathrm{mv}^{\dagger}}}j_*^{\dagger}\left(M/M_{\mathrm{tor}}\right)\leq \mathrm{rank}_{A_{\mathrm{mv}^{\dagger}}}j_*^{\dagger}\left(M_1/M_{\mathrm{tor}}\right)+\mathrm{rank}_{A_{\mathrm{mv}^{\dagger}}}j_*^{\dagger}M_2.\]
		On the other hand, 
		\[\mathrm{rank}_{A_{\mathrm{mv}^{\dagger}}}j_*^{\dagger}\left(M_1/M_{\mathrm{tor}}\right)\leq \mathrm{rank}_{A_{\mathrm{mv},E}}M_1/M_{\mathrm{tor}},\quad \mathrm{rank}_{A_{\mathrm{mv}^{\dagger}}}j_*^{\dagger}M_2\leq \mathrm{rank}_{A_{\mathrm{mv},E}}M_2.\]
		Lemma \ref{equivalent condition on overconvergence} implies $\mathrm{rank}_{A_{\mathrm{mv}^{\dagger}}}j_*^{\dagger}\left(M/M_{\mathrm{tor}}\right)=\mathrm{rank}_{A_{\mathrm{mv}}}M/M_{\mathrm{tor}}$, hence
		\[\mathrm{rank}_{A_{\mathrm{mv}^{\dagger}}}j_*^{\dagger}\left(M_1/M_{\mathrm{tor}}\right)= \mathrm{rank}_{A_{\mathrm{mv},E}}M_1/M_{\mathrm{tor}},\quad \mathrm{rank}_{A_{\mathrm{mv}^{\dagger}}}j_*^{\dagger}M_2= \mathrm{rank}_{A_{\mathrm{mv},E}}M_2.\]
		Thus $M_1/M_{\mathrm{tor}}$ and $M_2$ are overconvergent by Lemma \ref{equivalent condition on overconvergence}.
	\end{proof}
	\begin{remark}
		It is unknown whether the converse of Lemma~\ref{subquo of oc is oc} holds. That is, if $M$ is an extension of $M_2$ by $M_1$, where $M_1$ and $M_2$ are overconvergent, it is not clear whether $M$ itself is overconvergent.
	\end{remark}
	Recall that in \cite{du2025multivariablevarphiqmathcaloktimesmodulesassociatedpadic} there is a fully faithful functor $D_{A_{\mathrm{mv},E}}^{(i)}$ sending finite type continuous $\mathcal{O}_E$-representation of $G_K\coloneq \gal(\overline{K}/K)$ to the category of finite type étale $(\varphi_q,\mathcal{O}_K^{\times})$-modules over $A_{\mathrm{mv},E}$ for $0\leq i\leq f-1$. There is a natural question:
	
	
	\begin{question}\label{question about overconvergence}
		Is it true that $D_{A_{\mathrm{mv},E}}^{(0)}(\rho)$ is overconvergent for any finite free continuous $\mathcal{O}_E$-representation $\rho$ of $G_K$? 
	\end{question}
	
	For example, if $\rho$ is an unramified character sending the geometric Frobenius to $\lambda\in\mathcal{O}_E^{\times}$, then an easy computation gives that $D_{A_{\mathrm{mv},E}}^{(0)}(\rho)=A_{\mathrm{mv},E}e$, where $\varphi_q(e)=\lambda e$ and $a(e)=e$ for $a\in\mathcal{O}_K^{\times}$, hence is overconvergent.
	Note that by \cite[Lemma 7.1]{du2025multivariablevarphiqmathcaloktimesmodulesassociatedpadic}, the overconvergence of $D_{A_{\mathrm{mv},E}}^{(0)}(\rho)$ is equivalent to the overconvergence of $D_{A_{\mathrm{mv},E}}^{(i)}(\rho)$ for any $0\leq i\leq f-1$. 
	
	If we replace $\varphi_q$ everywhere by $\varphi$ in this section, all these results remain valid. In particular, we can also define overconvergent $(\varphi,\mathcal{O}_K^{\times})$-modules over $A_{\mathrm{mv},E}$. Let $D_{\mathrm{mv},E}^{\otimes}(\rho)\coloneq \bigotimes_{i=0}^{f-1}D_{A_{\mathrm{mv},E}}^{(0)}(\rho)$ which is an étale $(\varphi,\mathcal{O}_K^{\times})$-module by \cite[Proposition 7.7]{du2025multivariablevarphiqmathcaloktimesmodulesassociatedpadic}. 
	\begin{question}
		Is it true that $D_{\mathrm{mv},E}^{\otimes}(\rho)$ is overconvergent for any finite free continuous $\mathcal{O}_E$-representation $\rho$ of $G_K$?
	\end{question}
	The overconvergence of $D_{A_{\mathrm{mv},E}}^{(0)}(\rho)$ trivially implies the overconvergence of $D_{\mathrm{mv},E}^{\otimes}(\rho)$, and the reverse direction is not clear. We have the following example of overconvergent $(\varphi,\mathcal{O}_K^{\times})$-modules:
	
	\begin{lem}
		If $\rho: G_K\to\mathcal{O}_E^{\times}$ is a character, then $D_{\mathrm{mv},E}^{\otimes}(\rho)$ is overconvergent.
	\end{lem}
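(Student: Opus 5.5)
Since $D_{\mathrm{mv},E}^{\otimes}(\rho)$ is an étale $(\varphi,\mathcal{O}_K^{\times})$-module of rank $1$ over $A_{\mathrm{mv},E}$, we will write $D_{\mathrm{mv},E}^{\otimes}(\rho)=A_{\mathrm{mv},E}e$. We then show that, after changing the basis vector by a unit of $A_{\mathrm{mv},E}$, the action is given by $\varphi(e)=\lambda e$ and $a(e)=\chi(a)e$ with constants $\lambda\in\mathcal{O}_E^{\times}$ and a continuous character $\chi:\mathcal{O}_K^{\times}\to\mathcal{O}_E^{\times}$. Constants lie in $A_{\mathrm{mv},E}^{\dagger}$, so $A_{\mathrm{mv},E}^{\dagger}e$ is then an étale $(\varphi,\mathcal{O}_K^{\times})$-module over $A_{\mathrm{mv},E}^{\dagger}$ whose base change is $D_{\mathrm{mv},E}^{\otimes}(\rho)$. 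This gives overconvergence.

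To reach this normal form, we first reduce to simple characters. By local class field theory, every continuous character $\rho:G_K\to\mathcal{O}_E^{\times}$ can be written as $\rho=\rho_{\mathrm{ur}}\cdot(\psi\circ\chi_{\mathrm{LT}})$. Here $\rho_{\mathrm{ur}}$ is unramified, $\chi_{\mathrm{LT}}:G_K\twoheadrightarrow\mathcal{O}_K^{\times}$ is the Lubin--Tate character for the uniformizer $p$, and $\psi:\mathcal{O}_K^{\times}\to\mathcal{O}_E^{\times}$ is continuous. The functors $D^{(i)}_{A_{\mathrm{mv},E}}$ are built by base change and $\Delta_1$-invariants from $D_{\mathrm{LT}}$, and $D_{\mathrm{LT}}$ is compatible with tensor products of characters. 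Hence $D^{\otimes}_{\mathrm{mv},E}$ is monoidal on characters, and tensor products of overconvergent rank-one modules are again overconvergent. It is therefore enough to treat the two factors separately. For the unramified factor, the example after Question \ref{question about overconvergence} gives the result directly: each $D^{(i)}_{A_{\mathrm{mv},E}}(\rho_{\mathrm{ur}})$ has a basis with $\varphi_q(e)=\lambda e$ and trivial $\mathcal{O}_K^{\times}$-action. One then checks that in the tensor product $\varphi$ acts on $e_0\otimes\cdots\otimes e_{f-1}$ by a constant, because $\varphi$ permutes the factors cyclically (\cite[Proposition 7.7]{du2025multivariablevarphiqmathcaloktimesmodulesassociatedpadic}).

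For the factor $\psi\circ\chi_{\mathrm{LT}}$, we compute $D_{\mathrm{LT}}(\psi\circ\chi_{\mathrm{LT}})$ over $W_{\mathcal{O}_E}(\mathbb{F}(\negthinspace(T_{\mathrm{LT}}^{1/p^{\infty}})\negthinspace))$. Since the character factors through $\Gamma_{\mathrm{LT}}=\mathcal{O}_K^{\times}$, this module is $W_{\mathcal{O}_E}(\cdot)\,e$ with $\varphi_q(e)=e$ and $a(e)=\psi(a)^{-1}e$ (or $\psi(a)$, depending on normalization). After pulling back along $\mathrm{pr}_i$, the module over $W_{\mathcal{O}_E}(A_{\infty}')$ has basis $1\otimes e$, on which $(a_0,\dots,a_{f-1})\in(\mathcal{O}_K^{\times})^f$ acts by $\psi(a_i)^{\pm1}$. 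This vector is not $\Delta_1$-invariant, and this is the main obstacle. In the tensor product over $i=0,\dots,f-1$, the element $\bigotimes_i(1\otimes e)$ is acted on by $\prod_i\psi(a_i)^{\pm1}=\psi(a_0\cdots a_{f-1})^{\pm1}$, which is trivial on $\Delta_1$. So the tensor product of the basis vectors is $\Delta_1$-invariant, and the diagonal $\mathcal{O}_K^{\times}$ acts on it through a constant character, while $\varphi$ fixes it. The key step is to justify that $D^{\otimes}_{\mathrm{mv},E}$ of a character can be computed this way, namely that the tensor product of the $\Delta_1$-invariants equals the $\Delta_1$-invariants of the tensor product. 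For rank one this holds because each $(W_{\mathcal{O}_E}(A'_\infty)\otimes_{\mathrm{pr}_i}D_{\mathrm{LT}})^{\Delta_1}$ is invertible and its base change back to $W_{\mathcal{O}_E}(A_\infty')$ recovers the module (pro-étale $\Delta_1$-torsor descent).

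Finally, the invariant vector obtained above lies in the descent over $W_{\mathcal{O}_E}(A_{\infty})$ and is acted on by constants. Any basis of $D^{\otimes}_{\mathrm{mv},E}(\rho)$ over $A_{\mathrm{mv},E}$ differs from it by a unit $u\in W_{\mathcal{O}_E}(A_{\infty})^{\times}$. We must show that $u$ can be taken in $A_{\mathrm{mv},E}$, or equivalently that the constant-action vector already lies in $D^{\otimes}_{\mathrm{mv},E}(\rho)$. The $\varphi$-equation $\varphi(u)/u\in A_{\mathrm{mv},E}$ together with the descent statement of \cite{du2025multivariablevarphiqmathcaloktimesmodulesassociatedpadic} (descent from $W_{\mathcal{O}_E}(A_\infty)$ to $A_{\mathrm{mv},E}$ is canonical and fully faithful) should force this. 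The reason is that $A_{\mathrm{mv},E}e_{\mathrm{const}}$ is itself an étale $(\varphi,\mathcal{O}_K^{\times})$-module descending the same object, so the two descents coincide by that uniqueness.
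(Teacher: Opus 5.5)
Your proposal is correct and takes essentially the same route as the paper. Both arguments first reduce, via local class field theory and the compatibility of $D^{\otimes}_{\mathrm{mv},E}$ with tensor products, to unramified characters and to characters factoring through the Lubin--Tate character. They then identify $W_{\mathcal{O}_E}(A_{\infty})\otimes_{A_{\mathrm{mv},E}}D^{\otimes}_{\mathrm{mv},E}(\rho)$ with the $\Delta_1$-invariants of $\bigotimes_i W_{\mathcal{O}_E}(A_{\infty}')\otimes_{\mathrm{pr}_i}D_{\mathrm{LT}}(\rho)$, observe that $e_0\otimes\cdots\otimes e_{f-1}$ is $\Delta_1$-invariant with constant $(\varphi,\mathcal{O}_K^{\times})$-action, and descend to $A_{\mathrm{mv},E}e$.

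The differences are in the level of detail. The paper quotes Theorem 5.11 of the earlier work for the tensor/invariants identification, whereas you justify it by pro-étale $\Delta_1$-descent. The paper also leaves implicit the reduction step and the final descent, namely the full faithfulness of base change from $A_{\mathrm{mv},E}$ to $W_{\mathcal{O}_E}(A_{\infty})$; you spell both out.
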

	\begin{proof}
		If $\rho$ is unramified, then the conclusion follows from the overconvergence of $D_{A_{\mathrm{mv},E}}^{(0)}(\rho)$. Therefore we can assume that $\rho $ factors as $\rho: G_K\twoheadrightarrow \mathcal{O}_K^{\times}\xrightarrow{\chi} \mathcal{O}_E^{\times}$, where $G_K\to \mathcal{O}_K^{\times}$ is the Lubin-Tate character associated to the uniformizer $p$. In this case, the Lubin-Tate $(\varphi_q,\mathcal{O}_K^{\times})$-module $D_{\mathrm{LT}}(\rho)$ is given by $A_{\mathrm{LT},E}\cdot e$, where $A_{\mathrm{LT},E}$ is the Lubin-Tate coefficient ring, $\varphi_q(e)=e$, $a(e)=\chi(a)e$ for $a\in\mathcal{O}_K^{\times}$. By \cite[Theorem 5.11]{du2025multivariablevarphiqmathcaloktimesmodulesassociatedpadic}, we have
		\begin{align}
			W_{\mathcal{O}_E}(A_{\infty})\otimes_{A_{\mathrm{mv},E}}D_{\mathrm{mv},E}^{\otimes}(\rho)\cong \left(\bigotimes_{i=0,W_{\mathcal{O}_E}(A_{\infty}')}^{f-1}\left(W_{\mathcal{O}_E}(A_{\infty}')\otimes_{\mathrm{pr}_i,A_{\mathrm{LT},E}}D_{\mathrm{LT}}(\rho)\right)\right)^{\Delta_1},
		\end{align}
		for the definition of $A_{\infty}$, $A_{\infty}'$ and $\mathrm{pr}_i$, see (\ref{A_{infty}}), (\ref{A_{infty}'}) and (\ref{pr_i}), and 
		\[\Delta_1=\{(a_0,\dots,a_{f-1})\in(\mathcal{O}_K^{\times})^{f}: a_0a_1\cdots a_{f-1}=1\}.\]
		Note that $\bigotimes_{i=0,W_{\mathcal{O}_E}(A_{\infty}')}^{f-1}\left(W_{\mathcal{O}_E}(A_{\infty}')\otimes_{\mathrm{pr}_i,A_{\mathrm{LT},E}}D_{\mathrm{LT}}(\rho)\right)=W_{\mathcal{O}_E}(A_{\infty}') e_0\otimes\cdots\otimes e_{f-1}$, and for $a=(a_0,\dots,a_{f-1})\in\Delta_1$, we have
		\begin{align*}
			a(e_0\otimes\cdots\otimes e_{f-1})&=a_0(e_0)\otimes \cdots\otimes a_{f-1}(e_{f-1})=\chi(a_0\cdot\cdots\cdot a_{f-1})e_0\otimes \cdots \otimes e_{f-1}\\
			&=e_0\otimes \cdots \otimes e_{f-1},
		\end{align*}
		where the last equality uses the definition of $\Delta_1$. Hence, 
		\begin{align}
			W_{\mathcal{O}_E}(A_{\infty})\otimes_{A_{\mathrm{mv},E}}D_{\mathrm{mv},E}^{\otimes}(\rho)\cong W_{\mathcal{O}_E}(A_{\infty}) e_0\otimes \cdots \otimes e_{f-1},
		\end{align}
		which implies that $D_{\mathrm{mv},E}^{\otimes}(\rho)\cong A_{\mathrm{mv},E}e$ with $\varphi(e)=e$, $a(e)=\chi(a)e$ for $a\in\mathcal{O}_K^{\times}$. Thus $D_{\mathrm{mv},E}^{\otimes}(\rho)$ is overconvergent.
	\end{proof}

	\section{Overconvergence at the perfectoid level}\label{section 3}
	
	We do not have a complete solution to Question \ref{question about overconvergence} currently. At present, we can only prove the overconvergence at the perfectoid level. In this section, we give a detailed proof of this perfectoid overconvergence.
	\subsection{A reminder on the relative Fargues-Fontaine curve}
	We recall some constructions in the theory of relative Fargues-Fontaine curves and prove that any finite projective module over the integral Robba ring $\widetilde{\mathcal{R}}_{A_{\infty}}^{\mathrm{int}}$ is free.
	
	Throughout this section, we fix the following notation. Let $(R_0,R_0^+)$ be a perfectoid Huber pair over $\mathbb{F}$ with a pseudo-uniformizer $\varpi$ (\cite[Definition 6.1.1]{WS2020berkeley}), and let $S\coloneq\mathrm{Spa}(R_0,R_0^{+})$ be the corresponding affinoid perfectoid space. Let $\mathbf{Perf}_{S}$ be the site of perfectoid spaces over $S$ equipped with the $v$-topology (\cite[Definition 17.1.1]{WS2020berkeley}). If $R$ is a Huber ring, we simply write $\mathrm{Spa}R$ for $\mathrm{Spa}(R,R^{\circ})$, where $R^{\circ}$ is the subring of $R$ consisting of power-bounded elements.
	We say that $X$ is an affinoid perfectoid space, if $X=\mathrm{Spa}(R,R^{+})$ with $R$ perfectoid (\cite[Remark 7.1.3]{WS2020berkeley}).
	
	For any affinoid perfectoid space $X=\mathrm{Spa}(R,R^{+})$ over $S$, we also view $\varpi$ as a pseudo-uniformizer of $R$. We endow $W_{\mathcal{O}_E}(R^{+})$ with the $(\pi,[\varpi])$-adic topology, and consider the analytic adic space $\mathcal{Y}_{X,\closeopen{0}{\infty}}\coloneq \mathrm{Spa}W_{\mathcal{O}_E}(R^{+})\setminus V([\varpi])$. It is a quasi-Stein space, with a covering by affinoid opens given by $\left\{\mathcal{Y}_{X,[0,r]}: r\in\mathbb{Q}_{>0}\right\}$, where
	\begin{equation}
		\begin{aligned}
			\mathcal{Y}_{X,[0,r]}&\coloneq\{x\in \mathcal{Y}_{X,\closeopen{0}{\infty}}: |\pi|_x\leq |[\varpi]^{1/r}|_x\neq 0\}\\
			&=\mathrm{Spa}\left(\mathcal{O}(\mathcal{Y}_{X,[0,r]}),\mathcal{O}^{+}(\mathcal{Y}_{X,[0,r]})\right).
		\end{aligned}
	\end{equation}
	If $0<1/r\in\mathbb{Z}[1/p]$, we have
	\begin{align}\label{defn of O(X_[0,r])}
		\mathcal{O}(\mathcal{Y}_{X,[0,r]})= W_{\mathcal{O}_E}(R^+)\left\langle\frac{\pi}{[\varpi]^{1/r}}\right\rangle\left[\dfrac{1}{[\varpi]}\right]\subset W_{\mathcal{O}_E}(R),
	\end{align}
	where $W_{\mathcal{O}_E}(R^+)\left\langle\frac{\pi}{[\varpi]^{1/r}}\right\rangle$ is the $\pi$-adic completion of $W_{\mathcal{O}_E}(R^+)\left[\frac{\pi}{[\varpi]^{1/r}}\right]$, and $\mathcal{O}^{+}(\mathcal{Y}_{X,[0,r]})$ is the integral closure of $W_{\mathcal{O}_E}(R^+)\left\langle\frac{\pi}{[\varpi]^{1/r}}\right\rangle$ in $\mathcal{O}(\mathcal{Y}_{X,[0,r]})$. Equivalently, $W_{\mathcal{O}_E}(R^+)\left\langle\frac{\pi}{[\varpi]^{1/r}}\right\rangle$ is also the $[\varpi]$-adic completion of $W_{\mathcal{O}_E}(R^+)\left[\frac{\pi}{[\varpi]^{1/r}}\right]$. Assume that the topology on the Banach algebra $R$ is defined by a norm
	$|\cdot|$ such that $|\varpi| = p^{-1}$.
	Such a norm always exists (although it is not necessarily multiplicative;
	see for example the discussion before
	\cite[Remark 2.2.7]{WS2020berkeley}).
	For the definition of Banach rings, see for example
	\cite[§0.1]{fontaine2012perfectoides}. Then
	\begin{align}
		\mathcal{O}(\mathcal{Y}_{X,[0,r]})=\left\{\sum_{n\geq 0}\pi^n[x_n]\in W_{\mathcal{O}_E}(R): x_n\in R, \lim\limits_{n\to\infty}|x_n|p^{-n/r}=0\right\}.
	\end{align}
	We endow $\mathcal{O}(\mathcal{Y}_{X,[0,r]})$ with the $[\varpi]$-adic topology, then
	\begin{equation}\label{description of [0,r]}
		\begin{aligned}
			\left(\mathcal{O}(\mathcal{Y}_{X,[0,r]})\right)^{\circ}&=\left\{\sum_{n\geq 0}\pi^n[x_n]\in \mathcal{O}(\mathcal{Y}_{X,[0,r]}):|x_n|p^{-n/r}\leq 1 \right\},\\
			\left(\mathcal{O}(\mathcal{Y}_{X,[0,r]})\right)^{\circ\circ}&=\left\{\sum_{n\geq 0}\pi^n[x_n]\in \mathcal{O}(\mathcal{Y}_{X,[0,r]}):|x_n|p^{-n/r}< 1 \right\}.
		\end{aligned}
	\end{equation}
	Define a norm $|\cdot|_r$ on $\mathcal{O}(\mathcal{Y}_{X,[0,r]})$ by 
	\begin{align}\label{defn of ||_r}
		\left\lvert\sum_{n\geq 0}\pi^n[x_n]\right\rvert_r\coloneq \sup_n |x_n|p^{-n/r}.
	\end{align}
	We can check that the $[\varpi]$-adic topology of $\mathcal{O}(\mathcal{Y}_{X,[0,r]})$ agrees with the topology induced by $|\cdot|_r$, and $\mathcal{O}(\mathcal{Y}_{X,[0,r]})$ is a Banach algebra over $\mathcal{O}_E$. If the norm $|\cdot|$ of $R$ is multiplicative, then $|\cdot|_r$ is also multiplicative (\cite[Proposition 1.4.9]{fargues2018courbes}).

	For a closed interval $I=[s,r]\subseteq (0,\infty)$ with $r,s\in\mathbb{Q}_{>0}$, we can also consider the affinoid open $\mathcal{Y}_{X,I}$ contained in $\mathcal{Y}_{X,\closeopen{0}{\infty}}$:
	\begin{equation}
		\begin{aligned}
			\mathcal{Y}_{X,I}&\coloneq\{x\in \mathcal{Y}_{X,\closeopen{0}{\infty}}: |[\varpi]^{1/s}|_x\leq|\pi|_x\leq |[\varpi]^{1/r}|_x\neq 0\}\\
			&=\mathrm{Spa}\left(\mathcal{O}(\mathcal{Y}_{X,I}),\mathcal{O}^{+}(\mathcal{Y}_{X,I})\right).
		\end{aligned}
	\end{equation}
	If $1/r,1/s\in \mathbb{Z}[1/p]$, similar to (\ref{defn of O(X_[0,r])}), we have
	\begin{align}\label{defn of O(X_[s,r])}
		\mathcal{O}(\mathcal{Y}_{X,I})\coloneq W_{\mathcal{O}_E}(R^+)\left\langle\frac{\pi}{[\varpi]^{1/r}},\frac{[\varpi]^{1/s}}{\pi}\right\rangle\left[\dfrac{1}{[\varpi]}\right]
	\end{align}
	where $W_{\mathcal{O}_E}(R^+)\left\langle\frac{\pi}{[\varpi]^{1/r}},\frac{[\varpi]^{1/s}}{\pi}\right\rangle$ is the $\pi$-adic completion of $W_{\mathcal{O}_E}(R^+)\left[\frac{\pi}{[\varpi]^{1/r}},\frac{[\varpi]^{1/s}}{\pi}\right]$, and $\mathcal{O}^{+}(\mathcal{Y}_{X,I})$ is the integral closure of $W_{\mathcal{O}_E}(R^+)\left\langle\frac{\pi}{[\varpi]^{1/r}},\frac{[\varpi]^{1/s}}{\pi}\right\rangle$ in $\mathcal{O}(\mathcal{Y}_{X,I})$. We endow $\mathcal{O}(\mathcal{Y}_{X,I})$ with the $[\varpi]$-adic topology, which agrees with the $\pi$-adic topology. By \cite[Exemple 1.6.3]{fargues2018courbes}, $\mathcal{O}(\mathcal{Y}_{X,I})$ is a Banach algebra over $E$. 
	There are also other open subspaces of $\mathcal{Y}_{X,\closeopen{0}{\infty}}$ which play an important role in the theory of the relative Fargues-Fontaine curve:
	\begin{equation}
		\begin{aligned}
			\mathcal{Y}_{X,(0,\infty)}\coloneq \mathcal{Y}_{X,\closeopen{0}{\infty}}\setminus V(\pi)=\bigcup_{s,r\in\mathbb{Q}_{>0}}\mathcal{Y}_{X,[s,r]}
		\end{aligned}
	\end{equation}
	and
	\begin{equation}
		\begin{aligned}
			\mathcal{Y}_{X,\openclose{0}{r}}\coloneq \mathcal{Y}_{X,[0,r]}\setminus V(\pi)=\bigcup_{s\in\mathbb{Q}_{>0}}\mathcal{Y}_{X,[s,r]},
		\end{aligned}
	\end{equation}
	Both $\mathcal{Y}_{X,(0,\infty)}$ and $\mathcal{Y}_{X,\openclose{0}{r}}$ are quasi-Stein spaces.

	In the rest of this section, when $I\subseteq\closeopen{0}{\infty}$ is an interval, we always assume that $I$ is of the form $[0,r]$, $[s,r]$, $\closeopen{0}{\infty}$, $(0,\infty)$ or $\openclose{0}{r}$, where $r,s\in\mathbb{Q}_{>0}$. 
	For a general perfectoid space $X/S$ and an interval $I\subseteq\closeopen{0}{\infty}$ as above, we can glue the $\mathcal{Y}_{U,I}$ along affinoid opens $U$ of $X$ (\cite[Proposition II.1.3]{fargues2024geometrizationlocallanglandscorrespondence}) and denote by $\mathcal{Y}_{X,I}$ the resulting analytic adic space. One can easily check that
	\begin{align}\label{Y_{X,I} as fibre prod}
		\mathcal{Y}_{X,I}\cong \mathcal{Y}_{X,\closeopen{0}{\infty}}\times_{\mathcal{Y}_{S,\closeopen{0}{\infty}}}\mathcal{Y}_{S,I}.
	\end{align}
	
	\begin{remark}
		\begin{enumerate}
			\item [(i)]
			In \cite[Chapter II]{fargues2024geometrizationlocallanglandscorrespondence}, $\mathcal{Y}_{X,\closeopen{0}{\infty}}$ is denoted by $\mathcal{Y}_{X}$ and $\mathcal{Y}_{X,(0,\infty)}$ is denoted by $Y_{X}$. In \cite{WS2020berkeley}, $\mathcal{Y}_{X,\closeopen{0}{\infty}}$ is also denoted by $``X\times \mathrm{Spa}\mathbb{Z}_p"$.
			\item [(ii)]
			For a general perfectoid space $X/S$ and an interval $I$ as above, $\mathcal{Y}_{X,I}$ is a sousperfectoid space (\cite[$\S$6.3]{WS2020berkeley}). In fact, suppose that $I$ is compact, let $E(\pi^{1/p^{\infty}})$ be the $p$-adic completion of $\bigcup_{n\geq1}E(\pi^{1/p^{n}})$ which is a perfectoid field with ring of integers $\mathcal{O}_{E(\pi^{1/p^{\infty}})}$, then it is straightforward to verify that
			\[\mathcal{O}(\mathcal{Y}_{X,I})\widehat{\otimes}_{\mathcal{O}_E}\mathcal{O}_{E(\pi^{1/p^{\infty}})}\coloneq \left(\mathcal{O}(\mathcal{Y}_{X,I})^{\circ}\widehat{\otimes}_{\mathcal{O}_E}\mathcal{O}_{E(\pi^{1/p^{\infty}})}\right)^{\wedge}\left[\frac{1}{[\varpi]}\right],\] where $(\cdot)^{\wedge}$ denotes the $[\varpi]$-adic completion, is a perfectoid algebra. Now we fix a continuous section of $\mathcal{O}_E\hookrightarrow\mathcal{O}_{E(\pi^{1/p^{\infty}})}$ of $\mathcal{O}_E$-modules, it induces a continuous section $\mathcal{O}(\mathcal{Y}_{X,I})\widehat{\otimes}_{\mathcal{O}_E}\mathcal{O}_{E(\pi^{1/p^{\infty}})}\twoheadrightarrow \mathcal{O}(\mathcal{Y}_{X,I})$ of $\mathcal{O}(\mathcal{Y}_{X,I})$-modules, hence $\mathcal{Y}_{X,I}$ is affinoid sousperfectoid. 
			\item [(iii)]
			The analytic adic space $\mathcal{Y}_{X,I}$ does not depend on the choice of the uniformizer $\pi\in \mathcal{O}_E$, and does not depend on the choice of the pseudo-uniformizer $\varpi$ if $(0,\infty)\subseteq I$. If $I=[0,r]$ or $I=[s,r]$, then $\mathcal{Y}_{X,I}$ depends on the choice of $\varpi$.
		\end{enumerate}
	\end{remark}
	
	Let $X_1\to X \leftarrow X_2$ be a diagram in $\mathbf{Perf}_{S}$. There is a natural isomorphism of sousperfectoid spaces:
	\begin{align}\label{compatible with fibre prod}
		\mathcal{Y}_{X_1\times_X X_2,I}\xrightarrow{\sim} \mathcal{Y}_{X_1,I}\times_{\mathcal{Y}_{X,I}}\mathcal{Y}_{X_2,I}.
	\end{align}
	To see this, we may assume that $X_1,X,X_2$ are affinoid perfectoid spaces. Since $\mathcal{O}\left(\mathcal{Y}_{X,I}\right)\hookrightarrow \mathcal{O}\left(\mathcal{Y}_{X,I}\right)\widehat{\otimes}_{\mathcal{O}_E}\mathcal{O}_{E(\pi^{1/p^{\infty}})}$ naturally splits as topological $\mathcal{O}\left(\mathcal{Y}_{X,I}\right)$-modules, let
	\[\mathcal{Y}_{X,I}^{\mathrm{perf}}\coloneq\mathcal{Y}_{X,I}\times_{\mathrm{Spa}\mathcal{O}_E}\mathrm{Spa}\mathcal{O}_{E(\pi^{1/p^{\infty}})},\] 
	it suffices to check that the natural map
	\[\mathcal{Y}_{X_1\times_X X_2,I}^{\mathrm{perf}}\longrightarrow \mathcal{Y}_{X_1,I}^{\mathrm{perf}}\times_{\mathcal{Y}_{X,I}^{\mathrm{perf}}}\mathcal{Y}_{X_2,I}^{\mathrm{perf}}\]
	is an isomorphism. Then for $I=\closeopen{0}{\infty}$, we can pass to the corresponding diamonds and apply \cite[Proposition II.1.2]{fargues2024geometrizationlocallanglandscorrespondence} to conclude. The general case can be easily deduced from this case and (\ref{Y_{X,I} as fibre prod}). We can also use the tilting functor to prove this isomorphism (\cite[Definition 6.2.1]{WS2020berkeley}). Since
	\[\mathcal{Y}_{X,I}=\bigcup_{J\subseteq I\ \text{is compact}} \mathcal{Y}_{X,J},\]
	we can reduce to the case where $I$ is compact. Then a straightforward computation shows that there is an isomorphism
	\begin{align}\label{isom of tilt for [0,r]}
		\left(\mathcal{Y}_{X,[0,r]}^{\mathrm{perf}}\right)^{\flat}&\cong X\times_{S} \mathrm{Spa} \left(R_0\left\langle T^{1/p^{\infty}}\right\rangle,R_1^{+}\right),
	\end{align}
	and
	\begin{align}\label{isom of tilt for [s,r]}
		\left(\mathcal{Y}_{X,[s,r]}^{\mathrm{perf}}\right)^{\flat}&\cong X\times_S \mathrm{Spa}\left(R_0\left\langle T^{1/p^{\infty}},\left(\varpi^{\frac{1}{s}-\frac{1}{r}}T^{-1}\right)^{1/p^{\infty}}\right\rangle,R_2^{+}\right),
	\end{align}
	where $T=\left(\frac{\pi}{[\varpi]^{1/r}},\frac{\pi^{1/p}}{[\varpi]^{1/pr}}\dots\right)\in \left(\mathcal{O}(\mathcal{Y}^{\mathrm{perf}}_{X,I})\right)^{\flat}$, $R_0\langle\cdot\rangle$ is the $\varpi$-adic completion, $R_1^{+}$ is the integral closure of $R_0^{+}\left\langle T^{1/p^{\infty}}\right\rangle$ in $R_0\left\langle T^{1/p^{\infty}}\right\rangle$, and $R_2^{+}$ is the integral closure of $R_0^{+}\left\langle T^{1/p^{\infty}},\left(\varpi^{\frac{1}{s}-\frac{1}{r}}T^{-1}\right)^{1/p^{\infty}}\right\rangle$ in $R_0\left\langle T^{1/p^{\infty}},\left(\varpi^{\frac{1}{s}-\frac{1}{r}}T^{-1}\right)^{1/p^{\infty}}\right\rangle$. See for example \cite[Lemma 4.1.12, Proposition 4.1.13]{kedlaya2019relative}. Then the claim follows.
	
	\begin{prop}\label{v-sheaf and v-stack}
		Let $I\subseteq\closeopen{0}{\infty}$ be an interval as above.
		\begin{enumerate}
			\item [(i)] 
			The functor $\mathcal{O}_I: X\mapsto \mathcal{O}\left(\mathcal{Y}_{X,I}\right)$ is a $v$-sheaf on $\mathbf{Perf}_{S}$, and $H_v^k(X,\mathcal{O}_I)=0$ for $k\geq 1$ if $X$ is affinoid.
			\item [(ii)] The functor sending $X$ to the groupoid of vector bundles over $\mathcal{Y}_{X,I}$ is a $v$-stack on $\mathbf{Perf}_{S}$.
		\end{enumerate}
	\end{prop}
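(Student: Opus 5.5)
The plan is to reduce to the case $I=\closeopen{0}{\infty}$, which is covered by \cite[Proposition II.2.1]{fargues2024geometrizationlocallanglandscorrespondence} and \cite[Proposition 19.5.3]{WS2020berkeley}, or else to re-run their argument directly for a compact $I$ via tilting. For noncompact $I$ (that is, $\closeopen{0}{\infty}$, $(0,\infty)$ or $\openclose{0}{r}$), the space $\mathcal{Y}_{X,I}$ is the increasing union of the $\mathcal{Y}_{X,J}$ over compact $J\subseteq I$. Hence $\mathcal{O}_I=\varprojlim_J\mathcal{O}_J$, and vector bundles on $\mathcal{Y}_{X,I}$ are compatible systems of vector bundles on the $\mathcal{Y}_{X,J}$. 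Both the sheaf property and the stack property therefore pass to the limit once they are known for compact $J$. For the cohomology vanishing in the noncompact case I will use a Mittag-Leffler / $\varprojlim^1$ argument: the transition maps $\mathcal{O}_{J'}(X)\to\mathcal{O}_J(X)$ have dense image, and the $\mathcal{O}_J(X)$ are Banach, so $R^1\varprojlim$ vanishes. So the core case is $I=[0,r]$ or $I=[s,r]$.

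For compact $I$ and affinoid $X$, I pass to $\mathcal{Y}_{X,I}^{\mathrm{perf}}$. The map $\mathcal{O}(\mathcal{Y}_{X,I})\to\mathcal{O}(\mathcal{Y}_{X,I}^{\mathrm{perf}})$ is split injective as a map of topological $\mathcal{O}(\mathcal{Y}_{X,I})$-modules, and this splitting is functorial in $X$ because it comes from the fixed section of $\mathcal{O}_E\hookrightarrow\mathcal{O}_{E(\pi^{1/p^\infty})}$. Consequently $\mathcal{O}_I$ is a functorial direct summand of $X\mapsto\mathcal{O}(\mathcal{Y}^{\mathrm{perf}}_{X,I})$, so it suffices to prove the sheaf property and vanishing for the latter. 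By (\ref{isom of tilt for [0,r]}) and (\ref{isom of tilt for [s,r]}), $(\mathcal{Y}^{\mathrm{perf}}_{X,I})^\flat\cong X\times_S Z_I$ with $Z_I$ a fixed affinoid perfectoid space over $S$, and this identification is compatible with fibre products by (\ref{compatible with fibre prod}). A $v$-cover $X'\to X$ therefore induces a $v$-cover of affinoid perfectoid spaces $X'\times_S Z_I\to X\times_S Z_I$. Untilting preserves global sections of the structure sheaf in the sense that the Čech complexes correspond, and the $v$-acyclicity of $\mathcal{O}$ on affinoid perfectoid spaces (\cite[Proposition 17.1.8]{WS2020berkeley} / \cite[Theorem 8.7]{scholze2017etale}-type statement) gives exactness of the Čech complex of $\mathcal{O}(\mathcal{Y}^{\mathrm{perf}}_{-,I})$. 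This proves (i) for compact $I$.

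For (ii), vector bundles on the sousperfectoid affinoid $\mathcal{Y}_{X,I}$ are the same as finite projective $\mathcal{O}(\mathcal{Y}_{X,I})$-modules, by Kedlaya–Liu / \cite[Theorem 5.2.8]{WS2020berkeley}. Descent of morphisms then follows from (i) applied to $M\otimes\mathcal{O}_I$, using that $M$ is a direct summand of a free module. The main obstacle is effectivity of descent data, i.e. descending a finite projective module along a $v$-cover. Here I plan to follow Scholze–Weinstein: first reduce to covers by products of points, or to analytic plus pro-étale covers, using the vanishing of $H^1_v$ from (i). I would then show that $H^1_v$ vanishing for $\mathcal{O}_I\otimes\mathcal{E}$, together with a Nakayama-style lifting, produces the descended module. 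Concretely, I would approximate the descent datum by a finite projective module on the base, through idempotents in matrix rings over $\mathcal{O}_I(X)$. I expect the reduction to the case already treated by \cite[Proposition 19.5.3]{WS2020berkeley} via the fibre product description (\ref{Y_{X,I} as fibre prod}) to be the cleanest route: $\mathcal{Y}_{X,I}$ is an open subspace of $\mathcal{Y}_{X,\closeopen{0}{\infty}}$ pulled back from $\mathcal{Y}_{S,I}$, so the stack property restricts from the known case $I=\closeopen{0}{\infty}$ to opens defined over the base.
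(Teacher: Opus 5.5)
Your part (i) is sound, and it actually proves what the paper only cites from \cite{fargues2024geometrizationlocallanglandscorrespondence}. The ingredients are the reduction to compact $I$ via $\varprojlim^1=0$ (along cofinal systems with dense transition maps), the functorial splitting coming from $\mathcal{O}_E\hookrightarrow\mathcal{O}_{E(\pi^{1/p^{\infty}})}$, and the tilting identification. One precision: the acyclicity you need is that of the structure sheaf of the characteristic-$0$ perfectoid spaces $\mathcal{Y}^{\mathrm{perf}}_{X,I}$, not of $\mathcal{O}$ on the tilt; this is standard. Descent of morphisms in (ii) is also fine.

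The gap is effectivity in (ii). Your first sketch does not construct a descended module. Vanishing of $H^1_v$ for a bundle you already have, plus ``Nakayama-style lifting'', produces nothing on $\mathcal{Y}_{X,I}$ from a descent datum on $\mathcal{Y}_{X',I}$.

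Your fallback claims the stack property ``restricts'' from $I=\closeopen{0}{\infty}$ to opens defined over the base. That is not a valid deduction. A vector bundle on the open $\mathcal{Y}_{X',I}$ with a descent datum need not extend, together with its datum, to $\mathcal{Y}_{X',\closeopen{0}{\infty}}$. So descent on the big space tells you nothing about it. Restricting to opens is harmless for the sheaf property of $\mathcal{O}$, which is local, but not for effectivity. The paper does not restrict either: it treats (ii) as a special case of \cite[Proposition 19.5.3]{WS2020berkeley}, applied directly to $\mathcal{Y}_{X,I}\cong\mathcal{Y}_{X,\closeopen{0}{\infty}}\times_{\mathcal{Y}_{S,\closeopen{0}{\infty}}}\mathcal{Y}_{S,I}$ as the pullback of an open of the base. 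That is exactly why this fibre-product description is set up. If you cite, cite it in that form.

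If you want a self-contained argument, your set-up for (i) almost gives one.
\begin{itemize}
\item Take compact $I$ and affinoid $X'\to X$. Write $B,B',B''$ for $\mathcal{O}(\mathcal{Y}_{-,I})$ of $X$, $X'$, $X'\times_XX'$, write $(-)^{\mathrm{p}}$ for $-\widehat{\otimes}_{\mathcal{O}_E}\mathcal{O}_{E(\pi^{1/p^{\infty}})}$, and let $M'$ be a finite projective $B'$-module with descent datum.
\item Base-change the datum to $B'^{\mathrm{p}}$. Since $\mathcal{Y}^{\mathrm{perf}}_{X',I}\to\mathcal{Y}^{\mathrm{perf}}_{X,I}$ is a $v$-cover of affinoid perfectoid spaces (its tilt is a base change of $X'\to X$), $v$-descent of vector bundles on perfectoid spaces gives a finite projective $B^{\mathrm{p}}$-module $P$.
\item Let $s\colon\mathcal{O}_{E(\pi^{1/p^{\infty}})}\to\mathcal{O}_E$ be the coordinate projection for the topological basis $\{\pi^a: a\in\mathbb{Z}[1/p]\cap[0,1)\}$, and put $\sigma\coloneq\mathrm{id}\widehat{\otimes}s$.
\item $\sigma$ commutes with the descent maps. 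Hence $P=\widehat{\bigoplus}_a M\pi^a$, where $M$ is the equalizer of $M'\rightrightarrows M'\otimes_{B'}B''$, and $\sigma(bm)=\sigma(b)m$ for $b\in B^{\mathrm{p}}$, $m\in M$.
\item Choose generators $e_1,\dots,e_n$ of $P$ inside $M$; this is possible since $M$ spans a dense $B^{\mathrm{p}}$-submodule. Choose a $B^{\mathrm{p}}$-linear section $\tau$ of $(B^{\mathrm{p}})^n\twoheadrightarrow P$. Then $\sigma^{n}\circ\tau|_M$ splits $B^n\twoheadrightarrow M$, so $M$ is finite projective over $B$.
\item Comparing $\pi^0$-components of $M\otimes_BB'^{\mathrm{p}}\cong M'^{\mathrm{p}}$ gives $M\otimes_BB'\cong M'$.
\end{itemize}
The noncompact case then follows by gluing, as you said.
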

	\begin{proof}
		For (i), see \cite[Proposition II.2.1]{fargues2024geometrizationlocallanglandscorrespondence}, and (ii) is a special case of \cite[Proposition 19.5.3]{WS2020berkeley}.
	\end{proof}
	
	
	
	If $X=\mathrm{Spa}(R,R^{+})$ is a perfectoid space over $S$ where $R$ is perfectoid and $I\subseteq\closeopen{0}{\infty}$ is a compact interval with rational end points, recall that the topology of $\mathcal{O}(\mathcal{Y}_{X,I})$ is the $[\varpi]$-adic topology. If $I$ is an interval as before, we endow $\mathcal{O}(\mathcal{Y}_{X,I})\cong \varprojlim\limits_{J\subseteq I,\ J\ \text{is compact}}\mathcal{O}(\mathcal{Y}_{X,J})$ with the inverse limit topology.
	
	\begin{cor}\label{G-torsor}
		Let $G$ be a profinite group, and suppose that $X'=\mathrm{Spa}(R',R'^{+})\to X=\mathrm{Spa}(R,R^{+})$ is a pro-étale $G$-torsor in $\mathbf{Perf}_S$ where $(R,R^{+}),(R',R'^{+})$ are Huber pairs with $R,R'$ perfectoid. If $I\subseteq\closeopen{0}{\infty}$ is an interval as above, then we have
		\begin{align}
			H^k_{\mathrm{cont}}\left(G,\mathcal{O}\left(\mathcal{Y}_{X',I}\right)\right)=\begin{cases}
				\mathcal{O}\left(\mathcal{Y}_{X,I}\right),\quad &k=0,\\
				0,\quad &k\geq 1,
			\end{cases}
		\end{align}
		where $H^k_{\mathrm{cont}}\left(G,\mathcal{O}\left(\mathcal{Y}_{X',I}\right)\right)$ is the $k$-th continuous cohomology group.
	\end{cor}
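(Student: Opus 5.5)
The plan is to compute the continuous group cohomology through the Čech nerve of the torsor $X'\to X$, and then use the vanishing of $v$-cohomology of $\mathcal{O}_I$ from Proposition \ref{v-sheaf and v-stack}(i).

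\textbf{Step 1: the Čech nerve.} Since $X'\to X$ is a pro-étale $G$-torsor, it is a $v$-cover. Its Čech nerve is identified with
\[X'\times_X X'\times_X\cdots\times_X X'\cong X'\times \underline{G}^{n},\]
where $\underline{G}$ is the profinite set $G$ regarded as a perfectoid space over $S$. Write $X'=\varprojlim X'_j$ for finite étale $G/G_j$-torsors. Then $X'\times\underline{G}^n$ is the affinoid perfectoid space $\varprojlim_j \coprod_{(G/G_j)^n}X'$.

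\textbf{Step 2: sections on the nerve.} Next I will show that
\[\mathcal{O}\bigl(\mathcal{Y}_{X'\times\underline{G}^n,I}\bigr)\cong \mathcal{C}\bigl(G^n,\mathcal{O}(\mathcal{Y}_{X',I})\bigr),\]
the ring of continuous functions for the topology fixed before the statement. For compact $I$, this is the identity $\mathcal{O}(\mathcal{Y}_{X'\times \underline{G}^n,I})=\mathcal{C}(G^n,\mathbb{Z}_p)\widehat{\otimes}\,\mathcal{O}(\mathcal{Y}_{X',I})$. I will deduce it from the explicit description of $\mathcal{O}(\mathcal{Y}_{\cdot,[0,r]})$ and $\mathcal{O}(\mathcal{Y}_{\cdot,[s,r]})$ via Witt vectors. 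The key point is that $W_{\mathcal{O}_E}$ of a completed colimit of products of copies of $R'^{+}$ is the completed colimit of products of $W_{\mathcal{O}_E}(R'^{+})$. Alternatively, one can use the sheaf property together with the compatibility with fibre products (\ref{compatible with fibre prod}). For noncompact $I$, I then pass to the inverse limit over compact $J\subseteq I$.

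\textbf{Step 3: conclude.} Under these identifications the Čech complex of $\mathcal{O}_I$ for the cover $X'\to X$ becomes the standard inhomogeneous complex computing $H^\bullet_{\mathrm{cont}}(G,\mathcal{O}(\mathcal{Y}_{X',I}))$. Each term of the nerve is affinoid, so $H^k_v(X'\times\underline{G}^n,\mathcal{O}_I)=0$ for $k\ge1$ by Proposition \ref{v-sheaf and v-stack}(i). The Čech-to-derived spectral sequence then degenerates, and the Čech complex computes $H^\bullet_v(X,\mathcal{O}_I)$. Since $X$ is affinoid, the same proposition gives $H^\bullet_v(X,\mathcal{O}_I)=\mathcal{O}(\mathcal{Y}_{X,I})$ in degree $0$ and $0$ in higher degrees, which is the claim. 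For noncompact $I$ such as $(0,\infty)$, the sheaf $\mathcal{O}_I$ is an inverse limit of the $\mathcal{O}_J$, and I will check either that Proposition \ref{v-sheaf and v-stack}(i) still applies directly or that the transition maps give a vanishing $\varprojlim^1$.

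\textbf{Main obstacle.} I expect Step 2 to be the hard part: matching the topologies so that sections on the nerve are exactly the continuous cochains. This is delicate for $[0,r]$, where $\mathcal{O}(\mathcal{Y}_{X,[0,r]})$ is defined using $R^{+}$ and $[\varpi]$-adic completions rather than being a Banach $E$-algebra. There I would first treat the integral subring $W_{\mathcal{O}_E}(R'^{+})\langle \pi/[\varpi]^{1/r}\rangle$, where the result reduces modulo $[\varpi]^m$ to locally constant functions on the finite quotients $G/G_j$. I would then invert $[\varpi]$, which commutes with continuous cochains on a profinite group up to bounded denominators since $G$ is compact.
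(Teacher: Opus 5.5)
Your proposal is correct, and its overall structure matches the paper's proof. Both identify the inhomogeneous continuous cochain complex of $G$ with the \v{C}ech complex of $\mathcal{O}_I$ for the $v$-cover $X'\to X$. Both then conclude from the acyclicity on affinoids in Proposition \ref{v-sheaf and v-stack}(i).

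The difference is in how the \v{C}ech terms are matched with continuous cochains. The paper proceeds as follows:
\begin{enumerate}
\item It first reduces to compact $I$, using density of the restriction maps and EGA $0_{\mathrm{III}}$, 13.2.4.
\item It base-changes along the split injection $\mathcal{O}_E\to\mathcal{O}_{E(\pi^{1/p^{\infty}})}$, so that $\mathcal{Y}^{\mathrm{perf}}_{X,I}$ is perfectoid, and checks that this completed base change commutes with $C(G^k,-)$.
\item It uses the explicit tilts of $\mathcal{Y}^{\mathrm{perf}}_{X,[0,r]}$ and $\mathcal{Y}^{\mathrm{perf}}_{X,[s,r]}$, together with (\ref{compatible with fibre prod}), to show that $\mathcal{Y}^{\mathrm{perf}}_{X',I}\to\mathcal{Y}^{\mathrm{perf}}_{X,I}$ is itself a pro-\'etale $G$-torsor.
\item The identification of $\mathcal{O}(\mathcal{Y}^{\mathrm{perf}}_{X'^{\times n},I})$ with $G^{n-1}$-cochains is then the standard fact for a trivial torsor over a perfectoid space.
\end{enumerate}

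You instead observe that the nerve already lives in $\mathbf{Perf}_S$, namely $X'^{\times_X(n+1)}\cong X'\times\underline{G}^n$. So the only input you need is $\mathcal{O}_I(Z\times\underline{G}^n)\cong C(G^n,\mathcal{O}_I(Z))$ for affinoid perfectoid $Z$. You prove this directly from the Witt-vector description of $\mathcal{O}(\mathcal{Y}_{Z,I})$. Witt vectors commute with the completed colimit of products, and reduction modulo $[\varpi]^m$ gives locally constant functions, with $[\varpi]$ inverted at the end using compactness of $G$.

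This route buys two things:
\begin{itemize}
\item It bypasses tilting, the perfectoid base change and the fibre-product compatibility of $\mathcal{Y}$ entirely.
\item It needs no Mittag-Leffler reduction for non-compact $I$. You can pass to the limit in the identification, since $C(G^n,-)$ commutes with inverse limits. Proposition \ref{v-sheaf and v-stack}(i) then applies directly, because it is stated for all the intervals considered.
\end{itemize}
The price is that the topological bookkeeping for $[0,r]$, which you correctly flag as the main work, must be done by hand with Witt vectors. The paper instead transports the torsor structure through the tilting equivalence. It is then left only with the (analogous but simpler) check that $C(G^k,-)$ commutes with $-\widehat{\otimes}_{\mathcal{O}_E}\mathcal{O}_{E(\pi^{1/p^{\infty}})}$.
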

	\begin{proof}
		Let $C^{\bullet}$ be the complex 
		\[0\to \mathcal{O}\left(\mathcal{Y}_{X,I}\right)\xrightarrow{d^{-1}} \mathcal{O}\left(\mathcal{Y}_{X',I}\right)\xrightarrow{d^0} C(G,\mathcal{O}\left(\mathcal{Y}_{X',I}\right))\to\cdots,\]
		where $C^{k}=C(G^{k},\mathcal{O}_{X',I})$ is the ring of continuous maps from $G^k$ to $\mathcal{O}\left(\mathcal{Y}_{X',I}\right)$ for $k\geq 0$, $C^{-1}\coloneq \mathcal{O}(\mathcal{Y}_{X,I}) $, and $d^k$ is the usual differential which computes the continuous cohomology groups for $k\geq 0$.
		We need to show the exactness of $C^{\bullet}$. Since 
		\[\mathcal{O}(\mathcal{Y}_{X',I})\cong \varprojlim_{J\subseteq I,\ J\ \text{is compact}}\mathcal{O}(\mathcal{Y}_{X',J}),\]
		and for any $J_1\subseteq J_2$, the restriction map $\mathcal{O}(\mathcal{Y}_{X',J_2})\to \mathcal{O}(\mathcal{Y}_{X',J_1})$ has dense image, by \cite[Chap. 0, Remarques (13.2.4)]{EGA3P1}, we can reduce to the case $I=[s,r]$ or $I=[0,r]$. 
		
		We still denote by $[\varpi]$ the constant function sending $g\in G^k$ to $[\varpi]\in\mathcal{O}(\mathcal{Y}_{X',I})$. In the rest of the proof, we write $-\widehat{\otimes}_{\mathcal{O}_E}\mathcal{O}_{E(\pi^{1/p^{\infty}})}$ for the $[\varpi]$-adic completion of $-\otimes_{\mathcal{O}_E}\mathcal{O}_{E(\pi^{1/p^{\infty}})}$. 
		As $C^k\hookrightarrow C^k\widehat{\otimes}_{\mathcal{O}_E}\mathcal{O}_{E(\pi^{1/p^{\infty}})}$ naturally splits as topological $C^k$-modules, it suffices to show the exactness of $C^{\bullet}\widehat{\otimes}_{\mathcal{O}_E}\mathcal{O}_{E(\pi^{1/p^{\infty}})}$. Moreover, we can check that the natural map
		\begin{align}\label{function tensor agree with tensor function}
			C(G^k,\mathcal{O}(\mathcal{Y}_{X',I}))\widehat{\otimes}_{\mathcal{O}_E}\mathcal{O}_{E(\pi^{1/p^{\infty}})}\xrightarrow{\sim} C\left(G^k,\mathcal{O}(\mathcal{Y}_{X',I})\widehat{\otimes}_{\mathcal{O}_E}\mathcal{O}_{E(\pi^{1/p^{\infty}})}\right)
		\end{align}
		sending $f\otimes x$ to the map $g\mapsto f(g)\otimes x$ is an isomorphism for $k\geq 0$. To see this, we first note that $C(G^k,\mathcal{O}(\mathcal{Y}_{X',I}))^{\circ}=C(G^k,\mathcal{O}(\mathcal{Y}_{X',I})^{\circ})$, and $C\left(G^k,\mathcal{O}(\mathcal{Y}_{X',I})\widehat{\otimes}_{\mathcal{O}_E}\mathcal{O}_{E(\pi^{1/p^{\infty}})}\right)=C\left(G^k,\mathcal{O}(\mathcal{Y}_{X',I})^{\circ}\widehat{\otimes}_{\mathcal{O}_E}\mathcal{O}_{E(\pi^{1/p^{\infty}})}\right)\left[\frac{1}{[\varpi]}\right]$.
		Thus it suffices to check that
		\begin{align}\label{function tensor agree with tensor function, bounded ver}
			C(G^k,\mathcal{O}(\mathcal{Y}_{X',I})^{\circ})\widehat{\otimes}_{\mathcal{O}_E}\mathcal{O}_{E(\pi^{1/p^{\infty}})}\to C\left(G^k,\mathcal{O}(\mathcal{Y}_{X',I})^{\circ}\widehat{\otimes}_{\mathcal{O}_E}\mathcal{O}_{E(\pi^{1/p^{\infty}})}\right)
		\end{align}
		is an isomorphism. 
		Let us write $G^k=\varprojlim_{\alpha}H_\alpha$ as a inverse limit of finite quotient groups of $G^k$, then we can check that the left hand side of (\ref{function tensor agree with tensor function, bounded ver}) is the $[\varpi]$-adic completion of $\left(\varinjlim_{\alpha}C(H_\alpha,\mathcal{O}(\mathcal{Y}_{X',I})^{\circ})\right){\otimes}_{\mathcal{O}_E}\mathcal{O}_{E(\pi^{1/p^\infty})}$, and the right hand side of (\ref{function tensor agree with tensor function, bounded ver}) is the $[\varpi]$-adic completion of $\varinjlim_{\alpha}C\left(H_\alpha,\mathcal{O}(\mathcal{Y}_{X',I})^{\circ}{\otimes}_{\mathcal{O}_E}\mathcal{O}_{E(\pi^{1/p^\infty})}\right)$.
		Moreover, we have isomorphisms of topological rings
		\begin{align}\label{isom on dense subrings}
			\begin{aligned}
				\left(\varinjlim_{\alpha}C(H_\alpha,\mathcal{O}(\mathcal{Y}_{X',I})^{\circ})\right){\otimes}_{\mathcal{O}_E}\mathcal{O}_{E(\pi^{1/p^\infty})}&\cong \varinjlim_{\alpha}\left(C(H_\alpha,\mathcal{O}(\mathcal{Y}_{X',I})^{\circ}){\otimes}_{\mathcal{O}_E}\mathcal{O}_{E(\pi^{1/p^\infty})}\right)\\
				&\cong \varinjlim_{\alpha}C\left(H_\alpha,\mathcal{O}(\mathcal{Y}_{X',I})^{\circ}{\otimes}_{\mathcal{O}_E}\mathcal{O}_{E(\pi^{1/p^\infty})}\right),
			\end{aligned}
		\end{align}
		where every ring is equipped with the $[\varpi]$-adic topology. Then (\ref{isom on dense subrings}) implies (\ref{function tensor agree with tensor function, bounded ver}) by taking $[\varpi]$-adic completion.

		Let $\mathcal{Y}_{X,I}^{\mathrm{perf}}\coloneq\mathcal{Y}_{X,I}\times_{\mathrm{Spa}\mathcal{O}_E}\mathrm{Spa}\mathcal{O}_{E(\pi^{1/p^{\infty}})}$.
		Since $X'\to X$ is a pro-étale $G$-torsor and $X,X'$ are affinoid, there exists a  pro-étale covering $U\to X$ of affinoid perfectoid spaces such that $U'\coloneq U\times_{X}X'\cong U\times G$, where $U\times G$ is the representable pro-étale sheaf defined in \cite[$\S$9.3]{WS2020berkeley}. Using (\ref{compatible with fibre prod}), (\ref{isom of tilt for [0,r]}) and (\ref{isom of tilt for [s,r]}), we deduce that
		\begin{align}\label{tilt, torsor}
			\left(\mathcal{Y}_{U,I}^{\mathrm{perf}}\times_{\mathcal{Y}_{X,I}^{\mathrm{perf}}}\mathcal{Y}_{X',I}^{\mathrm{perf}}\right)^{\flat}\cong\left(\mathcal{Y}_{U',I}^{\mathrm{perf}}\right)^{\flat}\cong \left(\mathcal{Y}_{U,I}^{\mathrm{perf}}\right)^{\flat}\times G.
		\end{align}
		For any perfectoid space $T$ over $\left(\mathcal{Y}_{U,I}^{\mathrm{perf}}\right)^{\flat}$, by \cite[Theorem 7.1.4]{WS2020berkeley}, there exist a unique (up to isomorphism) untilt $T^{\sharp}$ over $\mathcal{Y}_{U,I}^{\mathrm{perf}}$. In particular, $\mathcal{Y}_{U,I}^{\mathrm{perf}}\times G$ is the untilt of $\left(\mathcal{Y}_{U,I}^{\mathrm{perf}}\times G\right)^{\flat}$, we then have
		\begin{align*}
			\mathrm{Hom}_{\left(\mathcal{Y}_{U,I}^{\mathrm{perf}}\right)^{\flat}}\left(T,\left(\mathcal{Y}_{U,I}^{\mathrm{perf}}\times G\right)^{\flat}\right)&=\mathrm{Hom}_{\mathcal{Y}_{U,I}^{\mathrm{perf}}}\left(T^{\sharp},\mathcal{Y}_{U,I}^{\mathrm{perf}}\times G\right)\\
			&=\mathrm{Hom}_{\mathcal{Y}_{U,I}^{\mathrm{perf}}}\left(T^{\sharp},\mathcal{Y}_{U,I}^{\mathrm{perf}}\right)\times \mathrm{Hom}_{\mathrm{cont}}(|T^{\sharp}|,G)\\
			&=\mathrm{Hom}_{\left(\mathcal{Y}_{U,I}^{\mathrm{perf}}\right)^{\flat}}\left(T,\left(\mathcal{Y}_{U,I}^{\mathrm{perf}}\right)^{\flat}\right)\times \mathrm{Hom}_{\mathrm{cont}}(|T|,G)\\
			&=\mathrm{Hom}_{\left(\mathcal{Y}_{U,I}^{\mathrm{perf}}\right)^{\flat}}\left(T,\left(\mathcal{Y}_{U,I}^{\mathrm{perf}}\right)^{\flat}\times G\right).
		\end{align*}
		Here the first isomorphism uses \cite[Theorem 7.1.4]{WS2020berkeley}, the second and the last isomorphism uses \cite[$\S$9.3]{WS2020berkeley}, and the third isomorphism uses \cite[Theorem 6.2.6]{WS2020berkeley}. Then we deduce that $ \left(\mathcal{Y}_{U,I}^{\mathrm{perf}}\right)^{\flat}\times G\cong \left(\mathcal{Y}_{U,I}^{\mathrm{perf}}\times G\right)^{\flat}$.
		Therefore, by \cite[Theorem 7.1.4]{WS2020berkeley} and (\ref{tilt, torsor}),
		\[\mathcal{Y}_{U,I}^{\mathrm{perf}}\times_{\mathcal{Y}_{X,I}^{\mathrm{perf}}}\mathcal{Y}_{X',I}^{\mathrm{perf}}\cong \mathcal{Y}_{U,I}^{\mathrm{perf}}\times G.\]
		Since $U\to X$ is a pro-étale covering, (\ref{isom of tilt for [0,r]}) and (\ref{isom of tilt for [s,r]}) imply that $\left(\mathcal{Y}_{U,I}^{\mathrm{perf}}\right)^{\flat}\to \left(\mathcal{Y}_{X,I}^{\mathrm{perf}}\right)^{\flat}$ is a pro-étale covering (\cite[Proposition 8.2.5.(3)]{WS2020berkeley}), hence $\mathcal{Y}_{U,I}^{\mathrm{perf}}\to \mathcal{Y}_{X,I}^{\mathrm{perf}}$ is also a pro-étale covering by \cite[Corollary 7.5.3]{WS2020berkeley}. Hence $\mathcal{Y}_{X',I}^{\mathrm{perf}}\to \mathcal{Y}_{X,I}^{\mathrm{perf}}$ is a pro-étale $G$-torsor. Let $X'^{\times n}\coloneq X'\times_{X}\cdots\times_XX'$ be the fibre product of $n$ copies of $X'$ over $X$ and let $\left(\mathcal{Y}_{X',I}^{\mathrm{perf}}\right)^{\times n}\coloneq \mathcal{Y}_{X',I}^{\mathrm{perf}}\times_{\mathcal{Y}_{X,I}^{\mathrm{perf}}}\cdots\times_{\mathcal{Y}_{X,I}^{\mathrm{perf}}} \mathcal{Y}_{X',I}^{\mathrm{perf}}$ be the fibre product of $n$ copies of $\mathcal{Y}_{X',I}^{\mathrm{perf}}$ over $\mathcal{Y}_{X,I}^{\mathrm{perf}}$ for $n\geq 1$. By induction on $n\geq 1$ and (\ref{compatible with fibre prod}), we have
		\[\mathcal{Y}_{X',I}^{\mathrm{perf}}\times G^{n-1}\cong \left(\mathcal{Y}_{X',I}^{\mathrm{perf}}\right)^{\times n} \cong \mathcal{Y}_{X'^{\times n},I}^{\mathrm{perf}},\quad n\geq 1.\]
		This induces an isomorphism
		\begin{align}\label{torsor induce isom on tensor}
			\mathcal{O}\left(\mathcal{Y}_{X'^{\times n},I}^{\mathrm{perf}}\right)\cong C\left(G^{n-1},\mathcal{O}\left(\mathcal{Y}_{X',I}\right)\widehat{\otimes}_{\mathcal{O}_E}\mathcal{O}_{E(\pi^{1/p^{\infty}})}\right), \quad n\geq 1.
		\end{align}
		Combining (\ref{function tensor agree with tensor function}) and (\ref{torsor induce isom on tensor}), we see that the complex $C^{\bullet}\widehat{\otimes}_{\mathcal{O}_E}\mathcal{O}_{E(\pi^{1/p^{\infty}})}$ is isomorphic to the \v{C}ech complex 
		\[0\to \mathcal{O}\left(\mathcal{Y}_{X,I}^{\mathrm{perf}}\right)\to \mathcal{O}\left(\mathcal{Y}_{X',I}^{\mathrm{perf}}\right)\to \mathcal{O}\left(\mathcal{Y}_{X'^{\times 2},I}^{\mathrm{perf}}\right)\to\cdots.\]
		The verification of compatibility of differentials is analogous to the proof of \cite[Theorem 5.3]{du2025multivariablevarphiqmathcaloktimesmodulesassociatedpadic}, hence we omit details.
		Then the conclusion follows from Proposition \ref{v-sheaf and v-stack}.
	\end{proof}
	
	\begin{defn}
		If $I\subseteq\closeopen{0}{\infty}$ is an interval as above and $R$ is a perfectoid algebra over $\mathbb{F}$ with a pseudo-uniformizer $\varpi$, we define three rings as follows:
		\begin{align}\label{defn of perfd robba rings}
			B_{R,I}\coloneq \mathcal{O}\left(\mathcal{Y}_{\mathrm{Spa}(R,R^{\circ}),I}\right),\quad \widetilde{\mathcal{R}}_{R}\coloneq \varinjlim_{r\to 0^{+}}B_{R,\openclose{0}{r}},\quad \widetilde{\mathcal{R}}^{\mathrm{int}}_{R}\coloneq \varinjlim_{r\to 0^{+}}B_{R,[0,r]}.
		\end{align}
		The ring $\widetilde{\mathcal{R}}_{R}$ is called the perfectoid Robba ring and $\widetilde{\mathcal{R}}^{\mathrm{int}}_{R}$ is called the integral perfectoid Robba ring. Note that $\pi$ is invertible in $B_{R,I}$ if and only if $0\notin I$, hence $\pi$ is not invertible in $\widetilde{\mathcal{R}}^{\mathrm{int}}_{R}$. Also note that although $B_{R,[0,r]}$ and $B_{R,\openclose{0}{r}}$ depend on the choice of $\varpi$, $\widetilde{\mathcal{R}}_{R}$ and $\widetilde{\mathcal{R}}^{\mathrm{int}}_{R}$ do not depend on the choice of $\varpi$.
	\end{defn}

	If $L$ is a perfectoid field, Kedlaya proved that $ B_{L,I}$ has very strong ring-theoretic properties (\cite[Corollary 2.10]{kedlaya2016noetherian}, \cite[Lemma 4.2.6, Remark 4.2.7]{kedlaya2015relative}):
	
	\begin{prop}\label{Bezout property of Robba ring over fields}
		The ring $B_{L,I}$ is a principal ideal domain, if $I=[0,r]$ or $[s,r]$ with $0<1/r,1/s\in \mathbb{Z}[1/p]$, and is a Bézout domain if $I=\openclose{0}{r}$ for $1/r\in \mathbb{Z}[1/p]$. Hence as colimits of Bézout domains, $\widetilde{\mathcal{R}}_{L}$ and $\widetilde{\mathcal{R}}^{\mathrm{int}}_{L}$ are also Bézout domains.
	\end{prop}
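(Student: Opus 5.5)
The plan is to deduce everything from the theory of Newton polygons for elements of $B_{L,I}$, following Kedlaya (and Fargues–Fontaine for the compact-interval case). Since $L$ is a perfectoid field, its norm is multiplicative. Hence for each $t\in I$ the Gauss norm $|\cdot|_t$ (defined as in (\ref{defn of ||_r})) is multiplicative on $B_{L,I}$. First I will show that $B_{L,I}$ is a domain, and that for nonzero $x$ the function $t\mapsto -\log|x|_t$ is convex and piecewise linear on compact subintervals with finitely many breakpoints. The Legendre dual of this function is the Newton polygon of $x$, and its slopes are additive under multiplication.

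The key step is a factorization (Weierstrass preparation) theorem. For $I=[s,r]$, I will show that every nonzero $x$ factors as a unit times a finite product of primitive elements of degree one, $\pi-[a]u$, one for each slope in $I$ counted with multiplicity. For $I=[0,r]$ there is an extra factor $\pi^m$, coming from the order of vanishing along $V(\pi)$, which is measured by the behaviour as $t\to 0$. To split off one slope, I will approximate $x$ by a truncation and run a Newton-type iteration that converges in the Banach algebra $B_{L,I}$, using completeness of $L$ and the fact that $L$ is algebraically closed after untilting, or at least perfect. An element with no slopes in $I$ is a unit, since its dominant term controls all the norms $|\cdot|_t$.

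This gives a divisor map to finite formal sums of points, and I will check two things. First, $x\mid y$ if and only if $\mathrm{div}(x)\le\mathrm{div}(y)$; this uses the factorization together with the fact that dividing by a degree-one element whose zero is shared stays in $B_{L,I}$, by a division-algorithm argument. Second, every ideal is generated by an element realizing the minimum of the divisors. Because the divisors have finite support on a compact interval, such a minimum exists. This gives the principal ideal domain property for $[0,r]$ and $[s,r]$.

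For $I=(0,r]$, I will write $B_{L,(0,r]}=\varprojlim_s B_{L,[s,r]}$. Given $x,y$, the gcd divisor $\min(\mathrm{div}\,x,\mathrm{div}\,y)$ is locally finite but may have infinitely many points accumulating at $0$. The main obstacle is therefore to construct an element with a prescribed locally finite divisor, via a Lazard-style convergent infinite product of normalized degree-one factors. After that I need to solve $ax+by=d$ compatibly on all the $[s,r]$; the limit of these solutions exists by a Mittag-Leffler argument, since the restriction maps have dense image. Finally, for $\widetilde{\mathcal{R}}_L$ and $\widetilde{\mathcal{R}}^{\mathrm{int}}_L$: the transition maps are injective, so the colimits are domains. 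Any two elements lie in a common $B_{L,(0,r]}$ or $B_{L,[0,r]}$, where they generate a principal ideal, and that relation persists in the colimit.
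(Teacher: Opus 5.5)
The paper does not prove this statement. It quotes Kedlaya \cite[Corollary 2.10]{kedlaya2016noetherian} and Kedlaya--Liu for $B_{L,I}$, and only adds the colimit remark, which your last paragraph reproduces correctly. Your self-contained route, however, has a genuine gap at its key step. Writing every nonzero element as a unit times a product of degree-one primitive elements $\pi-[a]u$ is the Fargues--Fontaine factorization, and it needs $L$ to be algebraically closed. A Newton-type iteration can split off a whole segment of the Newton polygon, but cutting that segment into degree-one factors amounts to finding roots in $L$. Your justification is not available: a perfectoid field is algebraically closed if and only if its tilt is, so untilting gains nothing, and perfectness does not help. The proposition is stated for an arbitrary perfectoid field, and the paper uses it for $L=\mathbb{F}(\negthinspace(T_{\mathrm{LT}}^{1/p^{\infty}})\negthinspace)$, where $v(L^{\times})=\mathbb{Z}[1/p]$ (normalizing $v(\varpi)=1$). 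Here is a concrete counterexample:
\begin{itemize}
\item Take a prime $\ell\neq p$, some $r>\ell$, and $x=\pi^{\ell}-[\varpi]\in B_{L,[0,r]}$. Then $-\log_p|x|_t=\min(1,\ell/t)$, which breaks at $1/t=1/\ell$, so $x$ is not a unit.
\item A degree-one primitive element $[a]+\pi w$, with $w\in W_{\mathcal{O}_E}(\mathcal{O}_L)^{\times}$, satisfies $-\log_p|[a]+\pi w|_t=\min(v(a),1/t)$, which breaks at $v(a)\in\mathbb{Z}[1/p]$.
\item A unit has no break at all.
\end{itemize}
These functions add under multiplication, so $x$ is not a unit times a product of degree-one primitive elements. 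Even the slopes of an element need not lie in $v(L^{\times})$.

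Consequently $\mathcal{Y}_{L,I}$ has closed points of degree $>1$, cut out by irreducible primitive elements of higher degree such as $x$. Your divisor calculus fails exactly where the real work lies. Two steps are affected. The first is the criterion $x\mid y\iff\mathrm{div}(x)\le\mathrm{div}(y)$. The second is the step left implicit in ``every ideal is generated by an element realizing the minimum'': you get $J\subseteq(d)$ for free, but $d\in J$ needs that an ideal whose elements have no common zero is the unit ideal. Both steps require that every irreducible primitive element $P$ generates a maximal ideal, i.e.\ that $B_{L,I}/(P)$ is a field, and nothing in your sketch provides this.

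To repair the argument you must handle primitive elements of arbitrary degree. One possibility is a Weierstrass preparation $y=wP$, with $w$ a unit and $P\in W_{\mathcal{O}_E}(\mathcal{O}_L)$ carrying exactly the zeros of $y$ in $I$; this needs no roots. Combine it with division with remainder by $P$: the remainders $\sum_{i<\deg P}\pi^{i}[r_i]$ have fewer zeros, which makes $y\mapsto\deg P$ a Euclidean function. Another possibility is a careful Galois descent from $\widehat{\overline{L}}$ using Corollary~\ref{G-torsor}. Once the compact case is repaired, your $(0,r]$ argument survives: use higher-degree factors in the Lazard product, normalized by powers of $\pi$ (a unit there), and the Mittag-Leffler step is fine. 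The colimit step is then as in the paper.
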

	
	In particular, any finite projective module over $\widetilde{\mathcal{R}}^{\mathrm{int}}_{L}$ is free. We can prove an analogue:
	
	\begin{prop}\label{fin proj over int Robba is free}
		If $R$ is a perfectoid $\mathbb{F}$-algebra such that any finite projective $R$-module is free, then any finite projective $\widetilde{\mathcal{R}}_{R}^{\mathrm{int}}$-module is free.
	\end{prop}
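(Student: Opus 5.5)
The plan is to imitate the proof of Corollary \ref{finite projective implies free}, with $A_{\mathrm{mv},E}^{\dagger}$ replaced by $\widetilde{\mathcal{R}}_{R}^{\mathrm{int}}$ and $A$ replaced by $R$. The argument has three steps.

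\textbf{Step 1: $\pi$ lies in the Jacobson radical.} We show that $\pi\widetilde{\mathcal{R}}_{R}^{\mathrm{int}}$ is contained in the Jacobson radical of $\widetilde{\mathcal{R}}_{R}^{\mathrm{int}}$. This is the analogue of Lemma \ref{pA dagger is topologically nilpotent}.

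Let $x\in\widetilde{\mathcal{R}}_{R}^{\mathrm{int}}$. Then $x\in B_{R,[0,r]}$ for some $r$. Write $x=\sum_{n\ge0}\pi^n[x_n]$ with $|x_n|p^{-n/r}\to0$. Set $C\coloneq\sup_n|x_n|p^{-n/r}$, so that $|x|_r=C<\infty$.

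For $0<r'<r$ we have
\[|\pi x|_{r'}=\sup_n |x_n|p^{-(n+1)/r'}\le C\sup_n p^{n/r-(n+1)/r'}.\]
Since $r'<r$, the exponent $n/r-(n+1)/r'$ is maximal at $n=0$, so this supremum equals $p^{-1/r'}$. Hence $|\pi x|_{r'}\le Cp^{-1/r'}$, which is $<1$ once $r'$ is small enough.

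For such $r'$, the element $\pi x$ lies in $B_{R,[0,r']}^{\circ\circ}$ by (\ref{description of [0,r]}), so it is topologically nilpotent. The series $\sum(-\pi x)^k$ therefore converges in the Banach ring $B_{R,[0,r']}$ and gives an inverse of $1+\pi x$.

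Two details need checking here. First, the norm $|\cdot|$ on $R$ need not be multiplicative. It is submultiplicative, however, and topological nilpotence only requires $|y^k|_{r'}\to0$, which submultiplicativity gives. Second, when $1/r'\notin\mathbb{Z}[1/p]$ the ring $B_{R,[0,r']}$ is still described by the same series, so we may shrink $r'$ freely. I expect these checks to be routine.

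\textbf{Step 2: reduction modulo $\pi$.} We want $\widetilde{\mathcal{R}}_{R}^{\mathrm{int}}/\pi\cong R$. Since $\pi$ is not invertible, it suffices to prove $B_{R,[0,r]}/\pi\cong R$ for each $r$ and then pass to the colimit. Taking the $n=0$ coefficient gives a surjection $B_{R,[0,r]}\to R$, $\sum\pi^n[x_n]\mapsto x_0$.

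\textbf{The main obstacle} is identifying the kernel of this map with $\pi B_{R,[0,r]}$. If $x_0=0$, then $x=\pi y$ with $y=\sum_n\pi^{n}[x_{n+1}]$, and we must check that $y\in B_{R,[0,r]}$. The decay condition does transfer: $|x_{n+1}|p^{-n/r}=p^{1/r}\,|x_{n+1}|p^{-(n+1)/r}\to0$. What still needs care is whether $y$ genuinely lies in $\mathcal{O}(\mathcal{Y}_{X,[0,r]})$ when $R^{+}=R^{\circ}$, using the Teichmüller expansion description of $\mathcal{O}(\mathcal{Y}_{X,[0,r]})$. I expect this to be fine, but this is the step I would verify most carefully.

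\textbf{Step 3: lifting a basis and proving independence.} Let $M$ be a finite projective $\widetilde{\mathcal{R}}_{R}^{\mathrm{int}}$-module. Then $M/\pi M$ is a finite projective $R$-module, hence free by hypothesis. Lift an $R$-basis of $M/\pi M$ to elements $e_1,\dots,e_d\in M$. By Step 1 and Nakayama's lemma, the $e_i$ generate $M$.

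For independence, suppose $\sum a_ie_i=0$. Reducing modulo $\pi$ gives $\pi\mid a_i$ for every $i$. Since $\pi$ is a nonzerodivisor on $\widetilde{\mathcal{R}}_{R}^{\mathrm{int}}\subset W_{\mathcal{O}_E}(R)$, and $M$ is projective (a direct summand of a free module), $\pi$ is also a nonzerodivisor on $M$. So we may divide the relation by $\pi$ and repeat. This gives $a_i\in\bigcap_n\pi^n\widetilde{\mathcal{R}}_{R}^{\mathrm{int}}$, and that intersection is contained in $\bigcap_n\pi^nW_{\mathcal{O}_E}(R)=0$. Hence the $e_i$ form a basis and $M$ is free.
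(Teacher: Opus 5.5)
Your proposal is correct and follows essentially the same route as the paper. Both arguments have three steps:
\begin{enumerate}
\item Show that $\pi$ lies in the Jacobson radical of $\widetilde{\mathcal{R}}_{R}^{\mathrm{int}}$ by shrinking the radius. Your estimate $|\pi x|_{r'}\le |x|_r\,p^{-1/r'}$ is a norm-theoretic version of the paper's observation that $[\varpi]^k x$ is power-bounded, which makes $\pi x$ topologically nilpotent in $B_{R,[0,1/(k+1)]}$.
\item Lift a basis of the free $R$-module $M/\pi M$ to generators of $M$ via Nakayama.
\item Prove linear independence by repeated division by $\pi$.
\end{enumerate}
The step you flagged as the main obstacle, $\widetilde{\mathcal{R}}_{R}^{\mathrm{int}}/\pi\cong R$, is asserted without proof in the paper. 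It is in fact immediate from the Teichm\"uller-series description of $B_{R,[0,r]}$, exactly as your shift computation shows.
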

	
	\begin{proof}
		First we check that $\pi$ is contained in the Jacobson radical of $\widetilde{\mathcal{R}}_{R}^{\mathrm{int}}$: let $x\in \widetilde{\mathcal{R}}_{R}^{\mathrm{int}}$, there exists $k\geq 0$ and $0<1/r\in\mathbb{Z}[1/p]$ such that $[\varpi]^kx\in \left(B_{R,[0,r]}\right)^{\circ}$by (\ref{description of [0,r]}). Without any loss of generality, we may assume that $k\geq 1/r$, then $1+\pi x$ is invertible in $\left(B_{R,[0,1/(k+1)]}\right)^{\circ}$, hence in $\widetilde{\mathcal{R}}_{R}^{\mathrm{int}}$. Therefore $\pi$ is contained in the Jacobson radical of $\widetilde{\mathcal{R}}_{R}^{\mathrm{int}}$ by \cite[\href{https://stacks.math.columbia.edu/tag/0AME}{Tag 0AME}]{stacks-project}. Now suppose that $M$ is a finite projective $\widetilde{\mathcal{R}}_{R}^{\mathrm{int}}$-module, since $\widetilde{\mathcal{R}}_{R}^{\mathrm{int}}/\varpi\cong R$, we see that $M/\pi M$ is a free $R$-module of finite rank, hence there exists $e_1,\dots,e_r\in M$ such that $M=\sum_{i=1}^r\widetilde{\mathcal{R}}_{R}^{\mathrm{int}}e_i+\pi M$. Then Nakayama's lemma (see for example \cite[\href{https://stacks.math.columbia.edu/tag/00DV}{Tag 00DV}]{stacks-project}) implies that $M=\sum_{i=1}^r\widetilde{\mathcal{R}}_{R}^{\mathrm{int}}e_i$. Now suppose that $\sum_{i=1}^ra_ie_i=0$ for some $a_i\in \widetilde{\mathcal{R}}_{R}^{\mathrm{int}}$, then $M/\pi M=\bigoplus_{i=1}^r Re_i$ implies that $\pi\mid a_i$ for each $i$, hence $\pi\cdot \sum_{i=1}^r\frac{a_i}{\pi} e_i=0$. Since $M$ is a projective $\widetilde{\mathcal{R}}_{R}^{\mathrm{int}}$-module, it follows that $\sum_{i=1}^r\frac{a_i}{\pi} e_i=0$. Repeating this argument, we deduce that $\pi^n\mid a_i$ for any $n\geq 1$, hence $a_i=0$, hence $M$ is a free $\widetilde{\mathcal{R}}_{R}^{\mathrm{int}}$-module with a basis given by $e_1,\dots,e_r$.
	\end{proof}
	
	\begin{remark}
		In fact, we can show that $(\widetilde{\mathcal{R}}_{R}^{\mathrm{int}},\pi\widetilde{\mathcal{R}}_{R}^{\mathrm{int}})$ is always a henselian pair (for the definition, see for example \cite[\href{https://stacks.math.columbia.edu/tag/09XD}{Tag 09XD}]{stacks-project}). We do not need this fact.
	\end{remark}

	\subsection{Overconvergent perfectoid $(\varphi_q,\mathcal{O}_K^{\times})$-modules}
	In this section, we show that to every finite free continuous $\mathcal{O}_E$-representation $\rho$ of $G_K$ and $r\in\mathbb{Q}_{>0}$, we can associate a finite projective module over $B_{A_{\infty},[0,r]}$. Moreover, for $r$ sufficiently small, it will be a free $B_{A_{\infty},[0,r]}$-module.
	
	\subsubsection{Lubin-Tate case}
	In the Lubin–Tate case, although the usual overconvergence does not hold in general (\cite[Remark 5.21]{fourquaux2014triangulable}), perfectoid overconvergence remains valid. Results of this kind are well known in the literature, and there are different proofs. For example, in the proof of \cite[Proposition III.3.1]{cherb1996}, Cherbonnier–Colmez apply a variant of Sen’s method to perform Galois descent. In \cite[Theorem 2.4.5]{kedlaya2015new} and \cite[Proposition 4.8]{de2019induction}, the argument proceeds by descending $\varphi$-modules over the ring of Witt vectors to $\varphi$-modules over the integral perfectoid Robba rings. Here, we present a geometric proof using Proposition \ref{v-sheaf and v-stack} (ii), which is somewhat also a Galois descent but is conceptually simpler and less computational.
	
	Fix an embedding $\sigma_{0}: \mathcal{O}_K\hookrightarrow\mathcal{O}_E$, which also induces an embedding $\sigma_{0}:\mathbb{F}_q\hookrightarrow \mathbb{F}$. We fix a Frobenius power series $p_{\mathrm{LT}}\in \mathcal{O}_K[\negthinspace[T_{\mathrm{LT}}]\negthinspace]$ associated to the uniformizer $p$ which defines a Lubin-Tate formal group, and let $K_{\infty}$ be the corresponding Lubin-Tate extension of $K$, and let $H_K\coloneq \gal(\overline{K}/K_{\infty})$. Since $K_{\infty}^{\flat}=\mathbb{F}_q(\negthinspace(T_{\mathrm{LT}}^{1/p^{\infty}})\negthinspace)$ (see for example \cite[$\S$1.4]{schneider2017galois}), we choose $T_{\mathrm{LT}}$ as the fixed pseudo-uniformizer of $\mathbb{F}\otimes_{\sigma_{0},\mathbb{F}_q}K_{\infty}^{\flat}=\mathbb{F}(\negthinspace(T_{\mathrm{LT}}^{1/p^{\infty}})\negthinspace)$ to define $B_{\mathbb{F}(\negthinspace(T_{\mathrm{LT}}^{1/p^{\infty}})\negthinspace),[0,r]}$ for $r\in\mathbb{Q}_{>0}$ (see (\ref{defn of perfd robba rings})). 
	For $0<1/r\in\mathbb{Z}[1/p]$, we also define
	\begin{align*}
		B_{K_{\infty}^{\flat},[0,r],K}&\coloneq W\left(\mathbb{F}_q[\negthinspace[T_{\mathrm{LT}}^{1/p^{\infty}}]\negthinspace]\right)\left\langle\frac{p}{[T_{\mathrm{LT}}]^{1/r}}\right\rangle\left[\frac{1}{[T_{\mathrm{LT}}]}\right],\\
		B_{\mathbb{C}_p^{\flat},[0,r],K}&\coloneq W\left(\mathcal{O}_{\mathbb{C}_p}^{\flat}\right)\left\langle\frac{p}{[T_{\mathrm{LT}}]^{1/r}}\right\rangle\left[\frac{1}{[T_{\mathrm{LT}}]}\right],
	\end{align*}
	then $B_{\mathbb{F}(\negthinspace(T_{\mathrm{LT}}^{1/p^{\infty}})\negthinspace),[0,r]}=B_{K_{\infty}^{\flat},[0,r],K}\otimes_{\mathcal{O}_K,\sigma_{0}}\mathcal{O}_E$. The $q$-th Frobenius endomorphism of $K_{\infty}^{\flat}$ and $\mathbb{C}_p^{\flat}$ induce $\mathcal{O}_K$-linear isomorphisms 
	\begin{align}\label{phi_q: (LT,K) and (C_p, K)}
		\varphi_q:B_{K_{\infty}^{\flat},[0,r],K}\to B_{K_{\infty}^{\flat},[0,r/q],K},\quad \varphi_q:B_{\mathbb{C}_p^{\flat},[0,r],K}\to B_{\mathbb{C}_p^{\flat},[0,r/q],K}
	\end{align}
	and an $\mathcal{O}_E$-linear isomorphism
	\begin{align}\label{phi_q: (LT,E)}
		\varphi_q: B_{\mathbb{F}(\negthinspace(T_{\mathrm{LT}}^{1/p^{\infty}})\negthinspace),[0,r]}\to B_{\mathbb{F}(\negthinspace(T_{\mathrm{LT}}^{1/p^{\infty}})\negthinspace),[0,r/q]}.
	\end{align}
	
	Let $\rho$ be a finite type continuous $\mathcal{O}_E$-representation of $G_K$. By Corollary \ref{G-torsor},
	\[H^0_{\mathrm{cont}}(H_K,B_{\mathbb{C}_p^{\flat},[0,r],K})=B_{K_{\infty}^{\flat},[0,r],K}\] 
	since $\mathrm{Spa}\mathbb{C}_p^{\flat}\to \mathrm{Spa}K_{\infty}^{\flat}$ is a pro-étale $H_K$-torsor (\cite[Theorem 7.3.1]{WS2020berkeley}). Thus for $r\in\mathbb{Q}_{>0}$,
	\[D_{\mathrm{LT}}^{[0,r]}(\rho)\coloneq\left(B_{\mathbb{C}_p^{\flat},[0,r],K}\otimes_{\mathcal{O}_K,\sigma_{0}}\rho\right)^{H_K}\]
	is a module over $B_{K_{\infty}^{\flat},[0,r],K}\otimes_{\mathcal{O}_K,\sigma_{0}}\mathcal{O}_E=B_{\mathbb{F}(\negthinspace(T_{\mathrm{LT}}^{1/p^{\infty}})\negthinspace),[0,r]}$ with a semi-linear $\mathcal{O}_K^{\times}$-action. The map (\ref{phi_q: (LT,K) and (C_p, K)}) induces a bijective map
	\begin{align}\label{phi_q: (LT) module}
		\varphi_q: D_{\mathrm{LT}}^{[0,r]}(\rho)\to D_{\mathrm{LT}}^{[0,r/q]}(\rho).
	\end{align}
	
	\begin{lem}
		If $\rho$ is a free $\mathcal{O}_E$-representation of $G_K$ of rank $d\geq 1$, then $D_{\mathrm{LT}}^{[0,r]}(\rho)$
		is a free $B_{\mathbb{F}(\negthinspace(T_{\mathrm{LT}}^{1/p^{\infty}})\negthinspace),[0,r]}$-module of rank $d$. Moreover, there are natural isomorphisms
		\[B_{\mathbb{C}_p^{\flat},[0,r],K}\otimes_{B_{K_{\infty}^{\flat},[0,r],K}}D_{\mathrm{LT}}^{[0,r]}(\rho)\cong B_{\mathbb{C}_p^{\flat},[0,r],K}\otimes_{\mathcal{O}_K,\sigma_{0}}\rho\]
		and
		\[B_{\mathbb{F}(\negthinspace(T_{\mathrm{LT}}^{1/p^{\infty}})\negthinspace),[0,r/q]}\otimes_{\varphi_q,B_{\mathbb{F}(\negthinspace(T_{\mathrm{LT}}^{1/p^{\infty}})\negthinspace),[0,r]}}D_{\mathrm{LT}}^{[0,r]}(\rho)\xrightarrow[\sim]{\mathrm{id}\otimes\varphi_q}D_{\mathrm{LT}}^{[0,r/q]}(\rho).\]
	\end{lem}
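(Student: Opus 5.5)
We will deduce the lemma from the $v$-stack property of vector bundles (Proposition \ref{v-sheaf and v-stack} (ii)) and the Galois descent of functions (Corollary \ref{G-torsor}), applied to the pro-étale $H_K$-torsor $X'\coloneq\mathrm{Spa}\,\mathbb{C}_p^{\flat}\to X\coloneq\mathrm{Spa}\,K_{\infty}^{\flat}$, with base $S=X$ and pseudo-uniformizer $T_{\mathrm{LT}}$, working with coefficients $\mathcal{O}_K$ first and then tensoring along $\sigma_0$ to $\mathcal{O}_E$ (which is finite free over $\mathcal{O}_K$, so all statements transfer). The continuous $H_K$-action on $\rho$ gives a descent datum on the trivial vector bundle $\mathcal{V}'\coloneq\mathcal{O}_{\mathcal{Y}_{X',[0,r]}}\otimes_{\mathcal{O}_K}\rho$: on $\mathcal{Y}_{X'\times_X X',[0,r]}\cong\mathcal{Y}_{X',[0,r]}\times H_K$ (as established in the proof of Corollary \ref{G-torsor}, via (\ref{compatible with fibre prod})) the gluing isomorphism is $g\mapsto g\otimes\rho(g)$, which is a legitimate isomorphism of vector bundles precisely because $\rho$ is continuous (so the matrix entries are continuous functions $H_K\to\mathcal{O}_K$, i.e.\ elements of $C(H_K,B_{\mathbb{C}_p^{\flat},[0,r],K})$). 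The cocycle condition is the multiplicativity of $\rho$. First reduce modulo the finite-level issue: since $\rho$ is continuous it suffices, if one prefers, to note $\rho$ mod $\pi^n$ factors through a finite quotient, but the direct argument above already works.

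By Proposition \ref{v-sheaf and v-stack} (ii), this datum descends to a vector bundle $\mathcal{V}$ on $\mathcal{Y}_{X,[0,r]}$, which is affinoid, so $\mathcal{V}$ corresponds to a finite projective $B_{K_\infty^\flat,[0,r],K}$-module $V$ of rank $d$ with $B_{\mathbb{C}_p^\flat,[0,r],K}\otimes V\cong B_{\mathbb{C}_p^\flat,[0,r],K}\otimes\rho$ compatibly with $H_K$. Taking global sections and $H_K$-invariants, using Corollary \ref{G-torsor} ($H^0=B_{K_\infty^\flat,[0,r],K}$) together with projectivity of $V$ (write $V$ as a direct summand of a free module, so invariants commute), we get $V=D_{\mathrm{LT}}^{[0,r]}(\rho)$ and the first natural isomorphism. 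Base change along $\sigma_0$ gives the $B_{\mathbb{F}(\negthinspace(T_{\mathrm{LT}}^{1/p^{\infty}})\negthinspace),[0,r]}$-statement; freeness of rank $d$ then follows since this ring is a PID by Proposition \ref{Bezout property of Robba ring over fields} (for general $r\in\mathbb{Q}_{>0}$ one can first take $r'$ with $1/r'\in\mathbb{Z}[1/p]$, or alternatively use Proposition \ref{fin proj over int Robba is free}-type Nakayama argument on $B_{R,[0,r]}$ whose reduction is the field $\mathbb{F}(\negthinspace(T_{\mathrm{LT}}^{1/p^{\infty}})\negthinspace)$).

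For the Frobenius statement, $\varphi_q$ on $B_{\mathbb{C}_p^\flat,[0,r],K}\to B_{\mathbb{C}_p^\flat,[0,r/q],K}$ is a ring isomorphism commuting with $H_K$, so it carries $(B_{\mathbb{C}_p^\flat,[0,r],K}\otimes\rho)^{H_K}$ bijectively onto $(B_{\mathbb{C}_p^\flat,[0,r/q],K}\otimes\rho)^{H_K}$ semilinearly, giving (\ref{phi_q: (LT) module}); a semilinear bijection between modules over rings identified by the isomorphism $\varphi_q$ linearizes to an isomorphism $\mathrm{id}\otimes\varphi_q$. The main obstacle I expect is the first step: verifying carefully that the continuous $H_K$-action really defines a $v$-descent datum, i.e.\ identifying $\mathcal{O}(\mathcal{Y}_{X'\times_XX',[0,r]})$ with $C(H_K,B_{\mathbb{C}_p^\flat,[0,r],K})$ without the $\mathrm{perf}$ base change; I would handle it by passing to $\mathcal{Y}^{\mathrm{perf}}$ as in the proof of Corollary \ref{G-torsor}, where the splitting $C^k\hookrightarrow C^k\widehat{\otimes}\mathcal{O}_{E(\pi^{1/p^\infty})}$ lets one transport the identification back.
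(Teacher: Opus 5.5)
Your descent step, the first isomorphism, and the Frobenius isomorphism are fine. Your linearization argument for $\mathrm{id}\otimes\varphi_q$ is in fact more direct than the paper's, which checks it after base change to $B_{\mathbb{C}_p^{\flat},[0,r/q],K}$. However, there is a genuine gap in the passage from $\mathcal{O}_K$- to $\mathcal{O}_E$-coefficients.

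Write $B_K\coloneq B_{K_{\infty}^{\flat},[0,r],K}$ and $B_E\coloneq B_K\otimes_{\mathcal{O}_K,\sigma_0}\mathcal{O}_E=B_{\mathbb{F}(\negthinspace(T_{\mathrm{LT}}^{1/p^{\infty}})\negthinspace),[0,r]}$. The bundle $\mathcal{O}_{\mathcal{Y}_{X',[0,r]}}\otimes_{\mathcal{O}_K}\rho$ that you descend has rank $d[E:K]$, not $d$. The descended module $V$ is $D_{\mathrm{LT}}^{[0,r]}(\rho)$ itself: a finite projective $B_K$-module of rank $d[E:K]$, whose $B_E$-structure comes from the $\mathcal{O}_E$-action on $\rho$. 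It is not obtained by base change along $\sigma_0$; indeed $B_E\otimes_{B_K}V$ has $B_E$-rank $d[E:K]$ and is a different module. What actually has to be shown is that a $B_E$-module which is finite projective over the subring $B_K$ is free of rank $d$ over $B_E$. Since $E/K$ may be ramified, this is not formal, and ``base change along $\sigma_0$'' skips exactly this step.

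The paper supplies the step as follows. Since $B_E$ is a PID and $D_{\mathrm{LT}}^{[0,r]}(\rho)$ is finitely generated over it, it suffices to prove $B_E$-torsion freeness. Suppose $xt=0$ with $0\neq x\in B_E$ and $t\neq 0$; one may assume $xB_E$ is prime. As $B_E$ is finite over $B_K$, incomparability gives $xB_E\cap B_K\neq 0$. So some nonzero $y\in B_K$ kills $t$, contradicting projectivity over the domain $B_K$. Freeness follows, and the rank is $d$ by comparing $B_K$-ranks.

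Alternatively, you could run your descent directly with $\mathcal{O}_E$-coefficients, applied to the rank-$d$ bundle $\mathcal{O}\otimes_{\mathcal{O}_E}\rho$ along the $H_K$-torsor $\mathrm{Spa}(\mathbb{F}\otimes_{\mathbb{F}_q,\sigma_0}\mathbb{C}_p^{\flat})\to\mathrm{Spa}\,\mathbb{F}(\negthinspace(T_{\mathrm{LT}}^{1/p^{\infty}})\negthinspace)$. You would then first need to identify $B_{\mathbb{C}_p^{\flat},[0,r],K}\otimes_{\mathcal{O}_K,\sigma_0}\mathcal{O}_E$ with the corresponding $\mathcal{O}_E$-coefficient ring.

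Also drop your fallback ``Nakayama on $B_{R,[0,r]}$'': $\pi$ is not in the Jacobson radical of that ring. Already for a perfectoid field $L$ and an integer $N\geq 1/r$, $1+\pi[\varpi]^{-N}$ is not a unit in $B_{L,[0,r]}$, since $\pi+[\varpi]^N$ vanishes at a point of $\mathcal{Y}_{L,[0,r]}$. This is why the paper runs that argument only on the colimit $\widetilde{\mathcal{R}}^{\mathrm{int}}_R$.
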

	\begin{proof}
		Note that $\mathrm{Spa}\mathbb{C}_p^{\flat}\to \mathrm{Spa}K_{\infty}^{\flat}$ is a pro-étale $H_K$-torsor (\cite[Theorem 7.3.1]{WS2020berkeley}). The first isomorphism and the fact that $D_{\mathrm{LT}}^{[0,r]}(\rho)$ is a finite projective module over $B_{K_{\infty}^{\flat},[0,r],K}$ locally free of rank $d\cdot[E:K]$ follow from Proposition \ref{v-sheaf and v-stack} (ii). By Proposition \ref{v-sheaf and v-stack} (ii),  the second isomorphism can be checked after applying $B_{\mathbb{C}_p^{\flat},[0,r/q],K}\otimes_{B_{K_{\infty}^{\flat},[0,r/q],K}}-$. Hence it remains to show that $D_{\mathrm{LT}}^{[0,r]}(\rho)$ is free of rank $d$ over $B_{\mathbb{F}(\negthinspace(T_{\mathrm{LT}}^{1/p^{\infty}})\negthinspace),[0,r]}$. Since $B_{\mathbb{F}(\negthinspace(T_{\mathrm{LT}}^{1/p^{\infty}})\negthinspace),[0,r]}$ is a PID (Proposition \ref{Bezout property of Robba ring over fields}), it suffices to show that $D_{\mathrm{LT}}^{[0,r]}(\rho)$ is torsion-free over $B_{\mathbb{F}(\negthinspace(T_{\mathrm{LT}}^{1/p^{\infty}})\negthinspace),[0,r]}$. Suppose that there exists  $0\neq x\in B_{\mathbb{F}(\negthinspace(T_{\mathrm{LT}}^{1/p^{\infty}})\negthinspace),[0,r]}$ and $0\neq t\in D_{\mathrm{LT}}^{[0,r]}(\rho)$ such that $xt=0$. Since every PID is a UFD, without any loss of generality, we may assume that $xB_{\mathbb{F}(\negthinspace(T_{\mathrm{LT}}^{1/p^{\infty}})\negthinspace),[0,r]}$ is a prime ideal, hence $\mathfrak{p}\coloneq xB_{\mathbb{F}(\negthinspace(T_{\mathrm{LT}}^{1/p^{\infty}})\negthinspace),[0,r]}\cap B_{\mathbb{F}(\negthinspace(T_{\mathrm{LT}}^{1/p^{\infty}})\negthinspace),[0,r],K}$ is a non-zero prime ideal of $B_{\mathbb{F}(\negthinspace(T_{\mathrm{LT}}^{1/p^{\infty}})\negthinspace),[0,r],K}$ by applying \cite[Theorem 9.3.(ii)]{matsumura1989commutative} to the zero ideal. Let $y\in \mathfrak{p}$ be a non-zero element, hence $yt=0$. This leads to a contradiction, since $D_{\mathrm{LT}}^{[0,r]}(\rho)$ is finite projective over $B_{\mathbb{F}(\negthinspace(T_{\mathrm{LT}}^{1/p^{\infty}})\negthinspace),[0,r],K}$. Thus $D_{\mathrm{LT}}^{[0,r]}(\rho)$ is torsion-free over $B_{\mathbb{F}(\negthinspace(T_{\mathrm{LT}}^{1/p^{\infty}})\negthinspace),[0,r]}$.
	\end{proof}
	\subsubsection{Multivariable case}
	
	Let 
	\begin{align}\label{A_{infty}}
		A_{\infty}\coloneq \mathbb{F}\left(\negthinspace\left(Y_{\sigma_0}^{1/p^{\infty}}\right)\negthinspace\right)\left\langle\left(\dfrac{Y_{\sigma_{i}}}{Y_{\sigma_0}}\right)^{\pm 1/p^{\infty}}: 1\leq i\leq f-1\right\rangle
	\end{align}
	be the completed perfection of the ring $A$, and let
	\begin{align}\label{A_{infty}'}
		A_{\infty}'\coloneq \mathbb{F}\left(\negthinspace\left(T_{\mathrm{LT},0}^{1/p^{\infty}}\right)\negthinspace\right)\left\langle\left(\dfrac{T_{\mathrm{LT},{i}}}{T_{\mathrm{LT},0}^{p^{i}}}\right)^{\pm 1/p^{\infty}}: 1\leq i\leq f-1\right\rangle.
	\end{align}
	There is a continuous $\mathbb{F}$-linear $(\varphi_q,(\mathcal{O}_K^{\times})^{f})$-action on $A_{\infty}'$, see \cite[(28), (47)]{breuil2023multivariable}. Recall that there is a map $m:A_{\infty}\hookrightarrow A_{\infty}'$ induced by functoriality such that $m: \mathrm{Spa}(A_{\infty}',\left(A_{\infty}'\right)^{\circ})\to \mathrm{Spa}(A_{\infty},A_{\infty}^{\circ})$ is a pro-étale $\Delta_1$-torsor, where $\Delta_1=\{(a_0,\dots,a_{f-1})\in(\mathcal{O}_K^{\times})^{f}: a_0a_1\cdots a_{f-1}=1\}$ (\cite[Proposition 2.4.4]{breuil2023multivariable}). We choose $Y_{\sigma_{0}}\in A_{\infty}\subset A_{\infty}'$ as the fixed uniformizer to define $B_{A_{\infty},[0,r]}$ and $B_{A_{\infty}',[0,r]}$ for $r\in\mathbb{Q}_{>0}$. The $\mathbb{F}$-linear automorphism $\varphi_q$ of $A_{\infty}$ and $A_{\infty}'$ induces $\mathcal{O}_E$-linear isomorphisms
	\begin{align}\label{phi_q: (A_{infty})}
		\varphi_q:B_{A_{\infty},[0,r]}\to B_{A_{\infty},[0,r/q]},\quad \varphi_q:B_{A_{\infty}',[0,r]}\to B_{A_{\infty}',[0,r/q]}.
	\end{align}
	The map $\mathrm{pr}_i: \mathbb{F}(\negthinspace(T_{\mathrm{LT}}^{1/p^{\infty}})\negthinspace)\to A_{\infty}', T_{\mathrm{LT}}\mapsto T_{\mathrm{LT},i}$ is $(\varphi_q,(\mathcal{O}_K^{\times})^f)$-equivariant for $0\leq i\leq f-1$, where $(\mathcal{O}_K^{\times})^f$ acts on $\mathbb{F}_q(\negthinspace(T_{\mathrm{LT}}^{1/p^{\infty}})\negthinspace)$ via the $i$-th projection. Using \cite[p.50, (56)]{breuil2023multivariable}, it induces a map
	\begin{align}\label{pr_i}
		\mathrm{pr}_i: B_{\mathbb{F}(\negthinspace(T_{\mathrm{LT}}^{1/p^{\infty}})\negthinspace),[0,\frac{p-1}{(q-1)p^{i}}r]}\hookrightarrow B_{A_{\infty}',[0,r]}.
	\end{align}
	Let $\rho$ be a finite type continuous $\mathcal{O}_E$-representation of $G_K$. For $0\leq i\leq f-1$, we define
	\[D_{A_{\infty},i}^{[0,r]}(\rho)\coloneq\left(B_{A_{\infty}',[0,r]}\otimes_{B_{\mathbb{F}(\negthinspace(T_{\mathrm{LT}}^{1/p^{\infty}})\negthinspace),[0,\frac{p-1}{(q-1)p^{i}}r]},\mathrm{pr}_i}D_{\mathrm{LT}}^{[0,\frac{p-1}{(q-1)p^{i}}r]}(\rho)\right)^{\Delta_1}.\]
	This is a $B_{A_{\infty},[0,r]}$-module with a semi-linear $\mathcal{O}_K^{\times}$-action since 
	\[H^0_{\mathrm{cont}}(\Delta_1,B_{A_{\infty}',[0,r]})=B_{A_{\infty},[0,r]}\]
	by Corollary \ref{G-torsor}. Moreover, (\ref{phi_q: (LT) module}) and (\ref{phi_q: (A_{infty})}) induce a bijective map
	\begin{align}\label{phi_q: mv module}
		\varphi_q: D_{A_{\infty},i}^{[0,r]}(\rho)\to D_{A_{\infty},i}^{[0,r/q]}(\rho).
	\end{align}
	
	\begin{thm}\label{overconvergence of mv module at the perfectoid level}
		If $\rho$ is an $\mathcal{O}_E$-representation of $G_K$ free of rank $d\geq 1$, then $D_{A_{\infty},i}^{[0,r]}(\rho)$
		is a $B_{A_{\infty},[0,r]}$-module locally free of rank $d$. Moreover, there are natural isomorphisms
		\[B_{A_{\infty}',[0,r]}\otimes_{B_{A_{\infty},[0,r]}}D_{A_{\infty},i}^{[0,r]}(\rho)\cong B_{A_{\infty}',[0,r]}\otimes_{B_{\mathbb{F}(\negthinspace(T_{\mathrm{LT}}^{1/p^{\infty}})\negthinspace),[0,\frac{p-1}{(q-1)p^{i}}r]},\mathrm{pr}_i}D_{\mathrm{LT}}^{[0,\frac{p-1}{(q-1)p^{i}}r]}(\rho)\]
		and
		\[B_{A_{\infty},[0,r/q]}\otimes_{\varphi_q,B_{A_{\infty},[0,r]}}D_{A_{\infty},i}^{[0,r]}(\rho)\xrightarrow[\sim]{\mathrm{id}\otimes\varphi_q}D_{A_{\infty},i}^{[0,r/q]}(\rho).\]
	\end{thm}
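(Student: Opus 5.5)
The plan is to run exactly the argument of the Lubin--Tate lemma just proved, with the pro-étale $H_K$-torsor $\mathrm{Spa}\,\mathbb{C}_p^{\flat}\to\mathrm{Spa}\,K_\infty^{\flat}$ replaced by the pro-étale $\Delta_1$-torsor $m:\mathrm{Spa}A_{\infty}'\to\mathrm{Spa}A_{\infty}$ of \cite[Proposition 2.4.4]{breuil2023multivariable}. Write $r_i\coloneq\frac{p-1}{(q-1)p^i}r$ and
\[\widetilde{D}\coloneq B_{A_{\infty}',[0,r]}\otimes_{\mathrm{pr}_i}D_{\mathrm{LT}}^{[0,r_i]}(\rho).\]
By the preceding lemma, $D_{\mathrm{LT}}^{[0,r_i]}(\rho)$ is free of rank $d$, so $\widetilde{D}$ is a free $B_{A_{\infty}',[0,r]}$-module of rank $d$. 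It carries a semilinear continuous $\Delta_1$-action, since $\mathrm{pr}_i$ is $(\mathcal{O}_K^{\times})^f$-equivariant. Viewing $\widetilde{D}$ as a vector bundle on $\mathcal{Y}_{\mathrm{Spa}A_\infty',[0,r]}$, the $\Delta_1$-action should be exactly a descent datum along the $v$-cover $\mathrm{Spa}A_\infty'\to\mathrm{Spa}A_\infty$. By Proposition \ref{v-sheaf and v-stack} (ii), such a datum descends to a vector bundle $\mathcal{E}$ on $\mathcal{Y}_{\mathrm{Spa}A_\infty,[0,r]}$. Since this space is affinoid (sousperfectoid), $\mathcal{E}$ corresponds to a finite projective $B_{A_\infty,[0,r]}$-module of rank $d$, and we identify its global sections with $D_{A_{\infty},i}^{[0,r]}(\rho)$.

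The key step, and the one requiring the most care, is translating the $\Delta_1$-action into a descent datum. Here I would use the computation already made in the proof of Corollary \ref{G-torsor}:
\[\mathcal{Y}_{X'^{\times n},I}^{\mathrm{perf}}\cong \mathcal{Y}_{X',I}^{\mathrm{perf}}\times\Delta_1^{n-1},\qquad \mathcal{O}\bigl(\mathcal{Y}_{X'\times_XX',[0,r]}\bigr)\cong C\bigl(\Delta_1,B_{A_\infty',[0,r]}\bigr).\]
The second isomorphism is obtained from the first by splitting off the $\mathcal{O}_{E(\pi^{1/p^\infty})}$-factor. Under it, the two pullbacks of $\widetilde{D}$ to $X'\times_XX'$ become $C(\Delta_1,\widetilde{D})$ with the twisted and untwisted structures. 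The map $g\mapsto(\text{action of }g)$ then gives an isomorphism between them, and the cocycle condition holds because the action is a group action. Continuity of the action is what makes this a morphism of the relevant sheaves. This identification, together with the check that the descended global sections equal $\widetilde{D}^{\Delta_1}$, is where I expect the main work to lie.

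For the identification of global sections, I would use the equalizer description of descent: sections of $\mathcal{E}$ are the elements of $\widetilde{D}$ whose two images in $C(\Delta_1,\widetilde{D})$ agree, namely the $\Delta_1$-invariants. Alternatively, one can apply Corollary \ref{G-torsor} (the vanishing of $H^k_{\mathrm{cont}}$ for $k\geq 1$) to $\mathcal{E}\otimes B_{A_\infty',[0,r]}$. Either route yields the first displayed isomorphism:
\[B_{A_\infty',[0,r]}\otimes D_{A_\infty,i}^{[0,r]}(\rho)\cong\widetilde{D},\]
because pulling $\mathcal{E}$ back recovers $\widetilde{D}$.

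For the Frobenius isomorphism, note that $\mathrm{id}\otimes\varphi_q$ is a map of finite projective $B_{A_\infty,[0,r/q]}$-modules. By Proposition \ref{v-sheaf and v-stack} (ii), or by faithful flatness of descent, it suffices to check it after base change to $B_{A_\infty',[0,r/q]}$. There it becomes, via the first isomorphism, the base change along $\mathrm{pr}_i$ of the Lubin--Tate Frobenius isomorphism from the preceding lemma. This uses the compatibility $\varphi_q\circ\mathrm{pr}_i=\mathrm{pr}_i\circ\varphi_q$, which is consistent with the radii $r_i/q$ and $r/q$. The resulting map is an isomorphism, which completes the proof.
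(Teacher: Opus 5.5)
Your proposal is correct and follows the same route as the paper, which proves the theorem in one line as a direct consequence of Proposition \ref{v-sheaf and v-stack} (ii) applied to the pro-\'etale $\Delta_1$-torsor $m$. You fill in the details the paper leaves implicit: turning the continuous $\Delta_1$-action into a descent datum via $\mathcal{O}(\mathcal{Y}_{X'\times_X X',[0,r]})\cong C(\Delta_1,B_{A_\infty',[0,r]})$, identifying the descended module with the $\Delta_1$-invariants, and checking the Frobenius isomorphism after base change exactly as in the Lubin--Tate lemma.
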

	
	\begin{proof}
		Since $m: \mathrm{Spa}(A_{\infty}',(A_{\infty}')^{\circ})\to \mathrm{Spa}(A_{\infty},A_{\infty}^{\circ})$ is a pro-étale $\Delta_1$-torsor, this is a direct consequence of Proposition \ref{v-sheaf and v-stack} (ii).
	\end{proof}
	
	It is unknown whether every finite projective module over $B_{A_{\infty},[0,r]}$ is necessarily free, although this is known to be the case for $A_{\infty}$ by \cite[Theorem 2.19]{DH21Projective}. However, using Corollary \ref{fin proj over int Robba is free}, we can show that $D_{A_{\infty},i}^{[0,r]}(\rho)$ is free for $r$ sufficiently small.
	\begin{prop}\label{for r large is free}
		Let $\rho$ be a free continuous $\mathcal{O}_E$-representation of $G_K$ of rank $d$, there exists $r(\rho)\in\mathbb{Q}_{>0}$ such that for any $0<r<r(\rho)$ with $r\in\mathbb{Q}$, $D_{A_{\infty},i}^{[0,r]}(\rho)$
		is a free $B_{A_{\infty},[0,r]}$-module of rank $d$.
	\end{prop}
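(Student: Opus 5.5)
The plan is to pass to the integral perfectoid Robba ring $\widetilde{\mathcal{R}}_{A_{\infty}}^{\mathrm{int}}=\varinjlim_{r\to0^+}B_{A_{\infty},[0,r]}$, prove freeness there, and then descend the basis to some finite level $[0,r(\rho)]$. First, I take the transition maps between levels. For $r'<r$, Proposition \ref{v-sheaf and v-stack} (ii) (equivalently, the torsor descent in Theorem \ref{overconvergence of mv module at the perfectoid level}) gives a natural isomorphism
\[B_{A_{\infty},[0,r']}\otimes_{B_{A_{\infty},[0,r]}}D_{A_{\infty},i}^{[0,r]}(\rho)\xrightarrow{\sim}D_{A_{\infty},i}^{[0,r']}(\rho).\]
This holds because the analogous base change statement is true for $D_{\mathrm{LT}}^{[0,\cdot]}(\rho)$: it can be checked after base change to $B_{\mathbb{C}_p^{\flat},[0,\cdot],K}$, where both sides become $B\otimes\rho$. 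It then holds for the modules over $B_{A_{\infty}',[0,\cdot]}$, and $\Delta_1$-descent preserves it. Hence $D^{\mathrm{int}}\coloneq\widetilde{\mathcal{R}}_{A_{\infty}}^{\mathrm{int}}\otimes_{B_{A_{\infty},[0,r]}}D_{A_{\infty},i}^{[0,r]}(\rho)$ is independent of $r$. It is a finite projective $\widetilde{\mathcal{R}}_{A_{\infty}}^{\mathrm{int}}$-module of rank $d$.

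Second, I apply Proposition \ref{fin proj over int Robba is free} with $R=A_{\infty}$. Every finite projective $A_{\infty}$-module is free by \cite[Theorem 2.19]{DH21Projective}, so $D^{\mathrm{int}}$ is free, say with basis $e_1,\dots,e_d$. The rank of this basis is $d$, since the rank is preserved by the base change.

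Third, I descend the basis. Since $D^{\mathrm{int}}=\varinjlim_{r'}D_{A_{\infty},i}^{[0,r']}(\rho)$, there is an $r_1$ with all $e_j$ lying in the image of $D_{A_{\infty},i}^{[0,r_1]}(\rho)$. Because that module is finitely presented, the inverse change of basis matrix and the relations expressing a finite generating set in terms of the $e_j$ are defined over some $B_{A_{\infty},[0,r_2]}$ with $r_2\le r_1$. Concretely, consider the map $u\colon B_{A_{\infty},[0,r_2]}^{d}\to D_{A_{\infty},i}^{[0,r_2]}(\rho)$ determined by the images of the $e_j$. It becomes an isomorphism after $\otimes\widetilde{\mathcal{R}}^{\mathrm{int}}$. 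Its kernel and cokernel are finitely generated, because the target is projective and hence $u$ is a map of finite projective modules; the kernel is a direct summand once the cokernel is projective. Since the colimit of these modules is $0$ and the modules are finitely generated, after shrinking to some $r(\rho)$ both kernel and cokernel vanish. One should phrase this carefully: the cokernel is finitely generated and dies in the colimit, so it dies at a finite stage. Then $u$ is surjective onto a projective module, so it splits, and the kernel is a finitely generated direct summand that also dies in the colimit. For any $r<r(\rho)$, the base change isomorphism of the first step then transports the basis.

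The main obstacle I expect is the first step: making precise that the transition maps $B_{A_{\infty},[0,r]}\to B_{A_{\infty},[0,r']}$ are compatible with the modules. This means checking that $\mathrm{pr}_i$ is compatible with shrinking radii and that the $\Delta_1$-invariants commute with this flat base change; both should follow from the $v$-stack property of vector bundles. A secondary subtlety is the identification $\widetilde{\mathcal{R}}^{\mathrm{int}}_{A_{\infty}}/\pi\cong A_{\infty}$ used in Proposition \ref{fin proj over int Robba is free}, but that is already handled there.
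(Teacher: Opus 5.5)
Your proposal is correct and follows essentially the same route as the paper: pass to $\widetilde{\mathcal{R}}^{\mathrm{int}}_{A_{\infty}}$, get freeness there from Proposition~\ref{fin proj over int Robba is free} together with \cite[Theorem 2.19]{DH21Projective}, then descend the basis to a small radius using the level-wise base change isomorphisms. Your kernel/cokernel colimit argument in the last step works, but the paper's version is more direct. Once the $e_j$ generate at level $r(\rho)$, injectivity of $B^{d}_{A_{\infty},[0,r(\rho)]}\to D^{[0,r(\rho)]}_{A_{\infty},i}(\rho)$ is automatic. One reason is that $B_{A_{\infty},[0,r(\rho)]}\subset\widetilde{\mathcal{R}}^{\mathrm{int}}_{A_{\infty}}$, so the composite $B^{d}_{A_{\infty},[0,r(\rho)]}\to(\widetilde{\mathcal{R}}^{\mathrm{int}}_{A_{\infty}})^{d}$ is injective. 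Another is that a surjection between finite projective modules of the same rank is an isomorphism.
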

	\begin{proof}
		First we fix $r_0\in \mathbb{Q}_{>0}$. By Corollary \ref{fin proj over int Robba is free}, $\widetilde{D}_{A_{\infty},i}^{\mathrm{int}}(\rho)\coloneq\widetilde{\mathcal{R}}_{A_{\infty}}^{\mathrm{int}}\otimes_{B_{A_{\infty},[0,r_0]}}D_{A_{\infty},i}^{[0,r_0]}(\rho)$ is a free $\widetilde{\mathcal{R}}_{A_{\infty}}^{\mathrm{int}}$-module of rank $d$. Thus there exists $e_1,\dots,e_d\in \widetilde{D}_{A_{\infty},i}^{\mathrm{int}}(\rho)$ which form a basis over $\widetilde{\mathcal{R}}_{A_{\infty}}^{\mathrm{int}}$. Suppose that $x_1,\dots,x_n\in D_{A_{\infty},i}^{[0,r_0]}(\rho)$ generate $D_{A_{\infty},i}^{[0,r_0]}(\rho)$ over $B_{A_{\infty},[0,r_0]}$, then $x_i=\sum_{j=1}^da_{ij}e_j$ for some $a_{ij}\in \widetilde{\mathcal{R}}_{A_{\infty}}^{\mathrm{int}}$. Since $\widetilde{\mathcal{R}}_{A_{\infty}}^{\mathrm{int}}=\varinjlim\limits_{r\to 0^{+}} B_{A_{\infty},[0,r]}$, there exists $0<r(\rho)\leq r_0$ such that $a_{ij}\in B_{A_{\infty},[0,r(\rho)]}$ and $e_j\in D_{A_{\infty},i}^{[0,r(\rho)]}(\rho)= B_{A_{\infty},[0,r(\rho)]}\otimes_{B_{A_{\infty},[0,r_0]}}D_{A_{\infty},i}^{[0,r_0]}(\rho)$ for any $i,j$. Then $D_{A_{\infty},i}^{[0,r(\rho)]}(\rho)$ is a free $B_{A_{\infty},[0,r(\rho)]}$-module with a basis $e_1,\dots,e_d$, and the conclusion follows. 
	\end{proof}
	
	\begin{remark}
		Our first proof of Theorem \ref{overconvergence of mv module at the perfectoid level} and Proposition \ref{for r large is free} used the generalized Colmez-Sen-Tate method developed in \cite{colmez1998theorie}, \cite{cherbonnier1998representations} and \cite{berger2008familles}. This method only ensured that Theorem \ref{overconvergence of mv module at the perfectoid level} holds for sufficiently small $r$, whereas the above method shows that the overconvergent radius at the perfectoid level can be arbitrary. 
	\end{remark}
	
	\begin{remark}
		Using the same method as in \cite[Theorem 2.4.5]{kedlaya2015new} and \cite[Proposition 4.8]{de2019induction}, one can show that the base change functor from the category of finite projective étale $\varphi_q$-modules over $\widetilde{\mathcal{R}}^{\mathrm{int}}_{A_{\infty}}$ to the category of finite projective étale $\varphi_q$-modules over $W_{\mathcal{O}_E}(A_{\infty})$ is an equivalence of categories, which is a special case of \cite[Theorem 4.5.7]{kedlaya2019relative}. This gives a different proof of the perfectoid overconvergence.
	\end{remark}
	
	
	\subsection{The relation between $\widetilde{\mathcal{R}}^{\mathrm{int}}_{A_{\infty}}$ and $A_{\mathrm{mv},E}^{\dagger}$}
	Recall that in \cite[(3.4)]{du2025multivariablevarphiqmathcaloktimesmodulesassociatedpadic} there is a $(\varphi_q,\mathcal{O}_K^{\times})$-equivariant map
	\begin{align}
		\iota: A_{\mathrm{mv},E}\hookrightarrow W_{\mathcal{O}_E}(A_{\infty}).
	\end{align}
	More precisely,  $\iota(Y_{\sigma_{0}}),\dots,\iota(Y_{\sigma_{f-1}})$ are characterized by the following lemma:
	\begin{lem}\label{decription of iota}
		There exist unique elements $y_0,\dots,y_{f-1}\in W(A_{\infty})$ such that for $0\leq i\leq f-1$,
		\begin{align}
			\begin{cases}
				\varphi (y_i)=F_i(y_0,\dots,y_{f-1}), &\\
				\overline{y_i}= Y_{\sigma_i}\in A_{\infty},&
			\end{cases}
		\end{align}
		where $\varphi: W(A_{\infty})\to W(A_{\infty})$ is induced by the $\mathbb{F}$-linear endomorphism $\varphi$ of $A_{\infty}$, see \cite[p.33, (43)]{breuil2023multivariable}, and $F_0,\dots,F_{f-1}$ are formal power series with $\mathcal{O}_K$-coefficients such that in $\mathcal{O}_K[\negthinspace[\mathcal{O}_K]\negthinspace]$ we have $\varphi(Y_{\sigma_{i}})=F_i(Y_{\sigma_{0}},\dots,Y_{\sigma_{f-1}})$.
		Moreover, $y_0,\dots,y_{f-1}\in W(A_{\infty}^{\circ\circ})$.
	\end{lem}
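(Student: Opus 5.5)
We will prove Lemma \ref{decription of iota} by the standard Witt-vector lifting argument: solve the Frobenius equation $\varphi(y_i)=F_i(y_0,\dots,y_{f-1})$ successively modulo $p^n$, using that $A_{\infty}$ is perfect, so $\varphi$ is bijective on $W(A_{\infty})$.

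\textbf{Reduction to a fixed-point problem.} Since $\varphi$ is an automorphism of $W(A_{\infty})$, the system is equivalent to the fixed-point equation
\[
\underline{y}=\Phi(\underline{y}),\qquad \Phi(\underline{y})_i:=\varphi^{-1}\bigl(F_i(y_0,\dots,y_{f-1})\bigr),
\]
for $\underline{y}=(y_0,\dots,y_{f-1})$. We look for solutions in the set $S$ of tuples with $y_i\in W(A_{\infty}^{\circ\circ})$ and $\overline{y_i}=Y_{\sigma_i}$. Here $W(A_{\infty}^{\circ\circ})$ means the Witt vectors with all components in $A_{\infty}^{\circ\circ}$; it is an ideal of $W(A_{\infty}^{\circ})$.

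\textbf{$\Phi$ maps $S$ to $S$.} Each $F_i$ has zero constant term, by the commutative diagram preceding (\ref{endomorphism phi_q}). Hence $F_i(\underline{y})$ converges in $W(A_{\infty}^{\circ})$ for the $(p,[\varpi])$-adic topology and lies in $W(A_{\infty}^{\circ\circ})$. Modulo $p$ we have $F_i\equiv Y_{\sigma_{i-1}}^p$ by (\ref{endomorphism phi_q}). Therefore $\overline{\varphi^{-1}F_i(\underline{y})}=\varphi^{-1}(Y_{\sigma_{i-1}}^p)$, and this equals $Y_{\sigma_i}$ because $\varphi(Y_{\sigma_i})=Y_{\sigma_{i-1}}^p$ in $A$. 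So $\Phi(S)\subseteq S$.

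\textbf{Contraction, existence and uniqueness.} Take $\underline{y},\underline{y}'\in S$ with $\underline{y}\equiv\underline{y}' \bmod p^n$. Write $F_i=Y_{\sigma_{i-1}}^p+pG_i$. Then
\[
(y_{i-1}+p^n z)^p-y_{i-1}^p\in p^{n+1}W,
\]
using $n\geq1$ and the binomial expansion. Likewise $pG_i(\underline{y})-pG_i(\underline{y}')\in p^{n+1}W$. Hence $\Phi(\underline{y})\equiv\Phi(\underline{y}')\bmod p^{n+1}$, so $\Phi$ is a contraction for the $p$-adic topology.
\begin{itemize}
\item[] \emph{Existence.} Start from the Teichmüller tuple $([Y_{\sigma_0}],\dots,[Y_{\sigma_{f-1}}])\in S$ and iterate $\Phi$. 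The iterates form a $p$-adically Cauchy sequence. Both $W(A_{\infty}^{\circ})$ and the closed ideal $W(A_{\infty}^{\circ\circ})$ are $p$-adically complete, so the limit exists and lies in $S$.
\item[] \emph{Uniqueness.} Any two solutions with the same reduction mod $p$ agree mod $p^n$ for all $n$, by induction using the contraction. Hence they are equal.
\end{itemize}
Uniqueness is needed among all solutions in $W(A_{\infty})$, not only among those in $S$, so the contraction estimate must hold there too. The same estimate works there, provided $F_i(\underline{y})$ makes sense. We interpret the statement for $y_i$ on which the series converges; the condition $\overline{y_i}=Y_{\sigma_i}$ topologically nilpotent already forces $y_i\in[Y_{\sigma_i}]+pW(A_{\infty})$. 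A cleaner route is to first show that any solution lies in $S$ by the same mod-$p^n$ induction.

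\textbf{Main obstacle.} The delicate step is convergence of the power series $F_i(\underline{y})$ in $W(A_{\infty}^{\circ})$, and the check that the result stays in $W(A_{\infty}^{\circ\circ})$. I will handle this by showing that $W(A_{\infty}^{\circ\circ})$ is an ideal. On Witt components it is described by valuation conditions, and sums and products of Witt vectors with components in $A_{\infty}^{\circ\circ}$ have components given by integral polynomials without constant term. Then monomials of total degree $N$ in the $y_i$ have components in $(A_{\infty}^{\circ\circ})^{N'}$ with $N'\to\infty$ for each fixed Witt index. This gives componentwise convergence, since $A_{\infty}^{\circ}$ is $Y_{\sigma_0}$-adically complete and $A_{\infty}^{\circ\circ}$ is generated by $Y_{\sigma_0}^{1/p^\infty}$ up to units.
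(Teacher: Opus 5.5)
Your route is the paper's. The paper also iterates $y_i^{(n)}=\varphi^{-1}\bigl(F_i(y_0^{(n-1)},\dots,y_{f-1}^{(n-1)})\bigr)$ starting from $y_i^{(0)}=[Y_{\sigma_i}]$. It obtains $y_i^{(n)}\equiv y_i^{(n-1)} \bmod p^n$ from $F_i\in Y_{\sigma_{i-1}}^p+p\mathfrak{m}$, which is exactly your binomial and $pG_i$ estimate. It proves uniqueness by showing that any other solution is congruent to $y^{(n)}$ modulo $p^{n+1}$ for all $n$. Those steps of yours are fine.

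What does not work as written is your resolution of the ``main obstacle'', a point the paper passes over in silence. Knowing that Witt components lie in $(A_\infty^{\circ\circ})^{N'}$ gives no decay at all. Since $A_\infty^{\circ}$ is perfect, every $x\in A_\infty^{\circ\circ}$ equals $(x^{1/p^k})^{p^k}$ with $x^{1/p^k}\in A_\infty^{\circ\circ}$, hence $(A_\infty^{\circ\circ})^{N'}=A_\infty^{\circ\circ}$ for every $N'$.

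The fix is to work with a fixed exponent. For a given Witt index $n$, only the finitely many components of index $\le n$ of the $y_j$ enter. They all lie in $J=Y_{\sigma_0}^{1/p^k}A_\infty^{\circ}$ for a single $k=k(n)$: if $x^{p^k}\in Y_{\sigma_0}A_\infty^{\circ}$, then $x\in Y_{\sigma_0}^{1/p^k}A_\infty^{\circ}$ by perfectness. The $n$-th Witt multiplication polynomial is isobaric of weight $p^n$ separately in the components of each factor, so each of its monomials involves a component of every factor. Hence the $n$-th component of a monomial of degree $N$ in the $y_j$ lies in $J^N=Y_{\sigma_0}^{N/p^k}A_\infty^{\circ}$, and $Y_{\sigma_0}$-adic completeness of $A_\infty^{\circ}$ gives componentwise convergence.

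Alternatively, write $y_j=[Y_{\sigma_j}]+pz_j$ and expand modulo $p^m$. Only terms with fewer than $m$ factors $pz_j$ survive. Each surviving term is $[Y_{\sigma_0}]^{N-m+1}$ times an element of $W(A_\infty^{\circ})$ times one of finitely many fixed elements, so it tends to $0$ in $W(A_\infty)/p^m$. This second argument works for arbitrary $z_j\in W(A_\infty)$. So $F_i$ can be evaluated on any tuple with $\overline{z_j}=Y_{\sigma_j}$, your contraction estimate applies there verbatim, and the hedge in your uniqueness paragraph is unnecessary.
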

	\begin{proof}
		For the existence, let $y_i^{(0)}\coloneq [Y_{\sigma_{i}}]$, and $y_i^{(n)}\coloneq \varphi^{-1}\left(F_i\left(y_0^{(n-1)},\dots,y_{f-1}^{(n-1)}\right)\right)$ for $n\geq 1$. We claim that $y_i^{(n)}\equiv y_i^{(n-1)} \mod p^{n}$ for $n\geq 1$, then we can take $y_i\coloneq \lim_{n\to\infty}y_i^{(n)}$, which lies in $W(A_{\infty}^{\circ\circ})$ by the construction, and the sequence $y_0,\dots,y_{f-1}$ works. Now we prove the claim by induction on $n\geq 1$. First using (\ref{endomorphism phi_q}), we see the claim holds for $n=1$. Then suppose that the claim holds for $n= m$. Hence we can write $y_i^{(m)}=y_i^{(m-1)}+p^mt_i$ for some $t_i\in W(A_{\infty})$. By the definition of $\varphi$, it suffices to check that
		\begin{align}\label{need to check mod p^m}
			F_i\left(y_0^{(m)},\dots,y_{f-1}^{(m)}\right)\equiv F_i\left(y_0^{(m-1)},\dots,y_{f-1}^{(m-1)}\right) \mod p^{m+1}.
		\end{align}
		Using (\ref{endomorphism phi_q}) and
		\[F_i\left(y_0^{(m)},\dots,y_{f-1}^{(m)}\right)=F_i\left(y_0^{(m-1)}+p^mt_0,\dots,y_{f-1}^{(m-1)}+p^mt_{f-1}\right),\]
		the equality (\ref{need to check mod p^m}) is easy to verify, and we omit the details. For the uniqueness, suppose that $z_0,\dots,z_{f-1}$ is another sequence satisfying the same conditions, then $z_i\equiv y_i^{(0)}\mod p$, hence we can define $z_i^{(n)}\coloneq \varphi^{-1}\left(F_i\left(z_0^{(n-1)},\dots,z_{f-1}^{(n-1)}\right)\right)$ for $n\geq 1$ and $z_i^{(0)}\coloneq z_i$. Our assumption on $z_i$ implies that $z_i^{(n)}=z_{i}$ for any $n\geq 0$. By induction on $n\geq 0$, as in the proof of the claim above, we can show that $z_i^{(n)}\equiv y_i^{(n)} \mod p^{n+1}$, hence after taking limits, we have $z_i=y_i$.
	\end{proof}
	
	Recall that $A_{\infty}$ is a Banach algebra with a normalized multiplicative norm such that $|Y_{\sigma_{0}}|=p^{-1}$ (\cite[Lemma 2.4.2]{breuil2023multivariable}), hence by (\ref{defn of ||_r}), there is a multiplicative norm $|\cdot|_r$ on $B_{A_{\infty},[0,r]}$. Recall also that $A_{\mathrm{mv},E}^{\dagger,s^{-}}$ and $\lVert\cdot \rVert_{s}$ are defined in (\ref{defn of s^{-1}}) and (\ref{defn of || ||_s}).
	
	\begin{prop}
		Let $s$ be a positive integer. The embedding $\iota$ satisfies 
		\begin{enumerate}
			\item 
			$\iota\left(A_{\mathrm{mv},E}^{\dagger,s^{-}}\right)\subseteq \left(B_{A_{\infty},[0,1/s]}\right)^{\circ}=W_{\mathcal{O}_E}(A_{\infty}^{\circ})\left\langle\frac{\pi}{[Y_{\sigma_{0}}]^{s}}\right\rangle$,
			\item 
			$\iota\left(A_{\mathrm{mv},E}^{\dagger,s^{-}}\left[\frac{1}{Y_{\sigma_{0}}}\right]\right)\subseteq B_{A_{\infty},[0,1/s]}$,
			\item 
			For $f\in A_{\mathrm{mv},E}^{\dagger,s^{-}}$, we have
			\begin{align}\label{equality of two norms}
				\lVert f\rVert_{s}^s=\left|\iota(f)\right|_{1/s}.
			\end{align}
		\end{enumerate}
	\end{prop}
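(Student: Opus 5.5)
The plan is to reduce everything to the images of the generators $\iota(Y_{\sigma_0})$ and $\iota(Y_{\sigma_i}/Y_{\sigma_0})$, control their $|\cdot|_{1/s}$-norms, and then pass to convergent series using completeness and multiplicativity. By Lemma \ref{decription of iota} we have $\iota(Y_{\sigma_i})=y_i\in W(A_{\infty}^{\circ\circ})$ with $\overline{y_i}=Y_{\sigma_i}$. Hence we can write $y_i=[Y_{\sigma_i}]+p z_i$ with $z_i\in W(A_\infty^{\circ})$. In fact the recursion $y_i^{(n)}$ shows every Witt coordinate lies in $A_\infty^{\circ\circ}$, so $z_i\in W(A_\infty^{\circ\circ})$.

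\textbf{Step 1: the norm of $\iota(Y_{\sigma_0})$.} I will show that $\iota(Y_{\sigma_0})=[Y_{\sigma_0}]\cdot u$ with $u$ a unit of $(B_{A_\infty,[0,1/s]})^{\circ}$ satisfying $|u-1|_{1/s}<1$. For this I write $pz_0/[Y_{\sigma_0}]=(p/[Y_{\sigma_0}]^{s})\cdot [Y_{\sigma_0}]^{s-1}z_0$. Since $|p/[Y_{\sigma_0}]^s|_{1/s}=1$ and $z_0$ has topologically nilpotent coordinates, this is the step I expect to be the main obstacle. A crude bound gives only $\le 1$, so I will use the finer structure $y_0\equiv[Y_{\sigma_0}]\bmod (p,\ [\text{higher terms}])$, i.e.\ that $z_0\in[Y_{\sigma_0}]$-divisible parts, by redoing the recursion of Lemma \ref{decription of iota} with the estimate (\ref{endomorphism phi_q}). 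This should give $|pz_0|_{1/s}<p^{-1}=|[Y_{\sigma_0}]|_{1/s}$.

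\textbf{Step 2: the ratios.} Similarly, $\iota(Y_{\sigma_i}/Y_{\sigma_0})=[Y_{\sigma_i}/Y_{\sigma_0}]\cdot u_iu^{-1}$ with $u_i$ of the same type. Hence both the ratio and its inverse have $|\cdot|_{1/s}=1$. From Step 1, $\iota(\pi/Y_{\sigma_0}^s)=(\pi/[Y_{\sigma_0}]^s)u^{-s}$, which has norm $1$ and lies in $(B_{A_\infty,[0,1/s]})^{\circ}$. Consequently each monomial $a_nY_{\sigma_0}^n\prod(Y_{\sigma_i}/Y_{\sigma_0})^{n_i}$ maps to an element of norm exactly $|a_n|p^{-n}=(|a_n|p^{-n/s})^{s}$.

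\textbf{Step 3: series and the three claims.} An element $f\in A_{\mathrm{mv},E}^{\dagger,s^-}$ is a convergent sum of such monomials for $\lVert\cdot\rVert_s$. Since $B_{A_\infty,[0,1/s]}$ is complete and $\iota$ is continuous from the $(\pi,Y_{\sigma_0})$-topology, $\iota(f)$ is the convergent sum of the images. This gives (1), and (2) follows by inverting $[Y_{\sigma_0}]u$. For (3), $\le$ is immediate from the ultrametric inequality. For equality, I reduce modulo the ideal $\{|\cdot|_{1/s}<\lVert f\rVert_s^s\}$: after scaling by a suitable power of $\pi/Y_{\sigma_0}^s$ and $Y_{\sigma_0}$ to normalize $\lVert f\rVert_s=1$, the leading part of $f$ lies in $\mathbb{F}[\pi/Y_{\sigma_0}^s,(Y_{\sigma_i}/Y_{\sigma_0})^{\pm1}]$. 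Its image in $(B_{A_\infty,[0,1/s]})^\circ/(\,\cdot\,)^{\circ\circ}$ is then the corresponding nonzero polynomial in $\pi/[Y_{\sigma_0}]^s$ and $[Y_{\sigma_i}/Y_{\sigma_0}]$. This polynomial is nonzero because $\mathbb{F}[(Y_{\sigma_i}/Y_{\sigma_0})^{\pm1}]\to A_\infty^{\circ}/A_\infty^{\circ\circ}$ is injective and powers of $\pi/[Y_{\sigma_0}]^s$ separate Witt components, by (\ref{description of [0,r]}).
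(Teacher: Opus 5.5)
Your outline follows the paper's proof. You write $\iota(Y_{\sigma_i})=[Y_{\sigma_i}]u_i$ with $u_i\in 1+\left(B_{A_{\infty},[0,1/s]}\right)^{\circ\circ}$, locate $\iota(Y_{\sigma_0}^{\pm1})$, $\iota(\pi/Y_{\sigma_0}^s)$ and $\iota\bigl((Y_{\sigma_i}/Y_{\sigma_0})^{\pm1}\bigr)$, and pass to convergent series. You then get equality of norms by normalizing $\lVert f\rVert_s=1$ and reducing modulo $\left(B_{A_{\infty},[0,1/s]}\right)^{\circ\circ}$, where $\mathbb{F}[\pi/Y_{\sigma_0}^s,(Y_{\sigma_i}/Y_{\sigma_0})^{\pm1}]$ injects into $\mathbb{F}[\pi/[Y_{\sigma_0}]^s,(Y_{\sigma_i}/Y_{\sigma_0})^{\pm1/p^{\infty}}]$. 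Steps 2 and 3 are fine.

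The gap is Step 1, on which everything else rests: it gives $|\iota(Y_{\sigma_0})|_{1/s}=p^{-1}$ and the invertibility of $u$. You assert that the direct estimate only yields $\le 1$, and propose to get strictness by showing that $z_0$ is divisible by $[Y_{\sigma_0}]$. That divisibility is false in general. Comparing $\varphi(y_0)=F_0(y_0,\dots,y_{f-1})$ modulo $p^2$ gives $\varphi(z_0)\equiv Q_0([Y_{\sigma_0}],\dots,[Y_{\sigma_{f-1}}]) \bmod p$. Now $Q_0$ typically has a nonzero linear part: for $K=\qp$, $\varphi$ comes from multiplication by $p$ on $\mathbb{Z}_p$, so $\varphi(Y_{\sigma_0})=pY_{\sigma_0}+(\text{higher order terms})$. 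Then the zeroth Witt coordinate of $z_0$ is $Y_{\sigma_0}^{1/p}+\cdots$, of norm $p^{-1/p}>p^{-1}$, so $z_0\notin[Y_{\sigma_0}]W(A_\infty^{\circ})$. As proposed, Step 1 would not go through.

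No refinement is needed, because you already showed $z_0\in W(A_\infty^{\circ\circ})$, and that alone gives strictness.
\begin{itemize}
\item Write $z_0=\sum_{n\ge0}\pi^n[x_n]$; all $x_n$ lie in $A_\infty^{\circ\circ}$.
\item Hence $|z_0|_{1/s}=\sup_n|x_n|p^{-ns}\le\max\{|x_0|,p^{-s}\}<1$. In other words $W_{\mathcal{O}_E}(A_\infty^{\circ\circ})\subseteq\left(B_{A_{\infty},[0,1/s]}\right)^{\circ\circ}$: the zeroth coordinate is topologically nilpotent and the higher ones are damped by $p^{-ns}$.
\item Since $|p/[Y_{\sigma_0}]|_{1/s}=p^{1-es}\le 1$ (with $e$ the ramification index of $E/\qp$), multiplicativity gives $|u-1|_{1/s}<1$.
\item So $u$ is a unit of $\left(B_{A_{\infty},[0,1/s]}\right)^{\circ}$ of norm $1$.
\end{itemize}
This is exactly the paper's step: $\iota(Y_{\sigma_i})/[Y_{\sigma_i}]\in 1+\frac{\pi}{[Y_{\sigma_i}]}W_{\mathcal{O}_E}(A_\infty^{\circ\circ})$, a unit of $\left(B_{A_{\infty},[0,1]}\right)^{\circ}\subseteq\left(B_{A_{\infty},[0,1/s]}\right)^{\circ}$. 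With this replacement your argument coincides with the paper's.

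Two small slips remain:
\begin{itemize}
\item $|p/[Y_{\sigma_0}]^s|_{1/s}=p^{-(e-1)s}$, which equals $1$ only when $e=1$.
\item The image of $a_nY_{\sigma_0}^n\prod_i(Y_{\sigma_i}/Y_{\sigma_0})^{n_i}$ has norm $|a_n|^sp^{-n}$, not $|a_n|p^{-n}$. Your expression $(|a_n|p^{-n/s})^s$ is the correct value.
\end{itemize}
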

	
	\begin{proof}
		By Lemma \ref{decription of iota}, for $0\leq i\leq f-1$, we have
		\[\iota(Y_{\sigma_{i}})\in [Y_{\sigma_{i}}]+p W(A_{\infty}^{\circ\circ}) \subseteq [Y_{\sigma_{i}}]+\pi W_{\mathcal{O}_E}(A_{\infty}^{\circ\circ}),\]
		this implies that $\frac{\iota(Y_{\sigma_{i}})}{[Y_{\sigma_{i}}]}\in 1+\frac{\pi}{[Y_{\sigma_{i}}]}W_{\mathcal{O}_E}(A_{\infty}^{\circ\circ})$ hence is invertible in $\left(B_{A_{\infty},[0,1]}\right)^{\circ}$. We deduce that
		\[\iota\left(\frac{\pi}{Y_{\sigma_{0}}^s}\right)=\frac{\pi}{[Y_{\sigma_{0}}]^s}\left(\dfrac{[Y_{\sigma_{0}}]}{\iota(Y_{\sigma_{0}})}\right)^s\in \left(B_{A_{\infty},[0,1/s]}\right)^{\circ},\]
		\[\iota\left(\dfrac{Y_{\sigma_{i}}}{Y_{\sigma_j}}\right)=\dfrac{[Y_{\sigma_{i}}]}{[Y_{\sigma_{j}}]}\cdot \dfrac{\iota(Y_{\sigma_{i}})}{[Y_{\sigma_{i}}]}\cdot\dfrac{[Y_{\sigma_{i}}]}{\iota(Y_{\sigma_{j}})}\in \left(B_{A_{\infty},[0,1]}\right)^{\circ},\]
		and
		\[\iota\left(\dfrac{1}{Y_{\sigma_{0}}}\right)=\dfrac{1}{[Y_{\sigma_{0}}]}\cdot\dfrac{[Y_{\sigma_{0}}]}{\iota(Y_{\sigma_{i}})}\in B_{A_{\infty},[0,1]}.\]
		Thus $\iota\left(A_{\mathrm{mv},E}^{\dagger,s^{-}}\right)\subseteq \left(B_{A_{\infty},[0,1/s]}\right)^{\circ}$ and $\iota\left(A_{\mathrm{mv},E}^{\dagger,s^{-}}\left[\frac{1}{Y_{\sigma_{0}}}\right]\right)\subseteq B_{A_{\infty},[0,1/s]}$. Now consider the commutative diagram
		\begin{equation}
			\begin{tikzcd}
				A_{\mathrm{mv},E}^{\dagger,s^{-}}\arrow[d,two heads,"\mathrm{mod}\ Y_{\sigma_{0}}=\mathrm{mod}\ \left(A_{\mathrm{mv},E}^{\dagger,s^{-}}\right)^{\circ\circ}"] \arrow[rr,hook,"\iota"]& & \left(B_{A_{\infty},[0,1/s]}\right)^{\circ}=W_{\mathcal{O}_E}(A_{\infty}^{\circ})\left\langle\frac{\pi}{[Y_{\sigma_{0}}]^{s}}\right\rangle\arrow[d,two heads,"\mathrm{mod}\, \left(B_{A_{\infty},[0,1/s]}\right)^{\circ\circ}"]\\
				\mathbb{F}\left[\dfrac{\pi}{Y_{\sigma_{0}}^s},\left(\dfrac{Y_{\sigma_{i}}}{Y_{\sigma_{0}}}\right)^{\pm1}\right]\arrow[rr,hook]& & \mathbb{F}\left[\dfrac{\pi}{Y_{\sigma_{0}}^s},\left(\dfrac{Y_{\sigma_{i}}}{Y_{\sigma_{0}}}\right)^{\pm1/p^{\infty}}\right].
			\end{tikzcd}
		\end{equation}
		Let $f\in A_{\mathrm{mv},E}^{\dagger,s^{-}}$. If $\lVert f\rVert_{s}=1$, $f\not\equiv 0\mod Y_{\sigma_{0}}$, thus $\iota(f)\notin \left(B_{A_{\infty},[0,1/s]}\right)^{\circ\circ}$, hence $|\iota(f)|_{1/s}=1$. In general, $f=Y_{\sigma_{0}}^k\cdot f_0$ for some $f_0\in A_{\mathrm{mv},E}^{\dagger,s^{-}}$, $\lVert f_0\rVert_{s}=1$, then $\lVert f\rVert_{s}=\lVert Y_{\sigma_{0}}^k\rVert_{s}=p^{-k/s}$ and $|\iota(f)|_{1/s}=|\iota(Y_{\sigma_{0}})|_{1/s}^k=p^{-k}$. This finishes the proof of (\ref{equality of two norms}).
	\end{proof}
	
		Using \cite[Lemma 2.14]{berger2020rigid}, we finally check that the $\mathcal{O}_K^{\times}$-action on $A_{\mathrm{mv},E}^{\dagger,s^{-}}\left[\frac{1}{Y_{\sigma_{0}}}\right]$ is locally $\qp$-analytic (for the definition, see for example \cite[$\S$2]{berger2016multivariable}).
		\begin{lem}\label{subring of qp-locally analytic vectors}
			Let $s$ be a positive integer. The $\mathcal{O}_K^{\times}$-action on $A_{\mathrm{mv},E}^{\dagger,s^{-}}\left[\frac{1}{Y_{\sigma_{0}}}\right]$ is locally $\qp$-analytic.
		\end{lem}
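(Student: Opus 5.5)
We will deduce the lemma from \cite[Lemma 2.14]{berger2020rigid}. For a Banach ring with an isometric continuous action of a compact $p$-adic Lie group, that criterion says the following: if there is an open subgroup $G_n$ and a topological generating family $x_1,\dots,x_m$ with $\lVert g(x_j)-x_j\rVert<1$-type bounds (more precisely $\lVert (g-1)x_j\rVert\le c\,\lVert x_j\rVert$ with $c<1$ small enough, uniformly for $g\in G_n$), then the action is locally analytic. We apply it with the Banach algebra $B\coloneq A_{\mathrm{mv},E}^{\dagger,s^{-}}\left[\frac{1}{Y_{\sigma_0}}\right]$ and the norm $\lVert\cdot\rVert_s$. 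The action of $\mathcal{O}_K^{\times}$ is isometric by Lemma \ref{||_s and phi,Gamma}.

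First we choose the topological generators. By the definition (\ref{defn of s^{-1}}), $B$ is topologically generated over $E$ (or over $\mathcal{O}_E$ after inverting $Y_{\sigma_0}$) by $Y_{\sigma_0}^{\pm1}$, $\frac{\pi}{Y_{\sigma_0}^s}$ and $\left(\frac{Y_{\sigma_i}}{Y_{\sigma_0}}\right)^{\pm1}$ for $1\le i\le f-1$. Since the locally analytic vectors form a closed-under-products subring, and inverses of units that are locally analytic stay locally analytic, it suffices to control $\gamma(Y_{\sigma_i})-Y_{\sigma_i}$ for each $i$.

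The key estimate uses (\ref{refined O_k^*-action}). For $a\in 1+p^n\mathcal{O}_K$ we have $a(Y_{\sigma_i})-Y_{\sigma_i}\in p\mathfrak{m}+\mathfrak{m}^{p^n}$. We bound both terms in $\lVert\cdot\rVert_s$ relative to $\lVert Y_{\sigma_i}\rVert_s=\lVert Y_{\sigma_0}\rVert_s=p^{-1/s}$. Each $Y_{\sigma_j}$ equals $Y_{\sigma_0}$ times a unit of norm $1$, so $\lVert\mathfrak{m}^{p^n}\rVert_s\le p^{-p^n/s}$, and the ratio to $p^{-1/s}$ is $p^{-(p^n-1)/s}$, which tends to $0$ as $n$ grows. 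Similarly $\lVert p\mathfrak{m}\rVert_s\le p^{-e}p^{-1/s}$, where $e$ is the ramification index; this gives the ratio $p^{-e}$, or at worst $|p|$. Hence for $n\gg0$ we get $\lVert (a-1)Y_{\sigma_i}\rVert_s\le c\,\lVert Y_{\sigma_i}\rVert_s$ with $c$ as small as required, uniformly over $a\in1+p^n\mathcal{O}_K$.

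We then transfer the estimate to the other generators. The element $\frac{Y_{\sigma_i}}{Y_{\sigma_0}}$ is handled by writing $a\left(\frac{Y_{\sigma_i}}{Y_{\sigma_0}}\right)=\frac{Y_{\sigma_i}+\delta_i}{Y_{\sigma_0}+\delta_0}$ and expanding a geometric series in $\delta_0/Y_{\sigma_0}$, which is small; the element $\frac{\pi}{Y_{\sigma_0}^s}$ is handled the same way. The main obstacle is that $|p|$ alone may not be strictly below the threshold required in \cite[Lemma 2.14]{berger2020rigid}, which is typically $p^{-1/(p-1)}$. If so, we first pass to a power of $\gamma-1$, or else enlarge $n$ and use that the $p\mathfrak{m}$-term actually lies in $p(a-1)$-divisible pieces, i.e.\ $a(Y_{\sigma_i})-Y_{\sigma_i}$ is divisible by $(a-1)$ modulo $\mathfrak{m}^{p^n}$. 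Alternatively, we apply the criterion to $\gamma^{p^k}$, which is allowed because locally analytic vectors for an open subgroup are the same.
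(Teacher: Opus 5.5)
Your route is the paper's: apply \cite[Lemma 2.14]{berger2020rigid} to $A_{\mathrm{mv},E}^{\dagger,s^{-}}[1/Y_{\sigma_0}]$ with the norm $\lVert\cdot\rVert_s$, use that $\mathcal{O}_K^{\times}$ acts isometrically, and feed the congruence $a(Y_{\sigma_i})\in Y_{\sigma_i}+p\mathfrak{m}+\mathfrak{m}^{p^n}$ (for $a\in 1+p^n\mathcal{O}_K$) into the generators $Y_{\sigma_0}$, $\pi/Y_{\sigma_0}^s$, $Y_{\sigma_i}/Y_{\sigma_0}$.

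The reduction to generators, as you justify it, does not work. Locally analytic vectors do form a subring stable under inverting locally analytic units, but this subring is not closed. So local analyticity of topological generators tells you nothing about the infinite sums $\sum_{\underline n}f_{\underline n}\prod_i(Y_{\sigma_i}/Y_{\sigma_0})^{n_i}$ that make up the ring. What the criterion needs, and this is how the paper uses it, is a uniform bound: $\lVert\gamma(x)-x\rVert_s\le p^{-1/(p-1)}$ for every $x$ in the unit ball $A_{\mathrm{mv},E}^{\dagger,s^{-}}$ and every $\gamma\in 1+p^m\mathcal{O}_K$. The missing step is that this \emph{estimate}, not local analyticity, propagates from generators. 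Since $\lVert\cdot\rVert_s$ is multiplicative and ultrametric and $\gamma$ is an isometry:
\begin{itemize}
\item the identity $\gamma(xy)-xy=\gamma(x)(\gamma(y)-y)+(\gamma(x)-x)y$ gives stability under products;
\item stability under sums is immediate;
\item for a unit $x$ of $A_{\mathrm{mv},E}^{\dagger,s^{-}}$ one has $\lVert\gamma(x^{-1})-x^{-1}\rVert_s=\lVert\gamma(x)-x\rVert_s$;
\item the condition passes to $\lVert\cdot\rVert_s$-limits.
\end{itemize}
Use this absolute form on the unit ball. Your relative form $\lVert(\gamma-1)x\rVert_s\le c\lVert x\rVert_s$ is not preserved under sums with cancellation.

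Your last paragraph chases a non-problem with a wrong tool. You are right that the displayed congruence cannot make your constant ``as small as required''. The $p\mathfrak{m}$-part contributes up to $\lVert p\rVert_s=p^{-e}$ for every $n$, so that sentence is unjustified. But no improvement is needed. With $|\pi|=p^{-1}$ one has $p^{-e}\le p^{-1}\le p^{-1/(p-1)}$. The $\mathfrak{m}^{p^s}$-part contributes at most $p^{-(p^s-1)/s}\le p^{-1/(p-1)}$, even after dividing by $Y_{\sigma_0}$. Hence $m=s$ already gives the required non-strict bound:
\begin{itemize}
\item for $Y_{\sigma_0}$ directly;
\item for $\pi/Y_{\sigma_0}^s$ via $\lVert\gamma(x)-x\rVert_s=\lVert 1-(\gamma(Y_{\sigma_0})/Y_{\sigma_0})^s\rVert_s$;
\item for $Y_{\sigma_i}/Y_{\sigma_0}$ by the analogous quotient.
\end{itemize}
This is exactly the paper's computation.

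The fix you propose, that $a(Y_{\sigma_i})-Y_{\sigma_i}$ is divisible by $a-1$ modulo $\mathfrak{m}^{p^n}$, does not hold in general. For $a=1+p^nb$ the difference is assembled from expressions $(1+X)^{p^n}-1$ with $X\in\mathfrak{m}$. These contain $\binom{p^n}{p^{n-1}}X^{p^{n-1}}$, whose coefficient has $p$-adic valuation only $1$. If a strict inequality were ever required, passing to $\gamma^{p}$ via $\gamma^p-1=\sum_{k=1}^{p}\binom{p}{k}(\gamma-1)^k$ is the right move. That too presupposes the uniform bound on the whole unit ball described above.
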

		
		\begin{proof}
			Note that $\left(A_{\mathrm{mv},E}^{\dagger,s^{-}}\left[\frac{1}{Y_{\sigma_{0}}}\right]\right)^{\circ}=A_{\mathrm{mv},E}^{\dagger,s^{-}}$, where the topology on $A_{\mathrm{mv},E}^{\dagger,s^{-}}\left[\frac{1}{Y_{\sigma_{0}}}\right]$ is induced by $\lVert\cdot\rVert_s$, hence by \cite[Lemma 2.14]{berger2020rigid}, we need to check that for sufficiently large $m$, for any $\gamma\in 1+p^m\mathcal{O}_K\subseteq \mathcal{O}_K^{\times}$ and any $x\in A_{\mathrm{mv},E}^{\dagger,s^{-}}$, we have
			\begin{align}\label{BSX condition}
				\lVert\gamma(x)-x\rVert_{s}\leq p^{-\frac{1}{p-1}}.
			\end{align}
			Since $\lVert\cdot\rVert_s$ is a multiplicative norm satisfying the strong triangle inequality, for $x,y\in A_{\mathrm{mv},E}^{\dagger,s^{-}}$ and $A_{\mathrm{mv},E}^{\dagger,s^{-}}$ is stable under the $\mathcal{O}_K^{\times}$-action, we have
			\begin{align*}
				\lVert\gamma(xy)-xy\rVert_{s}&=\lVert\gamma(x)\gamma(y)-\gamma(x)y+\gamma(x)y-xy\rVert_{s}\\
				&\leq\max\{\lVert\gamma(x)\gamma(y)-\gamma(x)y\rVert_{s},\lVert\gamma(x)y-xy\rVert_{s}\}\\
				&=\max\{\lVert\gamma(x)\rVert_{s}\lVert\gamma(y)-y\rVert_{s},\lVert\gamma(x)-x\rVert_{s}\lVert\gamma(y)\rVert_{s}\}\\
				&\leq\max\{\lVert\gamma(y)-y\rVert_{s},\lVert\gamma(x)-x\rVert_{s}\}
			\end{align*}
			and
			\begin{align*}
				\lVert\gamma(x+y)-(x+y)\rVert_{s}=\lVert \gamma(x)-x+\gamma(y)-y\rVert_{s}\leq \max\{\lVert \gamma(x)-x\rVert_{s},\lVert\gamma(y)-y\rVert_{s}\}.
			\end{align*}
			Moreover, if $x\in A_{\mathrm{mv},E}^{\dagger,s^{-}}$ is invertible, then $\lVert x\rVert_{s}=1$, and by last assertion of Lemma \ref{||_s and phi,Gamma},
			\begin{align*}
				\lVert\gamma(x^{-1})-x^{-1}\rVert_s=	\lVert \gamma(x)^{-1}x^{-1}(x-\gamma(x))\rVert_s=	\lVert (x-\gamma(x))\rVert_s.
			\end{align*}
			Therefore, using the explicit expression (\ref{defn of s^{-1}}), it suffices to show that for $x\in\left\{Y_{\sigma_{0}},\frac{\pi}{Y_{\sigma_{0}}^s},\frac{Y_{\sigma_i}}{Y_{\sigma_{0}}}\right\}$, there exists $m\geq 1$ such that (\ref{BSX condition}) holds for any $\gamma\in 1+p^m\mathcal{O}_K$. We claim that we can take $m=s$. For $\gamma\in 1+p^{s}\mathcal{O}_K$, by (\ref{refined O_k^*-action}), we can write $\gamma(Y_{\sigma_{i}})=Y_{\sigma_{i}}+pQ_i+P_i$ with $Q_i\in \mathfrak{m}, P_i\in \mathfrak{m}^{p^{s}}$ for $0\leq i\leq f-1$. Then
			for $x=Y_{\sigma_{0}}$, $\gamma(Y_{\sigma_{0}})-Y_{\sigma_{0}}=pQ_0+P_0\in p\mathfrak{m}+\mathfrak{m}^{p^s}$, hence
			\[\lVert\gamma(Y_{\sigma_{0}})-Y_{\sigma_{0}}\rVert_{s}\leq \max\left\{\lVert p\rVert_{s},\lVert Y_{\sigma_{0}}\rVert_{s}^{p^{s}}\right\}=\max\{p^{-e},p^{-p^{s}/s}\}\leq  p^{-\frac{1}{p-1}} ,\]
			where $e$ is the ramification index of $E/\qp$. For $x=\frac{\pi}{Y_{\sigma_{0}}^s}$, $\gamma(x)-x=\gamma(x)(1-\frac{x}{\gamma(x)})$, hence
			\begin{align*}
				\lVert\gamma(x)-x\rVert_{s}&=\left\lVert1-\left(\frac{\gamma(Y_{\sigma_{0}})}{Y_{\sigma_{0}}}\right)^s \right\rVert_s=\left\lVert1-\left(1+p\frac{Q_0}{Y_{\sigma_{0}}}+\frac{P_0}{Y_0}\right)^s \right\rVert_s\\
				&=\left\lVert\sum_{j=1}^{s}\binom{s}{i}\left(p\frac{Q_0}{Y_{\sigma_{0}}}+\frac{P_0}{Y_0}\right)^{i}\right\rVert_s\leq \max_{1\leq i\leq s}\left\lVert p\frac{Q_0}{Y_{\sigma_{0}}}+\frac{P_0}{Y_0}\right\rVert_{s}^i\\
				&=\left\lVert p\frac{Q_0}{Y_{\sigma_{0}}}+\frac{P_0}{Y_0}\right\rVert_{s}\leq\max\{\lVert p \rVert_s, \lVert Y_{\sigma_{0}}\rVert_s^{p^s-1}\}\leq p^{-\frac{1}{p-1}}.
			\end{align*}
			For $x=\frac{Y_{\sigma_{i}}}{Y_{\sigma_{0}}}$, $\gamma(x)-x=\gamma(x)(1-\frac{x}{\gamma(x)})$, thus 
			\begin{align*}
				\lVert\gamma(x)-x\rVert_{s}&=	\left\lVert 1-\frac{x}{\gamma(x)}\right\rVert_{s}=\left\lVert1-\frac{\gamma(Y_{\sigma_{0}})}{Y_{\sigma_{0}}}\frac{Y_{\sigma_{i}}}{\gamma(Y_{\sigma_{i}})} \right\rVert_s\\
				&=\left\lVert1-\frac{1+p\frac{Q_0}{Y_{\sigma_{0}}}+\frac{P_0}{Y_{\sigma_{0}}}}{1+p\frac{Q_i}{Y_{\sigma_{i}}}+\frac{P_i}{Y_{\sigma_{i}}}} \right\rVert_s=\left\lVert\frac{p\frac{Q_i}{Y_{\sigma_{i}}}+\frac{P_i}{Y_{\sigma_{i}}}-\left(p\frac{Q_0}{Y_{\sigma_{0}}}+\frac{P_0}{Y_{\sigma_{0}}}\right)}{1+p\frac{Q_i}{Y_{\sigma_{i}}}+\frac{P_i}{Y_{\sigma_{i}}}} \right\rVert_s.
			\end{align*}
			Since $p\frac{Q_i}{Y_{\sigma_{i}}}+\frac{P_i}{Y_{\sigma_{i}}}$ is an element of $ A_{\mathrm{mv},E}^{\dagger,s^{-}} $ with $\lVert p\frac{Q_i}{Y_{\sigma_{i}}}+\frac{P_i}{Y_{\sigma_{i}}} \rVert_{s} \leq p^{1/(p-1)}$, we see that $ 1+p\frac{Q_i}{Y_{\sigma_{i}}}+\frac{P_i}{Y_{\sigma_{i}}}$ is invertible in $ A_{\mathrm{mv},E}^{\dagger,s^{-}} $ with norm equal to $1$, thus
			\[\lVert\gamma(x)-x\rVert_{s}=\left \lVert p\frac{Q_i}{Y_{\sigma_{i}}}+\frac{P_i}{Y_{\sigma_{i}}}-\left(p\frac{Q_0}{Y_{\sigma_{0}}}+\frac{P_0}{Y_{\sigma_{0}}}\right) \right\rVert_{s}\leq  p^{-\frac{1}{p-1}}.\]
			Therefore, (\ref{BSX condition}) holds for any $\gamma\in 1+p^s\mathcal{O}_K$.
		\end{proof}
		
		As a consequence, $\bigcup_{n\geq0}\varphi_q^{-n}\left(A_{\mathrm{mv},E}^{\dagger,q^ns^{-}}\left[\frac{1}{Y_{\sigma_{0}}}\right]\right)$ is contained in the subring of locally $\mathbb{Q}_p$-analytic vectors of $B_{A_{\infty},[0,1/s]}$ for the $\mathcal{O}_K^{\times}$-action. It is not clear to us whether this is an equality or a strict inclusion.

	\appendix
	\numberwithin{equation}{subsection}
	\section{Some technical lemmas}\label{appendix}

	\subsection{Finiteness of $A^{\circ}$-submodules of étale $(\varphi_q,\mathcal{O}_K^{\times})$-modules over $A$}
	
	Let $A\coloneq A_{\mathrm{mv},E}/p$, then $A^{\circ}$ and $A$ are Noetherian domains, and the ideal of topological nilpotent elements of $A^{\circ}$ is $A^{\circ\circ}=Y_{\sigma_0}\cdot A^{\circ}$. Let $v: A\to \mathbb{Z}\cup\{-\infty\}$ be the $Y_{\sigma_0}$-adic valuation of $A$ with $v(Y_{\sigma_0})=1$. If $M$ is an étale $\varphi_q$-module over $A$, by \cite[Corollary 4.9]{du2025multivariablevarphiqmathcaloktimesmodulesassociatedpadic}, $M$ is a free $A$-module of finite rank. We define
	\begin{align}\label{defn of M^circ}
		M^{\circ}\coloneq\left\{x\in M: \sum_{i=0}^{\infty}A^{\circ}\varphi_q^{i}(x)\ \text{is a finite}\ A^{\circ}\text{-module}\right\}
	\end{align}
	which is a $A^{\circ}$-submodule of $M$. In other words, if we take
	\[F(M)\coloneq\left\{N\subseteq M: N\ \text{is a finite}\ A^{\circ}\text{-module}, \varphi_q(N)\subseteq N\right\},\]
	then
	\label{equivalent description}
	\begin{align}
		M^{\circ}=\bigcup_{N\in F(M)}N.
	\end{align}
	
	\begin{lem}\label{A^{circ}-submodule is finite}
		The $A^{\circ}$-module $M^{\circ}$ is finitely generated and $M=M^{\circ}\left[\frac{1}{Y_{\sigma_{0}}}\right]$.
	\end{lem}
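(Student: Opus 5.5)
We would prove both assertions at once by trapping $M^{\circ}$ between two lattices. Since $M$ is étale over $A$, it is free of finite rank $d$ (\cite[Corollary 4.9]{du2025multivariablevarphiqmathcaloktimesmodulesassociatedpadic}). Fix a basis $e_1,\dots,e_d$ and write $\varphi_q(e_j)=\sum_k a_{kj}e_k$ with $P=(a_{kj})\in\mathrm{GL}_d(A)$; invertibility of $P$ is exactly the étaleness. Put $L\coloneq\bigoplus_j A^{\circ}e_j$. For an integer $c$ we compare $Y_{\sigma_0}^{c}L$ with its image under $\varphi_q$, recalling that $\varphi_q$ is $\varphi^f$ and satisfies $\varphi(Y_{\sigma_i})=Y_{\sigma_{i-1}}^p$ on $A$, so $\varphi_q(Y_{\sigma_0})=Y_{\sigma_0}^q$ and $v(\varphi_q(x))=q\,v(x)$. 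Let $m\ge 0$ be such that $Y_{\sigma_0}^{m}P$ and $Y_{\sigma_0}^{m}P^{-1}$ both have entries in $A^{\circ}$.

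\textbf{Lower bound.} We first show $M=M^{\circ}[1/Y_{\sigma_0}]$. For $c\ge m/(q-1)$,
\[\varphi_q(Y_{\sigma_0}^cL)\subseteq Y_{\sigma_0}^{qc-m}L\subseteq Y_{\sigma_0}^{c}L,\]
so $Y_{\sigma_0}^cL\in F(M)$ and hence $Y_{\sigma_0}^cL\subseteq M^{\circ}$. Since every element of $M$ becomes a multiple of an element of $Y_{\sigma_0}^cL$ after multiplication by a power of $Y_{\sigma_0}$, this gives $M=M^{\circ}[1/Y_{\sigma_0}]$.

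\textbf{Upper bound.} Next we show $M^{\circ}\subseteq Y_{\sigma_0}^{-c'}L$ for some fixed $c'$; since $A^{\circ}$ is Noetherian, this yields finite generation. Take $N\in F(M)$ and let $k$ be the least integer with $N\subseteq Y_{\sigma_0}^{-k}L$, so that some $x\in N$ has a coordinate of valuation exactly $-k$. Write $x=\sum x_je_j$ with $\min_j v(x_j)=-k$. Then $\varphi_q(x)=P\cdot(\varphi_q(x_j))_j$, and
\[(\varphi_q(x_j))_j=P^{-1}\varphi_q(x),\]
where the coordinates $\varphi_q(x_j)$ have minimum valuation $-qk$. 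Hence some coordinate of $\varphi_q(x)$ has valuation at most $-qk+m$. Because $\varphi_q(x)\in N\subseteq Y_{\sigma_0}^{-k}L$, we get $-qk+m\ge -k$, that is, $k\le m/(q-1)$. Taking $c'=\lceil m/(q-1)\rceil$ gives $N\subseteq Y_{\sigma_0}^{-c'}L$ for every $N\in F(M)$, so $M^{\circ}\subseteq Y_{\sigma_0}^{-c'}L$, and $M^{\circ}$ is finite over $A^{\circ}$.

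\textbf{Main obstacle.} The delicate point is making the valuation estimates rigorous. Here $v$ is the $Y_{\sigma_0}$-adic valuation on the Tate-type ring $A$, which is multiplicative because $A^{\circ}/Y_{\sigma_0}=\mathbb{F}[(Y_{\sigma_i}/Y_{\sigma_0})^{\pm1}]$ is a domain; this multiplicativity is what makes $v(\varphi_q x)=qv(x)$ hold. We also need the coordinatewise minimum valuation to behave correctly under multiplication by matrices whose valuations are bounded below by $-m$. This uses only the ultrametric inequality $v(\sum a_kb_k)\ge\min_k(v(a_k)+v(b_k))$, together with the fact that $x\in Y_{\sigma_0}^{-k}L$ exactly when every coordinate has $v\ge -k$, which holds because $A^{\circ}=\{v\ge 0\}$.
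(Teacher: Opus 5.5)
Your proof is correct, but it is organized differently from the paper's.

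\textbf{The paper's route.} It first rescales the basis so that the matrix $P$ of $\varphi_q$ lies in $\mathrm{M}_d(A^{\circ})$. Then $L\in F(M)$, which gives $M=M^{\circ}[1/Y_{\sigma_0}]$. For the upper bound it reduces to rank one through the top exterior power. For $N\in F(M)$ containing $L$, $\wedge^dN$ is a finite $\varphi_q$-stable $A^{\circ}$-submodule of $\wedge^d_AM$. The identity $e_1\wedge\cdots\wedge\widehat{e}_i\wedge\cdots\wedge e_d\wedge x=\pm a_i\,e_1\wedge\cdots\wedge e_d$ then transfers the rank-one bound to each coordinate of $x$. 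In rank one, the paper uses Noetherianity of $A^{\circ}$ to get $\varphi_q^i(x)\in\sum_{k<i}A^{\circ}\varphi_q^k(x)$ for some $i$. It contradicts this using the strict decrease in $i$ of $q^iv(t)+\frac{q^i-1}{q-1}v(\lambda)$. The final bound is $M^{\circ}\subseteq Y_{\sigma_0}^{-v(\det P)}L$.

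\textbf{Your route.} You work directly in rank $d$ with the minimal pole order $k$ of a single $N\in F(M)$. You apply $\varphi_q$ only once and control the coordinates through $P^{-1}$. This avoids both exterior powers and the chain-stabilization step. It also gives the lower and upper inclusions with the same explicit constant $m/(q-1)$.

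\textbf{Trade-off.} You need an integral bound on $P^{-1}$. This comes from étaleness, or from $\det P\neq 0$ via the adjugate over $\mathrm{Frac}(A)$. The paper's argument only uses $P\in\mathrm{M}_d(A^{\circ})$ and finiteness of $v(\det P)$.

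\textbf{One small correction.} The identity $v(\varphi_q(a))=q\,v(a)$ does not follow from multiplicativity of $v$ alone. You also need that $\varphi_q$ induces an injective map on $A^{\circ}/Y_{\sigma_0}A^{\circ}=\mathbb{F}[(Y_{\sigma_i}/Y_{\sigma_0})^{\pm1}]$. This is clear, since it sends $Y_{\sigma_i}/Y_{\sigma_0}$ to $(Y_{\sigma_i}/Y_{\sigma_0})^{q}$, hence distinct monomials to distinct monomials.
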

	\begin{proof}
		Let $d\coloneq \mathrm{rank}_AM$.
		If $d=1$, then $M=Ae$, $\varphi_q(e)=\lambda e$ for some $\lambda\in A^{\times}$.  Replacing $e$ by $Y_{\sigma_0}^ne$ for some $n\geq 0$, we may assume that $\lambda \in A^{\circ}$, and let $r\coloneq v(\lambda)\geq 0$. We shall prove that
		\begin{align}\label{inclusion}
			A^{\circ}e\subseteq M^{\circ}\subseteq \dfrac{1}{Y_{\sigma_0}^{r}}A^{\circ}e
		\end{align}
		which implies the finiteness of $M^{\circ}$ since $A^{\circ}$ is Noetherian. The first inclusion is obvious. For the second inclusion, we shall prove by contradiction. Suppose that $M^{\circ}\nsubseteq \frac{1}{Y_{\sigma_0}^{r}}A^{\circ}e$, then there exists $x\in M^{\circ}$ with $x\notin \frac{1}{Y_{\sigma_0}^{r}}A^{\circ}e$. Since $M=Ae$, we have $x=te$ for some $t\in A$ with $v(t)\leq -r-1$. Then
		\[\varphi_q^{i}(x)=\varphi_q^{i}(t)\prod_{j=0}^{i-1}\varphi_q^j(\lambda) e,\quad i\geq 0.\]
		By the definition of $M^{\circ}$, $\sum_{i=0}^{\infty}A^{\circ}\varphi_q^{i}(x)=\bigcup_{i=1}^{\infty}\sum_{j=0}^{i}A^{\circ}\varphi_q^{j}(x)$ is a finite $A^{\circ}$-module. Since $A^{\circ}$ is Noetherian, there exists $i\geq 1$ such that $\varphi_q^{i}(x)\in \sum_{k=0}^{i-1}A^{\circ}\varphi_q^{k}(x)$. But this is impossible: 
		If
		\[\varphi_q^{i}(x)=\sum_{k=0}^{i-1}a_k\varphi_q^{k}(x),\quad a_0,\dots,a_{i-1}\in A^{\circ},\]
		then
		\begin{align}\label{impossible equality}
			\varphi_q^{i}(t)\prod_{j=0}^{i-1}\varphi_q^j(\lambda)=\sum_{k=0}^{i-1}a_k\varphi_q^k(t)\prod_{j=0}^{k-1}\varphi_q^j(\lambda).
		\end{align}
		But for $k=0,\dots,i-1$,
		\begin{align*}
			v\left(\varphi_q^{i}(t)\prod_{j=0}^{i-1}\varphi_q^j(\lambda)\right)&=q^{i}v(t)+\dfrac{q^{i}-1}{q-1}v(\lambda)\\
			&<q^{k}v(t)+\dfrac{q^{k}-1}{q-1}v(\lambda)=v\left(\varphi_q^{k}(t)\prod_{j=0}^{k-1}\varphi_q^j(\lambda)\right)\\
			&\leq v\left(\varphi_q^{k}(t)\prod_{j=0}^{k-1}\varphi_q^j(\lambda)\right)+v(a_k)=v\left(a_k\varphi_q^{k}(t)\prod_{j=0}^{k-1}\varphi_q^j(\lambda)\right),
		\end{align*}
		hence there is no $a_0,\dots,a_{i-1}\in A^{\circ}$ such that (\ref{impossible equality}) holds. Therefore, (\ref{inclusion}) is true.
		
		For general $d\geq 1$, let $e_1,\dots,e_d$ be an $A$-basis of $M$, and let $P=(p_{ij})_{i,j}$ be the matrix of $\varphi_q\in \mathrm{End}(M)$ with respect to $e_1,\dots,e_d$. Replacing $e_1,\dots,e_d$ by $Y_{\sigma_0}^ne_1,\dots,Y_{\sigma_0}^ne_d$, we may assume that $P\in\mathrm{M}_d(A^{\circ})$, hence $\lambda\coloneq \det P \in A^{\circ}$. Let $r\coloneq v(\lambda)\geq 1$. Then $\wedge^d_{A}M=A e_1\wedge\cdots\wedge e_d$ is an étale $\varphi_q$-module over $A$, since $\varphi_q(e_1\wedge\cdots\wedge e_d)=\lambda e_1\wedge\cdots\wedge e_d$. Thus the proof for $d=1$ implies that
		\begin{align}\label{wedge is bounded}
			A^{\circ}e_1\wedge\cdots\wedge e_d\subseteq \left(\wedge^d_{A}M\right)^{\circ}\subseteq \dfrac{1}{Y_{\sigma_0}^r}A^{\circ}e_1\wedge\cdots\wedge e_d.
		\end{align}
		Let $N\in F(M)$ be a finite $A^{\circ}$-submodule of $M$ with $\varphi_q(N)\subseteq N$. Replacing $N$ by $N+\oplus_{i=1}^dA^{\circ}e_i$, we may assume that $\oplus_{i=1}^dA^{\circ}e_i\subseteq N$. We define
		\[\wedge^dN\coloneq\sum_{x_1,\dots,x_d\in N}A^{\circ}x_1\wedge\cdots\wedge x_d\subseteq \wedge^d_{A}M,\]
		then
		$\varphi_q(\wedge^dN)\subseteq\wedge^dN$ and $\wedge^dN$ is finitely generated over $A^{\circ}$. This implies that $\wedge^dN\subseteq \left(\wedge^d_{A}M\right)^{\circ}$.
		Let
		$x=\sum_{i=1}^{d}a_i e_i\in N$, $a_i\in A$. For $1\leq i\leq d$, we have
		\[e_1\wedge\cdots\wedge \widehat{e}_i\wedge\cdots \wedge e_d\wedge x=(-1)^{d-i}a_ie_1\wedge\cdots\wedge e_d\in \wedge^dN\subseteq \left(\wedge^d_{A}M\right)^{\circ}.\]
		Thus $a_i\in \frac{1}{Y_{\sigma_0}^r}A^{\circ}$ for every $i$ by (\ref{wedge is bounded}), i.e. $x\in \frac{1}{Y_{\sigma_0}^r}\left(\oplus_{i=1}^dA^{\circ}e_i\right)$, hence $N\subseteq\frac{1}{Y_{\sigma_0}^r}\oplus_{i=1}^dA^{\circ}e_i$. Using (\ref{equivalent description}), we deduce that $M^{\circ}\subseteq \frac{1}{Y_{\sigma_0}^r}\bigoplus_{i=1}^dA^{\circ}e_i$, hence $M^{\circ}$ is a finite $A^{\circ}$-module.
	\end{proof}

	\subsection{{É}taleness of submodules of étale $(\varphi_q,\mathcal{O}_K^{\times})$-modules over $A$}
	
	\begin{lem}\label{phi_q is injective implies etaleness}
		Let $M$ be a finite module over $A$ endowed with a semi-linear endomorphism $\varphi_q$ and a semi-linear $\mathcal{O}_K^{\times}$-action commuting with $\varphi_q$. If $A\otimes_{\varphi_q,A}M\to M,x\otimes m\mapsto x\varphi_q(m)$ is injective, then $M$ is an étale $(\varphi_q,\mathcal{O}_K^{\times})$-module.
	\end{lem}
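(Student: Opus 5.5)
The plan is to show that the linearization $\varphi_q^{\mathrm{lin}}\colon A\otimes_{\varphi_q,A}M\to M$ is surjective. The key structural input is that $A$ is free of rank $q$ over $\varphi_q(A)$, by iterating (\ref{A is free over phi(A)}). Hence $A\otimes_{\varphi_q,A}M$ is a finite $A$-module, and we may compare its size with that of $M$.

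\textbf{Step 1: reduce to the torsion-free case and compare ranks.} First I would handle torsion. Let $M_{\mathrm{tor}}$ be the $A$-torsion submodule of $M$; it is stable under $\varphi_q$ and $\mathcal{O}_K^{\times}$.

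Since $A$ is a Noetherian domain and $\varphi_q$ is faithfully flat on $A$, the module $A\otimes_{\varphi_q}M_{\mathrm{tor}}$ is again torsion. Injectivity of the linearization, together with the splitting (\ref{A is free over phi(A)}), shows that $M$ and $A\otimes_{\varphi_q}M$ have the same generic rank, namely $\dim_{\mathrm{Frac}A}(\mathrm{Frac}A\otimes M)$. Consequently the image of the linearization is a submodule $M'\subseteq M$ with $\mathrm{rank}\,M'=\mathrm{rank}\,M$, and the cokernel $C\coloneq M/M'$ is a torsion $A$-module.

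\textbf{Step 2: show the cokernel vanishes, using the $\mathcal{O}_K^{\times}$-action.} This is the main obstacle. Both $M'$ and $C$ are stable under $\mathcal{O}_K^{\times}$, so the annihilator $\mathfrak{a}=\mathrm{Ann}_A(C)$ is an $\mathcal{O}_K^{\times}$-stable ideal. I would try to show, as in \cite{du2025multivariablevarphiqmathcaloktimesmodulesassociatedpadic} (where the analogous statement for étale modules is used to prove freeness), that a nonzero $\mathcal{O}_K^{\times}$-stable ideal of $A$ which is also stable under the relevant $\varphi_q$-structure must be the unit ideal. This rests on the fact that $A$ has no nontrivial $\mathcal{O}_K^{\times}$-stable proper closed subvarieties of the relevant type; concretely, one uses the explicit action (\ref{O_k^*-action}) and the fact that $\varphi_q$ multiplies "degrees".

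For the $\varphi_q$-compatibility, note that $M'$ is $\varphi_q$-stable and $\varphi_q(M)\subseteq M'$, so $C$ carries the zero Frobenius. I would then consider the Fitting ideal of $C$ instead. The injectivity hypothesis with equal ranks gives a determinant relation: $\det\varphi_q$ computed on $\wedge^{\mathrm{top}}$ of the torsion-free quotient. If $C\neq0$, this determinant $\lambda$ is a non-unit. However, $\lambda$ generates an $\mathcal{O}_K^{\times}$-stable principal ideal, since the $\mathcal{O}_K^{\times}$-action commutes with $\varphi_q$; more precisely $\gamma(\lambda)/\lambda$ is a unit, which follows by transport of structure on $\wedge^{\mathrm{top}}$.

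The argument I would try first is to invoke the classification of $\mathcal{O}_K^{\times}$-stable principal ideals of $A$ from \cite{du2025multivariablevarphiqmathcaloktimesmodulesassociatedpadic}. Such an ideal is generated by a power of $Y_{\sigma_0}$ times units, hence is the unit ideal in $A$, since $Y_{\sigma_0}$ is invertible in $A$. It follows that $\lambda\in A^{\times}$, so the linearization is an isomorphism on top exterior powers modulo torsion.

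\textbf{Step 3: conclude.} With $\lambda$ a unit, the linearization is surjective on the torsion-free quotient, since a map between free modules of equal rank is an isomorphism when its determinant is a unit. This uses that torsion-free finite modules carrying such a structure are free, by the same argument as in \cite[Corollary 4.9]{du2025multivariablevarphiqmathcaloktimesmodulesassociatedpadic}.

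The torsion part is treated by Noetherian induction on the support, applying the same ideal argument to the annihilator of $M_{\mathrm{tor}}$: it is $\mathcal{O}_K^{\times}$-stable, so it is either $0$ or $A$. Hence $M_{\mathrm{tor}}=0$ or $M$ is torsion-free. Finally, the five lemma gives surjectivity of the linearization, so $M$ is étale.
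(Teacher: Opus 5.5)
Once freeness of $M$ is granted, your Steps 2--3 are the paper's argument. Take $\lambda=\det P$ for the matrix of $\varphi_q$ in a basis. Use $\lambda\varphi_q(c_a)=a(\lambda)c_a$, with $c_a\in A^{\times}$ read off from $\wedge^{r}M$, to see that $\lambda A$ is $\mathcal{O}_K^{\times}$-stable, and conclude that $\lambda\in A^{\times}$.

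\textbf{The freeness step.} As written, it is not justified. You appeal to \cite[Corollary 4.9]{du2025multivariablevarphiqmathcaloktimesmodulesassociatedpadic}, but that result is about modules already known to be étale $\varphi_q$-modules, and this paper uses it only in that setting. You only know injectivity of the linearization, so invoking it, or ``the same argument'', is circular.

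The paper gets projectivity from the $\mathcal{O}_K^{\times}$-action alone. Every Fitting ideal of $M$ is $\mathcal{O}_K^{\times}$-stable, hence $0$ or $A$, so $M$ is finite projective \cite[Proposition 3.1.1.8]{breuil2023conjectures}. It is then free by \cite[Satz 3, p.~131]{lutkebohmert1977vektorraumbundel}. Projectivity over a domain also gives torsion-freeness, so your separate torsion step is not needed.

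\textbf{The ideal-theoretic input.} What you need is not a classification of $\mathcal{O}_K^{\times}$-stable principal ideals from \cite{du2025multivariablevarphiqmathcaloktimesmodulesassociatedpadic}, and no $\varphi_q$-stability is required. The fact is simply that every nonzero $\mathcal{O}_K^{\times}$-stable ideal of $A$ equals $A$ \cite[Corollary 3.1.1.7]{breuil2023conjectures}. You need it for non-principal ideals anyway, for instance $\mathrm{Ann}_A(M_{\mathrm{tor}})$.

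\textbf{A shorter route.} With that fact, your first observation in Step 2 finishes the proof directly, and more simply than the paper. The image $A\varphi_q(M)$ of the linearization is $\mathcal{O}_K^{\times}$-stable, since $\gamma(x\varphi_q(m))=\gamma(x)\varphi_q(\gamma(m))$. By your rank count, which is where injectivity enters, the cokernel $C=M/A\varphi_q(M)$ is a finite torsion $A$-module. So $\mathrm{Ann}_A(C)$ is a nonzero $\mathcal{O}_K^{\times}$-stable ideal, hence equals $A$, and $C=0$.

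This avoids torsion, freeness, Lütkebohmert's theorem and determinants altogether. You can then drop the detour through ``$\varphi_q$-compatibility'' and Fitting ideals of $C$. You can also drop the claim that $C\neq 0$ forces $\lambda$ to be a non-unit, which presupposes the freeness you only establish later.
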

	\begin{proof}
		By \cite[Proposition 3.1.1.8]{breuil2023conjectures}, $M$ is a finite projective $A$-module, hence is free of rank $r$ for some $r$ (\cite[Satz 3, p. 131]{lutkebohmert1977vektorraumbundel}). We fix a basis $e_1,\dots,e_r$ of $M$, and let $P=(p_{ij})_{i,j}$ be the matrix of $\varphi_q$ with respect to $e_1,\dots,e_r$. We need to prove that $\lambda\coloneq\det P$ is invertible in $A$. The assumption that $A\otimes_{\varphi_q,A}M\to M$ is injective implies $\lambda\neq 0$. Consider $\wedge^rM=Ae_1\wedge\dots\wedge e_r$, we have $\varphi_q(e_1\wedge\dots\wedge e_r)=\lambda e_1\wedge\dots\wedge e_r$. Let $\mathcal{O}_K^{\times}$ act on $\wedge^rM$ diagonally, for every $a\in \mathcal{O}_K^{\times}$, there exists $c_a\in A^{\times}$ such that $a(e_1\wedge\dots\wedge e_r)=c_a e_1\wedge\dots\wedge e_r$.
		Since $\varphi_q$ commutes with the $\mathcal{O}_K^{\times}$-action, we have
		\[\lambda \varphi_q(c_a)= a(\lambda) c_a,\quad c_a\in A^{\times}, a\in \mathcal{O}_K^{\times}.\]
		This implies that the ideal $\lambda A$ is stable under the $\mathcal{O}_K^{\times}$-action, hence $\lambda A=A$ (\cite[Corollary 3.1.1.7]{breuil2023conjectures}), hence $\lambda$ is invertible.
	\end{proof}
	
	\begin{cor}\label{submodule of etale module over A is etale}
		If $M$ is an étale $(\varphi_q,\mathcal{O}_K^{\times})$-module over $A$ and $N\subseteq M$ is a submodule stable under $\varphi_q$ and the $\mathcal{O}_K^{\times}$-action, then $N$ is also an étale $(\varphi_q,\mathcal{O}_K^{\times})$-module over $A$.
	\end{cor}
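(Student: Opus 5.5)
The plan is to reduce to Lemma \ref{phi_q is injective implies etaleness}. The module $N$ is finite over $A$ because $A$ is Noetherian and $M$ is finite. It carries the restricted endomorphism $\varphi_q$ and the restricted $\mathcal{O}_K^{\times}$-action, and these still commute. So the only hypothesis of that lemma left to verify is that the linearized map
\[A\otimes_{\varphi_q,A}N\to N,\qquad x\otimes n\mapsto x\varphi_q(n)\]
is injective.

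To check this, I will use the commutative square formed by the maps $A\otimes_{\varphi_q,A}N\to A\otimes_{\varphi_q,A}M$ and $N\hookrightarrow M$ together with the two linearization maps. The linearization map $A\otimes_{\varphi_q,A}M\to M$ is an isomorphism because $M$ is étale. Next I need the map $A\otimes_{\varphi_q,A}N\to A\otimes_{\varphi_q,A}M$ to be injective, which follows from flatness of $\varphi_q:A\to A$. That flatness holds because $A$ is free over $\varphi(A)$ with the explicit basis in (\ref{A is free over phi(A)}) after inverting $Y_{\sigma_0}$. Equivalently, it is the mod $\pi$ reduction of Lemma \ref{general form of basis}, iterated $f$ times to pass from $\varphi$ to $\varphi_q$.

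Composing these, the map $A\otimes_{\varphi_q,A}N\to M$ is injective. It factors through the linearization map $A\otimes_{\varphi_q,A}N\to N$ followed by the inclusion $N\hookrightarrow M$, so the linearization map for $N$ is injective. Lemma \ref{phi_q is injective implies etaleness} then shows that $N$ is an étale $(\varphi_q,\mathcal{O}_K^{\times})$-module.

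The only point needing care is the flatness of $\varphi_q$ on $A$. The paper records freeness for $A^{\circ}$ over $\varphi(A^{\circ})$, and the first lines of the proof of Lemma \ref{general form of basis} already pass this to $A$ over $\varphi(A)$. So I expect no real obstacle here.
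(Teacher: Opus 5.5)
Your proposal is correct and is essentially the paper's own proof. The steps match: $N$ is finite because $A$ is Noetherian; flatness of $\varphi_q$ gives $A\otimes_{\varphi_q,A}N\hookrightarrow A\otimes_{\varphi_q,A}M$; the étale isomorphism for $M$ then forces $A\otimes_{\varphi_q,A}N\to N$ to be injective; and Lemma \ref{phi_q is injective implies etaleness} concludes. Your justification of flatness, namely the freeness $A=\bigoplus_i \overline{a}_i\varphi(A)$ from the first lines of the proof of Lemma \ref{general form of basis}, iterated to $\varphi_q=\varphi^f$, is exactly what the paper uses implicitly.
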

	\begin{proof}
		Since $M$ is finite and $A$ is Noetherian, $N$ is also a finite $A$-module. As $\varphi_q: A\to A$ is flat, we see that $A\otimes_{\varphi_q,A}N\subseteq A\otimes_{\varphi_q,A}M$. Since $M$ is étale, the map $A\otimes_{\varphi_q,A}M\to M$ is an isomorphism. This implies that $A\otimes_{\varphi_q,A}N\to N$ is injective, then we apply Lemma \ref{phi_q is injective implies etaleness} to conclude.
	\end{proof}
	
	\begin{lem}\label{equal rank implies equality}
		Let $M,N$ be étale $(\varphi_q,\mathcal{O}_K^{\times})$-modules over $A$ with $N\subseteq M$. If $\mathrm{rank}_AN=\mathrm{rank}_AM$, then $M=N$.
	\end{lem}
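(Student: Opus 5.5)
We plan to compare the two modules via the quotient $Q\coloneq M/N$ and show $Q=0$. Since $N$ is stable under $\varphi_q$ and $\mathcal{O}_K^{\times}$, the quotient $Q$ is a finite $A$-module with induced semi-linear $\varphi_q$ and a commuting $\mathcal{O}_K^{\times}$-action. Because $M\cong A\otimes_{\varphi_q,A}M$, the linearized map $A\otimes_{\varphi_q,A}Q\to Q$ is surjective. Flatness of $\varphi_q$ on $A$ gives an exact sequence $0\to A\otimes_{\varphi_q}N\to A\otimes_{\varphi_q}M\to A\otimes_{\varphi_q}Q\to 0$. Since the first two linearized maps are isomorphisms, the five lemma shows that $A\otimes_{\varphi_q,A}Q\to Q$ is an isomorphism, in particular injective.

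Next we use Lemma \ref{phi_q is injective implies etaleness}. It gives that $Q$ is an étale $(\varphi_q,\mathcal{O}_K^{\times})$-module over $A$, hence a finite projective, indeed free, $A$-module; this comes from \cite[Proposition 3.1.1.8]{breuil2023conjectures} together with \cite[Satz 3, p. 131]{lutkebohmert1977vektorraumbundel}, exactly as in that lemma's proof. Then $0\to N\to M\to Q\to 0$ is a short exact sequence of free $A$-modules. Since $A$ is a domain, ranks are additive, so $\mathrm{rank}_A Q=\mathrm{rank}_AM-\mathrm{rank}_AN=0$. A free module of rank $0$ is zero, so $M=N$.

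The main obstacle is justifying that $Q$, which could a priori be torsion, is projective. That step is carried by \cite[Proposition 3.1.1.8]{breuil2023conjectures}, as invoked in Lemma \ref{phi_q is injective implies etaleness}. If one wanted to avoid it, an alternative is to tensor with the fraction field of $A$: $Q$ has rank $0$, so it is torsion, and its annihilator ideal is nonzero. That ideal is $\mathcal{O}_K^{\times}$-stable, so it equals $A$ by \cite[Corollary 3.1.1.7]{breuil2023conjectures}, which forces $Q=0$. We would keep this as a backup argument.
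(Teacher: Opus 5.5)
Your proposal is correct and follows essentially the paper's proof: the paper likewise shows that $M/N$ is étale (by citing that $\modet_{(\varphi_q,\mathcal{O}_K^{\times})}(A)$ is abelian, which is exactly your flatness and five-lemma argument), deduces that it is free via \cite[Proposition 3.1.1.8]{breuil2023conjectures}, and concludes by additivity of ranks. Your appeal to Lemma \ref{phi_q is injective implies etaleness} is redundant, since once the linearization is an isomorphism $Q$ is étale by definition; your backup argument via the $\mathcal{O}_K^{\times}$-stable annihilator is also valid.
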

	\begin{proof}
		Since $\modet_{(\varphi_q,\mathcal{O}_K^{\times})}(A)$ is an abelian category, $M/N$ must be an étale $(\varphi_q,\mathcal{O}_K^{\times})$-module over $A$, hence is a free $A$-module (\cite[Proposition 3.1.1.8]{breuil2023conjectures}), thus
		\[\mathrm{rank}_AM/N=\mathrm{rank}_AM-\mathrm{rank}_AN=0,\]
		thus $M/N=0$.
	\end{proof}

	\subsection{Flatness of inclusions}
	\begin{lem}\label{inclusion for s is flat}
		Let $s\geq 1$. Then the inclusion $A_{\mathrm{mv},E}^{\dagger,s}\hookrightarrow A_{\mathrm{mv},E}$ is flat.
	\end{lem}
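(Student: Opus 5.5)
The plan is to factor the inclusion $A_{\mathrm{mv},E}^{\dagger,s}\hookrightarrow A_{\mathrm{mv},E}$ through a completion of a localization. Both rings are Noetherian: $A_{\mathrm{mv},E}^{\dagger,s}$ by Lemma \ref{A dagger s is noetherian}, and $A_{\mathrm{mv},E}$ as recalled earlier. Set $B\coloneq A_{\mathrm{mv},E}^{\dagger,s}[1/Y_{\sigma_0}]$. Localization is flat, so $A_{\mathrm{mv},E}^{\dagger,s}\to B$ is flat. Since $\frac{\pi}{Y_{\sigma_0}^s}\cdot Y_{\sigma_0}^s=\pi$, the ring $B$ contains $\pi/Y_{\sigma_0}^{n}$ for all $n$ after inverting $Y_{\sigma_0}$. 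The intermediate claim I will aim for is
\[
\widehat{B}^{(\pi)}\cong A_{\mathrm{mv},E},
\]
where $\widehat{B}^{(\pi)}$ denotes the $\pi$-adic completion of $B$. Because $B$ is Noetherian (a localization of a Noetherian ring), the map $B\to\widehat{B}^{(\pi)}$ is flat. Composing flat maps then gives the lemma.

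To identify the completion, I will compare the reductions modulo $\pi^n$. First I show $B/\pi B\cong A=A_{\mathrm{mv},E}/\pi$. From the description $A_{\mathrm{mv},E}^{\dagger,s}/\frac{\pi}{Y_{\sigma_0}^s}=A^{\circ}$, and since $\frac{\pi}{Y_{\sigma_0}^s}$ becomes $\pi$ times a unit in $B$, we get $B/\pi=A^{\circ}[1/Y_{\sigma_0}]=A$. Here I must check that inverting $Y_{\sigma_0}$ commutes with the quotient, which holds because localization is exact. Next, $B$ is $\pi$-torsion free, being a subring of the domain $A_{\mathrm{mv},E}$. Hence multiplication by $\pi^{n}$ induces isomorphisms $\pi^nB/\pi^{n+1}B\cong B/\pi B$, and the same holds in $A_{\mathrm{mv},E}$. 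By induction on $n$ and the five lemma, the natural maps $B/\pi^n\to A_{\mathrm{mv},E}/\pi^n$ are isomorphisms. Taking the inverse limit yields $\widehat{B}^{(\pi)}\cong A_{\mathrm{mv},E}$, because $A_{\mathrm{mv},E}$ is $\pi$-adically complete.

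The main obstacle is the first step, namely verifying $B/\pi B\to A_{\mathrm{mv},E}/\pi$ is an isomorphism compatibly with the inclusion. Injectivity amounts to $B\cap\pi A_{\mathrm{mv},E}=\pi B$. I would check this termwise on expansions $\sum f_{\underline n}(Y_{\sigma_0})\prod (Y_{\sigma_i}/Y_{\sigma_0})^{n_i}$, using that an element of $A_E^{(p^{-1/s},1)}[1/Y_{\sigma_0}]$ with all coefficients divisible by $\pi$ is $\pi$ times an element of $A_E^{(p^{-1/s},1)}[1/Y_{\sigma_0}]$: dividing by $\pi$ only worsens the bound $|a_n|p^{-n/s}\le 1$ by a shift in $n$, which a power of $Y_{\sigma_0}$ absorbs. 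Convergence in $\underline n$ must hold uniformly; I will try to keep it by bounding the needed power of $Y_{\sigma_0}$ uniformly, in the spirit of (\ref{pi dagger=infty}). Surjectivity follows because $A^{\circ}$ surjects from $A_{\mathrm{mv},E}^{\dagger,s}$, and then one inverts $Y_{\sigma_0}$.
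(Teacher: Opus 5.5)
Your proof is correct, and it takes a genuinely different route from the paper's.

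\textbf{The paper's route.} It applies the local flatness criterion (Stacks Tag 0AGW) to $A_{\mathrm{mv},E}$, viewed as an $A_{\mathrm{mv},E}^{\dagger,s}$-module, with respect to the element $\frac{\pi}{Y_{\sigma_0}^s}$. Modulo this element the map becomes $A^{\circ}\hookrightarrow A=\bigcup_n Y_{\sigma_0}^{-n}A^{\circ}$, which is flat as a union of free modules. The $\mathrm{Tor}_1$ term vanishes because $\frac{\pi}{Y_{\sigma_0}^s}$ is a nonzerodivisor on $A_{\mathrm{mv},E}$.

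\textbf{Your route.} You identify $A_{\mathrm{mv},E}$ with the $\pi$-adic completion of the Noetherian ring $B=A_{\mathrm{mv},E}^{\dagger,s}[1/Y_{\sigma_0}]$. Flatness then follows from two textbook facts: localization is flat, and the completion of a Noetherian ring is flat. This avoids invoking a flatness criterion with completeness or ideal-separatedness hypotheses. As a byproduct it gives an explicit description of $A_{\mathrm{mv},E}$ in terms of $A_{\mathrm{mv},E}^{\dagger,s}$.

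\textbf{Shared input.} Both arguments rest on the same two facts. First, $A_{\mathrm{mv},E}^{\dagger,s}/\frac{\pi}{Y_{\sigma_0}^s}\cong A^{\circ}$, compatibly with reduction to $A=A_{\mathrm{mv},E}/\pi$. Second, $A=A^{\circ}[1/Y_{\sigma_0}]$.

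\textbf{Your ``main obstacle''.} The step you flag is not really an obstacle. Once you know that $A_{\mathrm{mv},E}^{\dagger,s}/\frac{\pi}{Y_{\sigma_0}^s}\to A$ is the inclusion $A^{\circ}\hookrightarrow A$ (this comes from Lemma~\ref{A dagger s is noetherian}, as the paper uses), injectivity of $B/\pi B\to A$ follows by localizing at $Y_{\sigma_0}$, because $\pi B=\frac{\pi}{Y_{\sigma_0}^s}B$.

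If you do carry out the termwise check instead, two points settle it. The power needed is exactly $Y_{\sigma_0}^s$, since $f_{\underline n}=\frac{\pi}{Y_{\sigma_0}^s}g_{\underline n}$ with $g_{\underline n}\in A_E^{(p^{-1/s},1)}$. The convergence $g_{\underline n}\to 0$ follows from Artin--Rees in the Noetherian local domain $A_E^{(p^{-1/s},1)}\cong \mathcal{O}_E[\negthinspace[Y_{\sigma_0},X]\negthinspace]/(Y_{\sigma_0}^sX-\pi)$, where $X=\frac{\pi}{Y_{\sigma_0}^s}$ is a nonzerodivisor.
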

	\begin{proof}
		Note that
		\begin{align}
			\dfrac{A_{\mathrm{mv},E}^{\dagger,s}}{\frac{\pi}{Y_{\sigma_{0}}^s}A_{\mathrm{mv},E}^{\dagger,s}}\cong A^{\circ},\quad \dfrac{A_{\mathrm{mv},E}}{\frac{\pi}{Y_{\sigma_{0}}^s}A_{\mathrm{mv},E}}=\dfrac{A_{\mathrm{mv},E}}{\pi A_{\mathrm{mv},E}}=A=\bigcup_{n=0}^{\infty}Y_{\sigma_{0}}^{-n}A^{\circ},
		\end{align}
		where the first isomorphism follows from Lemma \ref{A dagger s is noetherian}.
		Thus $\frac{A_{\mathrm{mv},E}}{\frac{\pi}{Y_{\sigma_{0}}^s}A_{\mathrm{mv},E}}$ is a countable union of free $\frac{A_{\mathrm{mv},E}^{\dagger,s}}{\frac{\pi}{Y_{\sigma_{0}}^s}A_{\mathrm{mv},E}^{\dagger,s}}$-modules, hence is a flat $\frac{A_{\mathrm{mv},E}^{\dagger,s}}{\frac{\pi}{Y_{\sigma_{0}}^s}A_{\mathrm{mv},E}^{\dagger,s}}$-module. Consider the short exact sequence of $A_{\mathrm{mv},E}^{\dagger,s}$-modules
		\[0\to A_{\mathrm{mv},E}^{\dagger,s}\xrightarrow{\times \frac{\pi}{Y_{\sigma_{0}}^s}}A_{\mathrm{mv},E}^{\dagger,s}\to \dfrac{A_{\mathrm{mv},E}^{\dagger,s}}{\frac{\pi}{Y_{\sigma_{0}}^s}A_{\mathrm{mv},E}^{\dagger,s}}\to 0,\]
		applying $A_{\mathrm{mv},E}\otimes_{A_{\mathrm{mv},E}^{\dagger,s}}-$, we have a short exact sequence
		\begin{align*}
			\mathrm{Tor}_1^{A_{\mathrm{mv},E}^{\dagger,s}}\left(A_{\mathrm{mv},E},A_{\mathrm{mv},E}^{\dagger,s}\right)\to \mathrm{Tor}_1^{A_{\mathrm{mv},E}^{\dagger,s}}\left(A_{\mathrm{mv},E},\dfrac{A_{\mathrm{mv},E}^{\dagger,s}}{\frac{\pi}{Y_{\sigma_{0}}^s}A_{\mathrm{mv},E}^{\dagger,s}}\right)\to A_{\mathrm{mv},E}
			\xrightarrow{\times \frac{\pi}{Y_{\sigma_{0}}^s}}A_{\mathrm{mv},E},
		\end{align*}
		and the first term of the exact sequence is zero since $A_{\mathrm{mv},E}^{\dagger,s}$ is a free $A_{\mathrm{mv},E}^{\dagger,s}$-module. Thus $\mathrm{Tor}_1^{A_{\mathrm{mv},E}^{\dagger,s}}\left(A_{\mathrm{mv},E},\frac{A_{\mathrm{mv},E}^{\dagger,s}}{\frac{\pi}{Y_{\sigma_{0}}^s}A_{\mathrm{mv},E}^{\dagger,s}}\right)=0$. Then we apply \cite[\href{https://stacks.math.columbia.edu/tag/0AGW}{Tag 0AGW}]{stacks-project} to conclude.
	\end{proof}
	
	Recall the following characterization of flat modules:
	\begin{prop}\label{critterion for flatness}
		Let $R$ be a ring, $M$ be an $R$-module. Then $M$ is flat if and only if $I\otimes_{R}M\to M,r\otimes m\mapsto rm$ is injective for any finitely generated ideal $I\subset R$.
	\end{prop}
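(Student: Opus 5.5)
The statement is the standard ideal-theoretic criterion for flatness, and I would prove it via Tor. The ``only if'' direction is immediate: if $M$ is flat, tensoring the injection $I\hookrightarrow R$ with $M$ gives an injection $I\otimes_R M\to R\otimes_R M\cong M$, and this composite is exactly $r\otimes m\mapsto rm$.

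For the converse, the plan is to show $\mathrm{Tor}_1^R(R/J,M)=0$ for every ideal $J$, and then for every module.

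\textbf{Step 1 (finitely generated ideals).} For an ideal $I$, the long exact Tor sequence of $0\to I\to R\to R/I\to 0$ gives $0=\mathrm{Tor}_1^R(R,M)\to\mathrm{Tor}_1^R(R/I,M)\to I\otimes_R M\to M$. So the hypothesis gives $\mathrm{Tor}_1^R(R/I,M)=0$ whenever $I$ is finitely generated.

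\textbf{Step 2 (arbitrary ideals).} Any ideal $J$ is the filtered union of its finitely generated subideals $I$. Tensor products commute with filtered colimits and filtered colimits are exact, so $J\otimes_R M=\varinjlim_I I\otimes_R M$. The map $J\otimes_R M\to M$ is therefore the colimit of injective maps, hence injective. By Step 1's argument, $\mathrm{Tor}_1^R(R/J,M)=0$ for all ideals $J$.

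\textbf{Step 3 (finitely generated modules).} Let $N$ be finitely generated; I would induct on the number of generators. If $N$ is generated by $n_1,\dots,n_k$, set $N'=Rn_1+\dots+Rn_{k-1}$. Then $N/N'$ is cyclic, hence isomorphic to some $R/J$. The exact sequence $\mathrm{Tor}_1(N',M)\to\mathrm{Tor}_1(N,M)\to\mathrm{Tor}_1(R/J,M)$ together with the induction hypothesis gives $\mathrm{Tor}_1^R(N,M)=0$.

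\textbf{Step 4 (all modules).} A general $N$ is the filtered colimit of its finitely generated submodules, and Tor commutes with filtered colimits. Hence $\mathrm{Tor}_1^R(N,M)=0$ for all $N$, so for any injection $N_1\hookrightarrow N_2$ the map $N_1\otimes M\to N_2\otimes M$ is injective, i.e.\ $M$ is flat.

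The only slightly delicate point is Step 2, namely justifying that $J\otimes_R M=\varinjlim_I I\otimes_R M$ and that injectivity is preserved in the colimit. Both follow from exactness of filtered colimits of modules, and alternatively one can simply cite \cite[\href{https://stacks.math.columbia.edu/tag/00HD}{Tag 00HD}]{stacks-project}. Since this is a standard fact, in the paper I would be content to cite it rather than reprove it.
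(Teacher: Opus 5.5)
Your proof is correct and is the standard one: finitely generated ideals, then arbitrary ideals via a filtered colimit, then cyclic and finitely generated modules by induction on the number of generators, then all modules by passing to a filtered colimit. The paper gives no argument of its own and simply cites Theorem 7.7 of Matsumura's \emph{Commutative Ring Theory}, which is essentially this same dévissage, so citing the standard reference, as you propose, matches what the paper does.
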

	\begin{proof}
		See for example \cite[Theorem 7.7]{matsumura1989commutative}.
	\end{proof}
	
	\begin{prop}\label{inclusion for dagger is flat}
		The inclusion $A_{\mathrm{mv},E}^{\dagger,\infty}\hookrightarrow A_{\mathrm{mv},E}$ is flat. As a consequence, the inclusion $A_{\mathrm{mv},E}^{\dagger}\hookrightarrow A_{\mathrm{mv},E}$ is faithfully flat.
	\end{prop}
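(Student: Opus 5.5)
Flatness of $A_{\mathrm{mv},E}^{\dagger,\infty}\hookrightarrow A_{\mathrm{mv},E}$ follows from Lemma \ref{inclusion for s is flat} by a filtered colimit argument via Proposition \ref{critterion for flatness}. Faithful flatness of $A_{\mathrm{mv},E}^{\dagger}\hookrightarrow A_{\mathrm{mv},E}$ then comes from localization plus a surjectivity check on maximal ideals, using that $\pi$ lies in the Jacobson radical (Lemma \ref{pA dagger is topologically nilpotent}).

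\textbf{Flatness.} Let $I\subset A_{\mathrm{mv},E}^{\dagger,\infty}$ be a finitely generated ideal, say $I=(x_1,\dots,x_n)$. Since $A_{\mathrm{mv},E}^{\dagger,\infty}=\bigcup_s A_{\mathrm{mv},E}^{\dagger,s}$ is a filtered union, all $x_j$ lie in some $A_{\mathrm{mv},E}^{\dagger,s_0}$. Put $I_s\coloneq \sum_j A_{\mathrm{mv},E}^{\dagger,s}x_j$ for $s\geq s_0$. Then $I=\varinjlim_s I_s$, hence
\[I\otimes_{A_{\mathrm{mv},E}^{\dagger,\infty}}A_{\mathrm{mv},E}=\varinjlim_s I_s\otimes_{A_{\mathrm{mv},E}^{\dagger,s}}A_{\mathrm{mv},E}.\]
To justify this identification, I would note that tensor products commute with filtered colimits of rings and modules, writing $A_{\mathrm{mv},E}=\varinjlim_s A_{\mathrm{mv},E}$ as a constant system. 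Each map $I_s\otimes_{A_{\mathrm{mv},E}^{\dagger,s}}A_{\mathrm{mv},E}\to A_{\mathrm{mv},E}$ is injective by Lemma \ref{inclusion for s is flat}. A filtered colimit of injections into the fixed module $A_{\mathrm{mv},E}$ is injective. Proposition \ref{critterion for flatness} then gives flatness. The only care needed is the colimit identification; it is standard but should be stated.

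\textbf{Flatness after inverting $Y_{\sigma_0}$.} Since $Y_{\sigma_0}$ is a unit in $A_{\mathrm{mv},E}$, we have $A_{\mathrm{mv},E}=A_{\mathrm{mv},E}\otimes_{A_{\mathrm{mv},E}^{\dagger,\infty}}A_{\mathrm{mv},E}^{\dagger}$. Localization preserves flatness, so $A_{\mathrm{mv},E}$ is flat over $A_{\mathrm{mv},E}^{\dagger}$.

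\textbf{Faithfulness.} It suffices that every maximal ideal $\mathfrak{m}$ of $A_{\mathrm{mv},E}^{\dagger}$ satisfies $\mathfrak{m}A_{\mathrm{mv},E}\neq A_{\mathrm{mv},E}$. By Lemma \ref{pA dagger is topologically nilpotent}, $\pi\in\mathfrak{m}$, so $\mathfrak{m}$ corresponds to a maximal ideal $\bar{\mathfrak{m}}$ of $A_{\mathrm{mv},E}^{\dagger}/\pi$. Moreover $A_{\mathrm{mv},E}^{\dagger}/\pi=A=A_{\mathrm{mv},E}/\pi$; this holds because $A_{\mathrm{mv},E}^{\dagger}\cap\pi A_{\mathrm{mv},E}=\pi A_{\mathrm{mv},E}^{\dagger}$ and the map is surjective mod $\pi$, as already used in Theorem \ref{general facts}(i). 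Therefore
\[A_{\mathrm{mv},E}/\mathfrak{m}A_{\mathrm{mv},E}=A/\bar{\mathfrak{m}}\neq 0.\]
Together with flatness this gives faithful flatness (\cite[\href{https://stacks.math.columbia.edu/tag/00HQ}{Tag 00HQ}]{stacks-project}).

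The main obstacle is the identification $A_{\mathrm{mv},E}^{\dagger}/\pi\cong A$. The intersection equality $A_{\mathrm{mv},E}^{\dagger}\cap\pi A_{\mathrm{mv},E}=\pi A_{\mathrm{mv},E}^{\dagger}$ should be checked directly from the explicit series descriptions, dividing coefficients by $\pi$ after clearing powers of $Y_{\sigma_0}$.
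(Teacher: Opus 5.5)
Your proposal is correct and follows the paper's proof essentially step for step. You get flatness over $A_{\mathrm{mv},E}^{\dagger,\infty}$ from the ideal criterion (Proposition~\ref{critterion for flatness}), by writing $I=\bigcup_s I_s$ and using the injectivity of each $I_s\otimes_{A_{\mathrm{mv},E}^{\dagger,s}}A_{\mathrm{mv},E}\to A_{\mathrm{mv},E}$ given by Lemma~\ref{inclusion for s is flat}. You then pass to $A_{\mathrm{mv},E}^{\dagger}$ by localization, and obtain faithfulness by reducing modulo $\pi$, which lies in the Jacobson radical by Lemma~\ref{pA dagger is topologically nilpotent}, together with $A_{\mathrm{mv},E}^{\dagger}/\pi\cong A\cong A_{\mathrm{mv},E}/\pi$. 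The differences are only cosmetic. You phrase the union step as a full colimit identification of tensor products, whereas the paper only needs that every element of $I\otimes A_{\mathrm{mv},E}$ comes from some stage $s$. You also use the criterion $\mathfrak{m}A_{\mathrm{mv},E}\neq A_{\mathrm{mv},E}$ directly, where the paper phrases it as surjectivity on closed points.
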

	\begin{proof}
		Let $I_{\infty}=\left(x_1,\dots,x_n\right)\subseteq A_{\mathrm{mv},E}^{\dagger,\infty}$ be a finitely generated ideal. Take $s_0\geq 1$ such that $x_1,\dots,x_n\in A_{\mathrm{mv},E}^{\dagger,s_0}$, and let $I_s$ be the ideal of $A_{\mathrm{mv},E}^{\dagger,s}$ generated by $x_1,\dots,x_n$ for $s\geq s_0$. Let $f_{s}$ be the natural map $I_s\otimes_{A_{\mathrm{mv},E}^{\dagger,s}}A_{\mathrm{mv},E}\to A_{\mathrm{mv},E}$ for $s\geq s_0$ or $s=\infty$. Let $t_s$ be the natural map $I_s\otimes_{A_{\mathrm{mv},E}^{\dagger,s}}A_{\mathrm{mv},E}\to I_{\infty}\otimes_{A_{\mathrm{mv},E}^{\dagger,\infty}}A_{\mathrm{mv},E}$ for $s\geq s_0$. The fact that $I_{\infty}=\bigcup_{s\geq s_0} I_{s}$ implies that $I_{\infty}\otimes_{A_{\mathrm{mv},E}^{\dagger,\infty}}A_{\mathrm{mv},E}=\bigcup_{s\geq s_0}\mathrm{im}t_s$. Note that we have a commutative diagram of abelian groups
		\begin{equation*}
			\begin{tikzcd}
				I_s\otimes_{A_{\mathrm{mv},E}^{\dagger,s}}A_{\mathrm{mv},E}\arrow[rd,hook,"f_s"]\arrow[d,"t_s"]&\\
				I_{\infty}\otimes_{A_{\mathrm{mv},E}^{\dagger,\infty}}A_{\mathrm{mv},E}\arrow[r,"f_{\infty}"]&A_{\mathrm{mv},E}.
			\end{tikzcd}
		\end{equation*}
		By Lemma \ref{inclusion for s is flat} and Proposition \ref{critterion for flatness}, $f_{s}$ is always injective.
		Thus for $x\in \ker f_{\infty}$, there exists $s\geq s_0$ and $y\in I_s\otimes_{A_{\mathrm{mv},E}^{\dagger,s}}A_{\mathrm{mv},E}$ such that $x=t_s(y)$, hence $f_s(y)=f_{\infty}(t_s(y))=f_{\infty}(x)=0$, hence $y=0$, hence $x=0$, hence $f_{\infty}$ is injective, hence the inclusion $A_{\mathrm{mv},E}^{\dagger,\infty}\hookrightarrow A_{\mathrm{mv},E}$ is flat by Proposition \ref{critterion for flatness}, hence $A_{\mathrm{mv},E}^{\dagger}\hookrightarrow A_{\mathrm{mv},E}$ is flat. It remains to show that every closed point of $\mathrm{Spec}A_{\mathrm{mv},E}^{\dagger}$ is contained in the image of $\mathrm{Spec}A_{\mathrm{mv},E}\to \mathrm{Spec}A_{\mathrm{mv},E}^{\dagger}$ (\cite[\href{https://stacks.math.columbia.edu/tag/00HQ}{Tag 00HQ}]{stacks-project}). Note that $\pi$ is contained in the Jacobson radical of $A_{\mathrm{mv},E}^{\dagger}$ (Lemma \ref{pA dagger is topologically nilpotent}) and the Jacobson radical of $A_{\mathrm{mv},E}$ (as $A_{\mathrm{mv},E}$ is $\pi$-adically complete), thus it suffices to check that $\mathrm{Spec}A_{\mathrm{mv},E}/\pi\to \mathrm{Spec}A_{\mathrm{mv},E}^{\dagger}/\pi$ is surjective. But this is obvious, since $A_{\mathrm{mv},E}/\pi\cong A\cong A_{\mathrm{mv},E}^{\dagger}/\pi$.
	\end{proof}

	\printbibliography
\end{document}